\theoremstyle{definition}
\newtheorem{prop}{Proposition}
\newtheorem{lemm}[prop]{Lemma}
\newtheorem{theo}[prop]{Theorem}
\newtheorem{exam}[prop]{Example}
\newtheorem{rema}[prop]{Remark}
\date{Draft Manuscript, September 2025}
\title{Stein's unbiased risk estimate\\ and Hyv{\"a}rinen's score matching}
\author{
\begin{tabular}{ll}
Sulagna Ghosh & \hspace{1cm} Nikolaos Ignatiadis\\
\texttt{sulagnag@uchicago.edu} & \hspace{1cm} \texttt{ignat@uchicago.edu}\\
\vspace{0.05cm}\\
Frederic Koehler & \hspace{1cm} Amber Lee\\
\texttt{fkoehler@uchicago.edu} & \hspace{1cm} \texttt{amberlee0516@uchicago.edu}\\
\vspace{0.1cm}\\
\end{tabular}
}
\begin{document}

\maketitle

\begin{abstract}
Given a collection of observed signals corrupted with Gaussian noise, how can we learn to optimally denoise them? This fundamental problem arises in both empirical Bayes and generative modeling. In empirical Bayes, the predominant approach is via nonparametric maximum likelihood estimation (NPMLE), while in generative modeling, score matching (SM) methods have proven very successful. In our setting, Hyvärinen's implicit SM is equivalent to another classical idea from statistics---Stein's Unbiased Risk Estimate (SURE).  
Revisiting SURE minimization, we establish, for the first time, that SURE achieves nearly parametric rates of convergence of the regret in the classical empirical Bayes setting with homoscedastic noise. 
We also prove that SURE-training can achieve fast rates of convergence to the oracle denoiser in a commonly studied misspecified model. In contrast, the NPMLE may not even converge to the oracle denoiser under misspecification of the class of signal distributions.
We show how to practically implement our method in settings involving heteroscedasticity and side-information, such as in an application to the estimation of economic mobility in the Opportunity Atlas. Our empirical results
demonstrate the superior performance of SURE-training over NPMLE under misspecification. Collectively, our findings advance SURE/SM as a strong alternative to the NPMLE for empirical Bayes problems in both theory and practice.\\

\noindent \textbf{Keywords:}  Empirical Bayes, Fisher divergence, G-modeling, misspecification, M-estimation
\end{abstract}

\section{Introduction} 
\label{sec:introduction}

Minimization of Stein's Unbiased Risk Estimate \citep[SURE]{stein1981estimation} is one of the most successful statistical strategies for model selection and parameter tuning in the Gaussian sequence model and beyond. SURE often plays a role similar to information criteria and cross-validation for tuning hyperparameters \citep{li1985stein, donoho1995adapting}. Here, instead, we use SURE as our primary loss function and minimize it over a nonparametric class of denoisers. 
This loss is equivalent to Hyv{\"a}rinen's \citeyearpar{hyvarinen2005estimation} score matching (SM) objective \citep{raphan2006learning,vincent2011connection}. Our key message is that SURE-training (equivalently, SM) should often replace nonparametric maximum likelihood estimation (NPMLE) in empirical Bayes denoising applications: while NPMLE optimizes the wrong objective under misspecification of the prior, SURE directly targets mean squared error, yielding superior performance both in theory and in practice for complex settings with side-information and heteroscedasticity.

To preview our methodology, consider (temporarily) the simplest empirical Bayes setting with
\begin{equation}
\label{eq:gaussian_EB}
\mu_i \simiid G_{\star},\;\; Z_i \mid \mu_i \simindep \mathrm{N}(\mu_i, 1),\;\;i=1,\dotsc,n,
\end{equation}
where the prior distribution $G_{\star}$ is unknown---our proposed methodology fully accommodates a generalization of~\eqref{eq:gaussian_EB} with side-information and heteroscedasticity (Section~\ref{sec:general_setting}).
While SURE applies to a broad class of denoisers, we focus on absolutely continuous denoisers of the form $t(z) = z+ h(z)$, whose mean squared error $\MSE(t(\cdot)) := \sum_{i=1}^n \EE{( t(Z_i) - \mu_i)^2}/n$ may be unbiasedly estimated via,
\begin{equation}
\label{eq:SURE_homosc_no_covariates}
\SURE(t(\cdot)) := 1+ \frac{1}{n}\sum_{i=1}^n \cb{ h(Z_i)^2 +2 \frac{\partial}{\partial z}h(Z_i)}\,\, \text{with}\,\, t(z) = z + h(z),
\end{equation}
so that $\EE{\SURE(t(\cdot))} =  \MSE(t(\cdot))$. If we knew $G_{\star}$ in~\eqref{eq:gaussian_EB}, then the optimal MSE denoiser would be equal to the posterior mean, $t_{\star}(z) = \EE[G_{\star}]{\mu_i \mid Z_i=z}$.
In an empirical Bayesian analysis~\citep{robbins1956empirical, efron2010largescale}, we do not know $G_{\star}$, but seek to learn about $G_{\star}$ via $Z_1,\dotsc,Z_n$ and to subsequently imitate the oracle Bayes estimator. Our empirical Bayes proposal is as follows: each distribution $G$ implies a denoiser via $t(z) = \EE[G]{\mu_i \mid Z_i=z}$ and so we can estimate $G_{\star}$ by minimizing $\SURE$ in~\eqref{eq:SURE_homosc_no_covariates} over the implied Bayes denoiser of any candidate $G$, that is,
$
 \widehat{G} \in \argmin_{G \in \mathcal{G}} \cb{\SURE(G)}$, $\SURE(G) := \SURE( \EE[G]{\mu_i \mid Z_i=\cdot}),
$
where $\mathcal{G}$ is a class of distributions that may (or may not) contain $G_{\star}$. Finally, we estimate $\mu_i$ via $\hat{\mu}_i = \EE[\widehat{G}]{\mu_i \mid Z_i}$.

The connection to Hyv{\"a}rinen's SM is as follows. Via the Eddington/Tweedie formula~\citep{dyson1926method, efron2011tweedie}, the Bayes denoiser may be expressed in terms of the score $s_G(z)$, that is, the derivative of the log-marginal density $f_G(z)$ of the observations $Z_i$ (when $\mu_i \sim G)$:
\begin{equation}
\label{eq:score_and_marginal}
\EE[G]{\mu_i \mid Z_i=z} = z + \score_G(z), \text{ with } \score_G(z) := \frac{\partial}{\partial z} \log f_G(z),\, f_G(z) := \!\int\!\!  \varphi(z-\mu)G(d\mu),
\end{equation}
and where $\varphi$ is the standard normal density function. Thus, SURE in~\eqref{eq:SURE_homosc_no_covariates} takes the form:
\begin{equation}
\label{eq:simple_SURE}
\SURE(G) = 1+\frac{1}{n}\sum_{i=1}^n \cb{ s_G(Z_i)^2 +2 \frac{\partial}{\partial z}s_G(Z_i)}.
\end{equation}
In turn, this objective is (up to an additive constant) equal to the score matching (SM) objective of~\citet{hyvarinen2005estimation}, $\SM(G) = \SURE(G)+c$. Score matching was introduced by~\citet{hyvarinen2005estimation} as an alternative to maximum likelihood estimation of parametric models that does not require computation of the normalizing constant (since the score does not depend on it). SM may also be used in nonparametric settings and is closely tied to the Fisher divergence,
\begin{equation}
\label{eq:fisher_divergence}
\Dfisher{f_{G_{\star}}}{f_G} := \int \cb{s_{G_\star}(z) - s_{G}(z)}^2 f_{G_\star}(z)dz, 
\end{equation}
In particular, $\EE{\SM(G)} =\Dfisher{f_{G_{\star}}}{f_G}+c'$ (for another constant $c'$), and so $\SM$ can be used as a loss function for estimating the score $s_{G_{\star}}$ of $f_{G_{\star}}$. Thus, our empirical Bayes approach via SURE simultaneously provides a method for nonparametric score estimation.

Estimating priors by SURE in empirical Bayes settings is not a new idea. \citet{tibshirani2019excess} explain that minimizing SURE recovers a minor modification of the classical estimator of~\citet{james1961estimation}. \citet*{xie2012sure} demonstrate convincingly that SURE can substantially outperform the denoising MSE of maximum likelihood both asymptotically and in finite samples. As we describe further in the related work (Section~\ref{subsec:related_work}) such methods have become quite popular in both statistics and economics. However, the current state of affairs, as noted by~\citet{chen2024empirical}, is that SURE-training methods only apply to restricted classes of priors. In this paper, we seek to advance SURE-training to a complete alternative to NPMLE for empirical Bayes through the following contributions:
\begin{itemize}[noitemsep, leftmargin=*]
\item We develop SURE-PM (Particle Modeling, Section~\ref{sec:fast_rates}), which uses as its working model $\mu_i \sim G$ with $G$ fully nonparametric but not incorporating side-information.
In the homoscedastic normal means problem with fully nonparametric and compactly supported prior, we prove in Theorem~\ref{theo:rate_homoscedastic}  that the regret of SURE-PM decays at a rate $O(1/n)$ (up to logarithmic factors), thus matching the celebrated (and minimax optimal) rate shown for the NPMLE by~\citet{jiang2009general}.
\item We introduce SURE-LS (Least Squares, Section~\ref{sec:regression}) with working priors that are Gaussian with mean and variance as nonparametric functions of side-information, deriving sharp oracle inequalities with fast rates that hold under misspecification of both prior and noise models (Theorem~\ref{theo:reg})---the NPMLE does not enjoy such guarantees. This encompasses and improves upon existing results including~\citet{xie2012sure, weinstein2018grouplinear, ignatiadis2019covariatepowered}. 
\item Our proofs develop new connections between the functional inequality/score-based diffusion model literature and M-estimation. 
As a consequence of the equivalence of SM and SURE-training, our Theorem~\ref{theo:rate_homoscedastic} provides the first instance of minimax rate optimality of Hyv{\"a}rinen's SM in a nonparametric setting in which the $\SM$ solution does not have a closed-form representation. 
\item We provide practical PyTorch implementations (Section~\ref{sec:computation}) of SURE-PM, SURE-LS, and a third method called SURE-THING (This Helps In Neural-G modeling). SURE-THING uses neural networks to learn fully nonparametric priors whose shape, location, and scale is modulated by side-information in a flexible way.
\item We demonstrate strong empirical performance in simulations (Section~\ref{sec:simulations}) and apply our methods to measuring economic mobility for Black children from low-income families in the Opportunity Atlas \citep{chetty2018opportunity} (Section~\ref{sec:opportunity}). Standard NPMLE, which assumes mean-variance independence, fails because Census tracts with larger Black populations have both lower mobility and smaller standard errors~\citep{chen2024empirical}. SURE-PM embeds the same mean-variance independence assumption in its working model, yet substantially outperforms the NPMLE by optimizing the right loss function, while SURE-LS and SURE-THING directly model the heteroscedasticity and side-information for further gains.
\end{itemize}
Overall, our paper establishes SURE-based training via end-to-end differentiable programming as a promising and theoretically grounded strategy for empirical Bayes estimation.

\subsection{Related Work}
\label{subsec:related_work}
\paragraph{Connection to empirical Bayes literature.}
\citet{efron2014two} classifies empirical Bayes estimation strategies as G- or F-modeling based. In G-modeling, one seeks to directly estimate $G_{\star}$ in~\eqref{eq:gaussian_EB}, by \smash{$\hG$}, say, and the empirical Bayes analysis proceeds by plugging-in \smash{$\hG$} in place of the true $G$~\citep{jiang2009general, efron2016empirical}. By contrast, in F-modeling, one starts by estimating properties of the marginal distribution $F_{G_{\star}}$ of $G_{\star}$, for instance, by estimating the marginal density $f_{G_{\star}}$ defined in~\eqref{eq:score_and_marginal} by a kernel density estimator \smash{$\hat{f}$}. Then one can approximate the Bayes denoiser via \smash{$\hEE{\mu_i \mid Z_i=z} = z + \{\tfrac{\partial}{\partial z} \hat{f}(z)\}/\hat{f}(z)$}~\citep{zhang1997empirical, li2005convergence,  raphan2007empirical, brown2009nonparametric}. We view our approach as a G-modeling strategy that uses SURE as the objective. 

Perhaps closest to our approach is the paper by~\citet{zhao2021simultaneous}, which in its Section 5.1 proposes an approach for the homoscedastic normal means problem in~\eqref{eq:gaussian_EB} that is effectively the same as our proposal (see Section~\ref{subsec:homosc_simulations} for more detail on the connection); however, without accompanying theory.~\citet{zhao2021simultaneous} also considers a setting with side-information, that is a special case of the model that we introduce in equation~\eqref{eq:EB_heterosc_side_info} of Section~\ref{sec:general_setting}. The focus therein is on arguing consistency (that is, that the empirical Bayes regret converges to zero), however, the proof technique used can only establish a rate of $O(n^{-1/2})$ (up to log factors), rather than the rate $O(n^{-1})$ we establish in  Theorem~\ref{theo:rate_homoscedastic}. Further important references that may be interpreted as G-modeling empirical Bayes facilitated by SURE include~\citet{jiang2011best, xie2012sure, tan2016steinized, kou2017optimal, zhang2017empirical, ignatiadis2019covariatepowered, rosenman2023combining, kwon2023optimal, li2025predictionpowered}, however, as argued by~\citet{chen2024empirical} these methods apply only to a restricted class of models. \citet{cohen2013empirical} deal with side-information by minimizing an asymptotically consistent estimate of the risk (which is different from SURE).

Going beyond G- and F-modeling,~\citet[Chapter 4]{biscarri2019statistical} and~\citet{barbehenn2023nonparametric} introduce the term E-modeling for approaches that directly model the posterior expectation $\EE{\mu_i \mid Z_i=z}$.~\citet{zhao2022regression} and~\citet{barbehenn2023nonparametric} use SURE as their objective function and directly model the posterior mean (rather than the prior). From a learning theory perspective, E-modeling implements improper learning~\citep{daniely2014average}, as it models the posterior mean without enforcing that it arises from a proper prior. By contrast, G-modeling implements proper learning by explicitly modeling a valid prior distribution.
The rates derived by~\citet{barbehenn2023nonparametric} for the empirical Bayes regret are of order $O(n^{-1/2})$, and so suboptimal. We conjecture that our proof techniques could lead to faster convergence rates also for certain E-modeling approaches via SURE. See Supplement~\ref{subsec:emodeling} for further bibliographic comments on E-modeling.

\paragraph{Connection to score matching literature.} \citet{hyvarinen2005estimation} first introduced score matching for estimation in parametric models with an intractable (or expensive to compute) normalizing constant. The SM objective is not limited to parametric settings---SM can be traced back to~\citet{cox1985penalty} in nonparametric contexts, and it has since been applied in various nonparametric settings~\citep{kingma2010regularized, sasaki2014clustering, strathmann2015gradientfree, sriperumbudur2017density, feng2024optimal}. Nonetheless, \citet{vincent2011connection} laments that ``what happens in the transition from $J_{\text{ISM}_q}$ [$\Dfisher{\cdot}{\cdot}$ in our notation] to finite-sample version $J_{\text{ISM}_{q0}}$ [$\SM(G)$ in our notation] is, however, not entirely clear,'' and arguably we are still far from understanding finite-sample properties of Hyv{\"a}rinen's SM. For instance, as far as we know, the only result on minimax rate optimality of Hyv{\"a}rinen's SM in a nonparametric setting is available for certain infinite dimensional kernel exponential families studied by~\citet{sriperumbudur2017density}. In the latter case, (regularized) SM permits an explicit representation as the Tikhonov-regularized solution of an explicit linear system and this representation facilitates analysis. Our~Theorem~\ref{theo:rate_homoscedastic} provides the first instance of minimax rate optimality of Hyv{\"a}rinen's SM in a nonparametric setting in which the $\SM$ solution does not have a closed-form representation. 

Several recent works have studied the statistical properties of Hyv{\"a}rinen's SM and its computational and statistical tradeoffs versus maximum likelihood estimation for density estimation, especially in parametric settings. See, e.g., \citet{forbes2015linear,koehler2023statistical,pabbaraju2024provable,koehler2023sampling,qin2024fit,koehler2024efficiently,chewi2025ddpm}. In contrast to those works, for us the main goal  is learning to denoise (and learning the score function) instead of density estimation in Kullback-Leibler divergence; however, we build upon techniques in these works, especially the connections between score matching and isoperimetric/functional inequalities.

The importance of estimating the score has gained prominence through the success of score-based generative modeling~\citep{song2019generative} and diffusion models~\citep{ho2020denoising}, spurring new theoretical investigations. A central question in this area is how well one can estimate the score $s_{G_\star}$ of the convolution of $\mathrm{N}(0,\sigma^2)$ and $G_{\star}$ from 
samples of $G_{\star}$~\citep{dou2024optimal,zhang2024minimax}. Related results are derived by~\citet{wibisono2024optimal}, who focus on minimax estimation of the score of $G_{\star}$ itself when $G_{\star}$ satisfies certain smoothness properties (e.g., that its score exists and is H\"{o}lder continuous). A further strand of the literature has considered estimating the score through denoising score matching~\citep{vincent2011connection, saremi2018deep, block2022generative, oko2023diffusion}, which we discuss further in Remark~\ref{rema:denoising_score_matching} below. Our work differs from these studies: while they assume direct observations of $\mu_i$ in~\eqref{eq:gaussian_EB}, in our setting we only require access to the noisy observations $Z_i$.

Finally, we highlight the work of~\citet*{feng2024optimal} who apply nonparametric SM to estimate the antitonic (decreasing) score projection. The score is then used as a data-driven loss for M-estimation of coefficients in linear regression. Remarkably, the procedure asymptotically attains minimal variance among all convex M-estimators. A key message of both~\citet{feng2024optimal} and our paper is that the choice of objective for nonparametric estimation under misspecification depends crucially on the downstream task. For~\citet{feng2024optimal}, M-estimation of regression coefficients requires Hyv{\"a}rinen's SM criterion rather than maximum likelihood. Similarly, for our task of denoising $\mu_i$, the right objective under misspecification is Hyv{\"a}rinen's SM (equivalently, SURE), not maximum likelihood. This message (in the case of denoising) is also emphasized by~\citet{hyvarinen2008optimal} and~\citet{xie2012sure}.

\section{The general setting}
\label{sec:general_setting}
We start by extending the scope of~\eqref{eq:gaussian_EB}. We allow for side-information encoded via covariates $X_i$ taking values in a generic space $\mathcal{X}$, and heteroscedasticity (where each observation has its own noise variance $\sigma_i^2$). Our model is as follows,
\begin{equation}
\label{eq:EB_heterosc_side_info}
\mu_i \mid X_i \simindep G_{\star}(\cdot \mid X_i),\;\;\;\;\quad Z_i \mid \mu_i, X_i \simindep \mathrm{N}(\mu_i, \sigma_i^2),\;\;\;\;\quad i=1,\dotsc,n,
\end{equation}
and we observe $W_i := (Z_i, X_i)$ but not $\mu_i$. Above, $G_{\star}(\cdot \mid X_i)$ represents the unknown conditional prior distribution of the parameter $\mu_i$ given covariates $X_i$. As is common in the literature, we assume that the noise variance $\sigma_i^2$ for each $i$ is known exactly~\citep{xie2012sure, weinstein2018grouplinear,  soloff2024multivariate}. 
In our notation, we suppress $\sigma_i^2$ by absorbing it into $X_i$ (that is, by concatenating $\sigma_i^2$ with the original covariates).
Throughout this paper, we treat \smash{$\boldX:=(X_1,\dotsc,X_n)$} as fixed (similar to fixed-$X$ regression~\citep{rosset2020fixedx}).\footnote{Our results can also be stated in a random-X setting with $X_i \simiid \mathbb P^X$ for a covariate distribution $\mathbb P^X$.} 

To streamline exposition, we treat $\mu_i, Z_i$ as random, generated via~\eqref{eq:EB_heterosc_side_info}. Our framework also naturally accommodates treating \smash{$\boldmu := (\mu_1, \dotsc, \mu_n)$} as fixed, following compound decision theory~\citep{robbins1951asymptotically, zhang2003compound}. Indeed, below we  state Theorem~\ref{theo:reg} (Section~\ref{sec:regression}) for fixed $\boldmu$.
When $\boldmu$ is fixed, the only source of randomness is \smash{$Z_i \mid \boldmu, \boldX \sim \mathrm{N}(\mu_i, \sigma_i^2)$}. For clarity, in contexts where $\boldmu$ is treated as fixed, we will denote expectations as \smash{$\EE[\boldmu]{\cdot}$}, or as \smash{$\EE[\mu_i]{\cdot}$} when the dependence on \smash{$\boldmu$} is only through $\mu_i$.

In the homoscedastic setting without covariates, say, with $X_i = \sigma_i^2=1$ for all $i$, the model in~\eqref{eq:EB_heterosc_side_info} collapses to~\eqref{eq:gaussian_EB}.
Inclusion of covariates means that our framework also encapsulates the more traditional regression setting: take the conditional distribution $G_{\star}(\cdot \mid X_i)$ to be equal to a Dirac point mass at $m_{\star}(X_i)$, where $m_{\star}: \mathcal{X} \to \RR$ is a function. Then $\mu_i = m_{\star}(X_i)$ almost surely and we can rewrite~\eqref{eq:EB_heterosc_side_info} as the regression problem
$
Z_i \mid X_i \sim \mathrm{N}(m_{\star}(X_i), \sigma_i^2).
$
We refer to e.g.,~\citet{fayiii1979estimates, cohen2013empirical, ignatiadis2019covariatepowered} and Section~\ref{sec:regression} below for more discussion on empirical Bayes estimation with covariates.

In this more general setting, when writing ``$G$'' (with some abuse of notation) we refer to the conditional distributions $\{G(\cdot \mid x): x \in \mathcal{X}\}$.\footnote{Given the fixed-X setting, these need to be specified only for $x \in \cb{X_1,\dotsc,X_n}.$} Analogously we also write $\mathcal{G}$ for a class of such conditional distributions. 
Below we slowly unpack a suitable generalization of definitions and the Eddington/Tweedie formula in~\eqref{eq:score_and_marginal}. First, given any $G$, we define the marginal density of $Z_i$ given $X_i=x$,
\begin{equation}
f_G(z \mid x) := \int \varphi(z-\mu; \sigma^2) G(d\mu \mid x),
\label{eq:conditional_marginal}
\end{equation}
where $\varphi(z; \sigma^2)$ is the density function of a centered Gaussian with variance $\sigma^2$. Recalling our notational convention that $X_i$ includes $\sigma_i^2$, we see that $\sigma_i^2$ influences the marginal density in two ways: first, it determines the noise level (variance, $\sigma_i^2)$ of the Gaussian kernel used in the convolution, and second, it may influence the distribution of $\mu_i$ via the conditional distribution $G(\cdot \mid X_i)$. Other covariates in $X_i$ only influence the marginal density of $Z_i$ via the latter mechanism (that is, via $G(\cdot \mid X_i)$). We also define the conditional score given $x$ and use shorthand notation for the conditional score of the data-generating prior $G_{\star}$,
\begin{equation}
\score_G(w) \equiv \score_G(z,x) := \frac{\partial}{\partial z} \log f_G(z \mid x),\quad s_{\star}(w) := s_{G_\star}(w).
\label{eq:conditional_score},
\end{equation}
where $w = (z,x)$.
With these definitions in place, we can verify the following Eddington/Tweedie type formula that generalizes~\eqref{eq:score_and_marginal},\footnote{
Expectations of the form $\EE[G]{\cdot}$ with a prior $G$ as a subscript indicate that we are integrating over both $\mu_i \sim G(\cdot \mid X_i)$ and $Z_i \mid \mu_i, X_i \sim \mathrm{N}(\mu_i, \sigma_i^2)$.
}
\begin{equation}
\EE[G]{\mu_i \mid W_i=w} = \EE[G]{\mu_i \mid Z_i=z, X_i=x}  = z + \sigma^2 \score_G(w),
\label{eq:general_tweedie}
\end{equation}
where $\sigma^2$ is implicitly a function of $x$.

\subsection{Population risks, SURE, and Score Matching}
We now provide more details on some results highlighted in the introduction (Section~\ref{sec:introduction}). Our results herein are known, see e.g.,~\citet{raphan2006learning, raphan2011least, vincent2011connection}. However,
we provide a self-contained exposition and also describe all results in the more general setting laid out after~\eqref{eq:EB_heterosc_side_info} with side-information and heteroscedasticity.

We start by defining risks of interest. We use the following notational convention: we write \smash{$(X, \mu, Z)$} for a fresh draw that is independent of everything else and generated as follows: first, \smash{$X \sim \mathrm{Unif}(\{X_1,\dotsc,X_n\})$}, and then \smash{$(\mu, Z)$} is generated as in~\eqref{eq:EB_heterosc_side_info} conditioning on $X=X_i$. Writing $W=(Z,X)$, the empirical Bayes regret is defined as
\begin{equation}
\Regret(G_{\star}, G) := \EE[G_{\star}]{ (\mu - \EE[G]{\mu \mid W})^2} - \EE[G_{\star}]{ (\mu - \EE[G_\star]{\mu \mid W})^2}.
\label{eq:ebregret_defi_sideinfo}
\end{equation}
In words, when $\mu$ in~\eqref{eq:EB_heterosc_side_info} is generated according to $G_{\star}$, but we instead use the working prior model $G$, what is the difference in incurred risk? By an orthogonality argument, $\Regret(G_{\star}, G)$ can also be expressed as the mean squared error in estimating the posterior mean $\mathbb E_{G_{\star}}[\mu \mid W]$, that is,
\begin{equation}
\label{eq:regret_as_posterior_mean_mse}
\Regret(G_{\star}, G) = \EE[G_{\star}]{ \p{\EE[G]{\mu \mid W} - \EE[G_{\star}]{\mu \mid W} }^2}.
\end{equation}
In the case without side-information and with homoscedasticity ($\sigma_i^2=1$ for all $i$), the above formula alongside the Eddington/Tweedie formula in~\eqref{eq:general_tweedie} implies that $\Regret(G_{\star}, G)$ is equal to the Fisher divergence $\Dfisher{f_{G_{\star}}}{f_G}$ defined in~\eqref{eq:fisher_divergence}. 
This connection between empirical Bayes regret and Fisher divergence at the population level also holds in the general setting of this section (Supplement~\ref{subsec:fisher_div_ebregret}) and manifests itself in the two (effectively equivalent) estimation strategies we pursue: minimizing SURE and minimizing the score matching (SM) objective. To make this connection as clear as possible, we next present the arguments underlying SURE and SM in a unified way. In each case we start with the risk of interest, e.g., the MSE, $\mathbb E[\{\mu_i - (Z_i + \sigma_i^2 s(W_i))\}^2]$, for SURE and Fisher divergence, $\mathbb E[\{s_\star(W_i) - s(W_i)\}^2]$, for SM. Moreover, in each case we subtract a constant term that does not depend on $s(\cdot)$, namely $\mathbb E[(\mu_i - Z_i)^2]$ for SURE and $\mathbb E[s_\star(W_i)^2]$ for SM.

\[
\begin{array}{@{}c@{\qquad\vrule\qquad}c@{}}
\textsc{SURE} & \textsc{SM} \\[1em]
\begin{aligned}
& \EE{\cb{\mu_i - (Z_i + \sigma_i^2 s(W_i))}^2} - \EE{(\mu_i - Z_i)^2} \\[0.5em]
& = \sigma_i^4 \EE{ s(W_i)^2} + 2\colorbox{blue!15}{$\sigma_i^2\EE{(Z_i-\mu_i)s(W_i)}$} \\[0.5em]
& = \sigma_i^4\EE{ s(W_i)^2} + 2\colorbox{blue!15}{$\sigma_i^4\EE{ \tfrac{\partial}{\partial z}s(W_i)}$}.
\end{aligned}
&
\begin{aligned}
& \EE{\cb{s_\star(W_i) - s(W_i)}^2} - \EE{s_\star(W_i)^2} \\[0.5em]
& = \EE{s(W_i)^2} - 2\colorbox{blue!15}{$\EE{s_\star(W_i)s(W_i)}$} \\[0.5em]
& = \EE{s(W_i)^2} + 2\colorbox{blue!15}{$\EE{\tfrac{\partial}{\partial z}s(W_i)}$}.
\end{aligned}
\end{array}
\]
In each case the crucial argument relies on partial integration. For SURE, suppose that, fixing $X_i$, $z \mapsto s(z,X_i)$ is absolutely continuous and that 
$\mathbb E[\abs{\partial s(Z_i, X_i)/\partial z}] < \infty$. Then the equality $\EE{(Z_i-\mu_i)s(W_i)} =  \sigma_i^2 \mathbb E[\partial s(Z_i, X_i)/\partial z]$ follows by Stein's celebrated Lemma~\citep{stein1981estimation} since $Z_i \mid \mu_i, X_i \sim \mathrm{N}(\mu_i, \sigma_i^2)$. See Supplement~\ref{subsec:partial_integration} for the partial integration argument underlying SM, and its connection to the argument for SURE.

We get the following default versions of SURE, respectively SM:
\begin{align}
\label{eq:SURE}
\SURE(G) &:=\frac{1}{n}\sum_{i=1}^n \sqb{\sigma_i^2 + \sigma_i^4 \cb{ s_G(W_i)^2 +2 \frac{\partial}{\partial z}s_G(W_i)}}, \\ 
\SM(G) &:=  \frac{1}{n}\sum_{i=1}^n \cb{ s_G(W_i)^2 +2 \frac{\partial}{\partial z}s_G(W_i)}.
\end{align}
In the case wherein all $\sigma_i^2$ are identical, the above objectives are equivalent. Otherwise, they correspond to  different objectives. Below, for most of our analyses we focus on~\eqref{eq:SURE}, due to the central role of mean squared error in the empirical Bayes literature~\citep{weinstein2018grouplinear}. However, our results extend to objectives of the form
$
\Loss(G) := \frac{1}{n}\sum_{i=1}^n w(X_i) \cb{ s_G(W_i)^2 +2 \frac{\partial}{\partial z}s_G(W_i)},
$
with non-negative weights $w(\cdot)$. The SURE objective corresponds to $w(X_i) = \sigma_i^4$, and the vanilla score matching objective to $w(X_i)=1$. For both tasks (empirical Bayes denoising and score matching), we may want to use different weights: for instance, for denoising we may seek optimality with respect to the inverse variance weighted mean squared error as in e.g.,~\citet{banerjee2023nonparametric}, in which case we would choose $w(X_i) = \sigma_i^2$.

\begin{rema}[Denoising score matching]
\label{rema:denoising_score_matching}
The partial integration argument in the derivations above enables us to learn  optimal denoisers for $\mu_i$ without ever observing $\mu_i$ by first learning the score. In denoising score matching~\citep{vincent2011connection, song2019generative} the same partial integration argument is used in reverse: we can learn the score, by learning to denoise $\mu_i$ via supervised regression. This is only applicable in settings wherein we do indeed observe the clean $\mu_i$. However, supervision is actually unnecessary---for this reason, the approach in our paper is also called regression  ``without supervision''~\citep{raphan2006learning} or ``implicit'' score matching \citep{vincent2011connection}.
\end{rema}

\begin{rema}[An alternative expression for SURE]
A second order generalization of the Eddington/Tweedie formula yields \smash{$\Var[G]{\mu_i \mid W_i} = \sigma_i^2 + \sigma_i^4 \tfrac{\partial}{\partial z}s(W_i)$}, see e.g.,~\citet[Equation (2.8)]{efron2011tweedie}. By this formula, we get that
$$\SURE(G) =\frac{1}{n}\sum_{i=1}^n \cb{-\sigma_i^2 + \p{Z_i-\EE[G]{\mu_i \mid W_i}}^2 +2 \Var[G]{\mu_i \mid W_i}}.$$
The interpretation is as follows: we seek to choose a prior $G$ such that both $\EE[G]{\mu_i \mid W_i} \approx Z_i$ and posterior uncertainty $\Var[G]{\mu_i \mid W_i}$ remains small. The term $\Var[G]{\mu_i \mid W_i}$ acts as a regularizer that encourages more concentrated priors.
\end{rema}

\subsection{SURE-training and uniform convergence}
\label{subsec:uniform_convergence}

Framing SURE-training as a general M-estimation problem~\Citep{geer2000empirical, wainwright2019highdimensional} with a specific loss, we can immediately get guarantees on the regret in~\eqref{eq:ebregret_defi_sideinfo} via standard uniform convergence arguments.
In particular, let \smash{$\hG \in \argmin\{\SURE(G): G \in \mathcal{G}\}$} and suppose it holds that,
$$
\EE{\sup_{G \in \mathcal{G}}\abs{ \SURE(G) - \frac{1}{n}\sum_{i=1}^n \p{\mu_i - \EE[G]{\mu_i \mid W_i}}^2}} \to 0 \,\text{ as }\, n\to \infty.
$$
Then, since $\SURE(\hG) \leq \SURE(G)$ for all $G \in \mathcal{G}$, it follows that,
$$
\EE{ \frac{1}{n}\sum_{i=1}^n \p{\mu_i - \EE[\widehat{G}]{\mu_i \mid W_i}}^2 - \inf_{G \in \mathcal{G}} \frac{1}{n}\sum_{i=1}^n \p{\mu_i - \EE[G]{\mu_i \mid W_i}}^2} \to 0\, \text{ as }\, n\to \infty.
$$
The above results hold in both the frequentist setting, where $\boldmu$ is fixed, as well as when integrating over the data-generating prior $G_{\star}$ (in which case we could also have stated the result in terms of $\Regret$ in~\eqref{eq:ebregret_defi_sideinfo}). 

Uniform convergence without localization has been the predominant approach for analyzing SURE-trained estimators~\citep{li1985stein, li1986asymptotic, li1987asymptotic, xie2012sure,xie2016optimal, zhang2017empirical, kou2017optimal, brown2018empirical, abadie2019choosing, banerjee2020adaptive, zhao2021simultaneous, rosenman2023combining, kwon2023optimal, barbehenn2023nonparametric, li2025predictionpowered}.\footnote{Notable alternative approaches include SURE for SURE~\citep{bellec2021secondorder} and excess optimism~\citep{tibshirani2019excess,cauchois2021comment}.}  This approach has an inherent limitation: it typically yields suboptimal rates rather than the sharp rates we establish in the next section.  Nevertheless, uniform convergence provides insight into SURE-training's fundamental properties. In particular, it guarantees that even under misspecification (when $G_{\star} \notin \mathcal{G}$), asymptotically we will perform at least as well as the denoiser associated to the best possible notional prior $G_{\oracle} \in \mathcal{G}$. Such a property is not true for estimators $\widehat{G}$ based on another principle (e.g., maximum likelihood). In Supplement~\ref{sec:uniform_convergence_rademacher}, we provide quantitative versions of these results stated in terms of Rademacher complexity of suitable function classes, demonstrating that SURE-training is a broadly applicable strategy for denoising in Gaussian sequence models under general conditions that allow for nonparametric priors, side-information, and heteroscedasticity.

\section{SURE-trained methods with sharp statistical guarantees}

In this section, we study two instantiations of our framework for estimating $G_\star$: SURE-PM (Particle Modeling) and SURE-LS (Least Squares). In our framework, an empirical Bayes method is determined by the class of (conditional) distributions $\mathcal{G}$ (as well as the computational implementation; see Section~\ref{sec:computation}). Our goal is to demonstrate that SURE-training enjoys fast regret rates in important settings; in Section~\ref{sec:simulations} and~\ref{sec:opportunity} we will also demonstrate the practical efficacy of SURE-PM and SURE-LS. Since the proofs are mathematically involved, e.g., requiring machinery on log-Sobolev inequalities, we provide an overview of the technical challenges and our approach in Supplement~\ref{sec:proof_elements}, postponing full proofs to Supplements~\ref{sec:proofs_sure_homoscedastic} and~\ref{sec:appendix_regression_proofs}. In doing so, we hope to facilitate the analysis of other SURE-based empirical Bayes estimators.

\subsection{SURE-PM (Particle Modeling)}
\label{sec:fast_rates}

Our first estimator, SURE-PM, posits the working model that ``$\mu_i \indep (\sigma_i^2, X_i)$,'' by specifying $\mathcal{G}^{\mathrm{PM}} := \{ G(\,\cdot \mid X_i=x) = H(\cdot)\,:\, H \in \mathcal{P}(M)\}$, where $\mathcal{P}(M)$ is the class of all univariate distributions that are supported on $[-M,M]$. While this working model embeds a mean-variance independence assumption, a key advantage of SURE-training is that it optimizes the right objective even under misspecification. Indeed, SURE-PM enjoys the guarantees of Section~\ref{subsec:uniform_convergence} under heteroscedasticity without requiring mean-variance independence, unlike the NPMLE which only has theoretical guarantees when $\mu_i \indep \sigma_i^2$~\citep{jiang2020general, soloff2024multivariate}---a strong assumption in practice~\citep{chen2024empirical}. SURE-PM achieves these practical advantages without sacrificing theoretical guarantees in the well-studied homoscedastic setting. Our next theorem shows fast rates for SURE-PM in the homoscedastic normal means problem of~\eqref{eq:gaussian_EB} wherein $\SURE$ takes the form~\eqref{eq:simple_SURE}.

\begin{theo}[SURE/SM rate in the homoscedastic normal means problem]
\label{theo:rate_homoscedastic}
Suppose that \smash{$\mu_i \simiid G_{\star}$} with $G_{\star} \in  \mathcal{P}(M)$ and that $\sigma_i^2=1$ for all $i$. Let \smash{$\widehat{G}$} be the minimizer of $\SURE(G)$ (equivalently, $\SM(G)$) over all $G \in \mathcal{P}(M)$. Then, with probability at least $1-n^{-2}$, it holds that,
$$
\Regret(G_{\star}, \hG) \equiv \Dfisher{f_{G_{\star}}}{f_{\hG}}  \leq C_M\frac{\log^6 n}{n},
$$
where $C_M >0$ is a constant that depends only on $M$. 
\end{theo}
The above rate is minimax optimal up to logarithmic factors~\citep{li2005convergence,polyanskiy2021sharp}, matching the celebrated NPMLE of~\citet{jiang2009general} (see Supplement~\ref{subsec:homosc_comparison} for comparisons with existing results).

Theorem~\ref{theo:rate_homoscedastic} also advances the theory of Hyv{\"a}rinen's SM. While prior work established rates for parametric models~\citep{forbes2015linear, barp2019minimum} or cases where the SM estimator has an explicit form~\citep{sriperumbudur2017density}, we obtain fast rates for the Fisher divergence without requiring an explicit representation. As a consequence, Theorem~\ref{theo:rate_homoscedastic} provides the first instance of minimax rate optimality of Hyv{\"a}rinen's SM in a nonparametric setting where the $\SM$ solution does not have a closed-form representation.

\subsection{SURE-LS (Least Squares)}
\label{sec:regression}

Our second estimator, SURE-LS, uses the working assumption that  $\mu_i$ follows a Gaussian distribution conditional on $X_i$, i.e., $\mathcal{G}^{\mathrm{LS}} := \{ G(\,\cdot \mid X_i=x) = \mathrm{N}\p{m(x), A(x)}\}$, where $m(\cdot)$, resp. $A(\cdot)$, represent the prior mean and variance as a function of covariates. Variants of this model (with different restrictions on $m(\cdot)$ and $A(\cdot)$) appear throughout the literature; see below for several examples. 

Parameterizing any candidate $G$ by $m(\cdot)$ and $A(\cdot)$ so that $G(\cdot \mid X_i) = \mathrm{N}\p{m(X_i), A(X_i)}$, we have the following expressions for the posterior mean of $\mu_i$ in~\eqref{eq:EB_heterosc_side_info}, the conditional score in~\eqref{eq:conditional_score} and its derivative:
\begin{equation*}
\EE[G]{\mu \mid W=w} =  \frac{\sigma^2}{\sigma^2 + A(x)}m(x)  +   \frac{ A(x)}{\sigma^2 + A(x)}z,\,\,s_{G}(w) = - \frac{z - m(x)}{\sigma^2+A(x)},\,\,\frac{\partial}{\partial z}s_{G}(w) = -\frac{1}{\sigma^2+A(x)}.
\end{equation*}
The SURE objective, which is different from maximum likelihood  (Supplement~\ref{subsec:sure_vs_mle}), is equal to 
\begin{equation}
\SURE(G) = \frac{1}{n}\sum_{i=1}^n \sigma_i^2 + \frac{1}{n} \sum_{i=1}^n  \sigma_i^4 \frac{(Z_i - m(X_i))^2}{(\sigma_i^2 +A(X_i))^2} - \frac{2}{n} \sum_{i = 1}^n \frac{\sigma_i^4}{\sigma_i^2+A(X_i)}.
\label{eq:sure_regression}
\end{equation}
In what follows, it will be convenient to parameterize $G$ instead via the functions $\lambda(\cdot)$, $b(\cdot)$ which are related to $m(\cdot)$, $A(\cdot)$ as follows:
\begin{equation}
\lambda(x) = \frac{\sigma^2}{\sigma^2 + A(x)},\,\,\,b(x) = \lambda(x) m(x)\,\;\; \longleftrightarrow\,\;\;  A(x) = \sigma^2 \frac{1-\lambda(x)}{\lambda(x)},\,\, m(x) = \frac{b(x)}{ \lambda(x)}. 
\end{equation}
With this new parameterization, the   optimal shrinkage function and the score take the form
\begin{equation}
\label{eq:linear_shrinkage}
\EE[G]{\mu \mid W=w} = b(x) \,+\, (1-\lambda(x))z,\,\,\,\, s_G(w) = \cb{b(x)-\lambda(x)z}/\sigma^2,
\end{equation}
while the SURE objective takes the form:
\begin{equation}
\SURE(G) \equiv \SURE(\lambda, b)=  \frac{1}{n}\sum_{i=1}^n \sigma_i^2 + \frac{1}{n} \sum_{i=1}^n  \cb{\lambda(X_i) Z_i - b(X_i)}^2 - \frac{2}{n} \sum_{i = 1}^n \sigma_i^2 \lambda(X_i).
\label{eq:SURE_gauss}
\end{equation}
Below we make assumptions directly on $\lambda(\cdot)$ and $b(\cdot)$, positing that $\lambda(\cdot) \in \mathcal{L},\,\, b(\cdot) \in \mathcal{B}$ for two classes $\mathcal{L}$ and $\mathcal{B}$, and so we identify $\mathcal{G} \equiv \mathcal{L} \times \mathcal{B}$. We assume that $\lambda(x) \in [0,1]$ for any $\lambda(\cdot) \in \mathcal{L}$.
We then estimate $\lambda(\cdot)$ and $b(\cdot)$  by minimizing~\eqref{eq:SURE_gauss}:
\begin{equation}
(\hat{\lambda},\, \hat{b}) \in \argmin \cb{\SURE(\lambda, b)\,:\, \lambda(\cdot) \in \mathcal{L},\, b(\cdot) \in \mathcal{B}}.
\label{eq:sure_minim_semiparametric}
\end{equation}
The implied denoiser $\hat{\mu}_i = \hat{b}(X_i) + (1-\hat{\lambda}(X_i))Z_i$ with $\hat{\lambda}, \hat{b}$ in~\eqref{eq:sure_minim_semiparametric}  has strong guarantees well-beyond our working model in~\eqref{eq:EB_heterosc_side_info} with conditional prior $\mathrm{N}(m(X_i), A(x_i))$. To make this clear, we state our next results in the fixed-X and compound decision theoretic setting with both $X_i$ and $\mu_i$ fixed. Moreover, we relax Gaussianity of $Z_i$ and assume that for $K>0$,
\begin{equation}
\label{eq:subgaussian}
Z_i = \mu_i + \xi_i,\,\,\; \EE{\xi_i}=0,\,\,\;\Var{\xi_i} = \sigma_i^2,\,\,\; \EE{\exp(t \xi_i)} \leq \exp\p{K^2\sigma_i^2 t^2/2} \text{ for all }t, 
\end{equation}
that is $\xi := Z_i - \mu_i$ is $K\sigma_i$-sub-Gaussian. The reason we can relax Gaussianity is that as noted e.g., by~\citet{kou2017optimal, ignatiadis2019covariatepowered}, the conditionally linear nature of the shrinkage rules in~\eqref{eq:linear_shrinkage}  implies that $\SURE$ is unbiased for any conditional distribution that has the correct structure of the first two moments, i.e., $\EE{Z_i \mid \mu_i, X_i} = \mu_i$ and $\Var{Z_i \mid \mu_i, X_i}=\sigma_i^2$. In this setting, instead of data-generating choices of $\lambda_{\star}$, $b_{\star}$, we define oracle choices of $\lambda \in \mathcal{L}$  and $b \in \mathcal{B}$ that optimize the mean squared error, with expectation taken only over $\xi_i$ in~\eqref{eq:subgaussian},
$$ 
(\lambda_{\oracle}, b_{\oracle}) \in \argmin_{\lambda \in \mathcal L,b \in \mathcal B}\cb{ \frac{1}{n}\sum_{i=1}^n \EE[\boldmu]{\cb{ ( \mu_i - (b(X_i) \,+\, (1-\lambda(X_i))Z_i) }^2 }}.
$$
We also assume that $\mathcal{L}$ is star-shaped about $\lambda_{\oracle}$. This means that for any $\lambda \in \mathcal{L}$ and any $\eta \in [0,1]$, it also holds that $\eta \lambda + (1-\eta) \lambda_{\oracle} \in \mathcal{L}$. Similarly, we assume that $\mathcal{B}$
is star-shaped about $b_{\oracle}$. We will state our main result below in terms of the complexity of the shifted classes $\mathcal{B}_{\oracle} := \mathcal{B}-b_{\oracle}$ and $\mathcal{L}_{\oracle} : = \mathcal{L}-\lambda_{\oracle}$. The notion of complexity we will use is that of local Gaussian complexity (see e.g.,~\citealt[equation (13.15)]{wainwright2019highdimensional}), which is defined as follows for a class of functions $\mathcal{H} \subset \cb{h: \mathcal{X} \to \RR}$ and $t>0$,
\begin{equation}
\mathscr{G}_n(t; \mathcal{H}) := \EE{\sup \cb{ \frac{1}{n} \sum_{i = 1}^n h(X_i)\zeta_i\,\,:\,\, h \in \mathcal{H},\,\,\, \frac{1}{n}\sum_{i=1}^n h^2(X_i) \le t^2 }},
\label{eq:local_gaussian_complexity}
\end{equation}
where the expectation is taken only over $\zeta_i \simiid \mathrm{N}(0,1)$.

Our main result below upper bounds the regret of the SURE-trained procedure by the Gaussian local complexity of $\mathcal{B}_{\oracle}$ and $\mathcal{L}_{\oracle}$. 

\begin{theo}
\label{theo:reg}
Suppose $\mathcal L$ is star-shaped about $\lambda_{\oracle}$,  $\mathcal B$ is star-shaped about $b_{\oracle}$, that the $\xi_i$ are independently $K\sigma_i$-sub-Gaussian and that all  $\sigma_i \in [\sqrt{2},\sigma_{\text{max}}]$.\footnote{This is without loss of generality by rescaling the problem.} Moreover suppose that $\abs{\mu_i}$, $\abs{2 \lambda_{\oracle}(X_i)\mu_i - b_{\oracle}(X_i)}$, $\abs{\lambda_{\oracle}(X_i)} \leq M$ for all $i$ (for $M>0$). Then, there exist constants $C,c$ that depend on $K, \sigma_{\text{max}}$, and $M$ such that with probability at least $1 - \delta$ (for $\delta \in (0,1/2])$, it holds that,
$$
\begin{aligned}
&\frac{1}{n}\sum_{i=1}^n \cb{( \hat \lambda(X_i) - \lambda_\oracle(X_i))Z_i - (\hat b(X_i) - b_\oracle(X_i))}^2 \le t_*^2,\,\, \text{ and }\\
&\sqrt{ \frac{1}{n} \sum_{i=1}^n \cb{ ( \mu_i - (\hat{b}(X_i) \,+\, (1-\hat{\lambda}(X_i))Z_i) }^2} \leq \sqrt{ \frac{1}{n} \sum_{i=1}^n \cb{ ( \mu_i - (b_{\oracle}(X_i) \,+\, (1-\lambda_{\oracle}(X_i))Z_i) }^2}  \;  +\; t_*,
\end{aligned}
$$
where $t_*$ is defined as the infimum
$$
\inf \cb{ t \ge 0 \, :\, \frac{t^2}{C} \ge \sqrt{\log\p{\frac{n}{\delta}}}\mathscr{G}_n(ct; \mathcal{B}_{\oracle})+ \log\p{\frac{n}{\delta}}\mathscr{G}_n(ct; \mathcal{L}_{\oracle})\ + t\abs{\log \delta} \sqrt{\frac{\log n}{n}}+ \abs{ \log \delta}^2 \frac{\log n}{n}}.
$$
\end{theo}
For low complexity classes with e.g., bounded VC dimension, we will have \smash{$t_* = \tilde{O}(n^{-1/2})$}\footnote{
We write $\tilde{O}(\psi_n)$ as shorthand for $O(\psi_n\psi_n')$ where $\psi_n'$ is polylogarithmic in $n$.
} as a function of the number of samples, so the first inequality in the guarantee yields a fast \smash{$\tilde{O}(n^{-1})$} rate of convergence to the oracle in terms of in-sample squared error, and the second inequality is a corresponding oracle inequality. This form of oracle inequality commonly appears in the literature (see e.g., \cite{jiang2009general,bellec2018slope}). At a technical level, this bound is an example of what is called an ``optimistic rate'' guarantee (see \citet{panchenko2003symmetrization,vapnik2006estimation,srebro2010optimistic,zhou2021optimistic}), as well an ``asymptotically exact'' oracle inequality (see e.g., \cite{cavalier2002oracle}) since it guarantees asymptotic convergence to the oracle MSE.

Below, we illustrate Theorem~\ref{theo:reg} in two important settings. In Supplement~\ref{subsec:further_further_examples}, we consider two more settings: the pure regression setting, and the group-linear estimation strategy of~\citet{weinstein2018grouplinear}, also termed CLOSE-Gauss by~\citet{chen2024empirical}.

\begin{exam}[Semiparametric isotonic SURE shrinkage.]
The semiparametric isotonic SURE shrinkage estimator of~\citet{xie2012sure} is defined as follows. Suppose $X_i=\sigma_i^2$ (i.e., we have no covariates beyond the variances). Then let
$\mathcal{L}^{\text{iso}} := \cb{\lambda(\cdot) \in [0,1] \text{ and isotonic in } \sigma^2},\,\, \mathcal{B} := \cb{0}.$
The above classes imply denoisers of the form $\EE[G]{\mu_i \mid Z_i=z, \sigma_i^2=\sigma^2} = (1-\lambda(\sigma^2))z$. The motivation of~\citet{xie2012sure} was as follows. Suppose $\mu_i \sim \mathrm{N}(0,\, A)$ and $\mu_i$ is independent of $\sigma_i^2$. Then, $\EE{\mu_i \mid Z_i, \sigma_i^2} = (1-\lambda(\sigma_i^2))Z_i$ with $\lambda(\sigma^2) = \sigma^2/(A+\sigma^2)$, and so $\lambda(\cdot) \in \mathcal{L}^{\text{iso}}$. Moreover, as explained in~\citet{xie2012sure}, the SURE-minimization in~\eqref{eq:sure_minim_semiparametric} can be solved using standard algorithms for isotonic regression.

We note that the local Gaussian complexity in~\eqref{eq:local_gaussian_complexity} of $\mathcal{L}^{\text{iso}}$ satisfies $\mathscr{G}_n(t; \mathcal{L}_{\oracle}) \lesssim (t/n)^{1/2}$.  The latter result follows, e.g., by entropy numbers for $\mathcal{L}_{\oracle}$ (\citealt{birman1967piecewise} and \Citealt[equation (2.5)]{geer2000empirical}) and chaining (e.g.,~\citet[Corollary 13.7]{wainwright2019highdimensional}). Thus, under the conditions of Theorem~\ref{theo:reg}, the same theorem (for fixed $\delta$) shows that $t_*^2 = \tilde{O}( n^{-2/3})$. By contrast, although~\citet{xie2012sure} do not provide rates for any of their results, carrying out their argument would yield a rate of $O_{\mathbb P}(n^{-1/2})$, which is also the same rate we would get by applying a uniform consistency argument as in Section~\ref{subsec:uniform_convergence}. By contrast, Theorem~\ref{theo:reg} shows that a fast rate is attained.

Our result also allows for a flexible structure on $\mathcal{B}$. For instance, suppose that instead of shrinking only toward $0$, we consider a broader class for $\mathcal{B}$ such as $\mathcal{B}:= \mathcal{B}^{\text{iso}} \equiv \mathcal{L}^{\text{iso}}$. Then if we optimize SURE over $\mathcal{L}^{\text{iso}} \times \mathcal{B}^{\text{iso}}$, Theorem~\ref{theo:reg} still yields $t_*^2 = \tilde{O} (n^{-2/3})$.
\end{exam}

\begin{exam}[Covariate-powered empirical Bayes estimation.]
\label{exam:EBCF}
\citet{ignatiadis2019covariatepowered}  consider the working model of this section and provide regret guarantees allowing for misspecification of both the prior model and the Gaussianity of $\xi_i$ (as we do in~\eqref{eq:subgaussian}). The proposed approach is called empirical Bayes with cross-fitting (EBCF) and proceeds as follows:
\begin{enumerate}[noitemsep]
    \item Partition $\cb{1,\dotsc,n}$ into $K$ (say, $K=5$) folds $I_1,\dotsc,I_K$.
    \item For the $\ell$-th fold $I_{\ell}$, fit a nonparametric regression (a supervised learning model) of $Z_i \sim X_i$ based on $i \in \cb{1,\dotsc,n}\setminus I_{\ell}$, and denote the learned model by $\hat{m}_{-I_{\ell}}(\cdot)$.
    \item For $i \in I_{\ell}$, estimate $\mu_i$ by $\hat{\mu}_i(\hat{A}_{I_{\ell}})$, where $\hat{\mu}_i(A) := \{A/(A+\sigma_i^2)\} Z_i + \{\sigma_i^2/(A+\sigma_i^2)\}\hat{m}_{-I_{\ell}}(X_i)$ and $A$ is estimated as $\hat{A}_{I_{\ell}}$ by minimizing $\SURE$ of $\hat{\mu}_i(A)$ over $i \in I_{\ell}$.
\end{enumerate}
Suppose the data generating mechanism is given by our working model with $X_i \in [0,1]^d$ for some $d \in \mathbb N$, $A(x)=c$ for all $x$ (for some $c > 0$), $\sigma_i^2=1$ for all $i$ (not included in $X_i$), and $m(\cdot) \in \mathcal{B}^{\text{Lip}} := \cb{b(\cdot) \in [-B,B] \mbox{ and $L$-Lipschitz in }  x},
$
for some $L,B>0$.
Then,~\citet{ignatiadis2019covariatepowered} show that the empirical Bayes regret is of order $O(n^{-2/(2+d)})$ and that this is minimax rate optimal for the regret among all possible estimators (with worst case taken over $m(\cdot) \in \mathcal{B}^{\text{Lip}}$).  Applying Theorem~\ref{theo:reg} with $\mathcal{B}^{\text{Lip}}$ and \smash{$\mathcal{L}^{\text{const}}=\cb{ \lambda(\cdot) \text { is constant}}$},  we get \smash{$t_*^2 = \tilde{O}(n^{-2/(2+d)})$}. In particular, according to Theorem~\ref{theo:reg}, the cross-fitting step of EBCF is not necessary; both $A$ and $m(\cdot)$ (equivalently, $\lambda$ and $b(\cdot)$) can be learned on the full dataset.\footnote{However, we note that the cross-fitting of EBCF has one important advantage. It ensures that if $|I_{\ell}| \geq 5$ for all folds, and if $Z_i \sim \mathrm{N}(\mu_i,1)$, then the EBCF estimator has a finite sample James-Stein~\citeyearpar{james1961estimation} property, that is, $n^{-1}\sum_{i=1}^n \EE[\boldmu]{ (\mu_i - \hat{\mu}_i)^2}\, < 1.$ 
}
\end{exam}

\section{Computational strategy for SURE-training}
\label{sec:computation}
In this section we describe our computational strategy for implementing SURE-PM and SURE-LS. Moreover, we present a third method, called SURE-THING (\underline{T}his \underline{H}elps \underline{I}n \underline{N}eural \underline{G}-modeling). SURE-THING incorporates a neural network that uses side-information and flexibly models the conditional distribution $G(\cdot \mid X_i)$.
In all cases, parameters of the model are optimized using standard gradient-based training with the SURE loss. We discuss the implementation used in our experiments in more detail in Supplement~\ref{subsec:imp_details}. 

\paragraph{SURE-PM.} SURE-PM from Section~\ref{sec:fast_rates} derives its name from our computational strategy. Namely, we parameterize any prior $G$ (which for SURE-PM has no dependence on the side-information) as $K$ discrete particles, \smash{$G = \sum_{j=1}^K \pi_j \delta_{u_j}$} with \smash{$u = (u_1,\ldots,u_K) \in \mathbb{R}^K$} and \smash{$\pi \in \Delta^{K-1}$}, where $\delta_{u_j}$ denotes the Dirac mass at $u_j$ and $\Delta^{K-1}$ denotes the probability simplex. 
In our implementation, we use a different parameterization for $u$ and $\pi$ in which all parameters are unconstrained (and so we can directly use gradient descent), and that avoids the label-switching problem for the grid locations. For the probabilities $\pi$, we introduce an unconstrained vector $\tilde{\pi} \in \mathbb{R}^K$ and let
$
\pi_j := \text{Softmax}(\tilde{\pi})_j := \exp(\tilde{\pi}_j)/\sum_{\ell} \exp(\tilde{\pi}_\ell)$ for $j=1,\dotsc,K.$
For the grid locations, we use a composition of transformations with parameters $\tilde{u} \in \mathbb{R}^{K-1}$, $s \in \mathbb{R}$, and $m \in \mathbb{R}$, namely
\smash{$
u_j := \left(\sum_{k=1}^{j-1} \text{Softmax}(\tilde{u})_k - \frac{1}{2}\right)e^s + m$} for $j=1,\dotsc,K$.
Here, $\text{Softmax}(\tilde{u})_j$ represents the relative distance between $u_j$ and $u_{j+1}$. 
The partial sum \smash{$\sum_{k=1}^{j-1} \text{Softmax}(\tilde{u})_k$} represents the relative position of the $j$-th 
grid point. The subtraction of $1/2$ positions the grid points so that a point occurring at relative position $1/2$ will be located at $m$, while $e^s$ controls the spread of the points around $m$.

\paragraph{SURE-LS.} For SURE-LS, we need to model the prior mean $m(x)$ and the prior variance $A(x)$. We model these with a feedforward neural network that takes as input $x \in \mathcal{X}$ and outputs the parameters $(m(x), \tilde{A}(x))$, and we equate the variance $A(x)$ to $\exp(\tilde{A}(x))$. (We note that the resulting optimization is not convex. If we instead parameterize $G$ as in~\eqref{eq:SURE_gauss} with $\mathcal{L}$ and $\mathcal{B}$ convex classes, then minimizing SURE is a convex optimization problem.)

\paragraph{SURE-THING.} We propose a third method to demonstrate the flexibility of the SURE-training framework. 
SURE-THING extends SURE-PM by allowing the atoms and weights to vary with covariates:
\smash{$
\mathcal{G}^{\text{THING}} := \{ G\,:\,  G(\cdot \mid x) = \sum_{j=1}^K \pi_j(x) \delta_{u_j(x)},\; \pi(x) \in \Delta^{K-1},\; u(x) \in \mathbb{R}^K\}.
$}
We parameterize our model by a feedforward neural network $\mathcal X \to \mathbb{R}^{2K}$ so that
for each $x \in \mathcal{X}$, the network takes as input the covariates and outputs parameters $(\tilde{\pi}(x), \tilde{u}(x), s(x))$ which are transformed to atoms and weights via the same mechanism as SURE-PM.

\section{Numerical results}
\label{sec:simulations}

This section evaluates the SURE-trained methods empirically, comparing them against existing methods.
We evaluate all methods (in all simulation setups) through the in-sample mean-squared error, which we define as follows. Suppose that the sample size in an experiment is $n \in \mathbb N$ and that we run $B$ Monte Carlo replicates of the experiment. For the $b$-th Monte Carlo replicate, we generate parameters \smash{$\mu_i^{(b)}$}, $i=1,\dotsc,n$ and observations \smash{$(X_i^{(b)}, Z_i^{(b)})$}, $i=1,\dotsc,n$. Each method takes \smash{$(X_i^{(b)}, Z_i^{(b)})$}, $i=1,\dotsc,n$, as input and returns estimates \smash{$\hat{\mu}^{(b)}_i$}, $i=1,\dotsc,n$. Our reported metric is
the estimated in-sample MSE, \smash{$
\widehat{\mathrm{MSE}} := \frac{1}{B} \sum_{b=1}^B \frac{1}{n} \sum_{i=1}^n (\mu_i^{(b)} - \hat{\mu}^{(b)}_i)^2.
$}

In Supplement~\ref{subsec:homosc_simulations} we conduct a simulation study in the normal means problem of~\eqref{eq:gaussian_EB} in which $\sigma_i^2=1$ for all $i$ and there is no side-information. These simulations corroborate the results of Theorem~\ref{theo:rate_homoscedastic} as SURE-PM nearly matches the risk of the oracle Bayes estimator.

\begin{figure}
\centering
\includegraphics[width=1\linewidth]{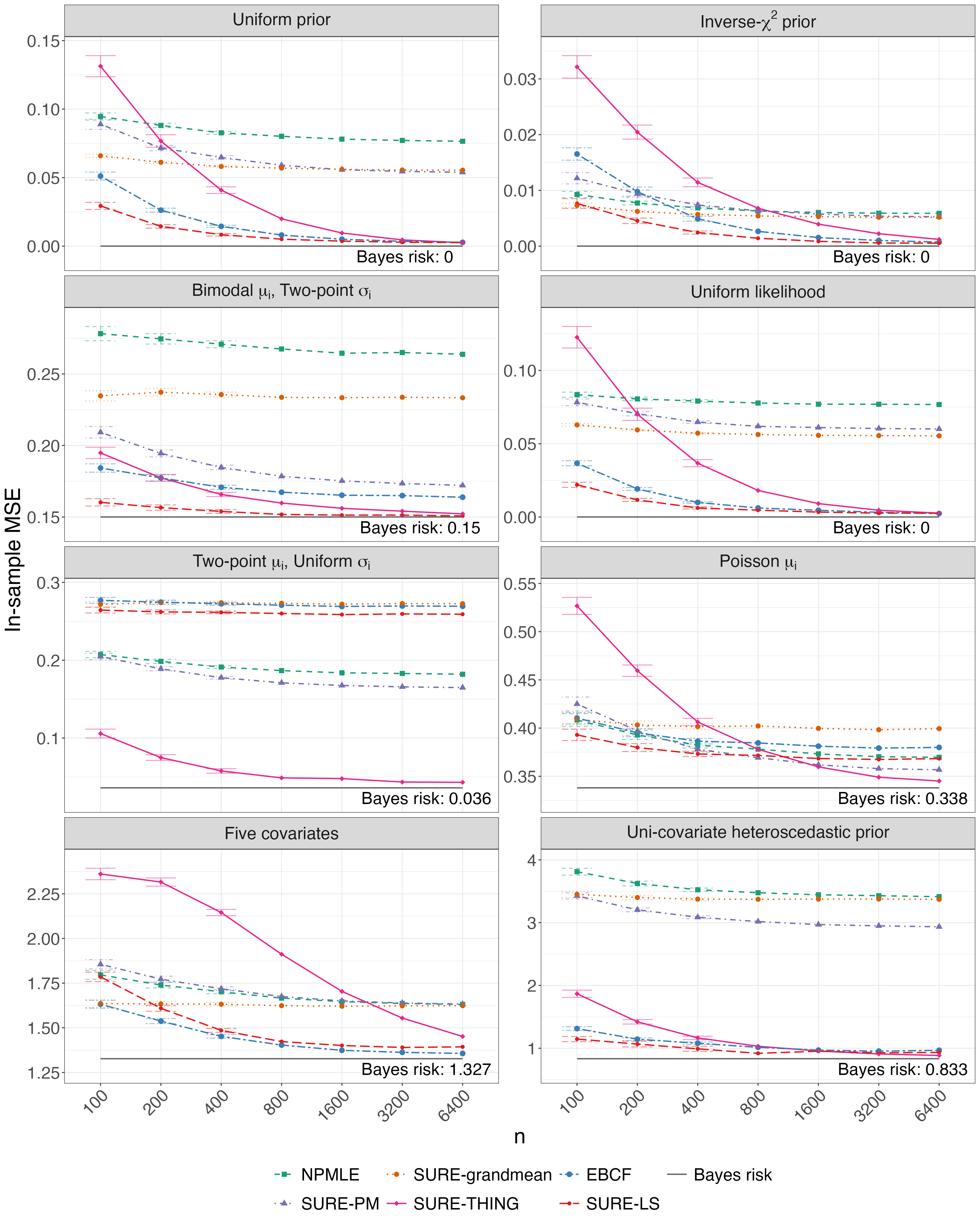}
\caption{Heteroscedastic simulations. Each panel corresponds to a different setting. We plot the in-sample MSE ($\pm$ 1 standard error) versus sample size $n$.}
\label{fig:xie_mse}
\end{figure}

We next conduct a simulation study with heteroscedasticity (varying $\sigma_i^2$) across problems and further side-information. We identify $\sigma_i^2$ with $X_i$ for the cases where we do not simulate further covariates. Part of our simulation study is inspired by the numerical study in~\citet[Simulation studies (c)-(f) in Fig. 1]{xie2012sure}. We consider eight settings:
\begin{itemize}[leftmargin=*,noitemsep]
\item uniform prior: $\;\;\; \sigma_i^2 \simiid \text{Unif}(0.1, 1), \;\; \mu_i = \sigma_i^2,\;\;\; Z_i  \simindep \mathrm{N} \bigl( \mu_i, \sigma_i^2 \bigr);$
\item inverse $\chi^2$ prior: $\sigma_i^2 \simiid \mathrm{Inv-}\chi^2_{10}, \;\; \mu_i = \sigma_i^2,\;\;\; Z_i \simindep \mathrm{N} \bigl( \mu_i, \sigma_i^2 \bigr);$
\item bimodal prior with two-point variance: 

$\quad \sigma_i^2 \simiid \frac{1}{2}(\delta_{0.1} + \delta_{0.5}), \;\; \mu_i \simindep \begin{cases}
\mathrm{N}(2, 0.1), & \text{if } \sigma_i^2 = 0.1 \\
\mathrm{N}(0, 0.5), & \text{if } \sigma_i^2 = 0.5
\end{cases},\;\;\; Z_i \simindep \mathrm{N}(\mu_i, \sigma_i^2);$

\item uniform likelihood: $\;\sigma_i^2 \simiid \text{Unif}(0.1, 1), \;\; \mu_i=\sigma_i^2,\;\;\; Z_i  \simindep \text{Unif}\left(\mu_i - \sqrt{3}\sigma_i , \mu_i + \sqrt{3} \sigma_i \right);$ 
\item two-point prior with uniform variance: 

$\quad \sigma_i^2 \simiid \text{Unif}(0.1, 0.5), \;\;\; \mu_i \simindep \frac{1}{2}(\delta_{\sigma_i^2} + \delta_{10\sigma_i^2}), \;\;\; Z_i \simindep \mathrm{N} \bigl( \mu_i, \sigma_i^2 \bigr);$

\item Poisson prior: $\;\;\; \sigma_i^2 \simiid \text{Unif}(0.1, 1), \;\; \mu_i \simindep \text{Poisson}(2\sigma_i^2),\;\;\; Z_i \simindep \mathrm{N} \bigl( \mu_i, \sigma_i^2 \bigr);$ 
\item prior with multiple covariates: $\quad \sigma_i^2 \simiid \text{Unif}(1.5, 2.5), \;\; X_i \simiid \text{Unif}(0,1)^5,$

$ \quad \mu_i \simindep \mathrm{N}\bigl(\pi X_{i,1}X_{i,2} + 20(X_{i,3}-0.5)^2 + 5 X_{i,4}, 4\bigr), \;\; Z_i \simindep \mathrm{N} \bigl( \mu_i, \sigma_i^2 \bigr);$

\item heteroscedastic prior with one covariate: 

$\quad X_i \simiid \text{Unif}(0, 1), \;\; \sigma_i^2 = 2X_i^2 + 5X_i + 1, \;\; \mu_i \simindep \mathrm{N}\bigl(2\sigma_i^2+0.5, 0.25\sigma_i^2\bigr), \;\; Z_i \simindep \mathrm{N} \bigl( \mu_i, \sigma_i^2 \bigr).$

\end{itemize} 
The first six settings are designed to capture mean/variance ($\mu_i/\sigma_i^2$) relationships. The uniform prior, inverse $\chi^2$ prior and uniform likelihood settings provide models of the strongest possible mean–variance dependence: $\mu_i = \sigma_i^2$ deterministically. The fourth one of these settings (uniform likelihood) is misspecified with respect to the general modeling assumption in~\eqref{eq:EB_heterosc_side_info} since the distribution of the noise is not Gaussian. Meanwhile, the bimodal prior model can be written as conditionally Gaussian with $\sigma_i^2$ that takes on only two values. (We discuss the bimodal prior model in more detail below.) The two-point prior provides a model where the marginal distribution becomes a mixture of two well-separated normal distributions and the sixth one considers a discrete prior which takes any non-negative integer value. In the last two settings, $\mu_i$ depends not only on $\sigma_i^2$, but also on other covariates. 

We vary the sample size $n \in \cb{2^k \cdot 100\,:\, k=0,\dotsc,6}$ in each setting ($n$ is a simulation parameter) and conduct $B=500$ Monte Carlo replicates of each setting. We compare seven estimators:
\begin{itemize}[leftmargin=*,noitemsep]
\item the Bayes estimator (which is equal to $\hat{\mu}_i = \sigma_i^2$ in the uniform prior, inverse $\chi^2$ prior and uniform likelihood settings, and so has MSE equal to $0$);
\item the NPMLE that models $\mu_i \indep \sigma_i^2$ (mean/variance independence) and $\mu_i \mid \sigma_i^2 \sim G$ (see~\citet{jiang2020general, soloff2024multivariate} for an analysis of this estimator when the assumption  $\mu_i \indep \sigma_i^2$ is correct) using the computational strategy in~\citet{koenker2014convex};
\item SURE-grandmean, proposed by~\citet{xie2012sure}, which proceeds as follows: let $\bar{Z} := n^{-1}\sum_{i=1}^n Z_i$ be the grand mean of all the observation $Z_i$ and consider the class of estimators $\hat{\mu}_i(A) := \{A/(A+\sigma_i^2)Z_i\} + \{\sigma_i^2/(A+\sigma_i^2)\}\bar{Z}$, then choose $\hat{A}$ by minimizing SURE over this class of estimators;
\item EBCF, the covariate-powered empirical Bayes estimation method proposed in~\citet{ignatiadis2019covariatepowered} (described in Example~\ref{exam:EBCF}), where we fit the nonparametric regression model $\hat{m}$ using a two-layer feedforward neural network with 8 neurons per hidden layer and \textrm{ReLU} activations for each fold, and squared error (i.e., the mean of $\{Y_i-\hat{m}(X_i)\}^2$) as the objective function; 
\item our proposed SURE-PM,  SURE-LS, and SURE-THING (as described in Section~\ref{sec:computation}).
\end{itemize}
The NPMLE, SURE-grandmean, and SURE-PM are misspecified in all of the settings: they all effectively operate on a class $\mathcal{G}$ of prior distributions that are not functions of $\sigma_i^2$, even though in the data generating process $\mu_i$ strongly depends on $\sigma_i^2$ and so $G_{\star} \notin \mathcal{G}$. By contrast, SURE-THING is well-specified in all settings except the uniform likelihood setting. SURE-LS is well-specified for the settings where the prior of $\mu_i$ (conditional on $X_i$) and the likelihood are Gaussian, while EBCF is well-specified when moreover $\Var{\mu_i \mid X_i}$ is constant.

The results of the simulation are shown in Figure~\ref{fig:xie_mse}. We summarize some key observations: SURE-THING outperforms the NPMLE, SURE-PM, and SURE-grandmean (as well as SURE-LS and EBCF for the fifth and sixth case) when the sample size is large enough (say, $n \geq 1600$), and for $n=6400$ almost matches the Bayes risk in all eight settings (including the setting with the misspecified uniform likelihood). However, its performance for small $n$ can be suboptimal, likely due to the challenge of fitting the neural network weights with few samples. SURE-THING outperforms all estimators (beyond, of course, the oracle Bayes rule) in the fifth case (two-point prior) for each value of $n$. SURE-PM is the second best estimator.

SURE-LS outperforms all estimators in all settings, except the two-point prior and the five covariate settings. 
This is expected, since SURE-LS is well-specified, while modeling the prior less flexibly than SURE-THING. EBCF also performs similarly to SURE-LS in homoscedastic prior cases, outperforming the latter when there are several covariates. When the prior is heteroscedastic, this does not perform as well as SURE-LS since it assumes a homoscedastic Gaussian prior during the estimation. 

Among the other methods, we note that SURE-PM outperforms the NPMLE across the board (with one exception for the inverse $\chi^2$ prior for lower values of $n$). This dominance over the NPMLE reflects a central point of our theoretical development: under misspecification, NPMLE optimizes the wrong objective for denoising tasks, while SURE-based methods directly target the mean squared error. Furthermore, SURE-grandmean outperforms NPMLE in some of the cases, while not performing too well in other cases. 

SURE-grandmean has the flattest curves in the sense that the performance gains kick in already for small sample sizes (as it uses SURE to tune a single parameter instead of training a nonparametric prior). SURE-grandmean and SURE-PM have comparable performance for the uniform prior and the inverse $\chi^2$ prior. For the bimodal (with two-point $\sigma_i$), two-point (with uniform $\sigma_i$), Poisson and heteroscedastic (with one covariate) priors, SURE-PM substantially outperforms SURE-grandmean. We will provide intuition for SURE-PM's strong performance in the bimodal setting below. 
By contrast, for the uniform likelihood, SURE-grandmean outperforms SURE-PM. This aligns with our theoretical development in Section~\ref{sec:regression}, particularly equation~\eqref{eq:subgaussian} that allows for non-Gaussian noise. SURE-grandmean remains effective in this setting as it only requires the correct structure of the first two moments, while SURE-PM's purely nonparametric approach has no theoretical guarantees when the noise distribution is misspecified: while $\SURE$ provides an unbiased estimate of MSE for Gaussian noise, this property does not extend to non-Gaussian settings, except for specific formulations such as SURE-LS.

\paragraph{Comparing SURE-PM vs NPMLE for the bimodal prior.}
We now zoom into one of the simulations with a bimodal prior to explicitly demonstrate in what way SURE behaves differently than the NPMLE under misspecification. We show results for a single Monte Carlo replicate with sample size n=6400. We recall that in this setting the variances $\sigma_i^2$ can take on one of two values: $0.1$ or $0.5$. Then, according to the value of $\sigma_i^2$, $\mu_i$ is drawn from a different normal distribution,
\begin{equation}
\text{(Low variance)}\; \mu_i \mid \sigma_i^2 = 0.1 \sim \mathrm{N}(2, 0.1),\;\; \text{(High variance)}\; \mu_i \mid \sigma_i^2 = 0.5 \sim \mathrm{N}(0, 0.5).
\label{eq:component_dbns}
\end{equation}
Although both SURE-PM and NPMLE have access to both $\sigma_i^2$ and $Z_i$, they are forced to always use the same distribution $G$ (because $\mathcal{G}$ is misspecified, as mentioned above), and are unable to use a different $G$ for low and high variance observations (as in the data generating process).

\begin{figure}
\centering
\includegraphics[width=0.8\linewidth]{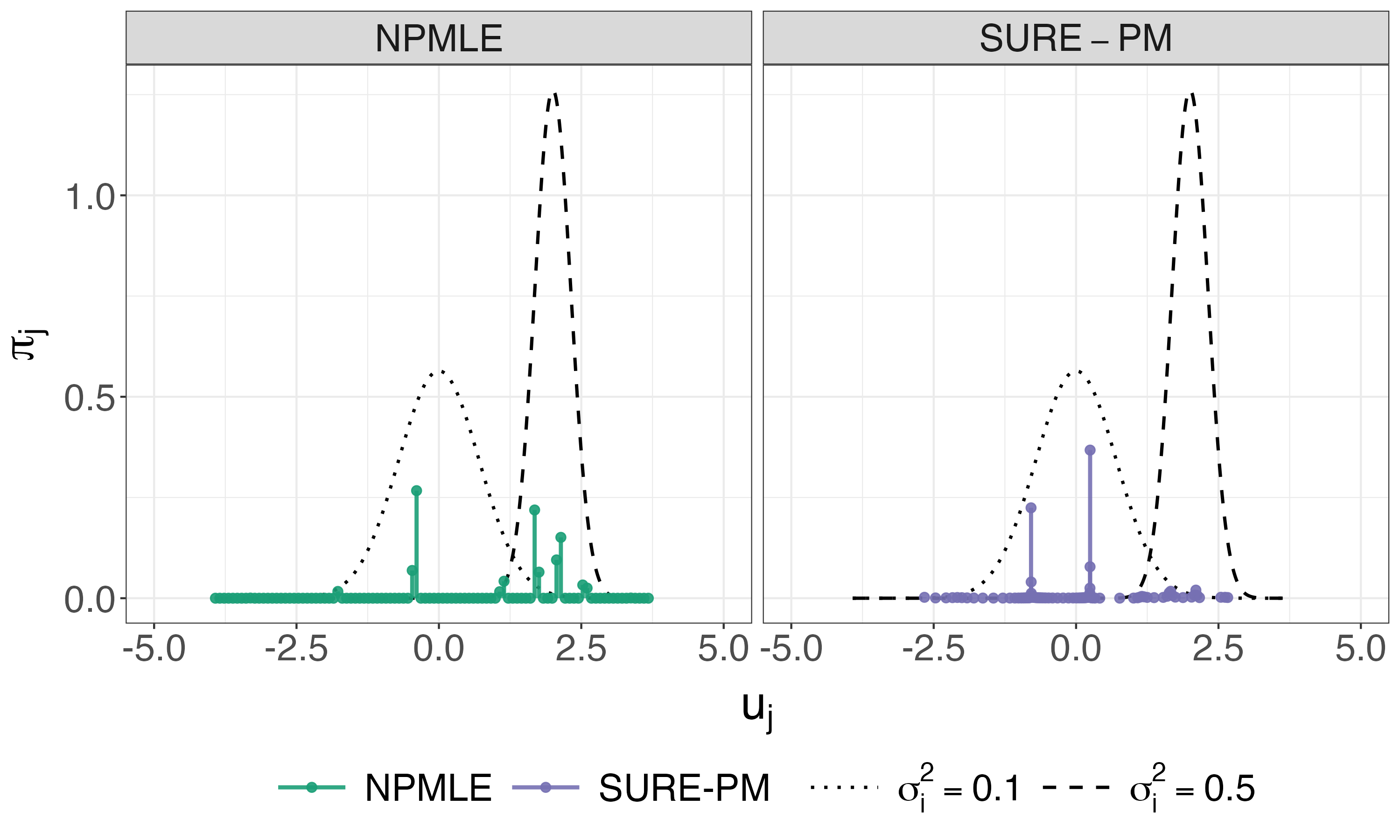}
\caption{Estimated priors \smash{$\sum_{j=1}^K \pi_j \delta_{u_j}$} from NPMLE and SURE-PM compared with the two true component distributions in~\eqref{eq:component_dbns} (low variance, high variance) for the bimodal prior setting.}
\label{fig:prior}
\end{figure}

\begin{figure}
\centering
\includegraphics[width=0.8\linewidth]{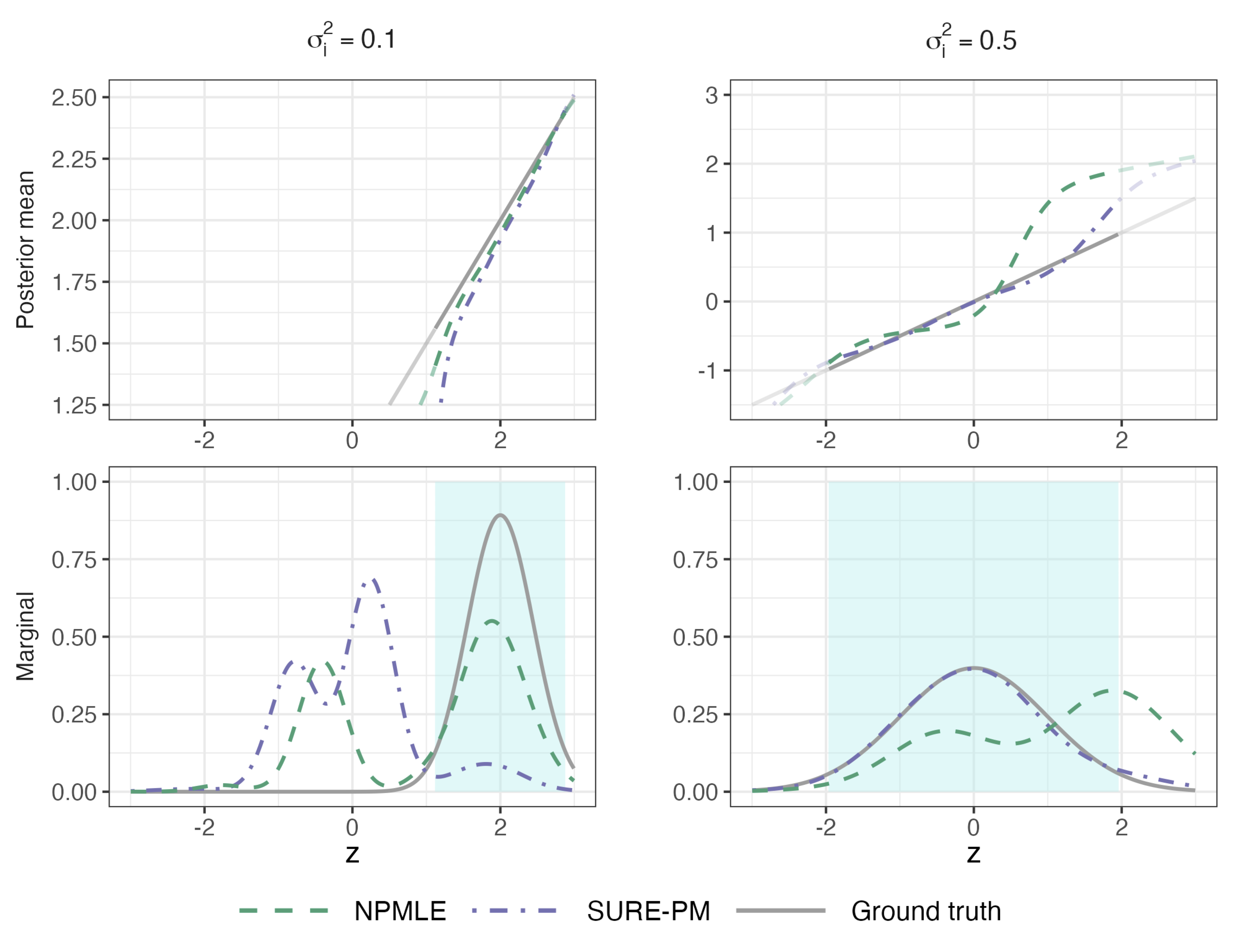}
\caption{Marginal densities (bottom row) and posterior mean functions (top row) for low variance observations ($\sigma_i^2=0.1$, left column) and high variance observations ($\sigma_i^2=0.5$, right column). The posterior means from SURE-PM better match the oracle for high variance observations while maintaining reasonable performance for low variance observations.}
\label{fig:marginal_and_posterior_mean}
\end{figure}

In Figure~\ref{fig:prior} we show the discrete priors estimated by the NPMLE, respectively SURE-PM, as well as the densities of the two normal prior components (corresponding to high and low variance). Qualitatively we observe the following difference: SURE-PM assigns almost no mass to large $\mu_i$ that fall in the support of the low variance component $\mathrm{N}(2, 0.1)$, while the NPMLE places substantial mass therein. How is it possible that the in-sample MSE of SURE-PM is so much smaller than of the NPMLE?

One explanation is provided by
Figure~\ref{fig:marginal_and_posterior_mean}.  We first focus on its first column which pertains to the low variance component ($\sigma_i^2 = 0.1$). The bottom row plots the marginal density of the low variance component \smash{$f_G(z \mid \sigma_i^2 = 0.1)$} for \smash{$G= \hat{G}^{\text{NPMLE}}$}, \smash{$G=\hat{G}^{\text{SURE}}$} and for \smash{$G=\mathrm{N}(2,0.1)$} (the true distribution of $\mu_i$ given $\sigma_i^2=0.1$). The ground truth marginal density is the $\mathrm{N}(2, 0.2)$ density, which results from adding the prior variance ($0.1$) and the noise variance ($0.1$). The blue box encloses the 2.5-97.5\% quantiles of this ground truth marginal density. Observe that SURE does not fit this marginal density at all, the mass it places is way too small, while the NPMLE does a much better job (also recall the fitted priors in Figure~\ref{fig:prior}). Meanwhile, the top plot shows the induced posterior means including the oracle posterior mean \smash{$\EE[G_{\star}]{\mu_i \mid Z_i, \sigma_i^2=0.5} = 0.5(Z_i + 2)$} and the posterior means based on the NPMLE and SURE-PM: despite the mismatch of the marginal density with SURE-PM, the implied posterior mean tracks along quite well with the true posterior mean and is only slightly worse than the posterior mean of the NPMLE. The reason  is that the implied posterior mean, via the Eddington/Tweedie formula in~\eqref{eq:general_tweedie} is a function of the score and does not depend on modeling the height of the density precisely, but just its shape.

For the high variance component (right column of Figure~\ref{fig:marginal_and_posterior_mean}), SURE-PM is doing a much better job of approximating the marginal density than the NPMLE (e.g., the NPMLE misplaces an additional mode around $2$). The consequence is that SURE-PM matches the posterior mean (top panel) really well, while the posterior mean of the NPMLE is biased upward and shrinks observations coming from the $\mathrm{N}(0, 0.5)$ component toward the $\mathrm{N}(2, 0.1)$ component. SURE-PM is able to avoid this by putting very little mass to the $\mathrm{N}(2, 0.1)$ component so that it can match the posterior mean at the high variance component, yet it does not sacrifice performance too much for denoising low variance observations.

This example illustrates why SURE-based methods can outperform NPMLE under misspecification: they optimize for denoising performance rather than density estimation accuracy (in Kullback-Leibler divergence), making targeted trade-offs that minimize overall mean squared error.

\section{Application to the Opportunity Atlas}
\label{sec:opportunity}
We turn to a real-world dataset to see how misspecification affects the performance of SURE-based estimators versus the NPMLE. Following the analysis in \cite{chen2024empirical}, we apply shrinkage methods to a measure of economic mobility from the Opportunity Atlas \citep{chetty2018opportunity}. For each Census tract $i$ (with $n=10,056$), the Opportunity Atlas contains a range of economic mobility estimates and standard errors, which we denote by  $Z_i$ and $\sigma_i$. Specifically, we analyze a measure of economic mobility that captures whether Black children from low-income families (25th income percentile) in tract $i$ will reach high incomes (top 20th percentile) as adults.

As noted by~\citet{chen2024empirical}, Census tracts with larger Black populations have both lower $\sigma_i^2$ (more observations for estimating the economic mobility measure) and lower $\mu_i$ (less economic mobility), which implies strong dependence between $\mu_i$ and $\sigma_i^2$. We apply NPMLE and SURE-PM, both of which incorrectly assume independence. Figure~\ref{fig:shrinkage_and_prior} shows that while NPMLE systematically underestimates $\mu_i$ for high-uncertainty tracts, SURE-PM performs better under this misspecification.
This can be understood by examining the different discrete priors learned by the methods, shown in the right panel of  Figure~\ref{fig:shrinkage_and_prior}. While nearly all the prior mass from the NPMLE is placed on $u_j$ less than 0.1, SURE-PM places most of its prior mass above 0.1. (This is similar to what occurred in the simulated example from Figure~\ref{fig:marginal_and_posterior_mean}.)

\begin{figure}
\includegraphics[width=\linewidth]{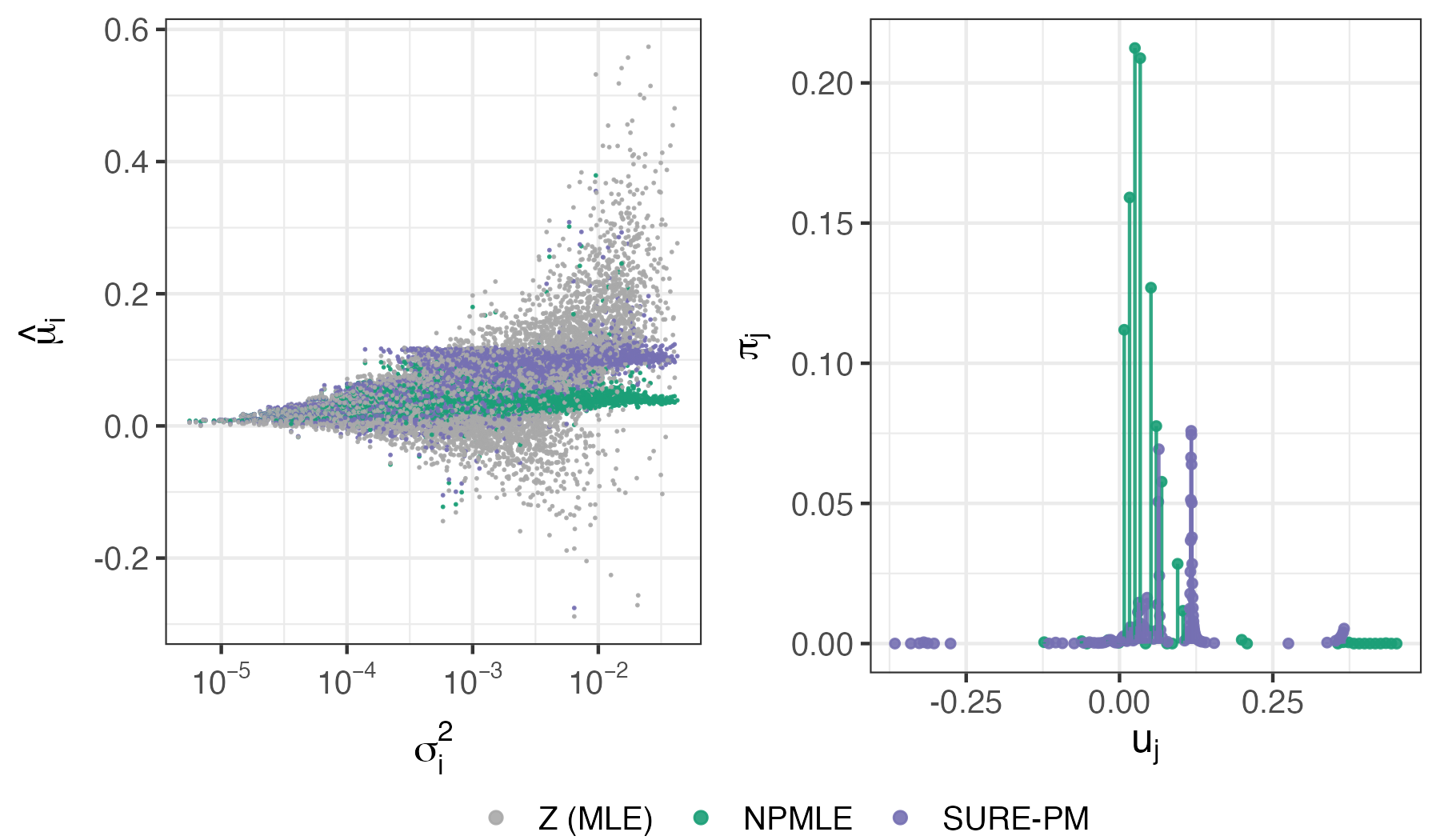}
\caption{Scatter plot (left)  of point estimates $\hat{\mu}_i$ (corresponding to the MLE ($Z_i$), the NPMLE, and SURE-PM) versus $\sigma_i^2$ and estimated priors (right) of NPMLE and SURE-PM applied to the Atlas dataset. Grid points $u_j$ associated with zero probability mass are excluded in the estimated prior plot.}
\label{fig:shrinkage_and_prior}
\end{figure}

We next turn to a quantitative comparison of SURE-PM, as well as SURE-LS and SURE-THING, versus the NPMLE.
Moreover, we consider two variants of SURE-LS/SURE-THING: first, we apply them using as only covariate the standard deviation $\sigma_i$. Second, we apply them using further tract-level side-information available from the Opportunity Atlas, e.g., poverty level percentage of college-educated individuals. We use the same nine covariates as~\citet[Online Appendix, Table 5.2]{chen2024empirical}.

\begin{table}
\caption{Evaluation via data fission: Normalized performance gains of SURE-trained methods as compared to NPMLE in the Opportunity Atlas.}
    \renewcommand{\arraystretch}{1.2} 
\begin{tabular*}{\linewidth}{@{\extracolsep{\fill}}*{4}{c}}
\hline
Covariates $X_i$ &  Estimator & \makecell{Normalized performance gains} &  Standard error \\
\hline 
\multirow{ 2}{*}{None} & NPMLE & 100.0\% & --\\
 & SURE-PM & 127.7\% & 0.6\% 
\\ 
\hline 

\multirow{ 2}{*}{$\sigma_i$} & SURE-LS & 153.0\% & 1.7\%\\
 & SURE-THING & 155.2\% & 1.2\% 
 \\ 
\hline 

\multirow{ 2}{*}{$\bigl( X_{i1}, \ldots, X_{i9}, \sigma_i \bigr)$} & SURE-LS & 175.4\% & 1.4\%
\\
& SURE-THING & 180.0\% & 1.6\% \\

\hline
\end{tabular*}
\label{tab:data_fission}
\end{table}

For the evaluation we use data fission to estimate the MSE under doubled noise variance~\citep{leiner2023data, oliveira2024unbiased}. 
For $B=25$ replicates and $b=1,\ldots,B$, we generate \smash{$\varepsilon_i^{(b)} \simiid \mathrm{N}(0, \sigma_i^2)$} and then let \smash{$\Zone = Z_i +  \varepsilon_i^{(b)}, \,\Ztwo = Z_i - \varepsilon_i^{(b)}$}.
This yields the two iid data copies with doubled variance,
from which we can use one dataset fold for training and another fold for evaluation. Table~\ref{tab:data_fission} shows the relative performance of these estimators, defined as the squared error improvement over the MLE (\smash{$\hat{\mu}_i^{(b)}= Z_i^{(1,b)}$}), normalized as a multiple of the improvement of the NPMLE over MLE (see Supplement~\ref{sec:data_fission_details} for details). We observe that SURE-PM substantially outperforms the NPMLE (even though both make the incorrect working assumption that $\mu_i \indep \sigma_i^2$). Moreover, by using side-information with SURE-LS or SURE-THING, we almost double the relative performance gains in MSE, as compared to an empirical Bayes analysis using the NPMLE.

\subsection*{Reproducibility}
All numerical results in this paper are fully third-party reproducible with code available on Github:
\url{https://github.com/sulagna-ghosh/score-matching-empirical-bayes}

\subsection*{Acknowledgments}
We would like to thank William Biscarri, Jiafeng Chen, Claire Donnat, Bodhisattva Sen and Jake Soloff for helpful discussions, and Aaron Schein for suggesting the acronym ``SURE-THING.'' Part of the computing for this project was conducted on UChicago's Data Science Institute cluster. N.I. gratefully acknowledges support from NSF (DMS 2443410).

\bibliographystyle{plainnat}
\bibliography{ebscore}

\appendix

\section{Remarks and discussions omitted in the main text}
\label{sec:omitted_remarks}

\subsection{Further bibliographic remarks on E-modeling}
\label{subsec:emodeling}

Beyond the Gaussian empirical Bayes problem,~\citet{james2022irrational} and~\citet{jana2023empirical} develop E-modeling approaches for the Beta, respectively Poisson empirical Bayes problems by using an unbiased risk estimate 
based on the Stein operator for the Beta, respectively Poisson distribution. The rates in~\citet{jana2023empirical}  are sharp.
Closely related procedures to E-modeling via SURE have appeared under the names ``regression without supervision''~\citep{raphan2006learning}, ``SURE2PLS'' (SURE-optimized parametric least squares)
~\citep{raphan2011least}, and ``SDA-SURE'' (stacked denoising autoencoder SURE)~\citep{soltanayev2018training}.
Other E-modeling approaches include those of \citet{ignatiadis2023empiricala} and~\citet{ignatiadis2024empirical} based on sample splitting, respectively data fission~\citep{leiner2023data}, as well as image denoising methods such as Noise2Self~\citep{batson2019noise2self} and Noise2Score~\citep{kim2021noisescore}.

\subsection{Fisher divergence and empirical Bayes regret}
\label{subsec:fisher_div_ebregret}

The Fisher divergence in~\eqref{eq:fisher_divergence} may be extended to the general setting of Section~\ref{sec:general_setting} by considering the expectation of the conditional Fisher divergence~\citep{arbel2018kernel},
\begin{equation}
\label{eq:fisher_divergence_conditional}
\Dfisher{f_{G_{\star}}}{f_G} := 
\EE[G_\star]{ \Dfisher{f_{G_{\star}}(\cdot \mid X)}{f_G(\cdot \mid X)}} = \EE[G_\star]{\abs{s_{G_\star}(W) - s_{G}(W)}^2}, 
\end{equation}
where $s_G(\cdot)$ is the conditional score in~\eqref{eq:conditional_score}.
Combining formula~\eqref{eq:regret_as_posterior_mean_mse}, and the Eddington/Tweedie formula in~\eqref{eq:general_tweedie} it follows that:
\begin{equation}
\label{eq:regret_as_fn_fisher_divergence}
\Regret(G_{\star}, G) = \EE[G_\star]{ \sigma^4 \Dfisher{f_{G_{\star}}(\cdot \mid X)}{f_G(\cdot \mid X)}}.
\end{equation}
When $\sigma_i^2=1$ for all $i$, the above simplifies as $\Regret(G_{\star}, G) \equiv \Dfisher{f_{G_{\star}}}{f_G}$ (recall the definition in~\eqref{eq:fisher_divergence_conditional}).

\subsection{Partial integration for SM and SURE}
\label{subsec:partial_integration}

The argument given by~\citet{cox1985penalty} and~\citet{hyvarinen2005estimation} for score matching proceeds by first writing $s_\star(W_i) = (\partial f_{\star}(Z_i \mid X_i)/\partial z) / f_{\star}(Z_i \mid X_i)$, where $f_{\star}$ is the (conditional) density of $Z_i \mid X_i$ in the data generating distribution, and arguing that,
$$
\begin{aligned}
\EE{s_\star(W_i)s(W_i) \mid X_i} &= \int \frac{ \partial }{\partial z} f_{\star}(z \mid X_i) s(z, X_i) dz \\ 
&= -\int \frac{ \partial }{\partial z}s(z, X_i) f_{\star}(z \mid X_i)dz = -\EE{\frac{\partial}{\partial z}s(Z_i, X_i) \mid X_i}.
\end{aligned}
$$
Going from the first to the second line, we use a partial integration argument, assuming sufficient regularity and that $s(z,X_i)f_{\star}(z, X_i)$ vanishes at $z = \pm \infty$. This argument does not rely in any way on the Gaussian noise structure in~\eqref{eq:EB_heterosc_side_info}. In our setting, the partial integrations for SURE and score matching are tightly tied together. Using the Eddington/Tweedie formula in~\eqref{eq:general_tweedie}, we have that
$$
\EE{(Z_i-\mu_i)s(W_i) \mid W_i} = \EE{Z_i - \mu_i  \mid W_i} s(W_i) = - \sigma_i^2 s_{\star}(W_i) s(W_i),
$$
and so we can recover the partial integration argument for score matching via Stein's lemma.

\subsection{Methods for the homoscedastic normal means problem}
\label{subsec:homosc_comparison}
The rate in Theorem~\ref{theo:rate_homoscedastic} for the empirical Bayes regret is minimax optimal up to logarithmic factors~\citep{li2005convergence,polyanskiy2021sharp}. Two methods were previously known to attain the lower bound (up to log factors).
\citet{li2005convergence} propose an F-modeling approach for which they prove an upper bound on the regret of \smash{$O( \log^8 n /n)$}. In a bit more detail, they estimate \smash{$f_{G_\star}(\cdot)$} and its derivative \smash{$\partial f_{G_{\star}}/\partial z$} by kernel density estimates (with a suitably tuned bandwidth and a higher order kernel), plug these estimates into the Eddington/Tweedie formula in~\eqref{eq:score_and_marginal}, and finally they truncate the estimated Bayes rule. 
Perhaps the most well-known method (and result), however, is due to the seminal paper of~\citet{jiang2009general}. They propose a G-modeling approach (as we do here) that estimates $G$ by the nonparametric maximum likelihood estimator~\citep[NPMLE]{robbins1950generalization, kiefer1956consistency},
$
\widehat{G}^{\text{NPMLE}} \in \argmax_G \cb{ \sum_{i=1}^n  \log\p{f_{G}(Z_i)}}.
$
When $G_{\star} \in \mathcal{P}(M)$,~\citet{jiang2009general} prove an upper bound of $O(\log^5 n / n)$ on the regret. 

Theorem~\ref{theo:rate_homoscedastic} establishes SURE-training as the third method in the literature that is provably minimax optimal for the empirical Bayes regret in the homoscedastic normal means problem with compactly supported prior. In this specific setting, we do not advocate replacing the NPMLE with our proposed SURE-trained method. For instance, under discretization, the NPMLE reduces to a convex programming problem~\citep{koenker2014convex} while the SURE objective remains non-convex. Nonetheless, our theoretical guarantees and empirical results (in Supplement~\ref{subsec:homosc_simulations}) demonstrate that SURE-training achieves comparable statistical and practical performance to the state-of-the-art method (the NPMLE) in this well-understood setting. We also refer to~\citet{ritov2024no} for further connections of the NPMLE and SURE in the homoscedastic normal means problem. 

\subsection{SURE differs from MLE}
\label{subsec:sure_vs_mle}
It is instructive to compare the $\SURE$ objective in~\eqref{eq:sure_regression} to the marginal (log) maximum likelihood (MLE) objective:
\begin{equation}
\ell_{\text{MLE}}(G) = \frac{1}{n} \sum_{i=1}^n \cb{\frac{(Z_i - m(X_i))^2}{\sigma_i^2+A(X_i)} + \log(\sigma_i^2 + A(X_i))}. \label{eq:mle_regression}
\end{equation}
In special cases, for instance when $\sigma_i^2$ is the same for all $i$ and when $A(X_i)$ is constant as a function of $X_i$, then the two losses have the same minimizer as they imply the same first-order optimality conditions. 
The strong denoising guarantees under misspecification that we derive below for the SURE-minimizer do not hold in general for the MLE. We discuss this more in the next Remark when $A_{\star}$ is fixed and known.

\begin{rema}[SURE vs MLE with known $A_{\star}$]\label{rema:sure-beats-mle}
To better illustrate the differences between \eqref{eq:sure_regression} and \eqref{eq:mle_regression}, consider the special case where $A_{\star}(X_i)$, equivalently $\lambda_{\star}(X_i)$, is known but the regression function is unknown and possibly misspecified. In this case, $m(X_i)$ is the only unknown so optimizing the SURE objective reduces to minimizing
\begin{equation}\label{eqn:sure-fixed-A}
\frac{1}{n} \sum_{i=1}^n  \sigma_i^4 \frac{(Z_i - m(X_i))^2}{(\sigma_i^2 +A_{\star}(X_i))^2} = \frac{1}{n} \sum_{i = 1}^n \left(\frac{\Var{Z_i \mid \mu_i}}{\Var{Z_i \mid X_i}}\right)^2 (Z_i - m(X_i))^2
\end{equation}
and the MLE objective reduces to minimizing
\begin{equation}\label{eqn:mle-fixed-A}
\frac{1}{n} \sum_{i=1}^n \frac{(Z_i - m(X_i))^2}{\sigma_i^2+A_{\star}(X_i)} = \frac{1}{n} \sum_{i = 1}^n \frac{1}{\Var{Z_i \mid X_i}} (Z_i - m(X_i))^2 
\end{equation}
so the objectives correspond to two different versions of weighted least squares.

As we will now elaborate, the weights in \eqref{eqn:sure-fixed-A}, i.e., for SURE, are much better behaved when the model is misspecified and our goal is to denoise the $Z_i$. Consider the behavior in a joint limit where both $\sigma_1 \to 0$ and $A_{\star}(X_1) \to 0$ with everything else fixed. For the MLE \eqref{eqn:mle-fixed-A}, the weight for $(X_1,Z_1)$ will go to infinity, so the model will be forced to choose $m$ minimizing $(Z_1 - m(X_1))^2$ above all else. 

Intuitively, this is undesirable for at least two reasons: (1) in a misspecified model the pair $(X_1,Z_1)$ could be some type of outlier which we (as well as the oracle) do not necessarily need to fit well, and (2) in the limit where $\sigma_1,A_{\star}(X_1) \to 0$ the trivial estimate $Z_1$ is already a very good estimate for $\mu_1$, so learning $m(X_1)$ is actually not important at all! More concretely, this means that the MLE is not attempting to target the actual denoising objective $\sum_i (\mu_i - m(X_i))^2$, and that no analogue of Theorem~\ref{theo:reg} is possible for the MLE under misspecification. In contrast, in the same limit $\sigma_1,A_{\star}(X_1) \to 0$ the weight of the pair $(X_1,Z_1)$ in the SURE objective \eqref{eqn:sure-fixed-A} will always stay bounded by $1$ (in fact, the weight of every point is at most $1$ due to the law of total variance). So SURE is well-behaved in the same limit, and as Theorem~\ref{theo:reg} shows, it will indeed be a consistent estimator for the oracle denoising function.
\end{rema}

\subsection{Further examples for Theorem~\ref{theo:reg}}
\label{subsec:further_further_examples}

\begin{exam}[Pure regression setting.]
Suppose we take $\mathcal{L}:=\cb{1}$, where our notation here identifies a constant function with its value. Then minimizing $\SURE(1,b)$ in~\eqref{eq:SURE_gauss} over $b \in \mathcal{B}$ for a class $\mathcal{B}$ is equivalent to minimizing the least squares objective $n^{-1} \sum_{i=1}^n  \cb{ Z_i - b(X_i)}^2$.
Theorem~\ref{theo:reg} provides a high-probability bound of the form,
$
\frac{1}{n}\sum_{i=1}^n \cb{\hat b(X_i) - b_\oracle(X_i)}^2 \le t_*^2,\;\; \text{ where}\;\;  b_{\oracle} \in \argmin_{b \in \mathcal B}\cb{ \frac{1}{n}\sum_{i=1}^n \cb{ \mu_i - b(X_i)}^2 },
$
and $Z_i=\mu_i + \xi_i$, $\EE{\xi_i}=0$.
Thus in this special case, Theorem~\ref{theo:reg} is essentially a standard oracle inequality for fixed-design regression, see e.g.,~\Citet{geer2000empirical}, stated in terms of local Gaussian complexity.
\end{exam}

\begin{exam}[Group-linear estimators and CLOSE-Gauss.] Here we also consider the setting wherein $X_i = \sigma_i^2$. In this setting,~\citet{weinstein2018grouplinear} explain 
why the best shrinkage rule of the form~\eqref{eq:linear_shrinkage} is conceptually desirable; for instance it allows for the shrinkage patterns to be different for units with different variances.~\citet{chen2024empirical} calls this model ``CLOSE-Gauss,'' where CLOSE is an acronym for ``\underline{c}onditional \underline{lo}cation-\underline{s}cale \underline{e}mpirical Bayes. \citet{weinstein2018grouplinear} propose the group-linear (GL) estimator $\grouplinear_i$ which has the following risk properties:
\begin{enumerate}[label=(\roman*.)]
\item $n^{-1}\sum_{i=1}^n \EE{ (\mu_i - \grouplinear_i)^2 \mid \sigma_i^2}\, <\, n^{-1}\sum_{i=1}^n \sigma_i^2$, that is, $\grouplinear_i$ dominates $\hat{\mu}_i=Z_i$;
\item $ \limsup_{n \to \infty} n^{-1}\sum_{i=1}^n \EE{ (\mu_i - \grouplinear_i)^2} \leq \inf_{\lambda, b}\cb{ \EE{ \{\mu_i - b(\sigma_i^2) - (1-\lambda(\sigma_i^2))Z_i\}^2}}$ if $\sigma_i^2 \mapsto \EEInline{\mu_i \mid \sigma_i^2}$ and $\sigma_i^2 \mapsto \VarInline{\mu_i \mid \sigma_i^2}$ are uniformly continuous (and further technical conditions);
\item $n^{-1}\sum_{i=1}^n \EE{ (\mu_i - \grouplinear_i)^2 \mid \sigma_i^2} = O(n^{-2/3})$ when $\VarInline{\mu_i \mid \sigma_i^2}=0$ and $\sigma_i^2 \mapsto \EE{\mu_i \mid \sigma_i^2}$ is $L$-Lipschitz.
\end{enumerate}
Our SURE-tuning approach forgoes exact dominance as in (i\.), with the upshot of upgrading regret results such as (ii\.) to be uniformly valid over a class of functions (rather than pointwise) and relaxing the assumption of Gaussian noise. Moreover, if we take
\begin{equation}
\mathcal{L}:=\cb{1},\, \mathcal{B} \equiv \mathcal{B}^{\text{Lip}} := \cb{b(\cdot) \in [-B,B] \mbox{ and $L$-Lipschitz in }  \sigma^2},
\label{eq:simple_Lipschitz}
\end{equation}
for some $L,B>0$,
then 
using results on the local Gaussian complexity of Lipschitz functions~\citep[Example 13.10]{wainwright2019highdimensional}, 
Theorem~\ref{theo:reg} yields the rate \smash{$t_*^2 = \tilde{O}( n^{-2/3})$}, which matches (up to logarithmic factors) the rate in (iii\.) above. The result continues to hold for more interesting choices of $\mathcal{L}$, for instance for $\mathcal{L} = \mathcal{B}^{\text{Lip}}$. By contrast to the results in~\citet{weinstein2018grouplinear}, Theorem~\ref{theo:reg} has the important advantage that it provides a a regret rate guarantee in case the conditions in (iii\.) do not hold.
\end{exam}

\section{Uniform convergence and Rademacher complexity}
\label{sec:uniform_convergence_rademacher}
The guarantees previewed in Section~\ref{subsec:uniform_convergence}
can be derived through population Rademacher complexity control. For any function class $\mathcal{H}$, we define this complexity as
\begin{equation}
\label{eq:population_rademacher}
\poprademacher(\mathcal{H}) := \EE{ \sup\cb{\abs{\frac{1}{n}\sum_{i=1}^n \varepsilon_i h(W_i)}\,:\, h \in \mathcal{H}}},
\end{equation}
where the $\varepsilon_i$ are iid Rademacher (that is, $\varepsilon_i = \pm 1$ with probability $1/2$) and independent of the $W_i=(Z_i,X_i)$. Note that above we are also taking an expectation over the $Z_i$ (but treat $X_i$ as fixed as in the rest of the paper).\footnote{
The overline in $\poprademacher$ signifies that the expectation is also taken with respect to the $Z_i$.
} We have the following initial result which we state in the well-specified setting wherein $G_{\star} \in \mathcal{G}$.

\begin{prop}[Uniform control by Rademacher complexity]
\label{prop:uniform_control_rademacher} 
Suppose that $G_{\star}  \in \mathcal{G}$.
\begin{itemize}
\item Let $\widehat{G} \in \argmin \cb{\SURE(G): G \in \mathcal{G}}$. Then:
$$
\begin{aligned}
\EE{\Regret(G_{\star}, \hG)} \leq 4 \poprademacher\!\!\p{\cb{h_G(\cdot)\,:\, h_G(w) = \sigma^4 \p{ \score_{G}(w)^2  + 2\frac{\partial}{\partial z}\score_{G}(w)}, \,\; G \in \mathcal{G}}}.
\end{aligned}
$$
\item  Let $\widehat{G} \in \argmin \cb{\mathrm{SM}(G): G \in \mathcal{G}}$. Then:
$$
\begin{aligned}
&\EE{\Dfisher{f_{G_{\star}}}{f_{\hG} }} \leq 4 \poprademacher\!\!\p{\cb{h_G(\cdot)\,:\, h_G(w) =   \score_{G}(w)^2  + 2\frac{\partial}{\partial z}\score_{G}(w), \,\; G \in \mathcal{G}}}.
\end{aligned}
$$
\end{itemize}
\end{prop}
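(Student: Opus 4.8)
This will be a textbook M-estimation argument, and the two bullets are established in parallel. Write $\Loss_n(G)$ for the empirical objective ($\SURE(G)$ in the first case, $\SM(G)$ in the second) and $\Loss(G)$ for its population target, namely $\Loss(G)=\EE[G_{\star}]{(\mu-\EE[G]{\mu\mid W})^2}$ in the SURE case and $\Loss(G)=\Dfisher{f_{G_{\star}}}{f_G}$ in the score-matching case, with $(X,\mu,Z)$ the fresh draw of the proposition. The first step records that, after subtracting a $G$-independent constant, $\Loss_n$ is an \emph{unbiased} estimator of $\Loss$: with $h_G(w)=\sigma^4\{s_G(w)^2+2\tfrac{\partial}{\partial z}s_G(w)\}$ in the SURE case and $h_G(w)=s_G(w)^2+2\tfrac{\partial}{\partial z}s_G(w)$ in the SM case, one has $\Loss_n(G)=c_n+\tfrac1n\sum_i h_G(W_i)$ and $\Loss(G)=c+\tfrac1n\sum_i\EE{h_G(W_i)}$ for constants $c_n,c$ not depending on $G$. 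This is exactly the content of the identity $\EE{\SURE(G)}=\MSE(G)$ (and its SM analogue) derived in this subsection, see \eqref{eq:SURE}, combined with the bookkeeping fact that the fresh-draw expectations in \eqref{eq:ebregret_defi_sideinfo}, \eqref{eq:regret_as_posterior_mean_mse} and \eqref{eq:fisher_divergence_conditional} equal the average over $i$ of the per-coordinate population risks (recall $X\sim\mathrm{Unif}(\{X_1,\dotsc,X_n\})$). In particular the constants cancel in the deviation: $\Loss_n(G)-\Loss(G)=\tfrac1n\sum_i\{h_G(W_i)-\EE{h_G(W_i)}\}$.

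Next comes the oracle comparison. Since $G_{\star}\in\mathcal{G}$, the quantity to be bounded is $\Loss(\hG)-\Loss(G_{\star})$ in both cases: indeed $\Loss(G_{\star})=0$ in the SM case, while in the SURE case $\Loss(G_{\star})$ is the Bayes risk, so $\Loss(\hG)-\Loss(G_{\star})=\Regret(G_{\star},\hG)$ by \eqref{eq:regret_as_posterior_mean_mse}/\eqref{eq:regret_as_fn_fisher_divergence}. Using that $\hG$ minimizes $\Loss_n$ over $\mathcal{G}\ni G_{\star}$, hence $\Loss_n(\hG)\le\Loss_n(G_{\star})$, a two-term split gives
\[
\Loss(\hG)-\Loss(G_{\star})\;\le\;\bigl(\Loss(\hG)-\Loss_n(\hG)\bigr)+\bigl(\Loss_n(G_{\star})-\Loss(G_{\star})\bigr)\;\le\;2\sup_{G\in\mathcal{G}}\bigl|\Loss_n(G)-\Loss(G)\bigr|.
\]
Taking expectations reduces everything to bounding $\EE{\sup_{G\in\mathcal{G}}\bigl|\tfrac1n\sum_i\{h_G(W_i)-\EE{h_G(W_i)}\}\bigr|}$.

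The final step is symmetrization. Introduce an independent ghost sample with $W_i'$ distributed as $W_i$ (the same fixed $X_i$, a fresh Gaussian draw $Z_i'$, independent across $i$); Jensen's inequality gives $\EE{\sup_G|\tfrac1n\sum_i\{h_G(W_i)-\EE{h_G(W_i)}\}|}\le\EE{\sup_G|\tfrac1n\sum_i\{h_G(W_i)-h_G(W_i')\}|}$, and since for each $i$ the summand $h_G(W_i)-h_G(W_i')$ is symmetric and the pairs $(W_i,W_i')$ are independent across $i$ (they need not be identically distributed in $i$, which is irrelevant), inserting Rademacher signs $\varepsilon_i$ leaves the joint law unchanged; a triangle inequality over the two halves then yields $2\poprademacher(\mathcal{H})$ with $\mathcal{H}=\{h_G:G\in\mathcal{G}\}$, matching the classes in the statement. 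Chaining the three displays gives $\EE{\Loss(\hG)-\Loss(G_{\star})}\le4\poprademacher(\mathcal{H})$, which is the claim in each bullet. (The constant can be sharpened to $2$ by keeping the two deviation terms separate, since $\EE{\Loss_n(G_{\star})-\Loss(G_{\star})}=0$; we keep the cruder bound for symmetry of presentation.)

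I do not anticipate a genuine obstacle here: the argument is entirely standard once the unbiasedness identities of the previous subsection are in hand. The only points requiring care are (i) the bookkeeping that the fresh-draw population risks coincide with the coordinatewise averages, so that those unbiasedness identities transfer verbatim, and (ii) the regularity conditions under which they were derived---absolute continuity of $z\mapsto s_G(z,x)$, integrability of $\tfrac{\partial}{\partial z}s_G$, and decay of $s_G(z,x)f_{G_{\star}}(z\mid x)$ as $z\to\pm\infty$---which must be assumed to hold for every $G\in\mathcal{G}$ (these are automatic for the prior classes used later in the paper). Existence and measurability of a minimizer $\hG$ is handled in the usual way, or by passing to near-minimizers.
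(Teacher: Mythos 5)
Your strategy is the same as the paper's: the basic-inequality / two-term split for M-estimation, followed by symmetrization. The SURE bullet goes through exactly as you describe. But the SM bullet has a real gap at the sentence ``In particular the constants cancel in the deviation: $\Loss_n(G)-\Loss(G)=\tfrac1n\sum_i\{h_G(W_i)-\EE{h_G(W_i)}\}$.'' For SURE this identity holds, since $\EE{\SURE(G)}=\MSE(G)$ exactly and so $c_n=c=\frac1n\sum_i\sigma_i^2$. But $\SM$ is \emph{not} an unbiased estimator of the Fisher divergence: one has $\EE{\SM(G)} = \Dfisher{f_{G_\star}}{f_G} - \EE{s_{G_\star}(W)^2}$, so $c_n=0$ while $c = \EE{s_{G_\star}(W)^2}>0$. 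Writing $X_G := \frac1n\sum_i\{h_G(W_i)-\EE{h_G(W_i)}\}$, the deviation is $\Loss_n(G)-\Loss(G)=X_G + (c_n-c)$ with $c_n-c = -\EE{s_{G_\star}(W)^2}\neq 0$; consequently $\sup_{G\in\mathcal{G}}|\Loss_n(G)-\Loss(G)|$ does not vanish (it is bounded below by $\EE{s_{G_\star}(W)^2} - \sup_{G}|X_G|$), and the claimed ``reduces everything to bounding $\EE{\sup_G|X_G|}$'' does not follow from the displayed chain of inequalities. As written, the SM route produces a vacuous bound.

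The fix is exactly the observation you relegate to the final parenthetical. Keep the two deviation terms separate rather than passing through $\sup_G|\Loss_n(G)-\Loss(G)|$; the offsets enter with opposite signs and cancel,
\[
\bigl(\Loss(\hG)-\Loss_n(\hG)\bigr)+\bigl(\Loss_n(G_{\star})-\Loss(G_{\star})\bigr)
= \bigl((c-c_n)-X_{\hG}\bigr)+\bigl((c_n-c)+X_{G_\star}\bigr)
= X_{G_\star}-X_{\hG}
\;\le\; 2\sup_{G\in\mathcal{G}}|X_G|,
\]
regardless of whether $c=c_n$. This is also how the paper's appendix proof proceeds: it never introduces $\Loss_n,\Loss$ as separate functions, but works directly with the centered process $X_G$ via the basic inequality $\frac1n\sum_i h_{\hG}(W_i)\le\frac1n\sum_i h_{G_\star}(W_i)$, so only the centered quantities $X_{\hG}$ and $X_{G_\star}$ ever appear and no constant needs to be tracked. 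With this correction your argument coincides with the paper's.
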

\begin{proof}[Proof sketch.]
For $G \in \mathcal{G}$, write $h_G(w) := \sigma^4 \p{ \score_{G}(w)^2  + 2\frac{\partial}{\partial z}\score_{G}(w)}$. By  symmetrization,
$$\EE[G_{\star}]{\sup_{G \in \mathcal{G}}\abs{\SURE(G) - \EE[G_{\star}]{ (\mu - \EE[G]{\mu \mid W})^2}}} \leq  2\poprademacher\p{\cb{h_G(\cdot)\,:\, G \in \mathcal{G}}}.$$
Using the fact that $\SURE(\widehat{G}) \leq \SURE(G_{\star})$ and the (above) uniform convergence of $\SURE$ to the true risk, we can conclude using standard arguments for M-estimators, e.g.,~\citet[Chapter 4.1]{wainwright2019highdimensional}. For completeness, we provide remaining proof details in Supplement~\ref{subsec:proof_prop_uniform_control_rademacher}. 
\end{proof} 
For score matching, a similar result to that of Proposition~\ref{prop:uniform_control_rademacher} is presented in~\citet[Theorem 1]{koehler2023statistical}.

We now turn to SURE-based training in misspecified settings. Unlike many alternative approaches (such as marginal maximum likelihood), SURE-minimization can provide strong guarantees on the risk even when the true prior $G_{\star}$ lies outside our class $\mathcal{G}$ of candidate priors. To make this point, in what follows we also treat $\boldmu$ in~\eqref{eq:EB_heterosc_side_info} as fixed, so that the only randomness we account for is given by $Z_i \sim \mathrm{N}(\mu_i, \sigma_i^2)$. 
In analogy to the Rademacher complexity in~\eqref{eq:population_rademacher}, we define the Rademacher complexity with fixed $\boldmu$ of a class $\mathcal{H}$ of functions that takes as input $(w,\mu)$:
\begin{equation}
\label{eq:population_rademacher_fixed}
\poprademacherfixedX(\mathcal{H}) := \EE[\boldmu]{ \sup\cb{\abs{\frac{1}{n}\sum_{i=1}^n \varepsilon_i h(W_i, \mu_i)}\,:\, h \in \mathcal{H}}}.
\end{equation}
The expectation above only integrates over randomness in $Z_i \sim \mathrm{N}(\mu_i, \sigma_i^2)$ and the Rademacher $\varepsilon_i$. Our next result establishes that the compound MSE of the SURE-tuned estimator nearly matches the risk of the oracle prior (which will be different from the data-generating prior in case $G_{\star} \notin \mathcal{G}$),
$$
G_{\oracle} \in \argmin\cb{\EE[\boldmu]{\frac{1}{n} \sum_{i=1}^n\cb{Z_i + \sigma_i^2s_G(W_i) - \mu_i}^2}\,:\, G \in \mathcal{G}}, 
$$
as long as the following centered class has low Rademacher complexity:
$$\mathcal{M}_{\oracle} := \cb{h_G(\cdot)\,:\, h_G(w,\mu) = - \sigma^2 \p{z-\mu}\p{s_{G_{\oracle}}(w)-s_G(w)} + \sigma^4 \frac{\partial}{\partial z}\cb{s_{G_{\oracle}}(w)-s_G(w)} , \, G \in \mathcal{G}}.
$$
\begin{prop}[Compound risk control]
\label{prop:uniform_control_rademacher_compound}
 Let $\widehat{G} \in \argmin \cb{\SURE(G): G \in \mathcal{G}}$. Then:
$$
\begin{aligned}
    \EE[\boldmu]{\frac{1}{n} \sum_{i=1}^n \cb{Z_i + \sigma_i^2 \score_{\widehat{G}}(W_i) - \mu_i}^2\,-\,\frac{1}{n} \sum_{i=1}^n\cb{Z_i + \sigma_i^2s_{G_{\oracle}}(W_i) - \mu_i}^2} \, \leq\,  4 \poprademacherfixedX(\mathcal{M}_{\oracle}). 
\end{aligned}
$$
\end{prop}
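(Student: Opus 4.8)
The plan is a standard M-estimation argument carried out in the fixed-$\boldmu$ regime, whose only substantive ingredient is an algebraic identification showing that the empirical process one must control is exactly the one indexed by $\mathcal{M}_{\oracle}$. Throughout, write $R_n(G) := \frac{1}{n}\sum_{i=1}^n \cb{Z_i + \sigma_i^2 s_G(W_i) - \mu_i}^2$ for the compound risk of the denoiser implied by $G$. The first step is to record the compound version of the unbiasedness of SURE: applying Stein's lemma coordinatewise (using $Z_i \mid \boldmu \sim \mathrm{N}(\mu_i, \sigma_i^2)$ and the assumed regularity of $z \mapsto s_G(z, X_i)$), exactly as in the SURE column of the display in Section~\ref{sec:general_setting} but now with $\boldmu$ held fixed, yields $\EE[\boldmu]{R_n(G)} = \EE[\boldmu]{\SURE(G)}$ for every fixed $G \in \mathcal{G}$, in particular for $G = G_{\oracle}$.

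Second, I would exploit the optimality of $\widehat G$. For $G \in \mathcal{G}$ set $D_n(G) := \cb{R_n(G) - R_n(G_{\oracle})} - \cb{\SURE(G) - \SURE(G_{\oracle})}$. Because $\widehat G$ minimizes $\SURE$ over $\mathcal{G}$ we have $\SURE(\widehat G) \le \SURE(G_{\oracle})$, hence $R_n(\widehat G) - R_n(G_{\oracle}) \le D_n(\widehat G) \le \sup_{G \in \mathcal{G}} D_n(G)$, so that $\EE[\boldmu]{R_n(\widehat G) - R_n(G_{\oracle})} \le \EE[\boldmu]{\sup_{G \in \mathcal{G}} D_n(G)}$. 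This step uses only that $G_{\oracle} \in \mathcal{G}$ is deterministic; that $G_{\oracle}$ is a minimizer of the population compound risk is what makes the left-hand side an excess-risk-over-best-in-class quantity.

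Third comes the algebra. Expanding the squares, $R_n(G) - R_n(G_{\oracle})$ equals $\frac{1}{n}\sum_i \sigma_i^4 \p{s_G(W_i)^2 - s_{G_{\oracle}}(W_i)^2}$ plus $\frac{2}{n}\sum_i \sigma_i^2 (Z_i - \mu_i)\p{s_G(W_i) - s_{G_{\oracle}}(W_i)}$, whereas $\SURE(G) - \SURE(G_{\oracle})$ equals that same quadratic sum plus $\frac{2}{n}\sum_i \sigma_i^4 \frac{\partial}{\partial z}\p{s_G(W_i) - s_{G_{\oracle}}(W_i)}$. The $\sigma_i^4\p{s_G^2 - s_{G_{\oracle}}^2}$ terms cancel in the difference, leaving $D_n(G) = \frac{2}{n}\sum_{i=1}^n h_G(W_i, \mu_i)$ with $h_G$ precisely the member of $\mathcal{M}_{\oracle}$ displayed just before the proposition. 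One more use of Stein's lemma---applied to $z \mapsto s_G(z, X_i) - s_{G_{\oracle}}(z, X_i)$---shows $\EE[\mu_i]{h_G(W_i,\mu_i)} = 0$ for every $i$ and every $G$, so $D_n$ is a centered empirical process.

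Finally I would symmetrize: since the summands $h_G(W_i, \mu_i)$ are independent across $i$ (the only randomness is the independent $Z_i$) and mean zero, the symmetrization inequality for independent but not necessarily identically distributed summands (the same tool as in the proof sketch of Proposition~\ref{prop:uniform_control_rademacher}) gives $\EE[\boldmu]{\sup_{G \in \mathcal{G}} \frac{1}{n}\sum_i h_G(W_i,\mu_i)} \le 2\,\poprademacherfixedX(\mathcal{M}_{\oracle})$, and combining this with the prefactor $2$ in $D_n(G) = \frac{2}{n}\sum_i h_G$ delivers the claimed bound $4\,\poprademacherfixedX(\mathcal{M}_{\oracle})$. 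The only delicate points---none of them deep---are the regularity conditions underpinning the two appeals to Stein's lemma (absolute continuity of the conditional scores, integrability of their $z$-derivatives, no boundary terms at $z = \pm\infty$), which we assume to hold uniformly over $\mathcal{G}$, together with measurability of the supremum. The genuine ``work'' is the bookkeeping in step three: one must difference against $G_{\oracle}$ \emph{before} taking the supremum, since the naive route through $\sup_G \abs{R_n(G) - \SURE(G)}$ would force control of a much larger (and possibly unbounded) class containing the $(Z_i-\mu_i)^2$ and $\sigma_i^2 s_G(W_i)^2$ terms rather than the tame centered class $\mathcal{M}_{\oracle}$.
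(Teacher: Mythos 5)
Your proof is correct and follows essentially the same route as the paper's: use $\SURE(\widehat G) \le \SURE(G_{\oracle})$, expand the squares so that the $\sigma_i^4 s_G^2$ terms cancel, recognize the remainder as $\frac{2}{n}\sum_i h_G(W_i,\mu_i)$ with $h_G \in \mathcal{M}_{\oracle}$, verify centering by Stein's lemma, and symmetrize. (Your opening observation that $\EE[\boldmu]{R_n(G)} = \EE[\boldmu]{\SURE(G)}$ is correct but not actually used anywhere downstream; the rest of the argument matches the paper's proof.)
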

In a broad class of problems, $\poprademacherfixedX(\mathcal{M}_{\oracle})$ will converge to $0$ as $n \to \infty$, and so our proposal has a guarantee on its frequentist MSE for denoising the $\mu_i$: even if $G_{\star} \notin \mathcal{G}$,  asymptotically we will perform at least as well as the denoiser associated to the best possible notional prior $G_{\oracle} \in \mathcal{G}$. A property as that of Proposition~\ref{prop:uniform_control_rademacher_compound} is not true for estimators $\widehat{G}$ based on another principle (e.g., marginal maximum likelihood) instead of SURE.

Taken together, Propositions~\ref{prop:uniform_control_rademacher} and~\ref{prop:uniform_control_rademacher_compound} demonstrate that SURE-tuning is a broadly applicable useful strategy for denoising in Gaussian sequence models.

\section{Proof elements}
\label{sec:proof_elements}
In this supplement, we provide a sketch of the key technical ideas underlying our proofs. Detailed proofs are postponed to later sections in the Supplement.

\subsection{Basic inequality in the well-specified setting}
\label{subsec:basic_ineq}

As is well known from nonparametric regression problems, uniform convergence arguments as in Propositions~\ref{prop:uniform_control_rademacher} and~\ref{prop:uniform_control_rademacher_compound}, often yield suboptimal convergence rates.  
 Our goal in Sections~\ref{sec:fast_rates} and~\ref{sec:regression} below will be to establish that in important cases, the difference between risks of SURE-trained empirical Bayes estimators and the oracle not merely converges to $0$ as $n \to \infty$ but does so at a fast rate. We develop an alternative approach based on basic inequalities and localization techniques to show these improved rates. We begin our development by establishing a basic inequality for the well-specified setting where $G_{\star} \in \mathcal{G}$. This serves as the foundation for our fast-rate analysis in Section~\ref{sec:fast_rates}; see Proposition~\ref{prop:regression-basic} in Section~\ref{sec:regression} for a related inequality under misspecification.

\begin{prop}[A well-specified basic inequality]
\label{prop:basic_inequality}
Suppose that $G_{\star} \in \mathcal{G}$ and that we estimate $\hG$ by minimizing $\SURE(G)$ over $\mathcal{G}$.  Writing $s_{\star}=s_{G_\star}$, then:
\begin{equation*}
\begin{aligned}
&\frac{1}{n}\sum_{i=1}^n  \sigma_i^4 \cb{\score_{\hG}(W_i)-\score_{\star}(W_i)}^2 \\ 
&\;\;\;\;\;\;\;\;\;\leq\;  \frac{2}{n}\sum_{i=1}^n \sigma_i^4 \sqb{ \score_{\star}(W_i)\cb{\score_{\star}(W_i)-\score_{\hG}(W_i)}  + \frac{\partial}{\partial z}\cb{\score_{\star}(W_i)-\score_{\hG}(W_i)}}.
\end{aligned}
\end{equation*}
\end{prop}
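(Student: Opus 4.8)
The plan is to start from the defining optimality inequality $\SURE(\hG) \le \SURE(G_\star)$, which holds because $\hG$ is a minimizer of $\SURE$ over $\mathcal G$ and $G_\star \in \mathcal G$. Writing out the definition in~\eqref{eq:SURE}, the common additive term $\tfrac1n\sum_i \sigma_i^2$ cancels, and after dividing by the (positive) $\sigma_i^4$ weights appearing term-by-term, this inequality becomes
\[
\frac{1}{n}\sum_{i=1}^n \sigma_i^4\cb{\score_{\hG}(W_i)^2 + 2\tfrac{\partial}{\partial z}\score_{\hG}(W_i)}
\;\le\;
\frac{1}{n}\sum_{i=1}^n \sigma_i^4\cb{\score_{\star}(W_i)^2 + 2\tfrac{\partial}{\partial z}\score_{\star}(W_i)}.
\]
So the first step is simply to record this rearranged basic inequality.

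The second step is the purely algebraic completion-of-the-square identity: for any two real numbers $a,b$ one has $a^2 - b^2 = (a-b)^2 + 2b(a-b)$, equivalently $a^2 = b^2 + (a-b)^2 + 2b(a-b)$. I would apply this pointwise with $a = \score_{\hG}(W_i)$ and $b = \score_{\star}(W_i)$, so that
\[
\score_{\hG}(W_i)^2 = \score_{\star}(W_i)^2 + \cb{\score_{\hG}(W_i)-\score_{\star}(W_i)}^2 + 2\score_{\star}(W_i)\cb{\score_{\hG}(W_i)-\score_{\star}(W_i)}.
\]
Substituting this into the left-hand side of the basic inequality, the term $\tfrac1n\sum_i\sigma_i^4\score_\star(W_i)^2$ cancels against the same term on the right, leaving (after moving things around) exactly the claimed bound: the $\tfrac1n\sum_i\sigma_i^4\{\score_{\hG}-\score_\star\}^2$ on one side, and on the other side the cross term $-\tfrac{2}{n}\sum_i\sigma_i^4\score_\star(W_i)\{\score_{\hG}(W_i)-\score_\star(W_i)\}$ together with the derivative difference $\tfrac{2}{n}\sum_i\sigma_i^4\tfrac{\partial}{\partial z}\{\score_\star(W_i)-\score_{\hG}(W_i)\}$. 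One just has to be careful with signs when collecting the cross term, which is why it appears as $\score_\star(\score_\star - \score_{\hG})$ in the statement.

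Frankly, there is no real obstacle here: the proposition is a deterministic (sample-path) rearrangement of the first-order optimality condition, not a probabilistic statement, so no concentration or complexity arguments enter. The only points requiring a word of care are (i) that the $\SURE$-minimizer $\hG$ is assumed to exist and attain the minimum over $\mathcal G$ — if one only has an approximate minimizer this introduces an additive slack that can be tracked trivially — and (ii) that all the quantities involved (in particular $\tfrac{\partial}{\partial z}\score_{\hG}(W_i)$ and $\tfrac{\partial}{\partial z}\score_\star(W_i)$) are finite, which is guaranteed by the standing smoothness assumptions on elements of $\mathcal G$ that make $\SURE(G)$ well-defined in the first place. I would state the result as a deterministic inequality valid for every realization of the data, since that is how it will be used as the starting point for the localization argument in Section~\ref{sec:fast_rates}.
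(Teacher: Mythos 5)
Your proof is correct and follows essentially the same route as the paper: start from the optimality inequality $\SURE(\hG)\le\SURE(G_\star)$, cancel the common $\tfrac1n\sum_i\sigma_i^2$ term, and rearrange algebraically (your completing-the-square identity $a^2-b^2=(a-b)^2+2b(a-b)$ is exactly the manipulation the paper performs). One minor wording slip: you cannot "divide by the $\sigma_i^4$ weights term-by-term" across a sum, and indeed your displayed inequality still carries the $\sigma_i^4$ factors, so that phrase should simply be dropped — the rest of the argument is fine as written.
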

In the next sections, we apply localization arguments to this basic inequality, following similar strategies employed in other M-estimation problems~\Citep{geer2000empirical, wainwright2019highdimensional}. Our arguments build conceptually on the following deterministic lemma.

\begin{lemm}[Deterministic inequality]
\label{lemm:deterministic_inequality}
Define the local complexity functional by
\begin{equation*}
\begin{aligned}
\widehat W(r) := &\sup\Bigg\{ \frac{2}{n}\sum_{i=1}^n \sigma_i^4 \sqb{s_{\star}(W_i)\cb{s_{\star}(W_i) - s_G(W_i)}  +\frac{\partial}{\partial z}\cb{\score_{\star}(W_i)-\score_{G}(W_i)}}: \\ 
&\;\;\;\;\;\;\;\;\;\;\;\;\;\;\;\;\;\;\;  G \in \mathcal{G}\,\, \text{ s.t. }\,\, \frac{1}{n}\sum_{i=1}^n\sigma_i^4 \cb{s_{\star}(W_i) - s_G(W_i)}^2 \leq r^2 \Bigg\}.
\end{aligned}
\end{equation*}
and define the corresponding greatest (post)fixed point\footnote{From the fact that $W(r)$ is monotonically increasing, one can directly show that $(\hat r)^2 = \widehat W(\hat r)$. So $\hat r$ is the greatest fixed point---this is a special case of the Knaster-Tarski theorem \citep{tarski1955lattice}.} by 
$\hat r := \sup \{ r \ge 0 \;:\; r^2 \le \widehat W(r) \}.$
Then it holds that
$\tfrac{1}{n}\sum_{i=1}^n\sigma_i^4 \cb{s_{\star}(W_i) - s_{\hG}(W_i)}^2 \le \hat r^2. 
$
\end{lemm}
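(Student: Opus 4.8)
The plan is to treat this as a purely deterministic bootstrapping (localization) step: feed the basic inequality of Proposition~\ref{prop:basic_inequality} into the definition of $\widehat W$, observe that $\hG$ is itself an admissible competitor in the supremum defining $\widehat W$ at the ``right'' radius, and then read off the conclusion from the definition of $\hat r$. Write $\widehat D_n^2 := \tfrac{1}{n} \sum_{i=1}^n \sigma_i^4\cb{s_{G_\star}(W_i) - s_{\hG}(W_i)}^2$ for the empirical squared distance between the score of the SURE-minimizer and the truth; the goal is $\widehat D_n^2 \le \hat r^2$, equivalently $\widehat D_n \le \hat r$ since both are nonnegative (and $\widehat D_n$ is a finite real number, the marginal of $\hG$ being a positive smooth Gaussian mixture evaluated at each $W_i$).

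\textbf{Main steps.} First, note that the right-hand side of the inequality in Proposition~\ref{prop:basic_inequality}, with $G = \hG$, is precisely the $G$-indexed expression whose supremum over feasible $G$ defines $\widehat W(r)$. Second, observe that $\hG \in \mathcal{G}$ and, by the very definition of $\widehat D_n$, satisfies $\tfrac{1}{n}\sum_i \sigma_i^4\cb{s_{G_\star}(W_i) - s_{\hG}(W_i)}^2 = \widehat D_n^2$, hence $\hG$ lies in the feasible set of the supremum defining $\widehat W(r)$ at radius $r = \widehat D_n$. Combining these two observations with Proposition~\ref{prop:basic_inequality} gives $\widehat D_n^2 \le \widehat W(\widehat D_n)$. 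Third, this inequality says precisely that $\widehat D_n$ is a member of the set $\cb{r \ge 0 : r^2 \le \widehat W(r)}$, so $\widehat D_n \le \sup\cb{r \ge 0 : r^2 \le \widehat W(r)} = \hat r$; squaring yields the claim.

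\textbf{Loose ends and the (non-)obstacle.} I would also record that $\cb{r \ge 0 : r^2 \le \widehat W(r)}$ is nonempty --- $G_\star \in \mathcal{G}$ makes $r = 0$ admissible with $\widehat W(0) \ge 0$ --- so $\hat r \in [0,\infty]$ is well-defined; if $\widehat W \equiv +\infty$ then $\hat r = +\infty$ and the bound is vacuous, and otherwise the argument above goes through verbatim. One may additionally note, as in the footnote, that $\widehat W(\cdot)$ is nondecreasing (enlarging $r$ only enlarges the constraint set), which upgrades $\hat r$ to the greatest (post)fixed point with $\hat r^2 = \widehat W(\hat r)$ in the sense of Knaster--Tarski, although the displayed bound requires only the one-sided statement. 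There is no substantive obstacle in this lemma itself: all the real work of the fast-rate analysis is deferred to bounding the localized complexity $\widehat W(r)$ --- and hence $\hat r$ --- via empirical-process and chaining arguments in the subsequent sections; the only care needed here is the harmless bookkeeping around suprema versus maxima and the well-definedness of $\hat r$, both handled above.
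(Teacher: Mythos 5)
Your proof is correct and follows essentially the same route as the paper's: instantiate Proposition~\ref{prop:basic_inequality} at $\hG$, note that $\hG$ is feasible in the supremum defining $\widehat W(r)$ at radius $r = \widehat D_n$ (so $\widehat D_n^2 \le \widehat W(\widehat D_n)$), and conclude $\widehat D_n \le \hat r$ from the definition of $\hat r$ as a supremum. The paper phrases this slightly differently by introducing an intermediate $r_0$ sandwiched between $\widehat D_n^2$ and the right-hand side of the basic inequality, but the natural choice $r_0 = \widehat D_n$ recovers exactly your argument, and the remarks you add on nonemptiness and possible infiniteness of $\hat r$ are sound.
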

To turn Lemma~\ref{lemm:deterministic_inequality} into sharp convergence guarantees, we require an implication of the following form:
\begin{align}
&\,\frac{1}{n}\sum_{i=1}^n\sigma_i^4 \cb{s_{\star}(W_i) - s_G(W_i)}^2 \text{ is ``small'' }    \label{eq:wrong_isometry} \\
 \Longrightarrow \;\;\;
& \abs{\frac{1}{n}\sum_{i=1}^n \sigma_i^4 \sqb{s_{\star}(W_i)\cb{s_{\star}(W_i) - s_G(W_i)}  +\frac{\partial}{\partial z}\cb{\score_{\star}(W_i)-\score_{G}(W_i)}}} \text{ is ``small.''} \nonumber
\end{align}
The major technical challenge in our analysis is that an implication of the form in~\eqref{eq:wrong_isometry} does not hold in general. For example, two functions being close in $L^2$ does not always imply that their derivatives are close---in general, one or both of the functions may not even be differentiable! The main thrust of our arguments in the following sections establishes that an implication of the form in~\eqref{eq:wrong_isometry} does hold in the two main settings we consider (in Sections~\ref{sec:fast_rates} and~\ref{sec:regression}). In each case, we provide a quantitative version of~\eqref{eq:wrong_isometry}, which enables us to prove that the SURE minimizer attains fast rates.

\subsection{Derivatives, log-Sobolev inequalities, and other proof elements }
\label{subsec:homosc_proof_elements}

As we discussed in Section~\ref{subsec:basic_ineq},
the most challenging part of our argument is to establish the approximate validity of an implication as in~\eqref{eq:wrong_isometry}. The challenge here is that $s_G(\cdot)$ depends on $\tfrac{\partial}{\partial z}f_G(\cdot)$, while $\tfrac{\partial}{\partial z} s_G(\cdot)$ which appears in SURE/SM also depends on the second derivative of $f_G(\cdot)$. The following result shows that in our setting, when the Fisher divergence between $f_{G_{\star}}$ and $f_G$ is small (for some $G \in \mathcal{P}(M)$), then the score derivatives are also close.

\begin{theo}
\label{theo:derivative_score}
Let $G, G_{\star} \in \mathcal{P}(M)$. Then, for any $\rho \in (0, 1/\sqrt{2\pi e^3})$, 
\begin{equation*}
    \begin{aligned}
        &\EE[G_{\star}]{ \cb{\frac{\partial}{\partial z} s_{G_\star}(Z) - \frac{\partial}{\partial z} s_G(Z)}^2 \ind\cb{f_{G_{\star}}(Z) \geq \rho, f_G(Z) \geq \rho}} \\ 
        &\qquad\qquad\qquad\qquad\; \lesssim_M \max\cb{\abs{\log\Dfisher{f_{G_\star}}{f_G}}^2, \abs{\log \rho}^4} \Dfisher{f_{G_\star}}{f_G}.
    \end{aligned}
\end{equation*}
\end{theo}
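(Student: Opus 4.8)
The plan is to reduce the estimate to a ``blown-up isometry'' for the difference of marginal densities $g := f_{G_\star} - f_G$, to establish that isometry from the analytic rigidity of Gaussian mixtures, and to reassemble using a log-Sobolev inequality to neutralise the $\{f\ge\rho\}$ truncation. Write $\psi := \log f_{G_\star} - \log f_G$, so $\psi' = s_{G_\star} - s_G$ and $\psi'' = \tfrac{\partial}{\partial z}s_{G_\star} - \tfrac{\partial}{\partial z}s_G$, and put $D := \Dfisher{f_{G_\star}}{f_G} = \EE[G_\star]{\psi'(Z)^2}$, $A := \cb{z : f_{G_\star}(z)\ge\rho,\ f_G(z)\ge\rho}$. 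We may assume $D \le c(M)$ for a small $c(M)$: otherwise the left-hand side is $O_M(1)$, because $\tfrac{\partial}{\partial z}s_G(z)+1 = \Var[G]{\mu\mid Z=z}\in[0,M^2]$ for every $G\in\mathcal{P}(M)$, while the right-hand side is at least $\abs{\log\rho}^4 D \ge \abs{\log(1/\sqrt{2\pi})}^4 c(M) > 0$ since $\rho<1/\sqrt{2\pi}$, so the claim holds for $C_M$ large.

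First I would carry out the reduction. Since $f_G(z)\le\varphi(\mathrm{dist}(z,[-M,M]))$, one has $A\subseteq[-R_\rho,R_\rho]$ with $R_\rho\lesssim_M\sqrt{\log(1/\rho)}$, and on $A$ the conditional moments of $\mu$ of order $\le 2$ under both priors are $O_M(\log(1/\rho))$. Using $\tfrac{\partial}{\partial z}s_{G'} = f_{G'}''/f_{G'} - s_{G'}^2$ write $\psi'' = \p{\tfrac{f_{G_\star}''}{f_{G_\star}} - \tfrac{f_G''}{f_G}} - (s_{G_\star}+s_G)\psi'$; the last term contributes at most $O_M(\log(1/\rho))\cdot D$ to $\EE[G_\star]{\psi''(Z)^2\ind\cb{Z\in A}}$, which is within the stated budget. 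Substituting $f_G = f_{G_\star}-g$ and $f_G\psi' = g' - s_{G_\star}g$ gives $f_G\psi'' = g'' - (s_{G_\star}+s_G)g' - g\p{\tfrac{\partial}{\partial z}s_{G_\star} - s_G s_{G_\star}}$, so it suffices to control the $f_{G_\star}/f_G^2$-weighted $L^2$-norms of $g$, $g'$ and $g''$ over $A$.

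The core is a quantitative ``differentiation is bounded'' estimate for Gaussian mixtures --- where Lemma~\ref{lemm:blown_isometries} and its induction enter. The signed measure $\nu := G_\star - G$ is supported on $[-M,M]$ with zero mass and total variation $\le 2$, and $g = \varphi*\nu$, so $\widehat g(\xi) = e^{-\xi^2/2}\widehat\nu(\xi)$ with $\abs{\widehat\nu(\xi)}\le 2$ on $\RR$; hence the frequency content of $g$ above $\abs\xi\asymp\sqrt{\log(1/\|g\|_{L^2})}$ is exponentially negligible, and a Plancherel argument with a cutoff there gives $\|g^{(k)}\|_{L^2}^2 \lesssim_k \p{\log(1/\|g\|_{L^2})}^k\|g\|_{L^2}^2$ for every $k$. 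This smallness propagates to $\tfrac{\partial}{\partial z}s_{G_\star}-\tfrac{\partial}{\partial z}s_G$ through the recursion $r_{k+1} = r_k' + \EE[G_\star]{\mu^k\mid Z=\cdot}\,r_1 + \EE[G]{\mu\mid Z=\cdot}\,r_k$ for the posterior-moment differences $r_k := \EE[G_\star]{\mu^k\mid Z=\cdot} - \EE[G]{\mu^k\mid Z=\cdot}$ (with $r_1 = \psi'$). Finally, to pass to the $f_{G_\star}$-weighted norms over $A$ --- where the crude bound $1/f_{G_\star}\lesssim_M 1/\rho$ is far too lossy --- I would invoke the dimension-free log-Sobolev inequality satisfied by $f_{G_\star}$ and $f_G$ (a Gaussian convolution of a measure on $[-M,M]$ has log-Sobolev constant $\le C(M)$); its Poincar\'e consequence, applied to the ratio $f_{G_\star}/f_G$ and bootstrapped, shows $f_{G_\star}/f_G$ is within a polylog factor of $1$ on the bulk of the $f_{G_\star}$-mass, converting the polynomial-in-$1/\rho$ losses from the weights into polylog ones. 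Combining with the blown-up isometry (applied twice, to reach $g''$, at cutoff $\abs\xi\asymp\sqrt{\log(1/D)}$) yields the bound with the $\max\cb{\abs{\log D}^2,\abs{\log\rho}^4}$ factor.

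The main obstacle is the core step: for a generic function ``small in $L^2$'' says nothing about the second derivative, which need not even exist, so the argument hinges on the rigidity of $\varphi*\nu$ (the Gaussian frequency decay $e^{-\xi^2/2}$), captured by Lemma~\ref{lemm:blown_isometries}. The second genuine difficulty is the reassembly bookkeeping: the truncation and the $1/f$ weights cost polynomial-in-$1/\rho$ factors a priori, and one must arrange that the log-Sobolev input cancels them exactly so that the final constant depends on $M$ alone; in particular $f_{G_\star}/f_G$ has to be controlled near the edge of $A$, where $D$ --- possibly far smaller than any power of $\rho$ --- is the only available smallness. A final nuisance is that $A$ may be disconnected (mixtures can be multimodal), forcing the truncation to be handled via global weighted inequalities rather than interval-by-interval integration by parts.
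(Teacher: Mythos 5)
Your decomposition of the target quantity is the same as the paper's: split $\partial_z s_{G_\star}-\partial_z s_G$ into $\bigl(f_{G_\star}''/f_{G_\star}-f_G''/f_G\bigr)$ and $-(s_{G_\star}+s_G)(s_{G_\star}-s_G)$, observe that the second piece is $O_M(\log(1/\rho))\cdot\Dfisher{f_{G_\star}}{f_G}$ on the truncated event, and reduce the first to controlling weighted $L^2$ norms of $g:=f_{G_\star}-f_G$ and its derivatives. Where you diverge from the paper is the core estimate on the second derivative, and that is precisely where your argument has a gap.

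Your Fourier step correctly gives the \emph{unweighted} Plancherel bound $\|g^{(k)}\|_{L^2(\text{Leb})}^2\lesssim_k(\log(1/\|g\|_{L^2}))^k\|g\|_{L^2}^2$ from $\widehat g=e^{-\xi^2/2}\widehat\nu$. But Theorem~\ref{theo:derivative_score} requires the \emph{weighted} quantities $\Delta_k^2(\rho)=\int(g^{(k)})^2/(f_{G_\star}\lor\rho+f_G\lor\rho)\,dz$ of Lemma~\ref{lemm:blown_isometries}, and passing from unweighted to weighted costs a factor $1/\rho$ --- exponentially larger than the advertised $\mathrm{polylog}(1/\rho)$. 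Concretely, the Fourier route would yield $\Delta_2^2(\rho)\lesssim\rho^{-1}(\log(1/D))^2D$ (writing $D=\Dfisher{f_{G_\star}}{f_G}$, using $\|g\|_{L^2}^2\lesssim\Dhel^2\lesssim_M D$), while the target is $\lesssim\max\{(\log D)^2,(\log\rho)^4\}D$. The patch you propose --- a Poincar\'e bound applied to $f_{G_\star}/f_G$ to argue the ratio is "within a polylog factor of $1$" --- does not close this gap: the Poincar\'e inequality for $f_G$ controls $\chi^2(f_{G_\star}\|f_G)=\int(f_{G_\star}/f_G-1)^2f_G$ by $\int(s_{G_\star}-s_G)^2f_{G_\star}^2/f_G$, which is not the Fisher divergence, and no $\chi^2$ (or KL/Hellinger) bound delivers a pointwise control on $f_{G_\star}/f_G$ near the boundary of the truncation set where both densities are $\asymp\rho$. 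The paper uses log-Sobolev only to establish $\Dhel^2(f_{G_\star},f_G)\lesssim_M\Dfisher{f_{G_\star}}{f_G}$ (Proposition~\ref{prop:KL_via_Fisher}); it never asserts pointwise control of the density ratio.

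What the paper actually does to avoid the $1/\rho$ loss is the part your plan omits: the Jiang--Zhang integration-by-parts recursion $\Delta_k^2\le\Delta_{k-1}\Delta_{k+1}+2\tilde L(\rho)\Delta_{k-1}\Delta_k$, which holds for the \emph{weighted} norms. In Lemma~\ref{lemm:blown_isometries} (Case 2), a Plancherel-type bound like yours is used, but only once and at level $k_0+1\asymp\log(1/\rho)$, where the crude $1/\rho$ weight transfer is incurred --- and then the geometric-mean interpolation $\Delta_2\le e\,[\Delta_{k_0+1}\Delta_1^{k_0-1}]^{1/k_0}$ takes a $k_0$-th root of that $1/\rho$, amortizing it to the bounded factor $(2\pi\rho^2)^{-1/(4k_0)}\le\sqrt e$. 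Your intuition that the rigidity of $\varphi*\nu$ is what makes second-derivative control possible is exactly right, and your unweighted Fourier lemma is in fact part of the paper's machinery; but without the weighted integration-by-parts recursion to interpolate back down from $k\asymp\log(1/\rho)$ to $k=2$, the estimate you obtain is off by an exponential factor in $\log(1/\rho)$, and the argument does not prove the theorem.
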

Theorem~\ref{theo:derivative_score} controls the score derivatives when the marginal densities are not too small, that is, when they are lower bounded by $\rho>0$. An important feature of the result is that the right-hand side only depends on $\abs{\log \rho}^4$. This will allow us to take $\rho$ to be polynomial in $n^{-1}$.

Proving Theorem~\ref{theo:derivative_score} requires  
powerful machinery. To state this machinery and its implications, it will be convenient to consider in addition to the Fisher divergence $\Dfisher{f_{G_{\star}}}{f_G}$ between $f_{G_{\star}}$ and $f_G$, defined in~\eqref{eq:fisher_divergence}, their Kullback-Leibler divergence as well as their squared Hellinger distance:
\begin{equation*}
\DKL{f_{G_\star}}{f_G} := \int \log\p{ \frac{f_{G_\star}(z)}{f_G(z)}} f_{G_\star}(z) dz ,\,\;\; \Dhel^2(f_{G_{\star}}, f_G) := \int \p{ \sqrt{f_{G_\star}(z)}-\sqrt{f_G(z)}}^2dz.
\label{eq:hellinger}
\end{equation*}
Our first result uses and extends the breakthrough induction argument of~\citet[Lemma 1]{jiang2009general} to  control second derivatives.\footnote{Related inequalities for higher order derivatives were derived by~\citet[arXiv v4, SM10.2; removed in later versions]{chen2024empirical} with the different goal of controlling regret in estimating higher moments of $\theta_i$.
} We write $a \lor b := \max\cb{a,b}$ for $a, b \geq 0$.
\begin{lemm}
\label{lemm:blown_isometries}
For any $\rho \in (0, 1/\sqrt{2\pi e^3})$, define 
$$\Delta_1^2(\rho) := \int \frac{\cb{\tfrac{\partial}{\partial z} f_{G_\star}(z) - \tfrac{\partial}{\partial z}f_G(z)}^2}{f_{G_\star}(z)\lor \rho + f_G(z) \lor \rho} dz,\,\,\,\,\, \Delta_2^2(\rho) := \int \frac{\big\{\tfrac{\partial^2}{\partial z^2}f_{G_\star}(z) - \tfrac{\partial^2}{\partial z^2}f_G(z)\big\}^2}{f_{G_\star}(z)\lor \rho + f_G(z) \lor \rho} dz.$$
The following inequalities then hold:
\begin{enumerate}
\item $\;\;\Delta_1^2(\rho) \; \lesssim \;\Dfisher{f_{G_\star}}{f_G} + \abs{\log \rho} \Dhel^2(f_{G_\star}, f_{G}).$ 
\item $\;\; \Delta_2^2(\rho) \; \lesssim \; \max\cb{a^4 \Dhel^2(f_{G_\star}, f_G), \abs{\log \rho}^3 \p{\Dfisher{f_{G_\star}}{f_G} + \abs{\log \rho}\Dhel^2(f_{G_\star}, f_{G})}} $, 

where $a^2 := \max\{\abs{\log \rho}+1, \abs{\log\Dhel^2(f_{G_\star}, f_G)}\}$. 
\item $\;\;\displaystyle \int \p{\frac{\tfrac{\partial^2}{\partial z^2}f_{G_\star}(z)}{f_{G_\star}(z) \lor \rho} - \frac{\tfrac{\partial^2}{\partial z^2}f_{G}(z)}{f_{G}(z) \lor \rho}}^2  f_{G_\star}(z) dz \; \lesssim \; \Delta_2^2(\rho) + \abs{\log \rho}^2 \Dhel^2(f_{G_\star}, f_{G}).$
\end{enumerate}
\end{lemm}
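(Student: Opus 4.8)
The plan is to treat all three bounds as statements about how closeness of the Gaussian mollifications $f_{G_\star},f_G$ of priors $G_\star,G\in\mathcal P(M)$ propagates ``up the derivative ladder'', using three ingredients valid uniformly over $\mathcal P(M)$: (i) the Tweedie identities $f_G' = f_G s_G$ and $f_G'' = f_G(s_G^2 + s_G')$, where $s_G := f_G'/f_G$; (ii) the linear growth bound $|s_G(z)|\le M+|z|$, immediate from the fact that $z + s_G(z)$ equals the posterior mean $\mathbb{E}_G[\mu\mid Z=z]\in[-M,M]$; and (iii) the pointwise Gaussian tail bound $|f_G^{(k)}(z)|\lesssim_{M,k}(1+|z|)^k e^{-(|z|-M)^2/2}$ for $|z|>M$, from the Hermite representation $f_G^{(k)} = \varphi^{(k)}(\cdot\,;1)\ast G$. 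The $\lor\rho$ floors and the $|\log\rho|$ factors then enter through a single mechanism: truncation at radius $T(\rho)\asymp\sqrt{|\log\rho|}$, beyond which every density in $\mathcal P(M)$ falls below $\rho$, so that inside the ball $\{|z|\le T(\rho)\}$ each linear-in-$z$ weight costs $\lesssim_M|\log\rho|$, while outside the ball Gaussian decay makes the contribution polynomially small in $\rho$.

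For the first inequality --- bounding $\Delta_1^2(\rho)$, essentially the first-derivative estimate of \citet{jiang2009general} with a floor inserted --- I would write $f_{G_\star}' - f_G' = f_{G_\star}(s_{G_\star}-s_G) + (f_{G_\star}-f_G)s_G$. Dividing by $f_{G_\star}\lor\rho + f_G\lor\rho\ge f_{G_\star}$ and integrating, the first piece contributes $\le 2\Dfisher{f_{G_\star}}{f_G}$. For the second, $(f_{G_\star}-f_G)^2\le 2(f_{G_\star}+f_G)(\sqrt{f_{G_\star}}-\sqrt{f_G})^2$ lets the denominator cancel $f_{G_\star}+f_G$, leaving $\lesssim\int s_G(z)^2(\sqrt{f_{G_\star}}-\sqrt{f_G})^2 dz$; bounding $s_G(z)^2\lesssim_M 1+z^2$ and splitting at $T(\rho)$ yields $\lesssim_M|\log\rho|\Dhel^2(f_{G_\star},f_G)$ plus a tail that is polynomially small in $\rho$ (hence absorbable once $\rho$ is chosen small enough, as it will be in the applications).

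The second inequality is the crux. The analogous one-order-up decomposition $f_{G_\star}''-f_G'' = f_{G_\star}(q_{G_\star}-q_G) + (f_{G_\star}-f_G)q_G$ with $q_G := f_G''/f_G$ does \emph{not} close, since its leading term reproduces an order-two Fisher-type quantity of the same strength as the left-hand side. The remedy is to gain a derivative by an extra mollification: writing $\varphi(\cdot\,;1) = \varphi(\cdot\,;\tfrac12)\ast\varphi(\cdot\,;\tfrac12)$ and $g_G := \varphi(\cdot\,;\tfrac12)\ast G$, one has $f_G'' = \varphi'(\cdot\,;\tfrac12)\ast g_G'$. Since $g_G$ is again the mollification of a measure on $[-M,M]$ (now with $\mathrm N(0,\tfrac12)$ noise), it inherits a linear score bound, so $g_{G_\star}'-g_G'$ is controlled by the argument of the first part run at the half-scale; convolving with the fixed $L^1\cap L^2$ kernel $\varphi'(\cdot\,;\tfrac12)$ transfers this to $f_{G_\star}''-f_G''$, the only loss being that passing from a $g_G$-floor to an $f_G$-floor in the denominator costs an $e^{O(|z|)}$ factor, absorbable into the Gaussian tails. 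Carrying the bookkeeping through gives the stated bound; the $\max$ of two terms reflects two competing regimes --- a Hellinger-dominated one carrying the prefactor $a^4$ (the extra powers of the linear-in-$z$ weight accumulated over two derivative steps, truncated at $\sqrt{|\log\rho|}$ or $\sqrt{|\log\Dhel^2(f_{G_\star},f_G)|}$, whichever is larger) and a Fisher-dominated one carrying $|\log\rho|^3$. Making this two-scale argument quantitative, while tracking the degradation of the floor and of the polynomial weights through the extra derivative, is the main obstacle and is exactly what forces the more delicate statement (and the $a$-dependent prefactor) relative to the first part.

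For the third inequality I would decompose
\begin{equation*}
\frac{f_{G_\star}''}{f_{G_\star}\lor\rho} - \frac{f_G''}{f_G\lor\rho}
= \frac{f_{G_\star}'' - f_G''}{f_{G_\star}\lor\rho}
+ \frac{f_G''\big\{(f_G\lor\rho)-(f_{G_\star}\lor\rho)\big\}}{(f_{G_\star}\lor\rho)(f_G\lor\rho)},
\end{equation*}
multiply by $f_{G_\star}$ and use $f_{G_\star}/(f_{G_\star}\lor\rho)\le 1$. The first term contributes $\lesssim\Delta_2^2(\rho)$ once the region where $f_{G_\star}\ll f_G$ is split off using $|f_{G_\star}''-f_G''|\lesssim_M(1+z^2)e^{-(|z|-M)^2/2}$ (there $f_{G_\star}$ is tiny, so the contribution is polynomially small in $\rho$). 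For the second term, $|(f_G\lor\rho)-(f_{G_\star}\lor\rho)|\le|f_G-f_{G_\star}|\le(\sqrt{f_{G_\star}}+\sqrt{f_G})\,|\sqrt{f_{G_\star}}-\sqrt{f_G}|$ combined with $|f_G''|\lesssim_M(1+z^2)e^{-(|z|-M)^2/2}$ and $f_{G_\star}/\{(f_{G_\star}\lor\rho)(f_G\lor\rho)\}\le\min\{\rho^{-1},f_G^{-1}\}$ reduces matters to a $z^2$-weighted Hellinger integral; truncating at $T(\rho)$ yields the $|\log\rho|^2\Dhel^2(f_{G_\star},f_G)$ term. Collecting the pieces and absorbing the $\rho$-polynomially-small tails into the displayed Fisher and Hellinger terms gives the stated bounds.
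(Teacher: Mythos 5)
Your Part~1 argument is essentially the paper's: write $f'_{G_\star}-f'_G = f_{G_\star}(s_{G_\star}-s_G)+(f_{G_\star}-f_G)s_G$, divide by the floored denominator, keep the first piece as $\lesssim\Dfisher{f_{G_\star}}{f_G}$, and control the second via the regularized-score bound ($\lesssim\tilde L^2(\rho)\asymp|\log\rho|$) together with $(a-b)^2\le 2(a+b)(\sqrt a-\sqrt b)^2$. Your Part~3 is also in the right spirit, though the paper's single algebraic identity $1/(f\lor\rho)-2w_\star = (f_\star\lor\rho - f\lor\rho)w_\star/(f\lor\rho)$ with $w_\star := 1/(f\lor\rho+f_\star\lor\rho)$ symmetrizes the denominator in one step, sparing you the case split over $f_{G_\star}\ll f_G$ that your decomposition requires.

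Your Part~2 route, however, has a genuine gap. Writing $f_G = \varphi(\cdot;\tfrac12)\ast g_G$ with $g_G := \varphi(\cdot;\tfrac12)\ast G$ and $f''_G = \varphi'(\cdot;\tfrac12)\ast g'_G$, you propose to reduce the second-derivative estimate for $f$ to the first-derivative estimate (Part~1) applied at the half-scale to $g$. But Part~1 at the half-scale produces a bound in terms of the Fisher divergence and squared Hellinger distance between $g_{G_\star}$ and $g_G$, which are the \emph{less} smoothed densities. By data processing for the further convolution with $\varphi(\cdot;\tfrac12)$ (or directly in Fourier, where $\widehat{g_{G_\star}-g_G}(\omega)=e^{\omega^2/4}\,\widehat{f_{G_\star}-f_G}(\omega)$), these half-scale divergences dominate --- rather than are dominated by --- the full-scale $\Dfisher{f_{G_\star}}{f_G}$ and $\Dhel^2(f_{G_\star},f_G)$ appearing on the right-hand side of the claim; no polylogarithmic factor reverses this, since the multiplier $e^{\omega^2/4}$ is unbounded. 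Compact support of $G,G_\star$ does make the reverse comparison possible, but exploiting it is precisely the hard part, and a one-shot convolution trick does not supply the mechanism. The paper's mechanism is the \citet{jiang2009general} derivative-ladder recursion $\Delta_k^2\le\Delta_{k-1}\Delta_{k+1}+2\tilde L(\rho)\Delta_{k-1}\Delta_k$ (integration by parts), run with a stopping index $k^*$ and a two-case analysis, combined with the Fourier-side estimate $\int\bigl(D^k(f_{G_\star}-f_G)\bigr)^2\lesssim a^{2k}\Dhel^2(f_{G_\star},f_G)+a^{2k-1}e^{-a^2}$. It is exactly this two-case argument that produces $\max\bigl\{a^4\Dhel^2(f_{G_\star},f_G),\,|\log\rho|^3(\Dfisher{f_{G_\star}}{f_G}+|\log\rho|\Dhel^2(f_{G_\star},f_G))\bigr\}$ with $a^2=\max\{|\log\rho|+1,|\log\Dhel^2(f_{G_\star},f_G)|\}$; your heuristic of ``two derivative steps times a $\sqrt{|\log\rho|}$ truncation radius'' does not account for the $|\log\Dhel^2(f_{G_\star},f_G)|$ dependence of $a$, which is where the stopping-time argument bites.
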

Lemma~\ref{lemm:blown_isometries} by itself does not suffice for the proof of Theorem~\ref{theo:derivative_score}. The reason is that the right-hand side of inequalities 1.\ and 3.\ also depends on the squared Hellinger distance $\Dhel^2(f_{G_{\star}}, f_G)$. Our next step establishes that we may control $\Dhel^2(f_{G_{\star}}, f_G)$ by $\Dfisher{f_{G_{\star}}}{f_G}$ using a functional inequality for Gaussian convolutions of compactly supported measures. We first recall the definition of the logarithmic Sobolev constant. Given a probability measure $\nu$ on $\RR$, its logarithmic Sobolev constant $C_{\text{LS}} = C_{\text{LS}}(\nu) \in (0, \infty]$ is the smallest constant such that for all smooth functions $\psi: \RR \to \RR$, the following holds:
$$
\int_{\RR} \psi^2(z) \log(\psi^2(z))\nu(dz) \,-\, \int_{\RR} \psi^2(z) \nu(dz) \log\p{  \int_{\RR} \psi^2(z) \nu(dz) } \leq C_{\text{LS}}(\nu) \int_{\RR} \abs{ \frac{\partial}{\partial z} \psi(z)}^2 \nu(dz).
$$
A key result by \citet{zimmermann2016elementary} establishes that the logarithmic Sobolev constant remains uniformly bounded across all distributions formed by convolving $\mathrm{N}(0,1)$ with any compactly supported probability measure $G \in \mathcal{P}(M)$; see~\citet{bardet2018functional, chen2021dimensionfree} for further related results.
\begin{theo}[Theorem 1.1.\ in \citet{zimmermann2016elementary}]
\label{theo:logsobolev}
The convolution of a compactly supported measure and a Gaussian measure satisfies a logarithmic Sobolev inequality with
$$
\sup_{G \in \mathcal{P}(M)}  \cb{C_{\text{LS}}(f_G) }\, < \, \infty,
$$
where we identify the Lebesgue density $f_G$ with its induced probability measure.
\end{theo}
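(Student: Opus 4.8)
The statement concerns a single Lebesgue density, so the goal is to bound $C_{\text{LS}}(f_G)$ by a constant depending only on $M$, uniformly over $G \in \mathcal P(M)$; one may assume $M > 0$, since $M = 0$ forces $f_G = \varphi$ with $C_{\text{LS}} = 2$. The workhorse is the factorization
\[
f_G(z) = \varphi(z)\,R_G(z), \qquad R_G(z) := \frac{f_G(z)}{\varphi(z)} = \int e^{z\mu - \mu^2/2}\,G(d\mu),
\]
together with two elementary consequences. First, $\log R_G$ is convex with $(\log R_G)'(z) = z + s_G(z) = \mathbb E_G[\mu \mid Z = z] \in [-M, M]$ (the posterior mean, by the Tweedie formula), so $R_G$ is $M$-Lipschitz on the log scale: $R_G(t) \ge R_G(z)\,e^{-M|z - t|}$ for all $z, t$. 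Second, $f_G$ has uniformly controlled Gaussian-type tails: writing $\overline\Phi$ for the standard normal tail, $\int_x^\infty f_G(t)\,dt = \int \overline\Phi(x - \mu)\,G(d\mu)$ with $|\mu| \le M$ and $\overline\Phi(u) \asymp \varphi(u)/u$ for $u \ge 1$, so $\int_x^\infty f_G(t)\,dt \asymp_M f_G(x)/x$ for large $x$; also the median of $f_G$ lies in $[-(2M + 1),\, 2M + 1]$, since $\mathbb E_{f_G}[Z] = \mathbb E_G[\mu] \in [-M, M]$ and $\operatorname{Var}_{f_G}(Z) = 1 + \operatorname{Var}_G(\mu) \le 1 + M^2$.

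I would first record why the soft routes do not give an $M$-only bound, to motivate the approach. The density $f_G$ is \emph{not} a finite-oscillation (Holley--Stroock) perturbation of $\varphi$, because $\log R_G \to +\infty$ at infinity; and $f_G$ is not uniformly log-concave outside a fixed ball, because its potential has second derivative $1 - (\log R_G)''(z) = 1 - \operatorname{Var}_{\pi_z}(\mu)$ with $\pi_z(d\mu) \propto e^{z\mu - \mu^2/2}\,G(d\mu)$, and taking $G = (1 - \varepsilon)\delta_{-M} + \varepsilon\delta_{M}$ with $\varepsilon = e^{-2Mz_0}$ makes $\pi_{z_0} \approx \tfrac12\delta_{-M} + \tfrac12\delta_{M}$, so $\operatorname{Var}_{\pi_{z_0}}(\mu) \approx M^2$ at the arbitrarily large point $z_0$. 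Both obstructions disappear only once $R_G$ is accounted for exactly, so I would use the explicit one-dimensional criterion for the logarithmic Sobolev inequality due to Bobkov and G\"otze (see also Barthe and Roberto): a probability measure on $\mathbb R$ with density $p$ and median $m$ satisfies a logarithmic Sobolev inequality if and only if
\[
B_+ := \sup_{x > m}\left(T(x)\,\log\tfrac{1}{T(x)}\right)\int_m^x \frac{dt}{p(t)} < \infty \quad\text{and}\quad B_- < \infty,
\]
where $T(x) := \int_x^\infty p$ and $B_-$ is the mirror-image quantity, and then $C_{\text{LS}}(p)$ is comparable to $\max(B_+, B_-)$ up to universal factors.

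The plan is to bound $B_+$, and hence (via the reflection $\mu \mapsto -\mu$, which keeps $G$ in $\mathcal P(M)$) also $B_-$, by a constant depending only on $M$. The crux is a cancellation of $R_G$ between the two factors: the tail factor satisfies $T(x) \asymp_M \varphi(x)R_G(x)/x$, contributing $R_G(x)$ in the numerator, while in $\int_m^x \tfrac{dt}{f_G(t)} = \int_m^x \tfrac{dt}{\varphi(t)R_G(t)}$ the log-Lipschitz bound $R_G(t) \ge R_G(x)e^{-M(x - t)}$ lets one pull $R_G(x)$ out, leaving $\le \tfrac{e^{Mx}}{R_G(x)}\sqrt{2\pi}\int_m^x e^{t^2/2 - Mt}\,dt$, i.e.\ $R_G(x)$ in the denominator; and $\log\tfrac1{T(x)} \le \tfrac{x^2}{2} + 2Mx$ for large $x$ from the two-sided bound $\varphi(t)e^{-M|t| - M^2/2} \le f_G(t) \le \varphi(t)e^{M|t|}$. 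A Laplace / integration-by-parts estimate gives $\int_m^x e^{t^2/2 - Mt}\,dt \le C_M\,e^{x^2/2 - Mx}/x$ for large $x$; multiplying the pieces, the factors $R_G(x)$ and $e^{\pm x^2/2}$ all cancel and one is left with $\le 2C_M\big(\tfrac12 + \tfrac{2M}{x}\big)$, which is $O_M(1)$ for $x$ large. On the remaining compact range of $x$ just above the median, $T(x)\log\tfrac1{T(x)} \le 1/e$ and $\int_m^x \tfrac{dt}{f_G(t)} \le C_M$ by the crude lower bound on $f_G$, so the supremum is $O_M(1)$ there too. Hence $\max(B_+, B_-) \le C_M$, and the criterion gives $C_{\text{LS}}(f_G) \le C'_M$ uniformly over $\mathcal P(M)$.

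The main obstacle is exactly this cancellation made quantitative: one must carry $R_G$ \emph{consistently} through both factors of $B_+$ — bounding the two factors by their separate worst cases instead produces a spurious $e^{2Mx}$ and a divergent bound — and the log-Lipschitz property of $R_G$, equivalently the boundedness of the posterior mean, is the one place where compact support $G \in \mathcal P(M)$ is genuinely used. Everything else (the Gaussian tail comparison, the median bound, the scalar Laplace integral) is routine and uniform in $G$. A somewhat softer alternative, with the same essential ingredient: prove a \emph{defective} logarithmic Sobolev inequality for $f_G$ with $M$-dependent constants (feasible from the uniform moment bound $\int e^{z^2/8}f_G(z)\,dz \le C_M$ that the Gaussian tails provide) together with a uniform Poincar\'e bound (from those tails, via the scalar Muckenhoupt--Hardy criterion), and then invoke the standard implication (Rothaus) that a defective logarithmic Sobolev inequality plus a Poincar\'e inequality yields a tight one.
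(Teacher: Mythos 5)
The paper does not prove this statement; it is quoted verbatim as Theorem 1.1 of \citet{zimmermann2016elementary}, so there is no internal proof to compare against. Your blind proof is, as far as I can verify, correct, and it follows what I believe is essentially the route Zimmermann himself takes: reduce to the Bobkov--G\"otze (Hardy-type) scalar criterion for the tight one-dimensional log-Sobolev inequality, then verify it by density estimates. The one genuinely non-routine ingredient is exactly the one you isolate: $(\log R_G)'(z) = z + s_G(z) = \EE_G[\mu \mid Z = z] \in [-M,M]$, so the tilted potential $\log R_G$ is $M$-Lipschitz, and this is what lets $R_G(x)$ appear with opposite signs in the tail factor $T(x) \asymp_M \varphi(x) R_G(x)/x$ and in the reciprocal-density integral $\int_m^x dt/(\varphi(t) R_G(t)) \le R_G(x)^{-1} e^{Mx}\sqrt{2\pi}\int_m^x e^{t^2/2-Mt}\,dt$ and cancel cleanly. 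Your diagnosis of why the soft routes fail (no Holley--Stroock bounded perturbation since $\log R_G$ is unbounded; no Bakry--\'Emery since $1 - \Var_{\pi_z}(\mu)$ need not stay positive outside a fixed ball) is also accurate and is a useful way to explain why a little work is needed. Two small cosmetic points: the bound $\log(1/T(x)) \le x^2/2 + 2Mx$ should really carry an $O_M(\log x)$ correction coming from the $1/x$ in the Mill's-ratio comparison and the $M^2/2$ from $R_G(0)$, and the Bobkov--G\"otze criterion is usually stated with $\log\bigl(1 + \tfrac{1}{2T(x)}\bigr)$ rather than $\log\tfrac{1}{T(x)}$; but for $T < 1/2$ these are equivalent up to universal constants, so neither affects the conclusion. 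The alternative you sketch (defective LSI from the uniform sub-Gaussian moment $\int e^{z^2/8} f_G\,dz \le C_M$, plus a Poincar\'e inequality via Muckenhoupt, tightened by Rothaus) is also a valid route and is in fact closer to what \citet{bardet2018functional} do in higher dimensions.
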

The upshot of Theorem~\ref{theo:logsobolev} is that it enables us to control the Hellinger distance via the Fisher divergence; and so it allows us to map the result of Lemma~\ref{lemm:blown_isometries} to Theorem~\ref{theo:derivative_score}.

\begin{prop}[Proposition 1 in~\citet{koehler2023statistical}]
\label{prop:KL_via_Fisher}
Let $G_{\star}, G \in \mathcal{P}(M)$. Then:
$$\Dhel^2(f_{G_\star}, f_G) \leq \DKL{f_{G_\star}}{f_G} \leq   \frac{1}{4}C_{\text{LS}}(f_G) \Dfisher{f_{G_\star}}{f_G} \lesssim_M \Dfisher{f_{G_\star}}{f_G}.$$
\end{prop}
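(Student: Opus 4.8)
The plan is to establish the chain of inequalities from left to right. The two outer comparisons are essentially soft, while the middle inequality---relating Kullback--Leibler divergence to Fisher divergence through the log-Sobolev inequality---is the crux.

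\emph{Step 1 (Hellinger versus KL).} The bound $\Dhel^2(f_{G_\star}, f_G) \leq \DKL{f_{G_\star}}{f_G}$ is the standard comparison of $f$-divergences and uses nothing about the Gaussian structure. Writing $p := f_{G_\star}$ and $q := f_G$, one expands $\Dhel^2(p,q) = 2\p{1 - \int \sqrt{p(z) q(z)}\,dz}$ and observes $\int \sqrt{pq}\,dz = \int \sqrt{q(z)/p(z)}\, p(z)\,dz$. Applying Jensen's inequality to the convex map $t \mapsto -\log t$ against the probability measure $p\,dz$ gives $-\log\p{1 - \tfrac12\Dhel^2(p,q)} = -\log\int \sqrt{q/p}\, p\,dz \leq -\int \log\sqrt{q/p}\, p\,dz = \tfrac12 \DKL{p}{q}$; combining this with the elementary inequality $x \leq -\log(1-x)$ for $x \in [0,1)$, applied at $x = \tfrac12 \Dhel^2(p,q)$, yields the claim.

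\emph{Step 2 (KL versus Fisher via LSI).} I would apply the log-Sobolev inequality for the measure $\nu := f_G$ to the test function $\psi := \sqrt{f_{G_\star}/f_G}$. Since $\int \psi^2\, d\nu = \int f_{G_\star}(z)\,dz = 1$, the normalizing term in the definition of $C_{\text{LS}}(\nu)$ vanishes and the left-hand side of the LSI becomes $\int \p{f_{G_\star}/f_G}\log\p{f_{G_\star}/f_G}\,d\nu = \DKL{f_{G_\star}}{f_G}$. For the right-hand side, $\tfrac{\partial}{\partial z}\psi = \tfrac12 \psi \cdot \tfrac{\partial}{\partial z}\log\p{f_{G_\star}/f_G} = \tfrac12 \psi\p{\score_{G_\star}(z) - \score_G(z)}$, so $\int \abs{\tfrac{\partial}{\partial z}\psi}^2 d\nu = \tfrac14 \int \p{\score_{G_\star}(z) - \score_G(z)}^2 f_{G_\star}(z)\,dz = \tfrac14 \Dfisher{f_{G_\star}}{f_G}$. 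Substituting both expressions into the log-Sobolev inequality gives $\DKL{f_{G_\star}}{f_G} \leq \tfrac14 C_{\text{LS}}(f_G)\, \Dfisher{f_{G_\star}}{f_G}$.

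\emph{Step 3 (uniform LSI constant).} The final bound $C_{\text{LS}}(f_G) \lesssim_M 1$ is immediate from Theorem~\ref{theo:logsobolev} (the Zimmermann bound), since $G \in \mathcal{P}(M)$ implies $C_{\text{LS}}(f_G) \leq \sup_{G' \in \mathcal{P}(M)} C_{\text{LS}}(f_{G'}) < \infty$.

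The main obstacle is the regularity justification in Step 2: the log-Sobolev inequality as stated in the excerpt holds for smooth $\psi$, so one must verify that $\psi = \sqrt{f_{G_\star}/f_G}$ is admissible, or approximate it by admissible test functions (and one may assume $\Dfisher{f_{G_\star}}{f_G} < \infty$, since otherwise there is nothing to prove). This is exactly where compact support of $G_\star$ and $G$ helps: both $f_{G_\star}$ and $f_G$ are strictly positive, real-analytic densities with Gaussian-type tails, the ratio $f_{G_\star}/f_G$ is bounded above and below on compact sets and grows at most sub-Gaussianly, so $\psi$ is smooth and a routine truncation-and-mollification argument passes from compactly supported smooth functions to $\psi$ itself. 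Since the statement coincides with Proposition~1 of \citet{koehler2023statistical}, one could alternatively just cite it; the sketch above is short enough to record for completeness.
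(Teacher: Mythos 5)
Your proposal is correct and follows essentially the same route as the paper: apply the log-Sobolev inequality to the test function $\psi = \sqrt{f_{G_\star}/f_G}$ against $\nu = f_G$, identify the entropy term as $\DKL{f_{G_\star}}{f_G}$ and the Dirichlet form as $\tfrac{1}{4}\Dfisher{f_{G_\star}}{f_G}$, then invoke Theorem~\ref{theo:logsobolev} for the uniform bound on $C_{\text{LS}}(f_G)$. You supply more detail than the paper on the standard $\Dhel^2 \leq \DKL$ comparison (via Jensen) and on the regularity of $\psi$, but these are elaborations rather than a different argument.
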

We present the full proof of Theorem~\ref{theo:rate_homoscedastic} in 
Supplement~\ref{sec:proofs_sure_homoscedastic}.  The proof builds on the results above and empirical process theory. For instance, in Lemma~\ref{lemm:second-covering} of Supplement~\ref{sec:proofs_sure_homoscedastic}, we construct coverings of $\{\frac{\partial}{\partial z}s_G(z)\,:\, G \in \mathcal{P}(M)\}$; building on existing constructions for controlling the complexity of normal mixture densities~\citep{ghosal2001entropies, zhang2009generalized}.

\subsection{Elements leading to the proof of Theorem~\ref{theo:reg}}

If~\eqref{eq:EB_heterosc_side_info} indeed holds and $\lambda_{\star} \in \mathcal{L}, b_{\star} \in \mathcal{B}$, then we could proceed to derive sharp rates using the basic inequality in Proposition~\ref{prop:basic_inequality}. However, since our result allows for misspecification, we cannot use Proposition~\ref{prop:basic_inequality}. Instead, our argument builds on the following inequality, whose proof (in Supplement~\ref{subsec:deterministic_argument_misspecified}) is closely tied to the assumption that $\mathcal{L}_{\oracle}$ and $\mathcal{B}_{\oracle}$ are star-shaped.
\begin{prop}
\label{prop:regression-basic}
Suppose that $\mathcal L, \mathcal B$ are star shaped about the oracles $\lambda_{\oracle}, b_{\oracle}$ respectively.
Then, the following deterministic inequality holds for $\hat{\lambda}, \hat{b}$:
\begin{align*}
&\frac{1}{n} \sum_{i = 1}^n \cb{\p{\hat \lambda(X_i) - \lambda_{\oracle}(X_i)} Z_i - \p{\hat b(X_i) - b_{\oracle}(X_i)}}^2 \\ 
&\;\;\;\; \le \;\; \frac{1}{n} \sum_{i = 1}^n \cb{\p{2\lambda_{\oracle}(X_i) \mu_i - b_{\oracle}(X_i)}(\lambda_{\oracle}(X_i) - \hat{\lambda}(X_i))- \lambda_{\oracle}(X_i) (b_{\oracle}(X_i) - \hat b(X_i))}\xi_i \\ 
&\;\;\;\;\;\; + \; \frac{1}{n} \sum_{i=1}^n \cb{\lambda_{\oracle}(X_i)(\lambda_{\oracle}(X_i) - \hat \lambda(X_i))}(\xi_i^2 - \sigma_i^2).
\end{align*}
\end{prop}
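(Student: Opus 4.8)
The plan is to start from the optimality of $(\hat\lambda,\hat b)$ for the $\SURE$ objective in~\eqref{eq:SURE_gauss} and massage the resulting basic inequality into a statement about the \emph{oracle} pair, exploiting the star-shapedness of $\mathcal{L}$ and $\mathcal{B}$. Because $(\hat\lambda,\hat b)$ minimizes $\SURE(\lambda,b)$ over $\mathcal{L}\times\mathcal{B}$ and $(\lambda_{\oracle},b_{\oracle})\in\mathcal{L}\times\mathcal{B}$, we have $\SURE(\hat\lambda,\hat b)\le \SURE(\lambda_{\oracle},b_{\oracle})$. Writing out both sides using $\SURE(\lambda,b)=\tfrac1n\sum_i\sigma_i^2+\tfrac1n\sum_i\{\lambda(X_i)Z_i-b(X_i)\}^2-\tfrac2n\sum_i\sigma_i^2\lambda(X_i)$, the $\sigma_i^2$ constant cancels and we are left with
\[
\frac1n\sum_{i=1}^n\cb{\hat\lambda(X_i)Z_i-\hat b(X_i)}^2-\frac2n\sum_i\sigma_i^2\hat\lambda(X_i)\;\le\;\frac1n\sum_i\cb{\lambda_{\oracle}(X_i)Z_i-b_{\oracle}(X_i)}^2-\frac2n\sum_i\sigma_i^2\lambda_{\oracle}(X_i).
\]
Here is where star-shapedness enters: rather than only plugging in $(\lambda_{\oracle},b_{\oracle})$, I would exploit that the whole segment $\eta(\hat\lambda,\hat b)+(1-\eta)(\lambda_{\oracle},b_{\oracle})$ lies in $\mathcal{L}\times\mathcal{B}$ for $\eta\in[0,1]$, take the first-order (variational) inequality at $\eta=1$, i.e. $\tfrac{d}{d\eta}\SURE\big|_{\eta=1}\le 0$; this is cleaner than the plain basic inequality and directly produces a bound whose left side is the \emph{squared} discrepancy $\tfrac1n\sum_i\{(\hat\lambda-\lambda_{\oracle})(X_i)Z_i-(\hat b-b_{\oracle})(X_i)\}^2$, exactly the quantity appearing in the statement. (Equivalently one can derive the plain two-point inequality and then symmetrize; the variational route avoids a stray cross term.)

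The next step is to substitute $Z_i=\mu_i+\xi_i$ everywhere and expand. Abbreviating $\delta_\lambda:=\hat\lambda-\lambda_{\oracle}$ and $\delta_b:=\hat b-b_{\oracle}$, the left side becomes $\tfrac1n\sum_i\{\delta_\lambda(X_i)\mu_i-\delta_b(X_i)+\delta_\lambda(X_i)\xi_i\}^2$, and on the right one collects the cross terms coming from the variational inequality. The key algebraic fact to establish is that, after cancellation, the deterministic ``signal'' part of the cross terms is controlled by the optimality of the oracle itself: $(\lambda_{\oracle},b_{\oracle})$ minimizes $\tfrac1n\sum_i\EE[\boldmu]{(\mu_i-b(X_i)-(1-\lambda(X_i))Z_i)^2}$, whose own variational inequality along the same star-shaped segment gives $\tfrac1n\sum_i\EE[\boldmu]{(\mu_i-b_{\oracle}(X_i)-(1-\lambda_{\oracle}(X_i))Z_i)(\delta_\lambda(X_i)Z_i-\delta_b(X_i))}\le 0$. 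Taking the expectation inside using $\EE{\xi_i}=0$, $\EE{\xi_i^2}=\sigma_i^2$ turns this into a deterministic inequality; subtracting it from the $\SURE$ inequality makes the purely deterministic terms drop out and leaves precisely the two stochastic terms in the claimed bound — one linear in $\xi_i$, with coefficient $(2\lambda_{\oracle}(X_i)\mu_i-b_{\oracle}(X_i))(\lambda_{\oracle}-\hat\lambda)(X_i)-\lambda_{\oracle}(X_i)(b_{\oracle}-\hat b)(X_i)$, and one involving $\xi_i^2-\sigma_i^2$, with coefficient $\lambda_{\oracle}(X_i)(\lambda_{\oracle}-\hat\lambda)(X_i)$. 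The coefficient of the linear term is exactly what you get from differentiating $(\mu_i-b-(1-\lambda)Z_i)^2$ at the oracle and replacing $Z_i$ by $\mu_i$ in the deterministic part, which is why $2\lambda_{\oracle}\mu_i-b_{\oracle}$ appears; the $\xi_i^2$ term arises solely from the $\delta_\lambda(X_i)^2\xi_i^2$ piece minus its mean $\delta_\lambda(X_i)^2\sigma_i^2$, combined with the unbiasedness correction $-\tfrac2n\sum_i\sigma_i^2\delta_\lambda(X_i)$ in $\SURE$.

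The main obstacle I anticipate is purely bookkeeping: tracking all the cross terms through the two simultaneous variational inequalities ($\SURE$-optimality of $(\hat\lambda,\hat b)$ and MSE-optimality of $(\lambda_{\oracle},b_{\oracle})$) and verifying that every deterministic term — in particular the $\mu_i^2$, $\mu_i\xi_i$-expectation, and $\lambda$–$b$ interaction terms — cancels exactly, leaving only the two residual stochastic terms with the stated coefficients. There is no analytic difficulty and no probabilistic content yet (the randomness is handled later, via the sub-Gaussian tail bounds and the local Gaussian complexity chaining that feed into $t_*$ in Theorem~\ref{theo:reg}); one only needs to be careful that the oracle's variational inequality is used with the \emph{same} perturbation direction $(\delta_\lambda,\delta_b)$ and the \emph{same} endpoint $\eta=1$, which is legitimate precisely because both $\mathcal{L}$ and $\mathcal{B}$ are star-shaped about their respective oracles and $\hat\lambda\in\mathcal{L}$, $\hat b\in\mathcal{B}$. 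Since the inequality is asserted to hold deterministically (no ``with probability'' qualifier), I would present the whole argument as a chain of exact identities and one-sided inequalities, with the $\SURE$ unbiasedness correction absorbed into the $\xi_i^2-\sigma_i^2$ term as described.
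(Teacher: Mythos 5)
Your strategy coincides with the paper's: use the KKT/variational inequality for $\widehat{G}$ (at $\eta=1$ along the segment), combine it with the exact quadratic structure of $\SURE$ to produce the squared discrepancy on the left, then invoke the oracle's first-order condition to discard the deterministic part of the remaining gradient term and leave only the $\xi_i$ and $\xi_i^2-\sigma_i^2$ contributions. The key ingredients — constant Hessian, star-shapedness licensing one-sided first-order conditions at both $(\hat\lambda,\hat b)$ and $(\lambda_{\oracle},b_{\oracle})$, and evaluating expectations using $\EE{\xi_i}=0$, $\EE{\xi_i^2}=\sigma_i^2$ — are all present and correctly identified.

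However, there is a sign error in the oracle's variational inequality that, taken literally, would make the final step fail. At the oracle, which is a constrained minimizer of the expected loss, the directional derivative toward the feasible point $(\hat\lambda,\hat b)$ — i.e., the derivative along the segment at $\eta=0$ — must be \emph{non-negative}. With $\delta_\lambda=\hat\lambda-\lambda_{\oracle}$, $\delta_b=\hat b-b_{\oracle}$, this reads
\[
\frac{1}{n}\sum_{i=1}^n \EE[\mu_i]{\p{\mu_i-b_{\oracle}(X_i)-(1-\lambda_{\oracle}(X_i))Z_i}\p{\delta_\lambda(X_i)Z_i-\delta_b(X_i)}}\;\geq\;0,
\]
not ``$\le 0$'' as you wrote. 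The sign matters: you are not ``subtracting'' the deterministic part, you are bounding it above by zero when it is contracted against $(\lambda_{\oracle}-\hat\lambda, b_{\oracle}-\hat b)$ (equivalently, $-(\delta_\lambda,\delta_b)$), which requires precisely the $\ge 0$ inequality in the $(\delta_\lambda,\delta_b)$ direction. Also, the parenthetical claim that one can ``equivalently'' use the plain two-point inequality $\SURE(\hat\lambda,\hat b)\le\SURE(\lambda_{\oracle},b_{\oracle})$ and symmetrize is not quite right: the two-point inequality only produces \emph{half} the Hessian quadratic form that the KKT route does, so it would prove the proposition only with an extra factor of $2$ on the right-hand side. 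The variational/KKT argument is not merely cleaner, it is strictly necessary to obtain the stated constant.
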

\noindent The challenge associated with turning the inequality of Proposition~\ref{prop:regression-basic} into the fast rates of Theorem~\ref{theo:reg} is conceptually related to our discussion following equation~\eqref{eq:wrong_isometry}. Our argument would be relatively standard, if we could show an implication of the following form:
\begin{align}
&\,\frac{1}{n} \sum_{i = 1}^n \cb{\p{\hat \lambda(X_i) - \lambda_{\oracle}(X_i)} Z_i - \p{\hat b(X_i) - b_{\oracle}(X_i)}}^2  \text{ is ``small'' }    \label{eq:first_line_cancellation} \\
\Longrightarrow \;\;\;& \frac{1}{n} \sum_{i = 1}^n \cb{\hat \lambda(X_i) - \lambda_{\oracle}(X_i)}^2 \quad \text{and} \quad \frac{1}{n}\sum_{i = 1}^n \cb{\hat b(X_i) - b_{\oracle}(X_i)}^2 \;\text{ are ``small.''}   \nonumber
\end{align}
To establish such an inequality, we need to preclude the possibility of cancellation of errors in~\eqref{eq:first_line_cancellation}.  The following anti-concentration/lower isometry estimate is crucial to our argument---it allows us to argue that terms of the form $c_i + f_i \xi_i$ do not typically cancel out as long as $c_i$ and $f_i$ are from a low complexity class of functions, without explicit dependence on the size of the $c_i$. We accomplish this via the following key technical lemma, which we present in the special case of bounded noise variables $\xi_i$ (postponing the statement for sub-Gaussian noise to  Lemma~\ref{lemm:subgaussian-isometry} of Supplement~\ref{subsec:lower_isometry}).
\begin{lemm}
\label{lemm:isometry_bounded_noise}
Suppose that $\xi_1,\ldots,\xi_n$ are independent, \smash{$\EE{\xi_i} \le \sqrt{\EE{\xi_i^2}}/2$}, each satisfy $\EE{\xi_i^2} \ge 1$, and are valued in $[-\upperbound, \upperbound]$ for $\upperbound \ge 1$. 
Consider a separable set $\mathcal H \subset \mathbb{R}^n \times [-1,1]^n$ and  denote a generic element of $\mathcal{H}$ by $(c,f) = (c_1,\ldots,c_n,f_1,\ldots,f_n) \in \mathcal{H}$. Let
$$r^2 := \sup_{(c,f) \in \mathcal H} \frac{1}{n}\sum_{i=1}^n f_i^2,\;\quad \rademacher(\mathcal{H}) := \EE{\sup_{(c,f) \in \mathcal H} \abs{ \frac{1}{n}\sum_{i =1}^n  \varepsilon_i c_i} + B\sup_{(c,f) \in \mathcal H} \abs{ \frac{1}{n}\sum_{i =1}^n  \varepsilon_i f_i}},$$
where the right-hand side expectation is taken over iid Rademacher random variables $\varepsilon_i$.
Then, for any $x \geq 0$, with probability at least $1 - e^{-x}$, uniformly over all $(c, f) \in \mathcal H$, it holds that,
$$\frac{1}{n}\sum_{i = 1}^n (c_i + f_i \xi_i)^2 \ge \frac{1}{8n}\sum_{i = 1}^n (c_i^2 + f_i^2) - 36 B\rademacher(\mathcal{H}) - 17 \upperbound r \sqrt{\frac{\max_{i=1}^n\EE{\xi_i^2} x}{n}} - 180\frac{\upperbound^2 x}{n}.$$
\end{lemm}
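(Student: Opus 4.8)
The plan is to prove a one-sided (lower) uniform concentration statement by first passing to the expectation of the quadratic form and then controlling the deviation of $\frac{1}{n}\sum_i (c_i + f_i\xi_i)^2$ around it, uniformly over $(c,f)\in\mathcal H$. For a fixed $(c,f)$, taking expectation over the $\xi_i$ gives
$$
\EE{\frac{1}{n}\sum_{i=1}^n (c_i+f_i\xi_i)^2} = \frac{1}{n}\sum_{i=1}^n\p{c_i^2 + 2 c_i f_i \EE{\xi_i} + f_i^2 \EE{\xi_i^2}}.
$$
The cross term $2 c_i f_i \EE{\xi_i}$ is the only thing preventing this from being bounded below by $\frac{1}{n}\sum_i(c_i^2+f_i^2)$ (using $\EE{\xi_i^2}\ge 1$); the hypothesis $\EE{\xi_i}\le \tfrac12\sqrt{\EE{\xi_i^2}}$ together with an AM-GM bound $2|c_i f_i|\EE{\xi_i} \le \tfrac12 c_i^2 + 2 f_i^2 \EE{\xi_i}^2 \le \tfrac12 c_i^2 + \tfrac12 f_i^2\EE{\xi_i^2}$ means the expectation is at least $\tfrac12 \cdot\frac1n\sum_i(c_i^2 + f_i^2\EE{\xi_i^2}) \ge \tfrac12\cdot\frac1n\sum_i(c_i^2+f_i^2)$. (The constant $1/8$ in the statement leaves slack to absorb the fluctuation term below.) So at the population level the desired lower bound holds with room to spare; everything reduces to a uniform deviation inequality.

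Next I would write $\frac1n\sum_i(c_i+f_i\xi_i)^2 - \EE{\cdot}$ and bound $\sup_{(c,f)\in\mathcal H}$ of its negative part. Expand the square into three pieces: $\frac1n\sum_i c_i^2$ (deterministic, no fluctuation), $\frac{2}{n}\sum_i c_i f_i(\xi_i - \EE{\xi_i})$, and $\frac1n\sum_i f_i^2(\xi_i^2 - \EE{\xi_i^2})$. For the second piece, since $|f_i|\le 1$, $|\xi_i|\le B$, the summands $c_i f_i(\xi_i-\EE{\xi_i})$ are bounded random variables, and a standard symmetrization + contraction argument bounds its supremum in expectation by $O(B)$ times $\EE{\sup \abs{\frac1n\sum_i \varepsilon_i c_i}}$ — here one treats $f_i(\xi_i-\EE{\xi_i})/(2B)$ as a contraction applied coordinatewise, but since it multiplies $c_i$ we instead use the comparison inequality for the process $\frac1n\sum_i \varepsilon_i c_i \phi_i$ with $|\phi_i|\le 2B$; this yields a term controlled by $B\,\mathfrak{R}(\mathcal H)$. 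For the third piece, $\xi_i^2 - \EE{\xi_i^2}$ is bounded by $B^2$ and $f_i^2\le |f_i|$, so symmetrization and contraction (the map $u\mapsto u^2$ is $2$-Lipschitz on $[-1,1]$, and $\xi_i^2-\EE{\xi_i^2}$ enters as a bounded multiplier) give a bound of the form $B^2 \EE{\sup\abs{\frac1n\sum_i\varepsilon_i f_i}}\lesssim B\,\mathfrak R(\mathcal H)$ after accounting for the definition of $\mathfrak R$. Combining, the expected supremum of the fluctuation is $O(B\,\mathfrak R(\mathcal H))$, matching the $36 B\mathfrak R(\mathcal H)$ term.

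Finally I would upgrade the in-expectation bound to a high-probability bound via a bounded-differences / Talagrand-type concentration inequality for the supremum of the empirical process $(c,f)\mapsto \frac2n\sum_i c_i f_i(\xi_i-\EE{\xi_i}) + \frac1n\sum_i f_i^2(\xi_i^2-\EE{\xi_i^2})$. Changing one $\xi_i$ changes each summand by $O(B\max_i|c_i| + B^2)/n$; the wimpy variance is controlled by $r^2\max_i\EE{\xi_i^2}/n$ for the $f_i\xi_i$-type term (since $\sup\frac1n\sum f_i^2 = r^2$) — this is where the $17 B r\sqrt{\max_i\EE{\xi_i^2}\,x/n}$ term enters — plus a $B^2 x/n$ term from the sup-norm of the increments, giving the final $180 B^2 x/n$. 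I would use Bousquet's version of Talagrand's inequality (or the bounded-differences inequality applied after noting the increments, with Bernstein-type variance control) to get the stated $1-e^{-x}$ bound. The main obstacle I anticipate is getting the variance proxy in the concentration step to depend on $r$ (the radius of the $f$-part) rather than on $\max_i|c_i|$, which is explicitly disallowed; the resolution is that the $c_i$-only contribution to the process, $\frac2n\sum_i c_i f_i(\xi_i-\EE{\xi_i})$, always carries a factor $f_i$ with $\frac1n\sum f_i^2\le r^2$, so Cauchy–Schwarz pushes the $c$-dependence into the already-subtracted $\frac1n\sum c_i^2$ (times a small constant) plus an $r^2$-weighted variance — one has to carefully split $c_i f_i \xi_i$-type terms so that whatever is not absorbed into $\tfrac18\cdot\frac1n\sum c_i^2$ is multiplied by something with bounded $\ell^2$ average. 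Getting that bookkeeping right, with clean universal constants, is the delicate part; everything else is routine empirical-process machinery.
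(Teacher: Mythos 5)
The population-level reduction at the start of your proposal is correct and essentially matches the paper's. The trouble is with the uniform concentration step, where your plan has a genuine gap that you correctly anticipate but whose proposed resolution does not actually work.

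Two linked problems. First, the concentration machinery you invoke (Talagrand's inequality, or bounded differences) requires the increments of the process to be uniformly bounded. After expanding $(c_i + f_i\xi_i)^2$, the cross term $\tfrac{2}{n}\sum_i c_i f_i(\xi_i - \EE\xi_i)$ has increments of order $B\max_i|c_i|/n$, and $\max_i|c_i|$ is unconstrained. Your ``Cauchy--Schwarz pushes the $c$-dependence into the already-subtracted $\tfrac1n\sum c_i^2$'' idea does not repair this, because what must be bounded to get the $B^2 x/n$ tail is the sup-norm of one-coordinate changes of the supremum, and a large isolated $c_i$ still produces a large one-coordinate change that cannot be amortized against the average $\tfrac1n\sum c_i^2$. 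Second, the cross term $\sum_i\varepsilon_i c_i f_i\xi_i$ is bilinear in the optimization variables $(c,f)$; the contraction/comparison inequalities you cite apply to $\sum_i\varepsilon_i\phi_i(a_i)$ with $\phi_i$ fixed Lipschitz maps, not to products $c_i g_i(f)$ where both factors vary over $\mathcal H$. You cannot pull out $\max_i|f_i\xi_i|$ as a multiplicative constant in front of $\EE\sup|\sum\varepsilon_i c_i|$ because the signs of $f_i\xi_i$ vary with $i$ and with the element of $\mathcal H$.

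The missing idea is an index split by the size of $c_i$: let $A' := \{i : |c_i| > 2B\}$ and $A := \{i : |c_i| \le 2B\}$. On $A'$ the inequality $\tfrac1n\sum_{i\in A'}(c_i + f_i\xi_i)^2 \ge \tfrac1{8n}\sum_{i\in A'}(c_i^2+f_i^2)$ is purely deterministic (since $|f_i\xi_i|\le B\le|c_i|/2$), with no probabilistic cost. On $A$ the quantities $c_i + f_i\xi_i$ live in $[-3B,3B]$, so $x\mapsto x^2$ is $6B$-Lipschitz there, contraction applies cleanly to the whole linear form $\sum_{i\in A}\varepsilon_i(c_i+f_i\xi_i)$ (after which one splits into the $c$ and $f\xi$ parts and contracts again on the fixed bounded multiplier $\xi_i$), and Talagrand's inequality applies because the summands are bounded by $9B^2$. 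Without this split, neither the contraction step nor the tail bound closes, so as written the argument does not go through.
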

We use Lemma~\ref{lemm:isometry_bounded_noise} to prove an implication of the form in~\eqref{eq:first_line_cancellation} as follows. Recall that $Z_i = \mu_i + \xi_i$.  For $\lambda(\cdot) \in \mathcal{L}$ and $b(\cdot) \in \mathcal{B}$, let 
$
f_i := \lambda(X_i) - \lambda_{\oracle}(X_i)$,
$c_i := \mu_i \cb{\lambda(X_i) - \lambda_{\oracle}(X_i)} - \cb{b(X_i) - b_{\oracle}(X_i)}$.
Then, if, $n^{-1}\sum_{i = 1}^n \{(\lambda(X_i) - \lambda_{\oracle}(X_i)) Z_i - (\hat b(X_i) - b_{\oracle}(X_i))\}^2$ is small, Lemma~\ref{lemm:isometry_bounded_noise} implies that $n^{-1}\sum_{i=1}^n \{ \lambda(X_i) - \lambda_{\oracle}(X_i)\}^2$ is also small.
Using the deterministic inequality in Proposition~\ref{prop:regression-basic}, localization techniques, and empirical process theory, eventually yields the statement of Theorem~\ref{theo:reg}. The complete proof details, incorporating these techniques along with our approximate lower isometry bound, are provided in Supplement~\ref{sec:appendix_regression_proofs}.

\section{Implementation details and optimization strategy}\label{subsec:imp_details}
All computations are done using PyTorch. For the optimization, in all cases we use full batch gradient descent using the Adam optimizer~\citep{kingma2015adam} with learning rate $0.01$. There may be some situations when cross validation can be used to tune hyperparameter values (like the size of the neural network) for estimating $G_\star$. Although not used here, cross validation can be helpful, and details of its implementation are in Supplement~\ref{sec:cv}.\\

\noindent \textbf{SURE-PM implementation.}
We fix $m$ at $\text{median}(Z_1,\dotsc,Z_n)$. The other parameters are learned by gradient descent and initialized as: $\tilde{\pi}_j =1$, $\tilde{u}_j=1$, and $s=\log(\text{IQR}_{0.95}(Z))$ where $\text{IQR}_{0.95}$ denotes the 95\% interquantile range. This corresponds to initializing with equal weights on evenly-spaced points spanning the 95\% interquantile range of the $Z_i$. The total number of parameters is equal to $2K$. We set $K=100$, finding no substantial improvement for larger values.\\

\noindent \textbf{SURE-LS implementation.} Similar to SURE-THING, for our experiments we used a two-layer network $h_{\theta} : \mathcal{X} \to \mathbb{R}^2$ with two hidden layers of size 8 per layer and ReLU activations. The network given as input the standard deviation $\sigma_i$ instead of the variance. \\

\noindent \textbf{SURE-THING implementation.} 
In our experiments, we used a two-layer feedforward neural network $h_\theta: \mathcal{X} \to \mathbb{R}^{2K}$ with 8 neurons per hidden layer and ReLU activations. The network parameters $\theta$ are initialized by the default initialization process, which uses $\text{Unif}(-1/\sqrt{b}, 1/\sqrt{b})$ for any linear layer with input size $b$.
As in SURE-PM, $m$ is again fixed at $\text{median}(Z_1,\dotsc,Z_n)$ and we take $K = 100$ for the number of atoms. The network is given the standard deviation $\sigma_i$  as a covariate (instead of the variance).

\section{SURE cross validation}
\label{sec:cv}

Suppose that the estimator $G$ has hyperparameters like the size of the neural network. We detail how to use $K$-fold cross validation with SURE to decide the value of the hyperparameters. 
\begin{enumerate}
\item Randomly partition the data into $K$ folds $\mathcal{I}_1, \mathcal{I}_2, \ldots, \mathcal{I}_K$.
\item For each value of $k = 1, 2, \ldots, K$: 
\begin{enumerate}
\item Create the train set $\displaystyle \ical_{-k} := \cup_{\ell \neq k} \, \mathcal{I}_\ell$ and the holdout set $\ical_k$.
\item For each possible hyperparameter value:
\begin{enumerate}
\item Estimate $\widehat{G}_{-k}$ using data from the train set $\ical_{-k}$ (we keep the dependence on the hyperparameters implicit here)
\item Compute the cross-validated SURE of the holdout set $\ical_k$,
$$
\operatorname{CV-SURE}_k := \frac{1}{\abs{\ical_k}} \sum_{i \in \ical_k} \sqb{\sigma_i^2 + \sigma_i^4 \cb{ s_{\widehat{G}_{-k}}(W_i)^2 +2 \frac{\partial}{\partial z} s_{\widehat{G}_{-k}}(W_i)}},$$
\end{enumerate}
\end{enumerate}
\item Choose the hyperparameter value that minimizes the average cross-validated SURE values $\tfrac{1}{K} \sum_{k = 1}^K \operatorname{CV-SURE}_k$.
\end{enumerate}

\section{Further simulation results}

\subsection{Homoscedastic setting without side-information}
\label{subsec:homosc_simulations}

We examine the homoscedastic normal means problem of~\eqref{eq:gaussian_EB} in which $\sigma_i^2=1$ for all $i$ and there is no side-information. Theorem~\ref{theo:rate_homoscedastic} establishes theoretical guarantees for this problem. For each simulation setting, we generate data with $n = 1000$ observations. We consider two data-generating processes (DGPs) for the $\mu_i$ and $Z_i$; each DGP is parameterized by further hyperparameters.

We first consider a DGP with a normal prior parameterized by $A_\star$:
\begin{equation}
\label{eq:normal_model}
\mu_i \simiid G_\star = \mathrm{N}(10, A_\star),\qquad   A_\star \in \{ 0.1, 1, 5 \}.
\end{equation}
We also consider a standard DGP in the compound decision theory literature~\citep{jiang2009general, koenker2014convex} wherein $\boldmu$ is fixed (and we only regenerate $Z_i$ across Monte Carlo replicates) and we set:
\begin{equation}
\mu_i = 
\begin{cases}
m_\star & \text{,  if } i \leq k_\star \\
0 & \text{,  if } i > k_\star
\end{cases},\qquad m_\star \in \{3, 5, 7\}, \qquad k_\star \in \{5, 50, 500\}.
\label{eq:fixed_mu_i_simulation}
\end{equation}
The DGP parameters $m_{\star}$ and $k_{\star}$ specify the strength of non-null signals and their total number.

We consider $B=50$ Monte Carlo replicates for each setting of each DGP. We evaluate the performance of three estimators:
\begin{itemize}[leftmargin=*]
    \item the Bayes estimator for the normal prior DGP and the best simple separable estimator~\citep{jiang2009general} for the compound DGP (both are denoted as ``Bayes'' in the tables);
    \item the NPMLE implemented following~\citet{koenker2014convex};
    \item SURE-PM (our proposal) as described in Section~\ref{sec:computation}.
\end{itemize}
\begin{table}
\caption{In-sample MSE of the estimators over 50 simulations in the homoscedastic problem without side-information and normal prior as in~\eqref{eq:normal_model}.}
\setlength\tabcolsep{0pt}
\setlength\extrarowheight{2pt}
\begin{tabular*}{\linewidth}{@{\extracolsep{\fill}}*{4}{c}}
\hline
$\sigma_{\star}^2$ & 0.1 & 1 & 5 \\ \hline
SURE-PM & 0.095 & 0.514 & 0.853 \\ 
NPMLE & 0.095 & 0.512 & 0.847 \\ 
Bayes & 0.090 & 0.501 & 0.828 \\ \hline
\end{tabular*}
\label{tab:sim_homo_normal}
\end{table} 

\begin{table}
\caption{In-sample MSE of the estimators over 50 simulations in the homoscedastic problem without side-information and fixed $\boldmu$ specified in~\eqref{eq:fixed_mu_i_simulation}.}
\setlength\tabcolsep{0pt}
\setlength\extrarowheight{2pt}
\begin{tabular*}{\linewidth}{@{\extracolsep{\fill}}*{13}{c}}
\hline
$k_\star$ & \multicolumn{4}{c}{5} & \multicolumn{4}{c}{50} & \multicolumn{4}{c}{500} \\ \cline{2-5} \cline{6-9} \cline{10-13} \hspace{0.01cm}
$m_\star$ & 3 & 4 & 5 & 7 & 3 & 4 & 5 & 7 & 3 & 4 & 5 & 7 \\ 
SURE-PM & 0.037 & 0.031 & 0.020 & 0.008 & 0.153 & 0.116 & 0.057 & 0.014 & 0.461 & 0.291 & 0.128 & 0.016 \\ 
NPMLE & 0.034 & 0.029 & 0.019 & 0.006 & 0.152 & 0.114 & 0.053 & 0.012 & 0.458 & 0.289 & 0.127 & 0.015 \\ 
Bayes & 0.027 & 0.022 & 0.012 & 0.001 & 0.144 & 0.105 & 0.046 & 0.003 & 0.449 & 0.283 & 0.118 & 0.006\\ \hline
\end{tabular*}
\label{tab:sim_homo_binary}
\end{table} 

Tables~\ref{tab:sim_homo_normal} and~\ref{tab:sim_homo_binary} report the results of the simulation.  Both SURE-PM and NPMLE show comparable performance, with modest gaps relative to the oracle Bayes estimator. This aligns with Theorem~\ref{theo:rate_homoscedastic} of this paper and~\citet{jiang2009general}, which together establish that both approaches achieve the minimax rate optimal empirical Bayes regret (up to logarithmic factors). 
These results suggest that while we do not advocate replacing the NPMLE with SURE-PM in this specific well-specified homoscedastic setting (where NPMLE enjoys computational advantages due to convexity), SURE-based methods remain competitive. In our main simulations of Section~\ref{sec:simulations} we see that the NPMLE can be highly suboptimal under misspecification.

We note that a simulation study similar to that reported in Table~\ref{tab:sim_homo_binary} was also conducted by~\citet[Section 6.1]{zhao2021simultaneous}, also see our discussion in Section~\ref{subsec:related_work} on related work. The main difference in implementation lies in the optimization method we use for computing $\hG$.\footnote{
\citet{zhao2021simultaneous} optimizes over priors of the form $G = \sum_{j=1}^K \delta_{\mu_j}/K$ with $\mu_j \in \RR$ and $K \in \mathbb N$. Then, optimization proceeds by coordinate descent optimizing over a single $\mu_j$ at a time. 
} Moreover, our simulation is accompanied by the sharp rate in Theorem~\ref{theo:rate_homoscedastic}, while~\citet{zhao2021simultaneous} does not provide a theoretical analysis of this estimator.

\section{Data fission details}
\label{sec:data_fission_details}

Recall that we use the data fission scheme
\begin{equation*}
\varepsilon_i^{(b)} \sim \mathrm{N}(0, \sigma_i^2), \qquad
\Zone = Z_i +  \varepsilon_i^{(b)}, \qquad
\Ztwo = Z_i - \varepsilon_i^{(b)},
\end{equation*}
which yields two independent data points with distribution
\begin{equation*}
\left. 
\begin{bmatrix} \Zone \\ \Ztwo \end{bmatrix} 
\, \middle| \, \mu_i, \sigma_i^2   \sim 
\mathrm{N} \left(
\begin{bmatrix} \mu_i \\ \mu_i \end{bmatrix}, 
\begin{bmatrix} 2\sigma_i^2 & 0 \\ 0 & 2\sigma_i^2 \end{bmatrix} \right) \right. 
\end{equation*}
For the $b$-th replicate, let $\hat{\mu}_i^{(b)}$ denote the $i^{\text{th}}$ shrinkage rule trained on the first data fold, $\boldZ^{(1, b)} :=(Z_1^{(1,b)},\ldots, Z_n^{(1,b)})$. The second data fold $\boldZ^{(2, b)} := (Z_1^{(2,b)}, \ldots, Z_n^{(2,b)})$ will be used for evaluation, which takes two steps.
\begin{enumerate}
\item Estimate the MSE in predicting $Z_i^{(2,b)}$ using $\hat{\boldmu}^{(b)}$. The MSE estimator will be denoted as $\displaystyle \widehat{\text{FMSE}}^{(b)}$ for the fission MSE. 
\begin{equation}\label{eqn:FMSE-b}
\widehat{\text{FMSE}}^{(b)} := \frac{1}{n} \sum_{i = 1}^n \bigl( \hat{\mu}_i^{(b)} - \Ztwo \bigr)^2.
\end{equation}
\item Estimate the Relative Improvement (RI) in the MSE between $\hat{\boldmu}^{(b)}$ and the MLE $\boldZ^{(1, b)}$ normalized by the error reduction of the NPMLE.  This describes the error reduction of an estimator compared to the baseline NPMLE estimator. The RI can be interpreted as the percent of the NPMLE error reduction achieved by the shrinkage rule $\hat{\boldmu}^{(b)}$.
\begin{equation} \label{eqn:relative-MSE-b}
\text{RI}^{(b)} := \frac{ \widehat{\text{FMSE}}^{(b)}_{\text{MLE}} - \widehat{\text{FMSE}}^{(b)} }{\widehat{\text{FMSE}}^{(b)}_{\text{MLE}} - \widehat{\text{FMSE}}^{(b)}_{\text{NPMLE}}}  
\end{equation}
for which the FMSE of the MLE and NPMLE are given by:
\begin{align}
\widehat{\text{FMSE}}^{(b)}_{\text{MLE}} &= \frac{1}{n} \sum_{i = 1}^n \bigl( \Zone - \Ztwo \bigr)^2,\\ 
\widehat{\text{FMSE}}^{(b)}_{\text{NPMLE}} &= \frac{1}{n} \sum_{i = 1}^n \bigl(  \hat{\mu}_{i \text{, NPMLE}}^{(b)} - \Ztwo \bigr)^2.
\end{align}
\end{enumerate}
The above describes the evaluation for a single replicate $b$, and we can repeat this process $B$ times and aggregate the results. In particular we are interested in the mean and standard error of the RI. These are the metrics reported in Table~\ref{tab:data_fission} (where the standard error only incorporates uncertainty due to the data fission scheme).
\begin{align}
\text{RI} &:=  \frac{1}{B} \sum_{i = 1}^B \text{RI}^{(b)},  \label{eq:ri} \\ 
\text{SE}_{\text{RI}} &:= \left[ \frac{1}{B(B-1)} \sum_{i = 1}^B \left( \text{RI}^{(b)} - \text{RI} \right)^2 \right]^{1/2} \label{eq:se_ri}.
\end{align}
We note that our metric is directly inspired by the analysis in~\citet{chen2024empirical}. \citet{chen2024empirical} normalizes performance by a different empirical Bayes baseline called ``Independent-Gauss'' that posits the working model $G(\cdot \mid X_i) = \mathrm{N}(u, \tau^2)$ for unknown $u$ and $\tau^2$ (that is, Independent-Gauss imposes both Gaussianity of the prior and that $\mu_i \indep (\sigma_i^2, X_i)$). \citet{chen2024empirical} defines the value of basic empirical Bayes as the MSE reduction using Independent-Gauss relative to the naive MLE. In the context of this paper, we prefer to think of the NPMLE as defining the value of basic empirical Bayes.

\section{Empirical process preliminaries}

We use the following version of Talagrand's concentration inequality for empirical processes; it does not require the summands to be identically distributed. See \cite{boucheron2013concentration} for an extensive discussion of related results. 
\begin{theo}[Theorem 3 of \cite{massart2000constants}]\label{theo:talagrand}
Let $b,n \ge 0$, 
suppose that $\xi_1,\ldots,\xi_n$ are independent random variables valued in the same
measurable space, and $\mathcal F$ is a separable family of $[-b,b]$-valued measurable functions
on this space. Suppose that either
\[ Z = \sup_{f \in \mathcal F} \left|\sum_{i = 1}^n (f(\xi_i) - \EE{f(\xi_i)})\right| \]
or
\[ Z = \sup_{f \in \mathcal F} \left|\sum_{i = 1}^n f(\xi_i)\right|. \]
Let
\[ \sigma^2 =  \sup_{f \in \mathcal F} \sum_{i = 1}^n \Var{f(\xi_i)}. \]
Then for any $\varepsilon,x > 0$,
$$\PP{Z \ge (1 + \varepsilon)\EE{Z} + \sigma \sqrt{8 x} + (2.5 + 32/\varepsilon)bx} \le \exp(-x)$$
and
$$\PP{Z \le (1 - \varepsilon)\EE{Z} - \sigma \sqrt{11 x} + (2.5 + 45/\varepsilon)bx} \le \exp(-x).
$$
\end{theo}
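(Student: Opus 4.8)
The plan is to establish the result by the entropy (``Herbst'') method, following the route of Ledoux and Massart rather than Talagrand's original induction via the convex-distance inequality; the entropy method is cleaner, well suited to extracting explicit constants, and handles non-identically-distributed summands with no extra effort. Write $Z = \sup_{f \in \mathcal F}\bigl|\sum_{i=1}^n g_i(\xi_i)\bigr|$ with $g_i = f - \mathbb{E}[f(\xi_i)]$ in the centered case and $g_i = f$ in the non-centered case, so that $|g_i|\le 2b$ (resp.\ $b$) pointwise. Everything reduces to bounding $\psi(\lambda) := \log \mathbb{E}\bigl[e^{\lambda(Z - \mathbb{E} Z)}\bigr]$ for $\lambda > 0$ (upper tail) and for $\lambda < 0$ (lower tail), after which the exponential Chebyshev inequality and an optimization over $\lambda$ produce the stated bounds.

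The first step is the tensorization (sub-additivity) of entropy: for any nonnegative $\Phi = \Phi(\xi_1,\dots,\xi_n)$,
$$
\mathrm{Ent}(\Phi) \;\le\; \sum_{i=1}^n \mathbb{E}\bigl[\mathrm{Ent}^{(i)}(\Phi)\bigr],
$$
where $\mathrm{Ent}^{(i)}$ is the entropy in $\xi_i$ only, the other coordinates frozen. Apply this with $\Phi = e^{\lambda Z}$, noting that $\mathrm{Ent}(e^{\lambda Z})/\mathbb{E}[e^{\lambda Z}] = \lambda \psi'(\lambda) - \psi(\lambda)$. The statistic $Z$ has bounded differences --- replacing $\xi_i$ by an independent copy moves $Z$ by at most $|g_i(\xi_i)-g_i(\xi_i')|$ --- and, crucially, a one-sided self-bounding property: if $Z_i$ denotes the statistic with the $i$-th term deleted, then $\sum_i (Z - Z_i)_+^2$ is controlled by $\sum_i \mathrm{Var}(f(\xi_i))$ evaluated at the maximizing $f$, hence by $\sigma^2$. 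Feeding these into the variational characterization of each conditional entropy, with a careful choice of comparison function as in Massart's argument, yields a sub-Gamma differential inequality for $\psi$, of the shape
$$
\lambda \psi'(\lambda) - \psi(\lambda) \;\le\; \frac{\lambda^2}{2}\,\bigl(\sigma^2 + 2b\,\mathbb{E} Z\bigr)\,\tau(\lambda b)
$$
for an explicit nondecreasing $\tau$ with $\tau(0)=1$; integrating from $0$ (using $\psi(0)=\psi'(0)=0$) gives $\psi(\lambda) \le v\lambda^2/\bigl(2(1-c\lambda)\bigr)$ with explicit $v,c$ built from $\sigma^2$, $b$ and $\mathbb{E} Z$.

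From this sub-Gamma bound the standard computation gives a tail bound $\mathbb{P}\bigl(Z \ge \mathbb{E} Z + \sqrt{2vx} + cx\bigr) \le e^{-x}$; the remaining content of the theorem is the bookkeeping that converts $v$, which naturally carries an $\mathbb{E} Z$ inside it, into the three-term form. Splitting $\sqrt{2x(\sigma^2 + 2b\,\mathbb{E} Z)}$ by the arithmetic--geometric inequality produces the $\sigma\sqrt{8x}$ term, the $(1+\varepsilon)\mathbb{E} Z$ term, and the $(2.5 + 32/\varepsilon)bx$ remainder, with $\varepsilon$ being exactly the free parameter of this split. The lower tail is obtained by the same scheme applied to $e^{-\lambda Z}$, $\lambda>0$, using the modified logarithmic Sobolev inequality in the opposite direction; this case is slightly more delicate because the self-bounding estimate is one-sided, which is why the lower-tail constants ($\sqrt{11x}$ and $45/\varepsilon$) come out worse than the upper-tail ones.

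The main obstacle is not the qualitative concentration --- sub-Gamma behavior of $Z$ is routine once the entropy method is in place --- but the quantitative control of constants, above all obtaining the variance term with the \emph{weak} variance $\sigma^2 = \sup_f \sum_i \mathrm{Var}(f(\xi_i))$ rather than the larger $\sup_f \sum_i \mathbb{E}[f(\xi_i)^2]$. This forces a near-optimal choice of the comparison function in each conditional-entropy bound and a tight integration of the resulting differential inequality; securing a small coefficient on $bx$ and a clean $(1+\varepsilon)$ prefactor on $\mathbb{E} Z$ is precisely where the refinement over the crude entropy-method bound is needed.
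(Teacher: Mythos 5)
The paper does not prove this result: it is quoted verbatim as Theorem 3 of Massart (2000) and used as a black-box tool, so there is no ``paper's own proof'' to compare against beyond the citation. Your proposal correctly identifies the route that the cited source itself takes --- the entropy (Herbst) method with tensorization, a self-bounding estimate, and a sub-Gamma differential inequality for the log-moment-generating function, with the lower tail handled by $\lambda \mapsto -\lambda$ and paying for the one-sidedness --- so at the level of strategy you are on the right track, and the final $\varepsilon$-split of $\sqrt{2x(\sigma^2 + 2b\,\mathbb{E}Z)}$ into $\sigma\sqrt{8x}$, $(1+\varepsilon)\mathbb{E}Z$, and a $bx/\varepsilon$ remainder by arithmetic--geometric is exactly the right bookkeeping.

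The gap is that what you have written is a roadmap, not a proof of \emph{this} theorem, because the theorem's entire content is the explicit numbers $8$, $11$, $2.5$, $32/\varepsilon$, and $45/\varepsilon$, and those are never derived. The sentence ``with a careful choice of comparison function as in Massart's argument'' is circular --- it cites the very argument you are supposed to be reconstructing --- and the function $\tau$ in your claimed differential inequality $\lambda\psi'(\lambda) - \psi(\lambda) \le \tfrac{\lambda^2}{2}(\sigma^2 + 2b\,\mathbb{E}Z)\tau(\lambda b)$ is introduced without being pinned down, so the integration to $\psi(\lambda) \le v\lambda^2/(2(1-c\lambda))$ cannot actually be performed, and the values of $v$ and $c$ that would emerge (and hence the constants in the tail bound) are simply asserted. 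Two concrete points that would need to be addressed to close this: first, you note $|g_i| \le 2b$ in the centered case, yet the theorem's envelope constant is $b$ in both cases, so you must account for how the factor of $2$ is absorbed (or show the argument never needs the two-sided envelope at full strength); second, to obtain the \emph{weak} variance $\sigma^2 = \sup_f \sum_i \mathrm{Var}(f(\xi_i))$ rather than $\sup_f \sum_i \mathbb{E}[f(\xi_i)^2]$ you rightly flag this as the crux, but identifying it as the hard step does not discharge it --- the specific variational bound on each conditional entropy that makes only the variance (and not the second moment) appear is precisely what one must exhibit, and it is absent.
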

We also state a convenient application of this result combined with truncation.
\begin{lemm}[Special case of Lemma 4.10 of \cite{chen2022online}]\label{lem:truncation-shift}
Suppose that $X$ is a mean-zero random variable and $\sigma_q = \|X\|_{L_q} < \infty$ for some $q \ge 2$. Then
\[ |\EE{X \ind(|X| < s|)} \le 2s (\sigma_q/s)^q. \]
\end{lemm}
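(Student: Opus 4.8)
The plan is to use the mean-zero hypothesis to convert the truncated first moment into a tail quantity, and then control that tail by the $L_q$ norm through an elementary Markov-type comparison. Concretely, since $\EE{X}=0$ we have $\EE{X\ind(|X|<s)} = \EE{X} - \EE{X\ind(|X|\ge s)} = -\EE{X\ind(|X|\ge s)}$, so it suffices to bound $\abs{\EE{X\ind(|X|\ge s)}} \le \EE{|X|\ind(|X|\ge s)}$.

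Next I would note that on the event $\cb{|X|\ge s}$ we have $(|X|/s)^{q-1}\ge 1$, using $q\ge 2$ so that $q-1\ge 1>0$; hence $|X|\ind(|X|\ge s) \le (|X|^q/s^{q-1})\ind(|X|\ge s) \le |X|^q/s^{q-1}$ pointwise. Taking expectations gives $\EE{|X|\ind(|X|\ge s)} \le s^{-(q-1)}\EE{|X|^q} = s^{-(q-1)}\sigma_q^q = s(\sigma_q/s)^q$. Combining the two estimates yields $\abs{\EE{X\ind(|X|<s)}} \le s(\sigma_q/s)^q \le 2s(\sigma_q/s)^q$, which is the stated bound (in fact with constant $1$ in place of $2$; the stated constant leaves harmless slack that is all we need downstream).

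There is no real obstacle here: the whole argument is a one-line truncation estimate, and the only thing worth recording is that the weaker stated constant suffices for the application, so no sharpening of the $\sigma\sqrt{8x}$-type terms in Talagrand's inequality or elsewhere is affected. It is also worth noting that the same computation goes through verbatim if $\ind(|X|<s)$ is replaced by $\ind(|X|\le s)$, or if one centers at a truncated mean rather than assuming $\EE{X}=0$ outright — which is why the cited source (Lemma 4.10 of \cite{chen2022online}) is phrased in greater generality, and why we only quote the special case needed here.
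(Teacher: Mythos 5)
Your proof is correct, and since the paper cites this lemma from Chen (2022) without reproducing a proof, you have supplied a clean standalone argument; you even obtain the sharper constant $1$ in place of the stated $2$. The only (implicit and harmless) assumption is $s>0$, under which every step — the mean-zero flip to a tail integral, the pointwise bound $|X|\ind(|X|\ge s)\le |X|^q/s^{q-1}$ from $q-1\ge 1$, and the expectation — is elementary and valid.
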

\begin{lemm}[Section 2.5 and 2.7 of \cite{vershynin2018high}]\label{lemm:moment-bound}
If $X$ is a mean-zero random variable which is $1$-subgaussian, then $\|X\|_{L_q} \lesssim \sqrt{q}$. If instead $X$ is $1$-subexponential, then $\|X\|_{L_q} \lesssim q$.
\end{lemm}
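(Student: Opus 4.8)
The plan is to invoke the standard equivalence between tail bounds and moment growth for subgaussian, respectively subexponential, random variables --- essentially the content of the cited sections of \cite{vershynin2018high} --- and to record the short argument for completeness. First I would fix the convention (consistent with~\eqref{eq:subgaussian}) that $X$ being $1$-subgaussian means $\EE{e^{tX}} \le e^{t^2/2}$ for all $t \in \RR$, and $1$-subexponential means $\EE{e^{tX}} \le e^{t^2/2}$ for all $\abs{t} \le 1$; the other equivalent definitions differ only by absolute constants, which get absorbed into $\lesssim$. It suffices to treat $q \ge 2$, since for $q \in [1,2)$ we have $\|X\|_{L_q} \le \|X\|_{L_2} \lesssim 1 \lesssim \sqrt{q}$ by monotonicity of $L_q$ norms.

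\emph{Subgaussian case.} I would first derive the two-sided tail bound from Chernoff's inequality: $\PP{X \ge t} \le e^{-\lambda t}\EE{e^{\lambda X}} \le e^{-\lambda t + \lambda^2/2}$, optimized at $\lambda = t$, gives $\PP{X \ge t} \le e^{-t^2/2}$, and the same bound applied to $-X$ yields $\PP{\abs{X} \ge t} \le 2 e^{-t^2/2}$. Then I would combine the layer-cake identity with the substitution $u = t^2/2$:
\[
\EE{\abs{X}^q} = q\int_0^\infty t^{q-1}\PP{\abs{X} \ge t}\,dt \;\le\; 2q\int_0^\infty t^{q-1} e^{-t^2/2}\,dt \;=\; q\,2^{q/2}\,\Gamma(q/2).
\]
Finally, Stirling's bound $\Gamma(x) \lesssim (x/e)^x$ for $x \ge 1$ gives $\EE{\abs{X}^q} \lesssim q\,(q/e)^{q/2}$, and taking $q$-th roots (using that $q^{1/q}$ and the absorbed constants raised to the power $1/q$ stay uniformly bounded) yields $\|X\|_{L_q} \lesssim \sqrt{q}$.

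\emph{Subexponential case.} The argument is identical apart from the tail bound. From $\EE{e^{tX}} \le e^{t^2/2}$ on $\abs{t} \le 1$, Chernoff with $\lambda = t$ for $t \le 1$ and $\lambda = 1$ for $t \ge 1$ gives $\PP{X \ge t} \le e^{1/2} e^{-t/2}$ for all $t \ge 0$, and hence (again treating $-X$ as well) $\PP{\abs{X} \ge t} \le 2 e^{1/2} e^{-t/2}$. Then, via the same layer-cake identity and the substitution $u = t/2$,
\[
\EE{\abs{X}^q} \;\le\; 2 e^{1/2}\, q\int_0^\infty t^{q-1} e^{-t/2}\,dt \;=\; 2 e^{1/2}\, q\, 2^q\,\Gamma(q) \;\lesssim\; q\,(2q/e)^q,
\]
and taking $q$-th roots gives $\|X\|_{L_q} \lesssim q$.

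There is no substantive obstacle here; the only thing needing care is bookkeeping --- pinning down the precise meaning of ``$1$-subgaussian''/``$1$-subexponential'' so the constants hidden in $\lesssim$ are unambiguous, and, in the subexponential case, handling the two-regime tail (Gaussian-like near the origin, exponential-like in the tail) rather than a single clean exponential tail. As an alternative route, if one instead adopts Vershynin's Orlicz-norm conventions ($\EE{e^{X^2}} \le 2$, resp.\ $\EE{e^{\abs{X}}} \le 2$), the moment bounds follow even more directly by expanding the exponential, since then $\EE{\abs{X}^{2k}}/k! \le 2$ and $\EE{\abs{X}^{k}}/k! \le 2$, which combined with $\|X\|_{L_q} \le \|X\|_{L_{2\lceil q/2 \rceil}}$ and Stirling's formula again gives $\|X\|_{L_q} \lesssim \sqrt{q}$, resp.\ $\|X\|_{L_q} \lesssim q$.
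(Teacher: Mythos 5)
Your proof is correct and is essentially the standard argument from the cited sections of Vershynin's book; the paper itself offers no proof beyond the citation, so your write-up simply supplies what the reference contains. Both branches (Chernoff tail bound, layer-cake identity, substitution to a Gamma integral, Stirling) are carried out correctly, including the two-regime tail in the subexponential case, and the hidden constants are harmless since the conclusion is only up to $\lesssim$.

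One small note on presentation rather than correctness: the step $\Gamma(x) \lesssim (x/e)^x$ for $x \ge 1$ is fine (since $\Gamma(x) \sim \sqrt{2\pi/x}\,(x/e)^x$ and $\sqrt{2\pi/x}$ is bounded for $x \ge 1$), but it is worth stating the $\sqrt{2\pi/x}$ factor explicitly or quoting the inequality $\Gamma(x) \le C\,x^{x-1/2}e^{-x}$ so a reader does not worry about the direction of the Stirling comparison. The alternative Orlicz-norm route you sketch at the end is the cleaner bookkeeping if one adopts Vershynin's $\psi_2$/$\psi_1$ conventions outright; either route is adequate here.
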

\begin{lemm}\label{lemm:sub-talagrand}
Suppose that
\[ Z = \sup_{a \in \mathcal A} \left|\sum_{i = 1}^n a_i \xi_i \right|\]
where $\xi_i$ is independent, mean zero, $1$-sub-Gaussian, and $|a_i| \le 1$. Let
\[ \sigma^2 =  \sup_{a \in \mathcal A} \|a\|_2^2. \]
Then for any $x \ge 1, \varepsilon > 0$, with probability at least $1 - e^{-x}$
\[ Z_s - (1 + \varepsilon)\EE{Z} \lesssim  \sigma \sqrt{x} + ( \varepsilon + 1/\varepsilon)\sqrt{\log(n)}x^{3/2}. \]
If instead $\xi$ is assumed to be $1$-sub-exponential, then
\[ Z_s - (1 + \varepsilon)\EE{Z} \lesssim  \sigma \sqrt{x} + ( \varepsilon + 1/\varepsilon)\log(n)x^2. \]
\end{lemm}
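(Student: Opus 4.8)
The obstacle is that Talagrand's inequality (Theorem~\ref{theo:talagrand}) is stated for uniformly bounded summands, whereas $a_i\xi_i$ is only sub-Gaussian (resp.\ sub-exponential); so the plan is a truncation argument. Fix a truncation level $s \ge 1$, to be chosen below as a function of $n$ and $x$, put $\xi_i^s := \xi_i\ind(\abs{\xi_i} < s)$, and let $Z_s := \sup_{a \in \mathcal A}\abs{\sum_{i=1}^n a_i \xi_i^s}$ be the correspondingly truncated process (on the event $\cb{\max_i \abs{\xi_i} < s}$ it coincides with $Z$). First I would apply the uncentered form of Theorem~\ref{theo:talagrand} to $Z_s$: since $\abs{a_i}\le 1$ the $i$-th summand $a_i\xi_i^s$ takes values in $[-s,s]$, so we may take $b = s$; and since $\EE{(\xi_i^s)^2}\le\EE{\xi_i^2}\lesssim 1$, the variance proxy obeys $\sup_{a\in\mathcal A}\sum_i \Var{a_i\xi_i^s}\le\sup_{a\in\mathcal A}\sum_i a_i^2\Var{\xi_i^s}\lesssim\sigma^2$. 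Theorem~\ref{theo:talagrand} then gives that, with probability at least $1-e^{-x}$, $Z_s \le (1+\varepsilon)\EE{Z_s} + O(\sigma\sqrt{x}) + (2.5+32/\varepsilon)\,sx$.

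Two things remain: (i) replacing $\EE{Z_s}$ by $\EE{Z}$, and (ii) choosing $s$. For (i), I would use $Z_s \le Z + \sum_i \abs{\xi_i}\ind(\abs{\xi_i}\ge s)$ together with the moment estimates of Lemmas~\ref{lem:truncation-shift} and~\ref{lemm:moment-bound}: since $\EE{\abs{\xi_i}\ind(\abs{\xi_i}\ge s)}\le\sqrt{\EE{\xi_i^2}\,\PP{\abs{\xi_i}\ge s}}$ is exponentially small in $s^2$ in the sub-Gaussian case (and in $s$ in the sub-exponential case), one gets $\EE{Z_s}\le\EE{Z}+o(1)$, and this correction is absorbed into the stated bound. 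For (ii), in the sub-Gaussian case I would take $s\asymp\sqrt{x\log n}$; this makes $(2.5+32/\varepsilon)\,sx\asymp(\varepsilon+1/\varepsilon)\sqrt{\log n}\,x^{3/2}$, exactly the advertised order, and at the same time $\PP{\max_i\abs{\xi_i}\ge s}\le 2ne^{-s^2/2}\lesssim e^{-x}$, so that on an event of probability $\ge 1-e^{-x}$ one has $Z_s = Z$ and the bound transfers verbatim to $Z$ (union bound, with constants relabeled). The sub-exponential case is identical except that $\PP{\abs{\xi_i}\ge s}\lesssim e^{-s}$ forces the larger choice $s\asymp x\log n$, which turns the bounded-summand term into $(\varepsilon+1/\varepsilon)\log n\,x^2$.

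The main obstacle --- and essentially the only delicate point --- is the tension in step (ii): the truncation level must be large enough that, with probability $1-e^{-x}$, truncation is lossless, yet small enough that the Talagrand contribution $(2.5+32/\varepsilon)\,sx$ stays at the claimed rate. Balancing these two requirements is exactly what pins $s$ down at $\sqrt{x\log n}$ (resp.\ $x\log n$) and thereby produces the $x^{3/2}$ (resp.\ $x^2$) exponent in the conclusion. Everything else --- bounding the variance proxy by a multiple of $\sigma^2$, controlling the mean shift under truncation, and absorbing lower-order terms into the two displayed summands --- is routine once Theorem~\ref{theo:talagrand} and Lemmas~\ref{lem:truncation-shift}--\ref{lemm:moment-bound} are in hand.
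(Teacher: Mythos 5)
Your proposal is correct and follows essentially the same route as the paper: truncate at a level $s$ chosen by balancing the union-bound event $\{\max_i\abs{\xi_i}<s\}$ against the boundedness term $(2.5+32/\varepsilon)sx$ in Talagrand's inequality, control the expectation shift $\EE{Z_s}-\EE{Z}$ via tail moments, and transfer the bound back to $Z$ on the high-probability event $Z_s=Z$. The only cosmetic differences are that the paper clips $\rho_s(\xi_i)=\min(\max(\xi_i,-s),s)$ rather than hard-truncating with an indicator, and controls $\EE{\abs{\xi_i}\ind(\abs{\xi_i}>s)}$ via $L_q$-moment bounds (parametrizing $s=2C\sqrt{q}$ and then choosing $q=\Theta(x\log n)$) instead of your Cauchy--Schwarz route; both give the same order.
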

\begin{proof}
We first give the proof in the sub-Gaussian case. 
Let $s \ge 0$, define $\rho_s(x) = \min(\max(x,-s),s)$, and define
\[ Z_s = \sup_{a \in \mathcal A} \left|\sum_{i = 1}^n a_i \rho_s(\xi_i) \right|. \]
By sub-Gaussian concentration and the union bound, we have that with probability at least $1 - e^{-x}$, provided that $s \ge 2\sqrt{x\log(n)}$,
\[ Z_s = Z.\]
Also, observe by Lemma~\ref{lem:truncation-shift} that
\[ \EE{|Z_s - Z|} \le \sum_{i = 1}^n |a_i| \EE{|\xi_i| \ind(|\xi_i| > s)} \le 2ns \max_i(\|\xi_i\|_{L_q}/s)^q \]
and by Theorem~\ref{theo:talagrand}, with probability at least $1 - e^{-x}$,
\begin{align*} 
Z_s 
&\le  (1 + \varepsilon)\EE{Z_s} + \sigma \sqrt{8 x} + (2.5 + 32/\varepsilon)sx \\
&= (1 + \varepsilon)(\EE{Z} + \EE{Z - Z_s}) + \sigma \sqrt{8 x} + (2.5 + 32/\varepsilon)sx \\
&\le (1 + \varepsilon)(\EE{Z}) + (1 + \varepsilon)2ns\max_i (\|\xi\|_{L_q}/s)^q + \sigma \sqrt{8 x} + (2.5 + 32/\varepsilon)sx.
\end{align*}
If $\xi$ is $1$-sub-Gaussian, then by Lemma~\ref{lemm:moment-bound} $\|\xi\|_{L_q} \le C \sqrt{q}$ for some absolute constant $C \ge 1$, so letting $s = 2C\sqrt{q}$ for some $q \ge 2$ we find
\begin{align*} 
Z_s 
&\le (1 + \varepsilon)(\EE{Z}) + (1 + \varepsilon)n 2C\sqrt{q}2^{-q} + \sigma \sqrt{8 x} + (2.5 + 32/\varepsilon)2C\sqrt{q}x \\
&\le (1 + \varepsilon)(\EE{Z}) + \sigma \sqrt{8 x} + (4 + \varepsilon + 32/\varepsilon)2C\sqrt{\log(n)}x^{3/2}
\end{align*}
by taking $q = \Theta(x\log(n))$ for some $x \ge 1$. Combining the two events mentioned above via the union bound establishes the result.

The result in the sub-exponential case follows analogously.
\end{proof}
We also use a high-probability version of the sub-Gaussian comparison theorem, which is a consequence of Talagrand's majorizing measures theorem.
\begin{theo}[Exercise 8.6.5 of \cite{vershynin2018high}]
There exists an an absolute constant $C > 0$ so that if $(X_t)_{t \in T}$ is a mean-zero separable stochastic process and $(Y_t)_{t \in T}$ is a separable Gaussian process on the same space such that $X_t - X_s$ is $\Var{Y_t - Y_s}$-sub-Gaussian for all $s,t \in T$, then with probability at least $1 - \delta$,
\begin{equation} \frac{1}{C}\sup_{t \in T} X_t \le \EE{\sup_{t \in T} Y_t} + \sqrt{\sup_t \Var{Y_t} \log(2/\delta)}. \label{eq:majorizing}
\end{equation}
\end{theo}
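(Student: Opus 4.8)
I would deduce this from the generic chaining machinery, of which it is essentially a packaged corollary --- it is the high-probability upgrade of Talagrand's comparison inequality. Write $d$ for the canonical pseudometric of the Gaussian process, $d(s,t)^2 := \Var{Y_t - Y_s}$. By hypothesis $\|X_t - X_s\|_{\psi_2} \lesssim d(s,t)$, so $(X_t)_{t\in T}$ is a sub-Gaussian process with respect to $d$. Fix a base point $t_0 \in T$ and pass to $X_t - X_{t_0}$: this process is still mean-zero, has the same increments, vanishes at $t_0$, and (since $\EE{X_{t_0}}=0$) the expectation of its supremum agrees with $\EE{\sup_t X_t}$. Talagrand's generic chaining tail bound (e.g.\ the chaining tail bounds in \citet[Chapter 8]{vershynin2018high}) then gives, for every $u \ge 0$,
\begin{equation*}
\PP{\sup_{t \in T}(X_t - X_{t_0}) \ge C_0\big(\gamma_2(T,d) + u\, \mathrm{diam}(T,d)\big)} \le 2e^{-u^2},
\end{equation*}
where $\gamma_2(T,d)$ is Talagrand's $\gamma_2$-functional and $C_0$ is absolute.

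The next step replaces the two deterministic quantities on the right by the Gaussian quantities in the statement. Since $(Y_t)$ is Gaussian with canonical metric exactly $d$, the lower-bound (``majorizing measures'') direction of Talagrand's theorem gives $\gamma_2(T,d) \lesssim \EE{\sup_{t \in T} Y_t}$. For the diameter, $\mathrm{diam}(T,d)^2 = \sup_{s,t}\Var{Y_t - Y_s} \le \sup_{s,t}\big(\sqrt{\Var{Y_t}} + \sqrt{\Var{Y_s}}\big)^2 \le 4\sup_t\Var{Y_t}$, so $\mathrm{diam}(T,d) \le 2\sqrt{\sup_t\Var{Y_t}}$. Plugging these in and choosing $u = \sqrt{\log(2/\delta)}$ (so that $2e^{-u^2}=\delta$) yields, with probability at least $1-\delta$,
\begin{equation*}
\sup_{t \in T}(X_t - X_{t_0}) \le C_1\Big(\EE{\sup_{t \in T}Y_t} + \sqrt{\sup_t\Var{Y_t}\,\log(2/\delta)}\Big)
\end{equation*}
for an absolute constant $C_1$, which is the claimed inequality with $C=C_1$, after identifying $\sup_t X_t$ with the pinned supremum $\sup_t(X_t-X_{t_0})$ --- the point at which mean-zero-ness is used, and the one piece of bookkeeping handled by the cited reference.

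The entire content sits inside two black boxes: (i) the generic chaining tail bound for processes with sub-Gaussian increments, and (ii) the hard direction of Talagrand's majorizing measures theorem (that a Gaussian process's expected supremum dominates the $\gamma_2$-functional of its canonical metric). These are the only nontrivial ingredients; everything else is the elementary diameter estimate and the choice of $u$. Accordingly I would not expect a genuine obstacle once (i) and (ii) are granted --- the one thing to be careful about is that the chaining bound controls the supremum of the process relative to a base point rather than the raw supremum, which is precisely why the mean-zero hypothesis is needed.
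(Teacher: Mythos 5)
Your route --- the generic chaining tail bound for the sub-Gaussian-increment process, the lower bound of Talagrand's majorizing measures theorem relating $\gamma_2(T,d)$ to the Gaussian width, and the elementary diameter estimate --- is the standard derivation, and it is correct up to the point where it yields the high-probability bound on $\sup_t(X_t - X_{t_0})$. The step you dismiss as ``bookkeeping handled by the cited reference'' is, however, the one genuine gap, and it is not a mere identification. The pinned supremum $\sup_t(X_t-X_{t_0})$ and the raw supremum $\sup_t X_t$ differ as random variables by $X_{t_0}$. Mean-zero-ness of $X$ gives $\EE{\sup_t X_t} = \EE{\sup_t(X_t-X_{t_0})}$, which is exactly what the in-expectation form of the comparison inequality (Vershynin's Theorem 8.6.1) needs, but it does not equate the two suprema almost surely, and the increment hypothesis gives no tail control whatsoever on $X_{t_0}$ itself. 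As literally written the statement even fails: take $Y \equiv 0$ and $X_t \equiv W$ for a non-degenerate mean-zero $W$; the increment hypothesis holds vacuously ($X_t - X_s \equiv 0$), both terms on the right of \eqref{eq:majorizing} vanish, yet the conclusion would force $W \le 0$ with probability at least $1-\delta$.

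What your argument actually proves is the bound on $\sup_t(X_t - X_{t_0})$ for a fixed base point, which is the version one wants. It suffices here because in the only place \eqref{eq:majorizing} is invoked --- controlling $Z_1(r)$ and $Z_2(r)$ in the proof of Theorem~\ref{theo:reg} --- the processes are suprema over balls centered at the oracle parameters and evaluate to zero at the center, so they are already pinned and $\sup_t X_t = \sup_t(X_t - X_{t_0})$ holds exactly. To make the stated theorem correct as a stand-alone claim, the conclusion should read $\sup_t(X_t - X_{t_0})$ for an arbitrary fixed $t_0 \in T$, or one should add the hypothesis that $X_{t_0} = 0$ almost surely for some $t_0 \in T$; mean-zero-ness alone does not do this job.
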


\begin{lemm}[Symmetrization]\label{lemm:symmetrization_argument}
Let $W_i$, $i = 1, \ldots, n$ be $n$ independent random variables from some distribution. Then, for any set $\mathcal{H}$ of functions $h(\cdot)$ such that $\EE{\abs{h(W_i)}} < \infty$, 
\begin{equation}\label{eq:radamacher_inequality}
    \begin{aligned}
        \EE{\sup_{h \in \mathcal{H}}\abs{\frac{1}{n} \sum_{i=1}^n \p{h(W_i) - \EE{h(W_i)}}}} \le 2 \; \EE{\sup_{h \in \mathcal{H}} \abs{\frac{1}{n} \sum_{i=1}^n \varepsilon_i h(W_i)}},  
    \end{aligned}
\end{equation}
where $\varepsilon_i$ are iid Rademacher random variables (that is, $\varepsilon_i = \pm 1$ with probability $1/2$). 
\end{lemm}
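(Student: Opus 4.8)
The plan is to run the classical ghost-sample (symmetrization) argument. First I would introduce an independent copy $W_1', \dots, W_n'$ of $W_1, \dots, W_n$, drawn from the same distribution and independent of the $W_i$ (and, later, of the Rademacher variables). Since each $W_i'$ has the same law as $W_i$ and is independent of $W_1,\dots,W_n$, for every $h \in \mathcal{H}$ we have $\EE{h(W_i)} = \EE[W']{h(W_i') \mid W}$, and hence $h(W_i) - \EE{h(W_i)} = \EE[W']{h(W_i) - h(W_i') \mid W}$. This rewrites the centering of the empirical process as a conditional expectation of a difference over the ghost sample.

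Next I would pull this conditional expectation outside the supremum and the absolute value via Jensen's inequality — both $x \mapsto \abs{x}$ and the pointwise supremum $\sup_{h \in \mathcal{H}}$ of affine functionals are convex — to obtain
\[ \EE{\sup_{h \in \mathcal{H}}\abs{\frac{1}{n}\sum_{i=1}^n\p{h(W_i) - \EE{h(W_i)}}}} \;\le\; \EE{\sup_{h \in \mathcal{H}}\abs{\frac{1}{n}\sum_{i=1}^n\p{h(W_i) - h(W_i')}}}, \]
where the right-hand expectation is over both $W$ and $W'$. Then I would symmetrize: the random vector $\p{h(W_i) - h(W_i')}_{i=1}^n$ has a law invariant under flipping the sign of any subset of coordinates, because swapping $W_i \leftrightarrow W_i'$ leaves the joint distribution of $(W,W')$ unchanged; therefore, introducing iid Rademacher $\varepsilon_i$ independent of $(W,W')$, the right-hand side is unchanged if we insert the $\varepsilon_i$, and a triangle inequality on $\varepsilon_i h(W_i) - \varepsilon_i h(W_i')$ together with the fact that $W$ and $W'$ are identically distributed (so the two resulting terms are equal) yields the bound $2\,\EE{\sup_{h \in \mathcal{H}}\abs{\frac{1}{n}\sum_{i=1}^n \varepsilon_i h(W_i)}}$, which is exactly~\eqref{eq:radamacher_inequality}.

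Every step here is elementary, so there is no genuine obstacle; the only subtlety worth flagging is measurability — the suprema over $\mathcal{H}$ must be measurable for the expectations to be well-defined and for Fubini/Jensen to be applied, which is precisely why $\mathcal{H}$ is assumed separable. The integrability hypothesis $\EE{\abs{h(W_i)}} < \infty$ is what makes $\EE{h(W_i)}$ (and hence the centered process) well-defined to begin with.
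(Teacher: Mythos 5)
Your proof is correct: it is the classical ghost-sample symmetrization argument (introduce an independent copy $W'$, rewrite the centering as $\EE[W']{h(W_i) - h(W_i') \mid W}$, pull the conditional expectation out by Jensen, insert Rademacher signs using the distributional invariance of $h(W_i) - h(W_i')$ under sign flips, then apply the triangle inequality). The paper states this lemma without proof, as it is a standard textbook result, so there is no paper argument to compare against — but your route is precisely the canonical one (see, e.g., \citet[Proposition 4.11]{wainwright2019highdimensional}). One small inaccuracy in your closing remark: the lemma as stated does \emph{not} assume $\mathcal{H}$ is separable — the only stated hypothesis is integrability — so you should not attribute the measurability fix to a separability assumption that isn't there. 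In practice the paper (like most) takes measurability of the suprema for granted, or one works with outer expectations; your argument is otherwise fine as written.
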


\section{Proofs for Supplements~\ref{sec:uniform_convergence_rademacher} and~\ref{subsec:basic_ineq}}

\subsection{Proof of Proposition~\ref{prop:uniform_control_rademacher} \label{subsec:proof_prop_uniform_control_rademacher}}

\begin{proof}
Let $G_\star \in \mathcal{G}$ and $\widehat{G} \in \argmin \cb{\SURE(G): G \in \mathcal{G}}$, which implies that: 
\begin{equation}
\begin{aligned}
    & \SURE(\widehat{G}) \le \SURE(G_\star), \\
    \implies & \frac{1}{n} \sum_{i=1}^n \sqb{\sigma_i^2 + \sigma_i^4 \cb{\score_{\widehat{G}}(W_i)^2 + 2\frac{\partial}{\partial z} \score_{\widehat{G}}(W_i)}} \\
    & \;\;\; \le \frac{1}{n} \sum_{i=1}^n \sqb{\sigma_i^2 + \sigma_i^4 \cb{\score_{G_\star}(W_i)^2 + 2\frac{\partial}{\partial z} \score_{G_\star}(W_i)}}, \\
    \implies & \frac{1}{n} \sum_{i=1}^n h_{\widehat{G}}(W_i) \le \frac{1}{n} \sum_{i=1}^n h_{G_\star}(W_i), 
\end{aligned}\label{eq:basic_inequality_fixedX}
\end{equation}
where $h_G(w) = \sigma^4 \p{ \score_{G}(w)^2  + 2\frac{\partial}{\partial z}\score_{G}(w)}$, for $G \in \mathcal{G}$. 

Now, using Stein's lemma, we can rewrite $\EE{\Regret(\widehat{G}, G_\star)}$ as: 
$$
\begin{aligned}
    & \EE{\Regret(G_{\star}, \hG)} \\
    & \;\;\; = \EE{\p{\mu - (Z + \sigma^2\score_{\hG}(W))}^2 - \p{\mu - (Z + \sigma^2\score_{G_\star}(W))}^2} \\
    & \;\;\; = \EE{\sigma^4 s_{\hG}(W)^2 + 2 \sigma^2 (Z - \mu) \score_{\hG}(W) - \sigma^4 s_{G_\star}(W)^2 - 2 \sigma^2 (Z - \mu) \score_{G_\star}(W)} \\
    & \;\;\; = \EE{\sigma^4 s_{\hG}(W)^2 + 2 \sigma^4 \frac{\partial}{\partial z} s_{\hG}(W)} - \EE{\sigma^4 s_{G_\star}(W)^2 + 2 \sigma^4 \frac{\partial}{\partial z} s_{G_\star}(W)}, 
\end{aligned}
$$
since $\EE{\mu \mid W} = Z + \sigma^2 \score_G(W)$. 

We note that, $X_i$'s are fixed here and $X \sim \text{Unif}(X_1, \ldots, X_n)$, so if we take expectation over $X$ in $\Regret(G_{\star}, \hG)$, then given fixed $X_i$'s, 
$$
\begin{aligned}
\EE{h_G(W)} = \frac{1}{n} \sum_{i=1}^n \EE{h_G(Z_i, X_i)} = \frac{1}{n} \sum_{i=1}^n \EE{h_G(W_i)}.
\end{aligned}
$$
Then, using~\eqref{eq:basic_inequality_fixedX} and Lemma~\ref{lemm:symmetrization_argument}, we have: 
$$
\begin{aligned}
    & \EE{\Regret(G_{\star}, \hG)} \\
    & \;\; = \EE{\sigma^4 s_{\hG}(W)^2 + 2 \sigma^4 \frac{\partial}{\partial z} s_{\hG}(W)} - \EE{\sigma^4 s_{G_\star}(W)^2 + 2 \sigma^4 \frac{\partial}{\partial z} s_{G_\star}(W)} \\
    & \;\; \le 2 \EE{\sup_{G \in \mathcal{G}} \abs{\EE{h_{G}(W)} - \frac{1}{n} \sum_{i=1}^n h_{G}(W_i)}} \\
    & \;\; \le 4 \EE{\sup_{G \in \mathcal{G}} \abs{\frac{1}{n} \sum_{i=1}^n \varepsilon_i h_G(W_i)}} \\
    & \;\; = 4 \poprademacher\!\!\p{\cb{h_G(\cdot)\,:\, h_G(w) = \sigma^4 \p{ \score_{G}(w)^2  + 2\frac{\partial}{\partial z}\score_{G}(w)}, \,\; G \in \mathcal{G}}}. 
\end{aligned}
$$
The argument for score matching is analogous and omitted.
\end{proof}

\subsection{Proof of Proposition~\ref{prop:uniform_control_rademacher_compound}}

\begin{proof} 
First, we rewrite $\SURE(G)$ in a different way: 
\begin{equation*}
    \begin{aligned}
        & \SURE(G) \\
        & \;\; = \frac{1}{n} \sum_{i=1}^n \sqb{\sigma^2_i + \sigma_i^4 \p{s_G(W_i)^2 + 2 \frac{\partial}{\partial z} s_G(W_i)}} \\
        & \;\; = \frac{1}{n} \sum_{i=1}^n \sqb{\sigma^2_i + \cb{(Z_i - \mu_i + \sigma_i^2 s_G(W_i)) - (Z_i - \mu_i)}^2 + 2 \frac{\partial}{\partial z} s_G(W_i)} \\
        & \;\; = \frac{1}{n} \sum_{i=1}^n \sqb{\sigma^2_i - (Z_i - \mu_i)^2 + (Z_i - \mu_i + \sigma_i^2 s_G(W_i))^2 - 2 (Z_i - \mu_i) \sigma_i^2 s_G(W_i) + 2 \frac{\partial}{\partial z} s_G(W_i)}. 
    \end{aligned}
\end{equation*}
From the above and using the fact that $\SURE(\hG) \le \SURE(G)$ for any $G \in \mathcal{G}$, we can write: 
\begin{equation*}  \label{eq:compound_basic_inequality}
    \begin{aligned}
        \frac{1}{n} \sum_{i=1}^n &\cb{\p{Z_i + \sigma_i^2 \score_{\widehat{G}}(W_i) - \mu_i}^2} \,-\, \frac{1}{n} \sum_{i=1}^n\cb{\p{Z_i + \sigma_i^2s_G(W_i) - \mu_i}^2} \\ 
        \leq & \;\; \frac{2}{n} \sum_{i=1}^n\sqb{- \sigma_i^2\p{Z_i - \mu_i}\cb{\score_G(W_i) - \score_{\widehat{G}}(W_i)} +  \sigma_i^4 \frac{\partial}{\partial z} \cb{\score_G(W_i) - \score_{\widehat{G}}(W_i)}} \\
        \leq & \; \; 2 \abs{\frac{1}{n} \sum_{i=1}^n h_{\hG}(W_i)} \\
        \leq & \; \; 2 \sup \cb{\abs{\frac{1}{n} \sum_{i=1}^n h_{G}(W_i)} : G \in \mathcal{G}}, 
    \end{aligned}
\end{equation*}
by the definition of $h_G$. 

We further note that, $\EE[\boldmu]{h_G(W_i)} = 0$, by Stein's lemma. Then, using Lemma~\ref{lemm:symmetrization_argument}, we can write for $h_G(W_i)$ that: 
$$
\begin{aligned}
    &\EE[\boldmu]{\frac{1}{n} \sum_{i=1}^n \cb{Z_i + \sigma_i^2 \score_{\widehat{G}}(W_i) - \mu_i}^2 - \frac{1}{n} \sum_{i=1}^n\cb{Z_i + \sigma_i^2s_{G_{\oracle}}(W_i) - \mu_i}^2} \\ 
    &\;\;\;\; \leq  4 \; \EE[\boldmu]{\sup \cb{\abs{\frac{1}{n} \sum_{i=1}^n \varepsilon_i h_{G}(W_i)} : G \in \mathcal{G}}} \\
    &\;\;\;\; =  4 \; \poprademacherfixedX(\mathcal{M}_{\oracle}).
\end{aligned}
$$
Hence proved. 
\end{proof}

\subsection{Proof of Proposition~\ref{prop:basic_inequality}}

\begin{proof}
We know, by definition in~\eqref{eq:SURE}, 
$$
\SURE(G) = \frac{1}{n} \sum_{i=1}^n \p{\sigma_i^2 + \sigma_i^4 \p{s_G^2(W_i) + 2\frac{\partial}{\partial z} s_G(W_i)}}. 
$$
Since $\hG$ minimizes $\SURE(G)$, we have: 
\begin{equation*}
    \begin{aligned}
        & \SURE(\hG) \leq \SURE(G_\star), \text{ i.e.,} \\
        & \frac{1}{n} \sum_{i=1}^n \p{\sigma_i^2 + \sigma_i^4 \p{\score_{\hG}^2(W_i) + 2\frac{\partial}{\partial z} \score_{\hG}(W_i)}} \leq \frac{1}{n} \sum_{i=1}^n \p{\sigma_i^2 + \sigma_i^4 \p{\score_\star^2(W_i) + 2\frac{\partial}{\partial z} \score_\star(W_i)}}, \text{ i.e.,} \\
        & \frac{1}{n} \sum_{i=1}^n \sigma_i^4 \p{\score_{\hG}(W_i) - \score_\star(W_i)}^2 \leq \frac{2}{n} \sum_{i=1}^n \sigma_i^4 \p{\score_\star^2(W_i) - \score_\star(w_i)s_{\hG}(W_i) + \frac{\partial}{\partial z} \p{\score_\star(W_i) - \score_{\hG}(W_i)}}. 
    \end{aligned}
\end{equation*}
Rearranging the equation above, we can write: 
\begin{equation*}
\begin{aligned}
&\frac{1}{n}\sum_{i=1}^n  \sigma_i^4 \cb{\score_{\hG}(W_i)-\score_{\star}(W_i)}^2 \\ 
&\;\;\;\;\;\;\;\;\;\leq\;  \frac{2}{n}\sum_{i=1}^n \sigma_i^4 \sqb{ \score_{\star}(W_i)\cb{\score_{\star}(W_i)-\score_{\hG}(W_i)}  + \frac{\partial}{\partial z}\cb{\score_{\star}(W_i)-\score_{\hG}(W_i)}}.
\end{aligned}
\end{equation*}
Hence proved. 
\end{proof}

\subsection{Proof of Lemma~\ref{lemm:deterministic_inequality} \label{subsec:proof_deterministic_inequality}}

\begin{proof}
From basic inequality in Proposition~\ref{prop:basic_inequality}, we have: 
\begin{equation}
\begin{aligned}
&\frac{1}{n}\sum_{i=1}^n  \sigma_i^4 \cb{\score_{\hG}(W_i)-\score_{G_\star}(W_i)}^2 \\ 
&\;\;\;\;\;\;\;\;\;\leq\; \frac{2}{n}\sum_{i=1}^n \sigma_i^4 \sqb{ \score_{G_\star}(W_i)\cb{\score_{G_\star}(W_i)-\score_{\hG}(W_i)}  + \frac{\partial}{\partial z}\cb{\score_{G_\star}(W_i)-\score_{\hG}(W_i)}} \\
&\;\;\;\;\;\;\;\;\; = \; \frac{2}{n}\sum_{i=1}^n h_{\hG}(W_i). 
\end{aligned}\label{eq:basic_ineq}
\end{equation}
where, for ease of writing, we define $$h_G(W_i) = \sigma_i^4 \sqb{ \score_{G_\star}(W_i)\cb{\score_{G_\star}(W_i)-\score_{G}(W_i)}  + \frac{\partial}{\partial z}\cb{\score_{G_\star}(W_i)-\score_{G}(W_i)}}$$ for $G \in \mathcal{G}$. So, we can also write: 
\begin{equation*}
\begin{aligned}
\widehat W(r) := &\sup\Bigg\{ \frac{2}{n}\sum_{i=1}^n h_G(W_i): G \in \mathcal{G}\, \text{ s.t. }\, \frac{1}{n}\sum_{i=1}^n\sigma_i^4 \cb{s_{G_\star}(W_i) - s_G(W_i)}^2 \leq r^2 \Bigg\}.
\end{aligned}
\end{equation*}

From~\eqref{eq:basic_ineq}, we can find $r_0 > 0$, such that:  
\begin{equation}
    \begin{aligned}
        & \frac{1}{n}\sum_{i=1}^n  \sigma_i^4 \cb{\score_{\hG}(W_i)-\score_{G_\star}(W_i)}^2 \\
        & \;\; \le \; r_0^2 \\
        & \;\; \le \; \frac{2}{n}\sum_{i=1}^n h_{\hG}(W_i) \\
        & \;\; \le \; \sup \cb{\frac{2}{n} \sum_{i=1}^n h_G(W_i) : G \in \mathcal{G} \text{ s.t. } \frac{1}{n}\sum_{i=1}^n\sigma_i^4 \cb{s_{G_\star}(W_i) - s_G(W_i)}^2 \leq r_0^2} \\
        & \;\; =  \; \widehat W(r_0), 
    \end{aligned}\label{eq:bi_hatW_ineq}
\end{equation}
since $\hG \in \cb{G \in \mathcal{G} : \frac{1}{n}\sum_{i=1}^n\sigma_i^4 \cb{s_{G_\star}(W_i) - s_G(W_i)}^2 \leq r_0^2}$. Also, from~\eqref{eq:bi_hatW_ineq}, we have, 
$$r_0 \in \cb{r \ge 0 | r^2 \le \widehat W(r)}, $$ 
And, since $\hat r = \sup \{ r \ge 0 \mid r^2 \le \widehat W(r) \}$, so, we must have, $r_0^2 \le \hat r^2$. Thus, we have from~\eqref{eq:bi_hatW_ineq}, 
$$
\frac{1}{n}\sum_{i=1}^n  \sigma_i^4 \cb{\score_{\hG}(W_i)-\score_{G_\star}(W_i)}^2 \le \hat r^2. 
$$
Hence proved. 
\end{proof}

\section{Proofs for Section~\ref{sec:fast_rates}}
\label{sec:proofs_sure_homoscedastic}
\subsection{Complexity of scores and their derivatives}

Following~\citet{zhang1997empirical, jiang2009general}, throughout our proof it is convenient to consider the regularized score defined as,
\begin{equation}
\label{eq:regularized_score}
\regscore_G(z) := \frac{\partial }{\partial z} f_G(z) \cdot \frac{1}{f_G(z) \lor \rho} = \frac{f_G'(z)}{f_G(z) \lor \rho} ,\;\;\;\rho >0,
\end{equation}
where $a \lor b := \max\cb{a,b}$ for $a, b \geq 0$ and $h'(z) = \partial h(z) / \partial z $ denotes the derivative of $h(\cdot)$ with respect to $z$.
Notice that when $f_G(z) \geq \rho$, then $\regscore_G(z) = \score_G(z)$ (and in particular, for $\rho=0$, $\regscore_G(z)=\score_G(z)$ for all $z$).

We will need the following function:

\begin{equation}
\tilde{L}(\rho) := \sqrt{-\log(2\pi \rho^2)}.
\end{equation}
Now let us give some properties of the regularized score and also its derivative,
$$ \frac{\partial}{\partial z} \regscore_G(z) = \begin{cases} \frac{f_G''(z)}{\rho}, & f_G(z) < \rho \\ 
\frac{f_G''(z)}{f_G(z)} - \p{ \frac{f_G'(z)}{f_G(z)}}^2, & f_G(z) > \rho.
\end{cases}$$
First some global bounds that scale as $\log(1/\rho)$ (instead of trivial upper bound $1/\rho$).
\begin{lemm}[Proposition 1 in~\citet{jiang2009general}]
\label{lemm:proposition_1_jiang_zhang}
For $0< \rho < (2\pi e^3)^{-1/2}$, we have that:
$$
\begin{aligned}
&\abs{\regscore_G(z)} \leq \tilde{L}(\rho), \\ 
-1 \leq &\frac{\partial}{\partial z} \regscore_G(z) \leq \tilde{L}^2(\rho).
\end{aligned} 
$$
\end{lemm}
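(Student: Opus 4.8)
I would work throughout with the tilted (posterior) reweighting of $G$ at the point $z$. Since $\varphi'(t) = -t\varphi(t)$ and $\varphi''(t) = (t^2-1)\varphi(t)$, we have $f_G'(z) = -\int (z-\mu)\varphi(z-\mu)\,G(d\mu)$ and $f_G''(z) = \int (z-\mu)^2\varphi(z-\mu)\,G(d\mu) - f_G(z)$. On the region $\{f_G(z) \ge \rho\}$ the regularization is inactive, so $\regscore_G = \score_G$, and the first- and second-order Eddington/Tweedie identities give $\regscore_G(z) = \EE[G]{\mu \mid Z=z} - z$ and $\tfrac{\partial}{\partial z}\regscore_G(z) = \Var[G]{\mu\mid Z=z} - 1$; hence the lower bound $\tfrac{\partial}{\partial z}\regscore_G(z) \ge -1$ is immediate from nonnegativity of the posterior variance. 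On $\{f_G(z) < \rho\}$ we have $\tfrac{\partial}{\partial z}\regscore_G(z) = f_G''(z)/\rho \ge -f_G(z)/\rho > -1$, again immediate. So only the two upper bounds require work, and — using $|\regscore_G(z)| = |f_G'(z)|/(f_G(z)\lor\rho)$ together with the identities above — both reduce to controlling the tilted moments $M_k(z) := \int |z-\mu|^k\varphi(z-\mu)\,G(d\mu)$, $k\in\{1,2\}$, by $\tilde L(\rho)^k\,(f_G(z)\lor\rho)$.

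The crux is a pair of one-variable inequalities calibrated so that the cost is governed by the \emph{radius at which $\varphi$ reaches the floor $\rho$}, namely $\varphi(\tilde L(\rho)) = \rho$. I would prove that, under the standing hypothesis $\rho < (2\pi e^3)^{-1/2}$ (equivalently $\tilde L(\rho)^2 = -\log(2\pi\rho^2) > 3$, so $\tilde L(\rho) > \sqrt 3$),
\[ |t|\,\varphi(t) \;\le\; \tilde L(\rho)\,\varphi(t) - \tfrac{1}{\tilde L(\rho)}\big(\varphi(t)-\rho\big) \quad\text{and}\quad t^2\varphi(t) \;\le\; \big(\tilde L^2(\rho)-2\big)\varphi(t) + 2\rho, \qquad t\in\mathbb R. \]
Each is a routine calculus check: the relevant difference is an even function of $t$ that is single-peaked on $[0,\infty)$ with its unique critical point at $\tilde L(\rho)$, where it vanishes (here one uses $\tilde L(\rho) > \sqrt 2$ so that $t^2\varphi(t)$ is already past its own peak). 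Integrating against $G$ gives $M_1(z) \le \tilde L(\rho)f_G(z) - \tfrac1{\tilde L(\rho)}(f_G(z)-\rho)$ and $M_2(z) \le (\tilde L^2(\rho)-2)f_G(z) + 2\rho$. On $\{f_G(z)\ge\rho\}$ the correction terms are harmless, yielding $|\regscore_G(z)| \le M_1(z)/f_G(z) \le \tilde L(\rho)$ and $\tfrac{\partial}{\partial z}\regscore_G(z) = \Var[G]{\mu\mid Z=z} - 1 \le M_2(z)/f_G(z) - 1 \le \tilde L^2(\rho)$. On $\{f_G(z)<\rho\}$ one divides by $\rho$ instead of $f_G(z)$ and uses $f_G(z)/\rho < 1$ to reach the same bounds (the hypothesis $\tilde L^2(\rho) > 3$ ensures the relevant coefficients are nonnegative).

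The step I expect to be the real obstacle is precisely this tail control: the naive estimates $M_k(z) \le \rho^{-1}\sup_t |t|^k\varphi(t)$ or $f_G''(z)\le \rho^{-1}$ only give bounds of order $1/\rho$, which is exactly the trivial bound Lemma~\ref{lemm:proposition_1_jiang_zhang} is meant to improve; what rescues the argument is anchoring the truncation at $\tilde L(\rho)$, where $\varphi = \rho$, so that the far field contributes only $O(\rho)$ rather than a power of $1/\rho$, converting $1/\rho$ into $\log(1/\rho) \asymp \tilde L^2(\rho)$. A secondary nuisance is the two-regime structure $f_G(z)\gtrless\rho$ and the kink of $\regscore_G$ at the interface; the pointwise-inequality formulation is chosen so that one and the same inequality settles both regimes after integration and division by $f_G(z)\lor\rho$, with no spurious constant multiple of $\tilde L(\rho)$ appearing. (A cruder near/far split at $|z-\mu| = \tilde L(\rho)$ — bounding $|z-\mu|^k \le \tilde L(\rho)^k$ on the near part and using monotonicity of $|t|^k\varphi(t)$ on the far part — also works but loses a constant factor, which the calibrated inequalities above are designed to avoid.)
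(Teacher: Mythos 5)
The paper does not prove this lemma — it is cited directly from Jiang \& Zhang (2009, Proposition~1) — so I am evaluating your proposal as a standalone proof. It is correct, and the key ideas are sound. Writing $L := \tilde L(\rho)$ so that $\varphi(L) = \rho$, your two pointwise inequalities check out by elementary calculus: for $g(t) := (t^2 - L^2 + 2)\varphi(t) - 2\rho$ one finds $g'(t) = -t(t^2 - L^2)\varphi(t)$, so the unique maximum on $[0,\infty)$ is at $t=L$ where $g(L) = 2\varphi(L) - 2\rho = 0$; and for $h(t) := (|t| - L + 1/L)\varphi(t) - \rho/L$ one finds $h'(t) = -\varphi(t)\,(t - L)(t + 1/L)$ on $t > 0$, so again the maximum is at $t=L$ where $h(L) = 0$. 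Integrating against $G$ and dividing by $f_G(z)\lor\rho$ works on both sides of the interface because the correction terms are calibrated so that the excess over $L f_G(z)$ (resp.\ $(L^2-2) f_G(z)$) is exactly absorbed whether one divides by $f_G(z) \ge \rho$ or by $\rho > f_G(z)$, using $L^2 > 3$ (equivalent to $\rho < (2\pi e^3)^{-1/2}$) in the low-density regime. The lower bound $\ge -1$ via $\mathrm{Var}_G[\mu\mid Z=z] \ge 0$ on $\{f_G > \rho\}$ and $f_G''/\rho \ge -f_G/\rho > -1$ on $\{f_G < \rho\}$ is correct, and the inequality $\mathrm{Var}_G[\mu\mid Z=z] = \mathrm{Var}_G[z-\mu\mid Z=z] \le M_2(z)/f_G(z)$ that you use implicitly is valid. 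Your diagnosis of what the bound buys over the trivial $1/\rho$ estimate (anchoring the truncation at the radius where $\varphi = \rho$, converting $1/\rho$ into $\log(1/\rho)$) is exactly the right way to think about why the constant $\tilde L(\rho)$ rather than a multiple of it appears.
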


Given $\varepsilon>0$ a class of functions $\mathcal{H}$ and a (semi)-norm $\Norm{\cdot}$, a $(\varepsilon, \mathcal{H}, \Norm{\cdot})$-covering is defined as a finite set of functions $h_1,\dotsc,h_B$ with the following property: for any $h \in \mathcal{H}$, there exists $j \in \cb{1,\dotsc,B}$ such that $\Norm{h-h_j} \leq \varepsilon$. The covering number $N(\varepsilon, \mathcal{H}, \Norm{\cdot})$ is defined as the smallest $B \in \mathbb N$ such that there exists a $(\varepsilon, \mathcal{H}, \Norm{\cdot})$-covering with $B$ elements.

Proposition 3 in~\citet{jiang2009general} provides control of the covering number of $(\cb{\regscore_G(\cdot) \,:\, G },\; \Norm{\cdot}_{\infty})$. Our next lemma compactly states a corollary of that proposition and extends it to one further derivative.

\begin{lemm}\label{lemm:second-covering}

\noindent Consider the families of functions, 
$$\mathscr{T}^{(1)}_{\rho} := \cb{\regscore_G(\cdot) = \frac{f_G^{\prime}(\cdot)}{f_G(\cdot) \vee \rho}: G \in \mathcal{P}(M) },\,\,\,\,\,\mathscr{T}^{(2)}_{\rho} := \cb{\frac{f_G^{\prime\prime}(\cdot)}{f_G(\cdot) \vee \rho}: G \in \mathcal{P}(M) }, 
$$ for $0 < \rho \leq 1/ (\sqrt{2\pi}e)$, where $\mathcal{P}(M) = \cb{G: G \text{ has support on } [-M, M]}$ and $M \geq \sqrt{2}$. Then, for any $0 < \varepsilon < \frac{1}{e}$, 
     \begin{equation} 
        \log N(\varepsilon, \mathscr{T}^{(1)}_{\rho}, \Norm{\cdot}_{\infty}) \lesssim_M \abs{\log(\varepsilon\rho)}^2 
    \end{equation}
    and
    \begin{equation} 
        \log N(\varepsilon, \mathscr{T}^{(2)}_{\rho}, \Norm{\cdot}_{\infty}) \lesssim_M \abs{\log(\varepsilon\rho)}^2.
    \end{equation}
The above covering numbers also hold verbatim if we center these two classes, i.e., if we consider the following classes instead for some fixed $G_{\star}$:
$$\mathscr{T}^{c,(1)}_{\rho} := \cb{\regscore_G(\cdot) - \regscore_{G_{\star}}(\cdot) \,:\, G \in \mathcal{P}(M) },\,\,\,\,\,\mathscr{T}^{c,(2)}_{\rho} := \cb{\frac{f_G^{\prime\prime}(\cdot)}{f_G(\cdot) \vee \rho}-\frac{f_{G_\star}^{\prime\prime}(\cdot)}{f_{G_{\star}}(\cdot) \vee \rho}\,:\, G \in \mathcal{P}(M)}.$$
\end{lemm}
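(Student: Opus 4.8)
My plan is to reduce the bound for $\mathscr{T}^{(1)}_\rho$ directly to \citet[Proposition 3]{jiang2009general}, who establish exactly the covering-number estimate $\log N(\varepsilon, \{\regscore_G : G\in\mathcal{P}(M)\}, \Norm{\cdot}_\infty) \lesssim_M \abs{\log(\varepsilon\rho)}^2$ (after a routine translation of their parameters into ours), and then to rerun their construction one derivative higher for $\mathscr{T}^{(2)}_\rho$. The statements for the centered classes are then immediate: if $\{g_1,\dots,g_B\}$ is an $\varepsilon$-cover of $\mathscr{T}^{(j)}_\rho$ in $\Norm{\cdot}_\infty$, then $\{g_b - t_\star\}_{b=1}^B$, with $t_\star$ the fixed function $\regscore_{G_\star}$ (resp.\ $f_{G_\star}''/(f_{G_\star}\vee\rho)$), is an $\varepsilon$-cover of $\mathscr{T}^{c,(j)}_\rho$ of the same cardinality. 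So the only real work is bounding $\log N(\varepsilon, \mathscr{T}^{(2)}_\rho, \Norm{\cdot}_\infty)$.

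For this I would use the Hermite expansion $\frac{\partial^j}{\partial z^j}\varphi(z-\mu) = (-1)^j\varphi(z)\sum_{k\ge 0}\frac{H_{k+j}(z)}{k!}\mu^k$ (probabilists' Hermite polynomials) together with Cramér's bound $\abs{\varphi(z)H_m(z)} \lesssim \sqrt{m!}\,e^{-z^2/4}$. Given $G\in\mathcal{P}(M)$, a Gauss-quadrature / moment-matching argument (as in \citet{ghosal2001entropies, zhang2009generalized}) produces a discrete $G'\in\mathcal{P}(M)$ with at most $N+1$ atoms that matches the first $N$ moments of $G$; then the first $N$ terms of the expansion of $f_G^{(j)} - f_{G'}^{(j)}$ cancel, and since $\abs{\int \mu^k (G-G')(d\mu)} \le 2M^k$ one gets $\Norm{f_G^{(j)} - f_{G'}^{(j)}}_\infty \lesssim_M \sum_{k>N}\frac{(k+j)M^k}{\sqrt{k!}}$, which is at most any prescribed $\delta>0$ provided $N \ge C_M\log(1/\delta)$. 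Writing
\[
\frac{f_G''}{f_G\vee\rho} - \frac{f_{G'}''}{f_{G'}\vee\rho} = \frac{f_G'' - f_{G'}''}{f_G\vee\rho} + f_{G'}''\left(\frac{1}{f_G\vee\rho} - \frac{1}{f_{G'}\vee\rho}\right),
\]
and bounding $\abs{f_{G'}''} \le \Norm{(x^2-1)\varphi(x)}_\infty$ and $\bigl|\tfrac{1}{f_G\vee\rho} - \tfrac{1}{f_{G'}\vee\rho}\bigr| \le \abs{f_G - f_{G'}}/\rho^2$, the right-hand side is $\lesssim_M \delta/\rho^2$; so choosing $\delta$ a fixed multiple of $\varepsilon\rho^2$, hence $N$ of order $\abs{\log(\varepsilon\rho)}$, makes $G'$ an $\varepsilon$-surrogate for $G$ in $\Norm{\cdot}_\infty$. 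Finally, discretizing the (at most $N+1$) atom locations of $G'$ to a grid of spacing $\eta$ in $[-M,M]$ and the weights to $\ell_1$-precision $\eta$ perturbs $f_{G'}$ and $f_{G'}''$ by $\lesssim_M N\eta$ in $\Norm{\cdot}_\infty$ (Lipschitzness of $\mu\mapsto\varphi^{(j)}(\cdot-\mu)$), hence the ratio by $\lesssim_M N\eta/\rho^2$, which is $\le\varepsilon$ once $\eta$ is a suitable $M$-dependent multiple of $\varepsilon\rho^2/N$. The number of discretized configurations is $\le (C_M/\eta)^{2N+2}$, so $\log N(\varepsilon,\mathscr{T}^{(2)}_\rho,\Norm{\cdot}_\infty) \lesssim_M N\log(1/\eta) \lesssim_M \abs{\log(\varepsilon\rho)}^2$.

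I expect the main obstacle to be bookkeeping rather than a new idea: controlling the Hermite tail $\sum_{k>N}(k+j)M^k/\sqrt{k!}$, and, more delicately, tracking that the two divisions by $f_G\vee\rho$ (introducing factors $1/\rho$ and $1/\rho^2$) only tighten the required precisions $\delta$ and $\eta$ by factors polynomial in $\rho$, so that $\log(1/\delta)$ and $\log(1/\eta)$ remain of order $\abs{\log(\varepsilon\rho)}$ and the final estimate stays $\abs{\log(\varepsilon\rho)}^2$ rather than degenerating to a power of $1/\rho$. Since \citet{jiang2009general} already executed this for $j\in\{0,1\}$ (which gives $\mathscr{T}^{(1)}_\rho$), the only genuinely new point is that $(x^2-1)\varphi(x) = H_2(x)\varphi(x)$ enjoys the same $L^\infty$- and Gaussian-decay bounds as $\varphi(x)$ and $x\varphi(x)$, so the $j=2$ case goes through verbatim with adjusted constants.
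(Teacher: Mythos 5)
Your proof is correct and follows essentially the same strategy as the paper's: approximate $G$ by a discrete $G'$ with $O(\log(1/(\varepsilon\rho)))$ atoms that matches the first $N$ moments, argue the tail of the series cancels, then discretize the atom locations and the weights and count configurations. The paper executes this via a Taylor expansion of $\varphi$ together with a truncation of $z$ at a radius $t^* \asymp \tilde L(\eta)$, whereas you work through the Hermite generating function and invoke Cram\'er's inequality $\abs{\varphi(z)H_m(z)}\lesssim\sqrt{m!}\,e^{-z^2/4}$ to get control uniform in $z$ without a separate tail regime; these are two ways of packaging the same analytic fact.

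Two minor points worth flagging, neither of which changes the conclusion. First, in bounding the ratio difference you use $\abs{f_{G'}''}\le\Norm{\varphi''}_\infty$ and $\abs{1/(f_G\vee\rho)-1/(f_{G'}\vee\rho)}\le\abs{f_G-f_{G'}}/\rho^2$, giving a required precision $\delta\asymp\varepsilon\rho^2$; the paper instead uses the sharper pointwise bound $\abs{f_G''(x)}\le(\tilde L^2(\rho)-1)f_G(x)$ (Jiang--Zhang, Lemma A.1) on the set $\{f_G\ge\rho\}$, yielding a factor $\tilde L^2(\rho)/\rho$ rather than $1/\rho^2$. Your version is cruder but since only $\log(1/\delta)$ enters, both give $N\asymp\abs{\log(\varepsilon\rho)}$ and thus the same squared-log rate. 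Second, in the location-discretization step the perturbation of $f_{G'}$ and $f_{G'}''$ is actually $\lesssim_M\eta$ rather than $N\eta$ (because the weights sum to $1$), but this only improves the constant. The observation that the centered classes inherit the covering bound by shifting each cover element by the fixed function $t_\star$ is correct, since translation is an isometry of $\Norm{\cdot}_\infty$.
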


\begin{proof}

Our Lemma below builds on arguments similar to the proofs of Proposition 3 in~\citet{jiang2009general} and Theorem 3.1. in~\citet{ghosal2001entropies}.

Let us first define $\Tilde{L}(y) = \sqrt{-\log(2\pi y^2)}$ be the inverse function of the normal distribution PDF $\varphi(x) = \frac{1}{\sqrt{2\pi}} e^{-\frac{1}{2}x^2}$. 
    
The proof for the covering number bound on the first derivative follows along the lines of Proposition 3 in~\citet{jiang2009general}. However, that proof holds for a semi-norm on a different support for the prior compared to our case. Here, we already prove the covering number bound for the second derivative below, and it follows along the lines similarly for the first derivative as well, defining appropriate bound on $\Norm{\frac{f_G^{\prime}}{f_G \vee \rho} - \frac{f_H^{\prime}}{f_H \vee \rho}}_{\infty}$, for example, taking $\eta = C (e\varepsilon \rho)^2$, for some constant $C < 1$, we can prove that $\Norm{\frac{f_G^{\prime}}{f_G \vee \rho} - \frac{f_H^{\prime}}{f_H \vee \rho}}_{\infty} \lesssim \frac{\eta}{\rho} \Tilde{L}(\eta) \lesssim \frac{1}{\sqrt{e}} \frac{\eta^{1/2}}{\rho} \lesssim \varepsilon$, using the fact that $\eta^{1/2} \Tilde{L}(\eta) \leq \frac{1}{\sqrt{e}}$. We are not going into details for this proof, since we are already proving the same for the second derivative. 
    
For proving the covering number bound for the second derivative, let $\eta = \frac{1}{C} e \varepsilon \rho$ (where $C > 1$ is a constant to be defined later), which implies that $\eta < \rho$ and $\Tilde{L}^2(\eta) \geq \Tilde{L}^2(\rho) \geq 2$, since $\Tilde{L}^2(y)$ is decreasing in $y^2$, where $\Tilde{L}(y) = \sqrt{-\log{(2\pi y^2)}}$. 
    
    Now, if $f_G(x) \geq \rho$, where $0 < \rho \leq (2\pi e^2)^{-\frac{1}{2}}$, we have 
    \begin{align*}
        \tilde{L}^2(\rho) & \geq \tilde{L}^2\p{f_G(x)} \text{ and } \\
        \tilde{L}^2(\rho) & \geq \tilde{L}^2\p{(2\pi e^2)^{-\frac{1}{2}}} = 2, 
    \end{align*} 
    since $\tilde{L}^2(y)$ is decreasing in $y^2$. 
    
    From Lemma A.1 in~\citet{jiang2009general}, 
    
    \begin{align*}
        & 0 \leq \frac{f_G^{\prime\prime}(x)}{f_G(x)} + 1 \leq \tilde{L}^2(f_G(x)), \\
        \implies & -1 \leq \frac{f_G^{\prime\prime}(x)}{f_G(x)} \leq \tilde{L}^2(f_G(x)) - 1 \leq \tilde{L}^2(\rho) - 1, \\ 
        \implies & \abs{\frac{f_G^{\prime\prime}(x)}{f_G(x)}} \leq \max{\{\tilde{L}^2(\rho) - 1, 1\}} = \tilde{L}^2(\rho) - 1,
    \end{align*}
    for $f_G(x) \geq \rho$, and given that $\tilde{L}^2(\rho) - 1 \geq 1$. Thus, we can write, 
    \begin{align*}
        \abs{\frac{f_G^{\prime\prime}(x)}{f_G(x) \vee \rho} - \frac{f_H^{\prime\prime}(x)}{f_H(x) \vee \rho}} \leq & \frac{1}{\rho} \abs{f_G^{\prime\prime}(x) - f_H^{\prime\prime}(x)} + \frac{\tilde{L}^2(\rho)-1}{\rho} \abs{f_G(x) - f_H(x)}. 
    \end{align*}
    
    Now, 
    \begin{align*}
        & f_G(x) = \int \varphi(x-u) d(G(u)), \\
        \implies & f^{\prime}_G(x) = - \int (x - u) \varphi(x-u) d(G(u)), \\
        \implies & f^{\prime\prime}_G(x) = - f_G(x) + \int (x-u)^2 \varphi(x-u) d(G(u)). 
    \end{align*}
    That means,  
    \begin{align*}
        \abs{f^{\prime\prime}_G(x) - f^{\prime\prime}_{H}(x)} \leq \abs{ f_G(x) - f_{H}(x)} + \abs{\int (x-u)^2 \varphi(x-u) \{d(G(u)) - d(H(u))\}}. 
    \end{align*}
    Then, we have 
    \begin{align*}
        & \abs{\frac{f_G^{\prime\prime}(x)}{f_G(x) \vee \rho} - \frac{f_H^{\prime\prime}(x)}{f_H(x) \vee \rho}} \\
        & \;\;\; \leq \frac{1}{\rho} \abs{\int (x-u)^2 \varphi(x-u) d(G(u) - H(u))} + \frac{\tilde{L}^2(\rho)}{\rho} \abs{\int \varphi(x-u) d(G(u) - H(u)) } \\
        & \;\;\; \leq \frac{1}{\rho} \abs{\int (x-u)^2 \varphi(x-u) d(G(u) - H(u)) } + \frac{\tilde{L}^2(\eta)}{\rho} \abs{\int \varphi(x-u) d(G(u) - H(u))}, 
    \end{align*}
    since $0 < \eta < \rho$. 
    
    We define $t^* = \max\{2M, \sqrt{8}\Tilde{L}(\eta)\}$. We note that $x^2\varphi(x)$ is decreasing in $x$ for $x \geq \sqrt{2}$, since $\frac{d}{dx}x^2\varphi(x) = x \varphi(x) (2-x^2) \leq 0$ for $x \geq \sqrt{2}$. Then, 
    \begin{align*}
        \sup_{\abs{x} \geq t^*} \abs{\int_{-M}^M (x-u)^2 \varphi(x-u) d(G(u) - H(u))} & \leq 2 \cdot (t^* - M)^2 \cdot \varphi(t^* - M) \\
        & \leq 2 \cdot \p{\frac{t^*}{2}}^2 \cdot \varphi\p{\frac{t^*}{2}} \\
        & \leq 2 \cdot (\sqrt{2}\Tilde{L}(\eta))^2 \cdot \varphi\p{\sqrt{2}\Tilde{L}(\eta)} \\
        & = 4 \cdot \Tilde{L}^2(\eta) \cdot \frac{1}{\sqrt{2\pi}} \exp{\p{-\frac{1}{2} \cdot 2\Tilde{L}^2(\eta)}} \\
        & = 4 \cdot \Tilde{L}^2(\eta) \cdot \frac{1}{\sqrt{2\pi}} \exp{\p{\log(2\pi\eta^2)}} \\
        & \leq c_1 \eta^2 \Tilde{L}^2(\eta), 
    \end{align*} 
    for some constant $c_1$, since for $\abs{x} \geq t^*$ and $\abs{u} \leq M$, $\abs{x - u} \geq t^* - M \geq M \geq \sqrt{2}$, and $t^* \geq 2M$, which implies $M \leq \frac{t^*}{2}$ and $t^* - M \geq t^* - \frac{t^*}{2} = \frac{t^*}{2} \geq \sqrt{2} \Tilde{L}(\eta) \geq \sqrt{2}$. 
    
    Similarly, we also have: 
    \begin{align*}
        \sup_{\abs{x} \geq t^*} \abs{\int_{-M}^M \varphi(x-u) d(G(u) - H(u)) } & \leq 2 \cdot \varphi(t^* - M) \\
        & \leq 2 \cdot \varphi\p{\frac{t^*}{2}} \\
        & \leq 2 \cdot \varphi\p{\sqrt{2}\Tilde{L}(\eta)} \\
        & = 2 \cdot \frac{1}{\sqrt{2\pi}} \exp{\p{-\frac{1}{2} \cdot 2\Tilde{L}^2(\eta)}} \\
        & = 2 \cdot \frac{1}{\sqrt{2\pi}} \exp{\p{\log(2\pi\eta^2)}} \\
        & \leq c_0 \eta^2, 
    \end{align*}
    for some constant $c_0$, since $\varphi(x)$ is decreasing for $x \geq 0$. 
    
    Now, using Taylor's expansion, since $-\frac{x^2}{2} < 0$ and $k! \geq k^k$, we can write: 
    \begin{align*}
        \abs{\varphi(x) - \sum_{j=0}^{k-1} \frac{(-1)^j x^{2j}}{\sqrt{2\pi} j! 2^j}} & = \abs{\frac{1}{\sqrt{2\pi}} \exp{\p{-\frac{x^2}{2}}} - \sum_{j=0}^{k-1} \frac{(-1)^j x^{2j}}{\sqrt{2\pi} j! 2^j}} \\
        & \leq \frac{\p{e \frac{x^2}{2}}^k}{\sqrt{2\pi} k^k} = \frac{\p{e^{1/2} 2^{-1/2} \abs{x}}^{2k}}{\sqrt{2\pi} k^k}. 
    \end{align*} 
    Then, we have: 
    \begin{align*}
        & \sup_{\abs{x} \leq t^*} \abs{\int_{-M}^M (x-u)^2 \varphi(x-u) d(G(u) - H(u))}  \\ 
        = & \sup_{\abs{x} \leq t^*} \abs{\int_{-M}^M (x-u)^2 \p{\varphi(x-u) - \sum_{j=0}^{k-1} \frac{(-1)^j (x-u)^{2j}}{\sqrt{2\pi} j! 2^j} + \sum_{j=0}^{k-1} \frac{(-1)^j (x-u)^{2j}}{\sqrt{2\pi} j! 2^j}}  d(G(u) - H(u))} \\
        \leq & \quad (1) + (2), 
    \end{align*} 
    where 
    \begin{align*}
        (1) = \sup_{\abs{x} \leq t^*} \abs{\int_{-M}^M \sum_{j=0}^{k-1} \frac{(-1)^j (x-u)^{2j+2}}{\sqrt{2\pi} j! 2^j}  d(G(u) - H(u))}, 
    \end{align*}
    and 
    \begin{align*}
        (2) & = \sup_{\abs{ x } \leq t^*} \abs{\int_{-M}^M (x-u)^2 \p{\varphi(x-u) - \sum_{j=0}^{k-1} \frac{(-1)^j (x-u)^{2j}}{\sqrt{2\pi} j! 2^j}} d(G(u) - H(u))} \\
        & \leq 2 \sup_{\substack{\abs{ x } \leq t^* \\ \abs{ u } \leq M}} \abs{\int_{-M}^M (x-u)^2 \p{\varphi(x-u) - \sum_{j=0}^{k-1} \frac{(-1)^j (x-u)^{2j}}{\sqrt{2\pi} j! 2^j}} d(G(u) - H(u))} \\
        & \leq 2 \sup_{\substack{\abs{x} \leq t^* \\ \abs{ u } \leq M}} \abs{\int_{-M}^M (x-u)^2 \cdot \frac{(e^{1/2} 2^{-1/2} \abs{ x-u })^{2k}}{\sqrt{2\pi} k^k} d(G(u) - H(u))}.
    \end{align*}
    Similarly, we also get, 
    \begin{align*}
        & \sup_{\abs{x} \leq t^*} \abs{\int_{-M}^M \varphi(x-u) d(G(u) - H(u)) } \\ 
        & \;\;\; \leq \sup_{\abs{x} \leq t^*} \abs{ \int_{-M}^M \sum_{j=0}^{k-1} \frac{(-1)^j (x-u)^{2j}}{\sqrt{2\pi} j! 2^j}  d(G(u) - H(u)) } + 2 \sup_{\substack{\abs{x} \leq t^* \\ \abs{u} \leq M}} \abs{\int_{-M}^M \frac{(e^{1/2} 2^{-1/2} \abs{x-u})^{2k}}{\sqrt{2\pi} k^k}}. 
    \end{align*} 
    Now, if $\int u^l dG(u) = \int u^l dH(u)$, for $l = 1, 2, \ldots, 2k$, then the first term on the right hand side of both of the above inequalities vanish. Again, if $\abs{x} \leq t^*$ and $\abs{u} \leq M$, then 
    \begin{align*}
        \abs{x - u} \leq t^* + M & \leq \frac{3t^*}{2} = \max\p{3M, 3\sqrt{2}\Tilde{L}(\eta)} \\
        & \leq \max\p{3M\Tilde{L}(\eta), 3\sqrt{2}\Tilde{L}(\eta)} \\
        & = 3 M \cdot \Tilde{L}(\eta), 
    \end{align*}
    since $M \geq \sqrt{2}$. Therefore, with $c = 3M \cdot e^{1/2} 2^{-1/2}$, the second term on the right hand side of the above two inequalities is bounded by a constant multiple of 
    $$\frac{(c^2 \Tilde{L}^2(\eta))^k}{k^k} = \exp{\p{-k\p{\log{(k)} - \log\p{c^2\Tilde{L}^2(\eta)}}}}. $$ 
    We can take $k$ as the smallest integer exceeding $(c^2+1)\Tilde{L}^2(\eta)$, then we have: 
    \begin{align*}
        \sup_{\abs{x} \leq t^*} \abs{ \int_{-M}^M (x-u)^2 \varphi(x-u) d(G(u) - H(u)) } & \leq c_3 \exp\p{-\Tilde{L}^2(\eta)} \Tilde{L}^2(\eta) = c_3 \eta^2 \Tilde{L}^2(\eta), \text{ and } \\
        \sup_{\abs{x} \leq t^*} \abs{ \int_{-M}^M \varphi(x-u) d(G(u) - H(u)) } & \leq c_2 \exp\p{-\Tilde{L}^2(\eta)} = c_2 \eta^2, 
    \end{align*}
    for some constants $c_2$ and $c_3$, depending on $M$. Defining $C_1 = \max\{c_0, c_2\}$ and $C_2 = \max\{c_1, c_3\}$ (which depend on $M$), we can write, combining all the results above: 
    \begin{align*}
         \Norm{\frac{f_G^{\prime\prime}}{f_G \vee \rho} - \frac{f_H^{\prime\prime}}{f_H \vee \rho}}_{\infty} \leq (C_1 + C_2) \frac{\eta^2}{\rho} \Tilde{L}^2(\eta), 
    \end{align*}
    for any $G, H \in \mathcal{P}(M)$. Now, we can define, similar to~\citet{jiang2009general}, $G_m$ with at most $N \leq D \cdot \log(\frac{1}{\varepsilon\rho})$ points supported on $[-M, M]$, $G_{m,\eta}$ with at most $N$ points supported on $0, \pm \eta^2, \pm 2\eta^2, \ldots$, which are at most at a distance of $\eta^2$ from any point under $G_m$, and finally $\Tilde{G}_{m, \eta}$ supported on $0, \pm \eta^2, \pm 2\eta^2, \ldots$ with at most $N$ points and the weights coming from an $\eta^2$-net over the $N$-dimensional simplex for the $l_1$ norm. Thus, similar to the paper, we can write: 
    \begin{align*}
        \Norm{\frac{f_G^{\prime\prime}}{f_G \vee \rho} - \frac{f_{G_m}^{\prime\prime}}{f_{G_m} \vee \rho}}_{\infty} & \leq (C_1 + C_2) \frac{\eta^2}{\rho} \Tilde{L}^2(\eta), \\
        \Norm{\frac{f_{G_m}^{\prime\prime}}{f_{G_m} \vee \rho} - \frac{f_{G_{m,\eta}}^{\prime\prime}}{f_{G_{m,\eta}} \vee \rho} }_{\infty} & \leq \frac{\eta^2}{\rho} \p{\Norm{ \varphi^{\prime\prime\prime}}_{\infty} + \Tilde{L}^2(\eta)\Norm{ \varphi^{\prime}}_{\infty}} \leq C_3 \frac{\eta^2}{\rho} \Tilde{L}^2(\eta), \\
        \Norm{\frac{f_{G_{m,\eta}}^{\prime\prime}}{f_{G_{m,\eta}} \vee \rho} - \frac{f_{\Tilde{G}_{m, \eta}}^{\prime\prime}}{f_{\Tilde{G}_{m, \eta}} \vee \rho}}_{\infty} & \leq \frac{\eta^2}{\rho} \p{\Norm{ \varphi^{\prime\prime}}_{\infty} + \Tilde{L}^2(\eta)\Norm{ \varphi}_{\infty}} \leq C_4 \frac{\eta^2}{\rho} \Tilde{L}^2(\eta), 
    \end{align*}
    for some constants $C_3$ and $C_4$ (since $\Tilde{L}^2(\eta) \geq 2$). Thus, combining all results, we can write:
    \begin{align*}
        & \Norm{\frac{f_G^{\prime\prime}}{f_G \vee \rho} - \frac{f_{\Tilde{G}_{m, \eta}}^{\prime\prime}}{f_{\Tilde{G}_{m, \eta}} \vee \rho}}_{\infty} \leq C_5 \frac{\eta^2}{\rho} \Tilde{L}^2(\eta) = C_5 \frac{\eta}{\rho} \eta \Tilde{L}^2(\eta) \leq C_5 \frac{\eta}{\rho} \frac{4}{e} = \frac{4C_5}{e} \frac{e \varepsilon \rho}{C\rho} = \varepsilon, \\
        \implies & \Norm{\frac{f_G^{\prime\prime}}{f_G \vee \rho} - \frac{f_{\Tilde{G}_{m, \eta}}^{\prime\prime}}{f_{\Tilde{G}_{m, \eta}} \vee \rho}}_{\infty} \leq \varepsilon,
    \end{align*}
    by definition of $\eta$, for some constant $C_5 > \frac{1}{4}$ (which depends on $M$) and we take $C = 4 C_5 > 1$, since $\eta < \frac{1}{\sqrt{2\pi}e} < \frac{1}{\sqrt{2\pi}}$, $\Tilde{L}^2(\eta) \leq 4 \log(\frac{1}{\eta})$ and $y \log(\frac{1}{y}) \leq \frac{1}{e}$ for any $0 < y < 1$. Thus, log of the covering number for the family of functions $\mathscr{T}^{(2)}_{\rho} = \{\frac{f_G^{\prime\prime}(\cdot)}{f_G(\cdot) \vee \rho}: G \in \mathcal{P}(M)\}$ will be: 
    \begin{align*}
        N\p{\varepsilon, \mathscr{T}^{(2)}_{\rho}, \Norm{\cdot}_{\infty}} \leq C_6 \p{\frac{2M}{\varepsilon}}^N \p{\frac{5}{\eta^2}}^N, 
    \end{align*}
    for some constant $C_6$, since $\eta^2$-net over the $N$-dimensional simplex for the $l_1$ norm has a cardinality of $c(\frac{5}{\eta^2})^N$, for some constant $c$. We can finally write: 
    \begin{align*}
        & \log N\p{\varepsilon, \mathscr{T}^{(2)}_{\rho}, \Norm{\cdot}_{\infty}} \\
        & \;\;\; \lesssim N \p{\log(2M) + \log\p{\frac{1}{\varepsilon}} + \log(5) + 2\log\p{\frac{1}{\eta}}} \\
        & \;\;\; \lesssim D \p{\log(\frac{1}{\varepsilon\rho})} \p{\log(2M) + \log\p{\frac{1}{\varepsilon\rho}} + \log(5) + 2\log\p{\frac{1}{\varepsilon\rho}} + 2 \log\p{\frac{C}{e}}} \\
        & \;\;\; \lesssim_M \p{\log\p{\frac{1}{\varepsilon\rho}}}^2 = \abs{\log\p{\varepsilon\rho}}^2,
    \end{align*}
    since $\eta = \frac{1}{C} e\varepsilon\rho$ and $\log\p{\frac{1}{\varepsilon}} \leq \log\p{\frac{1}{\varepsilon\rho}}$. Hence proved. 
\end{proof}

\subsection{Proof of results in Section~\ref{subsec:homosc_proof_elements}}

\subsubsection{Proof of Theorem~\ref{theo:derivative_score}}
\begin{theo}[Restatement of Theorem~\ref{theo:derivative_score}]
Let $G, G_{\star} \in \mathcal{P}(M)$. Then, for any $0< \rho \le \frac{1}{\sqrt{2\pi e^3}}$,  
\begin{equation*}
    \begin{aligned}
        &\EE[G_{\star}]{ \cb{\frac{\partial}{\partial z} s_{G_\star}(Z) - \frac{\partial}{\partial z} s_G(Z)}^2 \ind\cb{f_{G_{\star}}(Z) \geq \rho, f_G(Z) \geq \rho}} \\ 
        &\qquad\qquad\qquad\qquad\; \lesssim_M \max\cb{\abs{\log\Dfisher{f_{G_\star}}{f_G}}^2, \abs{\log \rho}^4} \Dfisher{f_{G_\star}}{f_G}.
    \end{aligned}
\end{equation*}
\end{theo}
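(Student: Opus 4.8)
Write $\regscore_G$ for the regularized score in~\eqref{eq:regularized_score} and let $E := \cb{f_{G_\star}(Z) \geq \rho,\ f_G(Z) \geq \rho}$. On $E$ one has $\regscore_{G_\star} = \score_{G_\star}$, $\regscore_G = \score_G$, and, from the displayed formula for $\tfrac{\partial}{\partial z}\regscore_G$ given above, $\tfrac{\partial}{\partial z}\score_{G_\star} = \tfrac{f_{G_\star}''}{f_{G_\star}\lor\rho} - \regscore_{G_\star}^2$ and likewise for $G$. Hence on $E$,
\begin{equation*}
\tfrac{\partial}{\partial z}\score_{G_\star}(Z) - \tfrac{\partial}{\partial z}\score_{G}(Z) = \p{\tfrac{f_{G_\star}''(Z)}{f_{G_\star}(Z)\lor\rho} - \tfrac{f_{G}''(Z)}{f_{G}(Z)\lor\rho}} - \p{\regscore_{G_\star}(Z)^2 - \regscore_{G}(Z)^2},
\end{equation*}
so by $(a-b)^2 \leq 2a^2 + 2b^2$ it suffices to control the $f_{G_\star}$-expectation of $\mathds{1}_E$ times the square of each bracketed term. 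I would first dispose of the trivial regime: by the second-order Tweedie identity $\Var[G]{\mu \mid Z} = 1 + \tfrac{\partial}{\partial z}\score_{G}(Z)$ together with the fact that the posterior under any $G \in \mathcal{P}(M)$ is supported on $[-M,M]$, both derivatives lie pointwise in $[-1, M^2-1]$, so the left-hand side of the theorem is $\lesssim_M 1$ unconditionally; thus one may assume $\Dfisher{f_{G_\star}}{f_G}$ is below any prescribed $M$-dependent constant, which streamlines the logarithmic bookkeeping (e.g.\ $1 + \abs{\log\Dfisher{f_{G_\star}}{f_G}}^2 \lesssim \abs{\log\Dfisher{f_{G_\star}}{f_G}}^2$).

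For the first bracketed term I would drop $\mathds{1}_E$ (the integrand is nonnegative) and invoke part~3 of Lemma~\ref{lemm:blown_isometries}, bounding it by $\Delta_2^2(\rho) + \abs{\log\rho}^2 \Dhel^2(f_{G_\star}, f_G)$; then part~2 of the same lemma bounds $\Delta_2^2(\rho)$ by a maximum of $a^4\Dhel^2(f_{G_\star},f_G)$, $\abs{\log\rho}^3 \Dfisher{f_{G_\star}}{f_G}$, and $\abs{\log\rho}^4\Dhel^2(f_{G_\star},f_G)$, where $a^2 = \max\cb{\abs{\log\rho}+1,\ \abs{\log\Dhel^2(f_{G_\star},f_G)}}$. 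Now Proposition~\ref{prop:KL_via_Fisher} (which is where Theorem~\ref{theo:logsobolev} enters) lets me replace every occurrence of $\Dhel^2(f_{G_\star},f_G)$ by $\lesssim_M \Dfisher{f_{G_\star}}{f_G}$; for the lone factor $a^4\Dhel^2$ I also need $\abs{\log\Dhel^2(f_{G_\star},f_G)}$ under control, which follows by combining $\Dhel^2(f_{G_\star},f_G) \leq C_M \Dfisher{f_{G_\star}}{f_G}$ with the monotonicity of $x \mapsto x\abs{\log x}^2$ near $0$, giving $a^4\Dhel^2(f_{G_\star},f_G) \lesssim_M \max\cb{\abs{\log\rho}^2, \abs{\log\Dfisher{f_{G_\star}}{f_G}}^2}\,\Dfisher{f_{G_\star}}{f_G}$. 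Collecting all pieces yields the claimed bound for the first bracket.

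For the second bracketed term I would factor $\regscore_{G_\star}^2 - \regscore_G^2 = (\regscore_{G_\star} - \regscore_G)(\regscore_{G_\star} + \regscore_G)$ and use the global bound $\abs{\regscore_G} \leq \tilde{L}(\rho) \lesssim \sqrt{\abs{\log\rho}}$ from Lemma~\ref{lemm:proposition_1_jiang_zhang}, so that on $E$ (where $\regscore = \score$) the integrand is $\leq 4\tilde{L}(\rho)^2(\score_{G_\star}-\score_G)^2 \lesssim \abs{\log\rho}\,(\score_{G_\star}(Z) - \score_G(Z))^2$; dropping $\mathds{1}_E$ upward, its $f_{G_\star}$-expectation is $\lesssim \abs{\log\rho}\,\Dfisher{f_{G_\star}}{f_G}$, which is dominated by the right-hand side. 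Adding the two contributions completes the proof. The substantive difficulty here — upgrading closeness in Fisher divergence to closeness of the first \emph{and second} derivatives of $f$ relative to $f\lor\rho$ — is exactly Lemma~\ref{lemm:blown_isometries} (the Jiang--Zhang induction together with the log-Sobolev inequality), which I take as given; conditional on it, the only delicate step is the logarithmic arithmetic for the term $a^4\Dhel^2$, and the reduction to small $\Dfisher{f_{G_\star}}{f_G}$ makes that routine.
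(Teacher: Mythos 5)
Your proof follows essentially the same route as the paper's: the identity $\frac{\partial}{\partial z} s_G = f_G''/f_G - s_G^2$ on the good event, the $(a-b)^2 \le 2a^2 + 2b^2$ split, Lemma~\ref{lemm:blown_isometries} (parts 2 and 3) combined with Proposition~\ref{prop:KL_via_Fisher} for the second-derivative bracket, and the global sup bound of Lemma~\ref{lemm:proposition_1_jiang_zhang} applied to the factored difference of squared scores. One point in your favor: your treatment of the $a^4 \Dhel^2(f_{G_\star},f_G)$ term is more careful than the paper's, whose intermediate step bounding $\abs{\log \Dhel^2(f_{G_\star},f_G)}$ by $\abs{\log(C_{\text{LS}}(f_G)\Dfisher{f_{G_\star}}{f_G})}$ points the wrong way when both quantities are below one (since $x \mapsto \abs{\log x}$ is decreasing there and $\Dhel^2 \le C_{\text{LS}}\Dfisher/4$), whereas your route---bounding the product $a^4\Dhel^2$ directly via the monotonicity of $x \mapsto x\abs{\log x}^2$ near the origin, after reducing to the regime of small Fisher divergence---is the correct way to close that step and recovers the same final bound.
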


\begin{proof}
Recalling that $s_G(z)  = \frac{f'_G(z)}{f_G(z)}$, we observe that
\begin{equation*}
         \frac{\partial}{\partial z} s_G(z) = \frac{f_G''(z)}{f_G(z)} - s_G^2(z). 
\end{equation*}
Then, using the above and the inequality $(a + b)^2 \le 2a^2 + 2b^2$, we have
\begin{equation*}
    \begin{aligned}
        \EE[G_{\star}]{ \cb{\frac{\partial}{\partial z} s_{G_\star}(Z) - \frac{\partial}{\partial z} s_G(Z)}^2 \ind\cb{f_{G_{\star}}(Z) \geq \rho, f_G(Z) \geq \rho}} \leq (1) + (2)
    \end{aligned}
\end{equation*}
where we define $(1)$ and $(2)$ below. For $(1)$,
\begin{equation*}
    \begin{aligned}
        (1) & :=  \; 2\EE[G_{\star}]{ \cb{\frac{f_{G_\star}''(Z)}{f_{G_\star}(Z)} - \frac{f_G''(Z)}{f_G(Z)}}^2 \ind\cb{f_{G_{\star}}(Z) \geq \rho, f_G(Z) \geq \rho}} \\
        & = \; 2\EE[G_{\star}]{ \cb{\frac{f_{G_\star}''(Z)}{f_{G_\star}(Z) \lor \rho} - \frac{f_G''(Z)}{f_G(Z) \lor \rho}}^2 \ind\cb{f_{G_{\star}}(Z) \geq \rho, f_G(Z) \geq \rho}} \\
        & \lesssim \; \Delta_2^2(\rho) + \abs{\log \rho}^2 \Dhel^2(f_{G_\star}, f_{G}) \\
        & \lesssim \; \Delta_2^2(\rho) + \abs{\log \rho}^2 C_{\text{LS}}(f_G) \Dfisher{f_{G_\star}}{f_G} \\
        & \lesssim_M  \; \Delta_2^2(\rho) + \abs{\log \rho}^2 \Dfisher{f_{G_\star}}{f_G}, 
    \end{aligned}
\end{equation*}
where the first inequality is due to the third claim in Lemma~\ref{lemm:blown_isometries}, and the second inequality uses Proposition~\ref{prop:KL_via_Fisher}, and the last inequality uses Theorem~\ref{theo:logsobolev}. For $(2)$,
\begin{equation*}
    \begin{aligned}
        (2) := & \; 2\EE[G_\star]{\cb{s_G^2(Z) - s^2_{G_\star}(Z)}^2 \ind\cb{f_{G_{\star}}(Z) \geq \rho, f_G(Z) \geq \rho}} \\
        = & \; 2\EE[G_\star]{\cb{\p{s_G(Z) - s_{G_\star}(Z)}\p{s_G(Z) + s_{G_\star}(Z)}}^2 \ind\cb{f_{G_{\star}}(Z) \geq \rho, f_G(Z) \geq \rho}} \\
        \leq & \; 4\EE[G_\star]{\cb{\p{s_G(Z) - s_{G_\star}(Z)}}^2 \p{s^2_G(Z) + s^2_{G_\star}(Z)} \ind\cb{f_{G_{\star}}(Z) \geq \rho, f_G(Z) \geq \rho}} \\ 
        \leq & \; 8 \Tilde{L}^2(\rho) \EE[G_\star]{\cb{s_G(Z) - s_{G_\star}(Z)}^2} \\
        \lesssim & \; \abs{\log \rho} \Dfisher{f_{G_\star}}{f_G}.
    \end{aligned} 
\end{equation*}
where we used Lemma~\ref{lemm:proposition_1_jiang_zhang} in the second inequality. 
Furthermore, letting $a$ be as in Lemma~\ref{lemm:blown_isometries},we can write: 
\begin{equation*}
    \begin{aligned}
        \Delta_2^2(\rho) & \lesssim \max\cb{a^4 \Dhel^2(f_{G_\star}, f_G), \abs{\log \rho}^3 \p{\Dfisher{f_{G_\star}}{f_G} + \abs{\log \rho}\Dhel^2(f_{G_\star}, f_{G})}} \\
        & \lesssim \max\cb{a^4, \abs{\log \rho}^4}C_{\text{LS}}(f_G) \Dfisher{f_{G_\star}}{f_G} \\
        & \lesssim_M \max\cb{\p{1+\abs{\log\rho}}^2, \abs{\log\Dfisher{f_{G_\star}}{f_G}}^2, \abs{\log\rho}^4} \Dfisher{f_{G_\star}}{f_G}, 
    \end{aligned}
\end{equation*}
since, by applying Proposition~\ref{prop:KL_via_Fisher}, we have 
$$
\begin{aligned}
a^2 & = \max\cb{1+\abs{\log\rho}, \abs{\log\Dhel^2(f_{G_\star}, f_{G})}} \\
& \le \max\cb{1+\abs{\log\rho}, \abs{\log(C_{\text{LS}}(f_G) \Dfisher{f_{G_\star}}{f_G})}} \\
& \lesssim_M \max\cb{1+\abs{\log\rho}, \abs{\log\Dfisher{f_{G_\star}}{f_G}}}. 
\end{aligned}
$$
Combining everything, we have:
\begin{equation*}
    \begin{aligned}
        &\EE[G_{\star}]{ \cb{\frac{\partial}{\partial z} s_{G_\star}(Z) - \frac{\partial}{\partial z} s_G(Z)}^2 \ind\cb{f_{G_{\star}}(Z) \geq \rho, f_G(Z) \geq \rho}} \\ 
        & \;\;\; \lesssim_M \max\cb{\abs{\log\Dfisher{f_{G_\star}}{f_G}}^2, \abs{\log \rho}^4, \p{1+\abs{\log\rho}}^2} \Dfisher{f_{G_\star}}{f_G} \\
        & \;\;\; \lesssim_M \max\cb{\abs{\log\Dfisher{f_{G_\star}}{f_G}}^2, \abs{\log \rho}^4} \Dfisher{f_{G_\star}}{f_G} 
    \end{aligned}
\end{equation*}
as desired.
\end{proof}

\subsubsection{Proof of Lemma~\ref{lemm:blown_isometries}}
\begin{proof}
Let us start with the first claim. Using the consequence of the Cauchy-Schwarz inequality that $(a - b)^2 = (a - c + c - b)^2 \le 2(a - c)^2 + 2(c - b)^2$, we have that
$$
\begin{aligned}
\MoveEqLeft\int \frac{\cb{f'_{G_\star}(z) - f'_G(z)}^2}{f_{G_\star}(z)\lor \rho + f_G(z) \lor \rho} dz \\ 
&= \int \cb{\frac{f'_{G_\star}(z)}{f_{G_\star}(z)} - \frac{f'_G(z)}{f_{G_\star}(z)}}^2 \frac{ f_{G_\star}(z)^2}{f_{G_\star}(z)\lor \rho + f_G(z) \lor \rho} dz \\ 
&\leq 2 \int  \cb{\frac{f'_{G_\star}(z)}{f_{G_\star}(z)} - \frac{f'_G(z)}{f_G(z)}}^2 \frac{ f_{G_\star}(z)^2}{f_{G_\star}(z)\lor \rho + f_G(z) \lor \rho}dz \\
&\qquad + 2 \int  \cb{\frac{f'_G(z)}{f_G(z)} - \frac{f'_G(z)}{f_{G_\star}(z)}}^2 \frac{ f_{G_\star}(z)^2}{f_{G_\star}(z)\lor \rho + f_G(z) \lor \rho}dz \\ 
&\le 2 \int  \cb{\frac{f'_{G_\star}(z)}{f_{G_\star}(z)} - \frac{f'_G(z)}{f_G(z)}}^2 f_{G_\star}(z)dz  + 2 \int  \cb{\frac{f'_G(z)}{f_G(z)} - \frac{f'_G(z)}{f_{G_\star}(z)}}^2 \frac{ f_{G_\star}(z)^2}{f_{G_\star}(z)\lor \rho + f_G(z) \lor \rho}dz \\ 
&= 2 \Dfisher{f_{G_\star}}{f_G} \,+\, 2 \int \cb{\frac{f_G'(z)}{f_G(z)}}^2 \frac{ \cb{f_G(z)-f_{G_\star}(z)}^2}{ f_{G_\star}(z)\lor \rho + f_G(z) \lor \rho } dz \\ 
& \lesssim  \Dfisher{f_{G_\star}}{f_G} \,+\, \tilde{L}^2(\rho)\Dhel^2(f_{G_\star}, f_{G}),
\end{aligned}
$$
where in the last step we used Lemma~\ref{lemm:proposition_1_jiang_zhang} and that $(a - b)^2 = (\sqrt{a} - \sqrt{b})(\sqrt{a} + \sqrt{b})$ and $(\sqrt{a} + \sqrt{b})^2 \le 2a + 2b$ for $a,b \ge 0$ to simplify the last term.

For the second claim, first we define:
$$
\Delta_k^2 := \int \frac{\cb{\frac{d^k}{dz^k}(f_{G_\star}(z) - f_G(z))}^2}{f_{G_\star}(z)\lor \rho + f_G(z) \lor \rho} dz.
$$
To derive an upper bound for $\Delta_2$, we use a slightly different version of equation (A.2) from~\citet{jiang2009general} for a general $a > 0$ to be optimized later: 
\begin{equation}\label{eqn:a2-analogue}
    \begin{aligned}
        \int \p{D^k(f_G - f_{G_\star})} & \le a^{2k} \int \abs{f_G - f_{G_\star}}^2 + \frac{4}{\pi} a^{2k-1} e^{-a^2} \\
        & \le \frac{2\sqrt{2}a^{2k}}{\sqrt{\pi}} \Dhel^2(f_{G_\star}, f_G) + \frac{4a^{2k-1}}{\pi}e^{-a^2}, 
    \end{aligned} 
\end{equation}
since $f_G(z) = \int \varphi(z-u) G(du) = \int \frac{1}{\sqrt{2\pi}} e^{-\frac{1}{2} (z-u)^2} G(du) \le \frac{1}{\sqrt{2\pi}}$ for any $G$ and consequently $\int \abs{f_G-f_{G_\star}}^2 \le 2\sqrt{\frac{2}{\pi}} \Dhel^2(f_{G_\star}, f_G)$ using again that $(x - y)^2 = (\sqrt{x} - \sqrt{y})^2(\sqrt{x} + \sqrt{y })^2$. 

To find an upper bound for $\Delta_2$, we use the following key inequality, which was proved by \citet{jiang2009general} via integration by parts:
\begin{equation*}
    \Delta_k^2 \leq \Delta_{k-1} \Delta_{k+1} + 2 \tilde{L}(\rho) \Delta_{k-1} \Delta_k.
\end{equation*}

We define $k_0 \ge 1$ such that $k_0 \leq \tilde{L}^2(\rho)/2 < k_0+1$ and let
\[ k^* = \min\{k \ge 1: \Delta_{k+1} \leq k_0 2\tilde{L}(\rho) \Delta_k\}. \] 

For any integer $k$ satisfying $1 \le k < k^*$, we have by definition that $2 \tilde{L}(\rho) \Delta_k \le \Delta_{k + 1}/k_0$ and so
\begin{equation*}
    \Delta_k^2 \leq (1+1/k_0) \Delta_{k-1} \Delta_{k+1},
\end{equation*} 
or equivalently $\Delta_k/\Delta_{k - 1} \le (1 + 1/k_0) \Delta_{k + 1}/\Delta_k$. Also, let $a \ge 1$ be defined by 
\[ a^2 = \max \{\tilde L^2(\rho) + 1, 2 \abs{\log \Dhel(f_{G_\star}, f_G)} \}. \] 
The analysis now splits into two cases.

\textbf{Case 1: $k^* \leq k_0$.} We have that
\begin{equation*}
    \frac{\Delta_2}{\Delta_1} \leq (1+1/k_0)^{k^*-1} \frac{\Delta_{k^*+1}}{\Delta_{k^*}} \leq e k_0 2 \tilde{L}(\rho) \leq e \tilde{L}^3(\rho), 
\end{equation*}
using the inequality $1 + x \le e^x$ and the definitions of $k^*$ and $k_0$. Also, using the first part, we can write:
\begin{equation}
    \Delta_2^2 \le e^2 \tilde{L}^6(\rho) \Delta_1^2 \lesssim \abs{\log \rho}^3 \cb{\Dfisher{f_{G_\star}}{f_G} \,+\, \abs{\log \rho}\Dhel^2(f_{G_\star}, f_{G})}.
    \label{eqn:case1}
\end{equation} 

\textbf{Case 2: $k^* > k_0$.} In this case, we have for all $k \le k_0 < k^*$ that
\begin{equation*}
    \begin{aligned}
        \frac{\Delta_2}{\Delta_1} & \leq (1+1/k_0)^k \frac{\Delta_{k+1}}{\Delta_k},
    \end{aligned}
\end{equation*}
so taking the geometric mean of these inequalities yields
\begin{equation*}
     \frac{\Delta_2}{\Delta_1}  \leq \left[\prod_{k=1}^{k_0} \left(1+\frac{1}{k_0}\right)^k \frac{\Delta_{k+1}}{\Delta_k}\right]^{\frac{1}{k_0}}  = (1+1/k_0)^{\frac{k_0+1}{2}} \left(\frac{\Delta_{k_0+1}}{\Delta_1}\right)^{\frac{1}{k_0}},
\end{equation*}
where in the equality we used that $\sum_{k = 1}^{k_0} k = \frac{k_0(k_0 + 1)}{2}$. Hence,
\begin{equation*}
    \begin{aligned}
    \Delta_2 & \leq (1+1/k_0)^{\frac{k_0+1}{2}} (\Delta_{k_0+1})^{\frac{1}{k_0}} (\Delta_1)^{\frac{k_0-1}{k_0}} \\
    & \leq (1+1/k_0)^{k_0} (\Delta_{k_0+1})^{\frac{1}{k_0}} (\Delta_1)^{\frac{k_0-1}{k_0}} \\
    & \leq e \; [(\Delta_{k_0+1}) (\Delta_1)^{k_0-1}]^{\frac{1}{k_0}},
    \end{aligned}
\end{equation*}
since $k_0 \ge 1$ implying that $k_0+1 \le 2k_0$. 

Observe from the definition of $a^2$ and~\eqref{eqn:a2-analogue} that
\[ \Delta_{k_0 + 1}^2 \le \frac{a^{2k_0+2}}{\rho\sqrt{2\pi}} \Delta_0^2 \p{1+2\sqrt{\frac{2}{\pi}}a^{-1}} \le \frac{3a^{2k_0+2}}{\rho\sqrt{2\pi}} \Delta_0^2, \]
and also, from~\citet{jiang2009general}, for this case, we have
\[ \Delta_1 \le \sqrt{3} e a \Delta_0 .\] 
Thus, 
\[ \Delta_2 \le e \cb{\frac{\sqrt{3} a^{k_0+1}}{\p{2\pi\rho^2}^{1/4}} \Delta_0 \p{\sqrt{3}ea}^{k_0-1} \Delta_0^{k_0-1}}^{\frac{1}{k_0}} \le \sqrt{3} e^{5/2} a^2 \Delta_0, \]
since, from Lemma 1 of~\citet{jiang2009general}, we note that
\[ (2\pi\rho^2)^{-\frac{1}{4k_0}} < (2\pi\rho^2)^{-\frac{1}{4k_0+4}} < \sqrt{e}, \]
for all $\rho \in (0, \sqrt{1/2\pi})$ for all $k_0 \ge 1$ and $a \ge 1$. 

Then, we have: 
\begin{equation}
    \Delta_2 \le 3 e^{5/2} a^2 \Delta_0 \lesssim a^2 \Dhel(f_{G_\star}, f_G).
    \label{eqn:case2}
\end{equation}

So, combining \eqref{eqn:case1} and \eqref{eqn:case2}, we can write: 
\[ \Delta_2^2 \lesssim \max\cb{a^4 \Dhel^2(f_{G_\star}, f_G), \abs{\log \rho}^3 \p{\Dfisher{f_{G_\star}}{f_G} + \abs{\log \rho}\Dhel^2(f_{G_\star}, f_{G})}},  \] 
where $\rho \leq \frac{1}{\sqrt{2\pi e^3}}$ and $a^2 = \max\{\Tilde{L}^2(\rho)+1, 2\abs{\log \Dhel(f_{G_\star}, f_G)}\}$. 

As for the third claim, define
\[ w_\star = 1/(f \lor \rho + f_\star \lor \rho) \]
and observe that
\[ 1/(f \lor \rho) - 2w^* = [f \lor \rho + f_* \lor \rho - 2(f \lor \rho)] \frac{w^*}{f \lor \rho} = [f_* \lor \rho - f \lor \rho] \frac{w^*}{f \lor \rho}\]
so
\[ f''/(f \lor \rho) - 2f'' w_\star = \frac{f''}{f \lor \rho}(f_\star \lor \rho - f\lor \rho) w_\star. \]
We can upper bound
\[ \frac{|f''|}{f \lor \rho} = \frac{|f''|}{f} \frac{f}{f \lor \rho} 
\le 2\tilde{L}^2(\rho),  \]
using Lemma~\ref{lemm:proposition_1_jiang_zhang}. 

So the bound follows by triangle inequality, using the symmetric bound for $f_\star$.
\end{proof}

\subsubsection{Proof of Proposition~\ref{prop:KL_via_Fisher}}

As mentioned in the main text, the argument for this proposition appears in~\citet{koehler2023statistical} (see there for further related references). \citet{koehler2023statistical} states the result for the Kullback-Leibler divergence, however, the first inequality between squared Hellinger distance and Kullback-Leibler divergence is standard.
For self-containedness and because we use a slightly different definition of the log-Sobolev constant, we  provide a proof below.

\begin{proof}
Let us take $\psi(z) = \sqrt{\frac{f_{G_\star}(z)}{f_G(z)}}$ and $\nu(dz) = f_G(z) dz$ in the definition of the logarithmic Sobolev constant. Then, we have: 
\begin{equation*}
    \begin{aligned}
        \int_{\RR} \frac{f_{G_\star}(z)}{f_G(z)} \log\p{\frac{f_{G_\star}(z)}{f_G(z)}} f_G(z) dz - & \int_{\RR} \frac{f_{G_\star}(z)}{f_G(z)} f_G(z) dz \cdot \log\p{\int_{\RR} \frac{f_{G_\star}(z)}{f_G(z)} f_G(z) dz} \\
        \leq & C_{\text{LS}}(f_G) \int_{\RR}  \abs{\frac{\partial}{\partial z} \p{\sqrt{\frac{f_{G_\star}(z)}{f_G(z)}}}}^2 f_G(z) dz, 
    \end{aligned}
\end{equation*}
which implies, 
\begin{equation*}
    \begin{aligned}
        \DKL{f_{G_\star}}{f_G} & \leq  C_{\text{LS}}(f_G) \cdot \int_{\RR} \abs{ \frac{1}{2 \sqrt{\frac{f_{G_\star}(z)}{f_G(z)}}} \frac{\partial}{\partial z} \p{\frac{f_{G_\star}(z)}{f_G(z)}} }^2 f_G(z) dz \\
        & =  \frac{1}{4} C_{\text{LS}}(f_G) \int_{\RR} \abs{\frac{\partial}{\partial z} \p{\frac{f_{G_\star}(z)}{f_G(z)}}}^2 \p{\frac{f_G(z)}{f_{G_\star}(z)}}^2 f_{G_\star}(z) dz \\
        & =  \frac{1}{4} C_{\text{LS}}(f_G) \int_{\RR} \p{\frac{\partial}{\partial z} \log\p{\frac{f_{G_\star}(z)}{f_G(z)}}}^2 f_{G_\star}(z) dz \\
        & =  \frac{1}{4} C_{\text{LS}}(f_G) \int_{\RR} \p{ \frac{\partial}{\partial z} \log\p{f_{G_\star}(z)} - \frac{\partial}{\partial z} \log\p{f_{G}(z)} }^2 f_{G_\star}(z) dz \\ 
        & =  \frac{1}{4} C_{\text{LS}}(f_G) \Dfisher{f_{G_\star}}{f_G}.
    \end{aligned}
    \label{eq:prop9_second_ineq}
\end{equation*}
Thus,
\begin{equation*}
    \Dhel^2(f_{G_\star}, f_G) \leq \DKL{f_{G_\star}}{f_G} \leq \frac{1}{4} C_{\text{LS}}(f_G) \Dfisher{f_{G_\star}}{f_G}. 
\end{equation*}
\end{proof}

\subsection{Further technical lemmata}

The first lemma allows us to just think of regularized scores on an event of very high probability.

\begin{lemm}
\label{lemm:compact_z_whp}
There exists $c>0$ (that depends only on $M$), such that for every $C \geq 2$ the following are true.\footnote{We have the specific choice, $C=10$, in mind for what follows.} Let 
\begin{equation}\label{eq:tstar} 
t^* := M + \sqrt{2C \log(n)},\quad \rho^* := cn^{-2C}. 
\end{equation}
Define the event,
$$A_n := \cb{ \max_i\abs{Z_i} \leq t^*}.$$
The following hold regarding the event $A_n$.
\begin{enumerate}
\item $\PP{A_n} \geq 1- 2 n^{1-C}.$
\item On $A_n$ it holds that $\abs{s_G(Z_i)} \leq 2M + \sqrt{2C \log(n)}$ for all $i=1,\dotsc,n$ for all $G \in \mathcal{P}(M)$.
\item On $A_n$ it holds that:
$$ \min_{i=1,\dotsc,n} \inf_{G \in \mathcal{P}(M)} f_G(Z_i) \geq c n^{-2C}.$$
\item On $A_n$, for any $\rho \leq \rho^*$ it holds that $s_G(Z_i) = s_G^{\rho}(Z_i)$ for all $i=1,\dotsc,n$ and in particular also $s_{\star}(Z_i) = s_{\star}^{\rho}(Z_i)$.
\end{enumerate}

\end{lemm}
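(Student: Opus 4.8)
The plan is to verify the four items in sequence. Each reduces to an elementary estimate, and the items are genuinely ordered: item 3 is the only one that needs a moment's thought, and items 2 and 4 follow quickly (item 4 essentially trivially) from it.

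For item 1 I would write $Z_i = \mu_i + \xi_i$ with $\xi_i \simiid \mathrm{N}(0,1)$ and $\abs{\mu_i}\le M$, so that $\{\abs{Z_i} > t^*\} \subseteq \{\abs{\xi_i} > \sqrt{2C\log n}\}$ since $t^* = M + \sqrt{2C\log n}$; the standard Gaussian tail bound $\PP{\abs{\xi_i} > t} \le 2e^{-t^2/2}$ gives each such event probability at most $2e^{-C\log n} = 2n^{-C}$, and a union bound over $i \le n$ yields $\PP{A_n^c} \le 2n^{1-C}$. For item 2 I would invoke the Eddington/Tweedie identity \eqref{eq:score_and_marginal}, which gives $s_G(z) = \EE[G]{\mu \mid Z = z} - z$; since $G$ is supported on $[-M,M]$ the posterior mean is a $G$-weighted average of points in $[-M,M]$, hence lies in $[-M,M]$, so $\abs{s_G(z)} \le M + \abs{z}$. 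On $A_n$ we have $\abs{Z_i} \le t^* = M + \sqrt{2C\log n}$, giving $\abs{s_G(Z_i)} \le 2M + \sqrt{2C\log n}$ uniformly over $G \in \mathcal P(M)$.

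Item 3 is the crux. Because $f_G(z) = \int \varphi(z-\mu)\,G(d\mu)$ with $G$ supported on $[-M,M]$, on $A_n$ every $\mu$ in the support of $G$ satisfies $\abs{z-\mu} \le t^* + M$, so unimodality of $\varphi$ gives $f_G(z) \ge \varphi(t^*+M)$. Plugging in $t^* + M = 2M + \sqrt{2C\log n}$ and expanding the square in the exponent produces $\varphi(t^*+M) = \tfrac{1}{\sqrt{2\pi}}\,n^{-2C}\exp\!\p{-2M^2 + C\log n - 2M\sqrt{2C\log n}}$. Viewing $C\log n - 2M\sqrt{2C\log n}$ as a quadratic in $u := \sqrt{\log n}\ge 0$, it is minimized at $u = M\sqrt{2/C}$ with minimum value exactly $-2M^2$, and crucially this bound is uniform over $C \ge 2$ and $n \ge 1$. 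Hence $f_G(Z_i) \ge \tfrac{1}{\sqrt{2\pi}}e^{-4M^2}\,n^{-2C}$ on $A_n$ for all $G \in \mathcal P(M)$, which is item 3 with the explicit choice $c := \tfrac{1}{\sqrt{2\pi}}e^{-4M^2}$, depending only on $M$. Item 4 is then immediate: fixing $\rho^* := cn^{-2C}$ with this same $c$, on $A_n$ and for any $\rho \le \rho^*$ we have $f_G(Z_i) \ge \rho^* \ge \rho$, so $f_G(Z_i) \lor \rho = f_G(Z_i)$ and the regularized score in \eqref{eq:regularized_score} satisfies $s_G^{\rho}(Z_i) = f_G'(Z_i)/(f_G(Z_i)\lor\rho) = s_G(Z_i)$; specializing to $G = G_\star \in \mathcal P(M)$ gives $s_\star(Z_i) = s_\star^{\rho}(Z_i)$.

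The only point requiring care — and hence the ``main obstacle'' such as it is — is ensuring that the constant $c$ in item 3 is genuinely independent of both $n$ and $C$, so that the same $c$ can be used to define $\rho^*$ and make item 4 work. This is exactly what the elementary minimization of $u \mapsto Cu^2 - 2M\sqrt{2C}\,u$ delivers (its minimum value $-2M^2$ being independent of $C$), so no further input is needed.
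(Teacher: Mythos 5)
Your proof is correct and follows essentially the same route as the paper: Gaussian tail plus union bound for item~1, the Eddington/Tweedie representation for item~2, the pointwise lower bound $f_G(z) \ge \varphi(t^*+M)$ for item~3, and item~4 as an immediate corollary. The only cosmetic difference is in item~3, where you minimize the exponent $Cu^2 - 2M\sqrt{2C}\,u$ exactly while the paper uses the cruder bound $(a+b)^2 \le 2a^2+2b^2$; both yield the same constant $c = e^{-4M^2}/\sqrt{2\pi}$, and you are slightly more thorough in explicitly verifying uniformity in $C$ and in treating item~4 and arbitrary $G\in\mathcal{P}(M)$.
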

\begin{proof}
For the first part (using Hoeffding's inequality):
$$\PP{ \max_i\abs{Z_i} \geq t^*} \leq 2n \PP{ Z_i \geq t^*} \leq 2n \exp( - C \log n) = 2 n^{1-C}.$$
For the second result, we just write $s_*(Z_i) = \EE[G_\star]{\mu_i \mid Z_i} - Z_i$. Since the true prior is supported on $[-M,M]$, the first term is bounded by $M$ and the second is bounded by $t^*$ on $A_n$. For the last result: Suppose without loss of generality that $Z_i \geq 0$, then:
$$
\begin{aligned}
f_G(Z_i) &\geq \frac{1}{\sqrt{2\pi}} \exp\p{ -(Z_i +M)^2/2}  \\   
&\geq \frac{1}{\sqrt{2\pi}} \exp\p{ -(t^* +M)^2/2} \\ 
&= \frac{1}{\sqrt{2\pi}} \exp\p{ -(\sqrt{2C \log(n)} +2M)^2/2} \\ 
&\geq \frac{\exp(-4M^2)}{\sqrt{2\pi}} n^{-2C}.
\end{aligned}
$$
\end{proof}

Our proof relies crucially on controlling the complexity of the following classes and their star hull. For any class $\mathcal{F}$ of functions and any $\delta > 0$ we also define the localized population Rademacher complexity~\citep[Equation (14.3)]{wainwright2019highdimensional}:
\begin{equation}
\label{eq:localized_population_rademacher}
\localizedpoprademacher{\delta}{\mathcal{F}} = \EE{ \sup\cb{\abs{\frac{1}{n}\sum_{i=1}^n \varepsilon_i f(W_i)}\,:\, f \in \mathcal{F},\,\, \Norm{f}_2 \leq \delta}},
\end{equation}
where the $\varepsilon_i$ are iid Rademacher and independent of the $Z_i$.
Throughout Supplement~\ref{sec:proofs_sure_homoscedastic} we use the notation $\Norm{\cdot}_2$ to denote the $L^2(\text{Leb},\RR)$ norm, where $\text{Leb}$ denotes the Lebesgue measure on $\RR$.

\begin{equation}
\mathcal{S}^{\rho}_c = \cb{ s_G^{\rho}(\cdot) - s_{\star}^{\rho}(\cdot) \,:\, G \in \mathcal{P}(M)}.
\label{eq:score_class_center_rho_rho}
\end{equation}
We also define the star hull of a set of functions $\mathcal{H}$ as 
\begin{equation}
\label{eq:starhull}
\StarHull(\mathcal{H}) = \cb{ \lambda \cdot h(\cdot)\,:\, h \in \mathcal{H},\, \lambda \in [0,1]}.
\end{equation}

\begin{lemm}[Complexity of centered regularized score class]
\label{lemm:complexity_regularized_score}
We have the following results for the centered score class and its star hull.
\begin{enumerate}
\item For $0< \rho < (2\pi e^3)^{-1/2}$, we have that $$\sup_{h \in \mathcal{S}^{\rho}_c} \Norm{h(\cdot)}_{\infty} = \sup_{h \in \StarHull(\mathcal{S}^{\rho}_c)} \Norm{h(\cdot)}_{\infty} \leq 2\tilde{L}(\rho) \lesssim \sqrt{\abs{\log(\rho)}}.$$
\item $\log N(\eta, \mathcal{S}^{\rho}_c, \Norm{\cdot}_{\infty}) \leq \log N(\eta, \StarHull(\mathcal{S}^{\rho}_c), \Norm{\cdot}_{\infty}) \lesssim_M \abs{ \log(\eta \rho)}^2$.
\item Let $\rho = \rho^*$ be as in~\eqref{eq:tstar} (which depends on $C, M >0$).
Then it holds that:
\begin{equation}
\localizedpoprademacher{\delta}{\StarHull(\mathcal{S}^{\rho^*}_c)} \lesssim_{C,M} \frac{\delta}{\sqrt{n}}\p{ \log(1/\delta) + \log(n)} \,+\, \frac{\sqrt{\log n}}{n}\p{ \log(1/\delta) + \log(n)}^2.
\label{eq:rademacher_bound_center_score}
\end{equation}
\item Furthermore,
there exists a constant $C' = C'(C,M)>0$ such that $$\delta = C'\frac{\log^{3/2} n} {\sqrt{n}},$$
satisfies
$$
\localizedpoprademacher{\delta}{\StarHull(\mathcal{S}^{\rho^*}_c)}  \leq \frac{\delta^2}{2\tilde{L}(\rho^*)}.
$$
\end{enumerate}
\end{lemm}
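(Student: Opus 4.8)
The plan is to establish the four claims of Lemma~\ref{lemm:complexity_regularized_score} in order, as each builds on the previous one. For claim 1, the key observation is that an element of $\mathcal{S}^{\rho}_c$ has the form $s_G^{\rho}(z) - s_\star^{\rho}(z)$, and by Lemma~\ref{lemm:proposition_1_jiang_zhang} each regularized score satisfies $|s_G^\rho(z)| \le \tilde L(\rho)$ pointwise, so the difference is bounded by $2\tilde L(\rho)$; taking a convex combination with the zero function (which is what forming the star hull does) cannot increase the sup-norm, giving equality of the two suprema. The bound $\tilde L(\rho) = \sqrt{-\log(2\pi\rho^2)} \lesssim \sqrt{|\log\rho|}$ is immediate. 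For claim 2, the covering number of $\mathcal{S}^{\rho}_c = \mathscr{T}^{c,(1)}_\rho$ in sup-norm is controlled by Lemma~\ref{lemm:second-covering} (the centered first-derivative class), giving $\log N(\eta, \mathcal{S}^\rho_c, \|\cdot\|_\infty) \lesssim_M |\log(\eta\rho)|^2$; passing to the star hull multiplies the covering number by a factor polynomial in $1/\eta$ and in the sup-norm bound from claim 1 (a standard fact, e.g.\ as in Lemma 13.6 of Wainwright), which only changes the $|\log(\eta\rho)|^2$ bound by a constant and lower-order log terms, so the same asymptotic bound holds.

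Claim 3 is the substantive step. The plan is to apply a localized Dudley entropy integral bound for the localized population Rademacher complexity (e.g.\ Wainwright Corollary 13.7 / the one-step discretization bound). Since we work with the Lebesgue $L^2$ norm $\|\cdot\|_2$ in the localization constraint but have sup-norm covering bounds, I will use the fact that on the high-probability event (or after a truncation argument to the compact region $|z| \le t^*$ as in Lemma~\ref{lemm:compact_z_whp}) the relevant functions are supported effectively on a region of length $O(\sqrt{\log n})$, so $\|h\|_2^2 \lesssim \sqrt{\log n}\,\|h\|_\infty^2$ — but more carefully, one uses the empirical $L^2(P_n)$ norm which is automatically bounded by $\|h\|_\infty$. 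The Dudley integral $\int_0^\delta \sqrt{\log N(\eta,\StarHull(\mathcal{S}^{\rho^*}_c), L^2)/n}\, d\eta$ with the entropy bound $\lesssim_M |\log(\eta\rho^*)|^2$ and $\rho^* = cn^{-2C}$ gives, since $|\log(\eta\rho^*)| \lesssim |\log\eta| + \log n$, a bound of order $\frac{1}{\sqrt n}\int_0^\delta (|\log\eta| + \log n)\, d\eta \lesssim \frac{\delta}{\sqrt n}(|\log\delta| + \log n)$ for the main term; the second term $\frac{\sqrt{\log n}}{n}(|\log\delta|+\log n)^2$ arises from the discretization/truncation remainder (the sup-norm times the entropy at scale $\delta$ divided by $n$, combined with the $\sqrt{\log n}$ from claim 1's $\tilde L(\rho^*) \lesssim \sqrt{C\log n}$). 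I expect the main obstacle here to be the bookkeeping needed to reconcile the Lebesgue-$L^2$ localization radius with the sup-norm entropy estimates and to correctly track the truncation remainder so that it lands at the stated $\frac{\sqrt{\log n}}{n}(\cdots)^2$ order rather than something larger.

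Finally, claim 4 is a calculation: we seek $\delta$ with $\localizedpoprademacher{\delta}{\StarHull(\mathcal{S}^{\rho^*}_c)} \le \delta^2/(2\tilde L(\rho^*))$. Since $\tilde L(\rho^*) \lesssim_{C} \sqrt{\log n}$, it suffices that the left side, bounded via~\eqref{eq:rademacher_bound_center_score}, be at most $c\,\delta^2/\sqrt{\log n}$. Plugging the ansatz $\delta = C' \log^{3/2}n/\sqrt n$: the first term of~\eqref{eq:rademacher_bound_center_score} becomes $\lesssim \frac{\delta}{\sqrt n}\log n = C' \frac{\log^{5/2} n}{n}$, while the target $\delta^2/\sqrt{\log n} = (C')^2 \frac{\log^{5/2} n}{n}$ — matching orders, so for $C'$ large enough the first term is dominated. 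The second term of~\eqref{eq:rademacher_bound_center_score} becomes $\lesssim \frac{\sqrt{\log n}}{n}\log^2 n = \frac{\log^{5/2}n}{n}$, again of the same order, so again absorbed for $C'$ large. Choosing $C' = C'(C,M)$ large enough to dominate both implied constants completes the proof. I would present claims 1--2 briefly, devote most space to setting up the Dudley bound for claim 3, and close with the short arithmetic for claim 4.
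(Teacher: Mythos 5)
Your proposal follows the same four-part structure as the paper's proof, and Parts 1, 2, and 4 match the paper essentially exactly (Part 1 from Lemma~\ref{lemm:proposition_1_jiang_zhang}, Part 2 from Lemma~\ref{lemm:second-covering} plus a standard star-hull covering bound, Part 4 by plugging $\delta = C'\log^{3/2}n/\sqrt n$ into~\eqref{eq:rademacher_bound_center_score} and absorbing constants).

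For Part 3, you are circling the right argument but wavering between a pure Dudley entropy integral and a one-step discretization, and these are not the same thing. A plain Dudley integral on the localized class would give you the first term $\frac{\delta}{\sqrt n}(\log(1/\delta)+\log n)$, but it does not by itself produce the $\frac{\sqrt{\log n}}{n}(\log(1/\delta)+\log n)^2$ remainder. The paper's proof instead takes a \emph{proper} $(\eta,\Norm{\cdot}_\infty)$-cover of the $\delta$-localized class (proper in the sense that every center $h_j$ also satisfies $\Norm{h_j}_2\le\delta$), decomposes the Rademacher supremum into the approximation error $\eta$ plus a finite maximum over the cover, and then applies a sub-Gamma maximal inequality (Corollary 2.6 of Boucheron et al.): for each center, $\varepsilon_i h_j(Z_i)$ has sup-norm $\le 2\tilde L(\rho^*)$ and variance $\EE{h_j^2(Z_i)}\le\delta^2$ (this is where the properness of the cover is used), so $\sum_i \varepsilon_i h_j(Z_i)$ is sub-Gamma $\Gamma(n\delta^2, 2\tilde L(\rho^*))$ and the maximal inequality yields $\delta\sqrt{n\log N}+\tilde L(\rho^*)\log N$. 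Choosing $\eta = \delta/\sqrt n$ and using $\log N \lesssim_M |\log(\eta\rho^*)|^2$ gives exactly~\eqref{eq:rademacher_bound_center_score}. This also resolves your worry about which $L^2$ norm enters the localization: the variance bound is in terms of $\EE{h_j^2(Z_i)}$, i.e.\ the $L^2$ norm under the distribution of $Z_i$, consistent with Wainwright's definition of local Rademacher complexity; the truncation to $|z|\le t^*$ is not needed at this step (it matters in Lemma~\ref{lemm:noise_process_complexity} and later in the main proof, not here). With those two clarifications your plan goes through.
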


\begin{proof}
Part 1: This result follows from Lemma~\ref{lemm:proposition_1_jiang_zhang}.

Part 2: We start with Lemma~\ref{lemm:second-covering} according to which
$\log N(\eta, \mathcal{S}^{\rho}_c, \Norm{\cdot}_{\infty}) \lesssim \abs{ \log(\eta \rho)}^2$. 
To upgrade this to a result for the star hull we can use the argument of~\citet[Lemma 4.5]{mendelson2002improving}. That is, first let $N$ be the size of a $\eta/2$ cover of  $\mathcal{S}^{\rho}_c$ and call the centers $h_1,\dotsc,h_N$. Our strategy is to construct a $\eta/2$ cover of each of:
$$
\cb{ \lambda h_j(\cdot): \lambda \in [0,1]}, \, j=1,\dotsc,n.
$$
Why does this suffice? Any element in the star hull may be represented as $\lambda h(\cdot)$ for some $h(\cdot) \in \mathcal{S}^{\rho}_c$. Now first pick the closest element $h_j$ in the cover of  $\mathcal{S}^{\rho}_c$ and then pick $\lambda_{jk}$ in the cover of $\cb{ \lambda h_j(\cdot): \lambda \in [0,1]}$. We get:
$$
\Norm{\lambda h(\cdot) - \lambda_{jk} h_j(\cdot)}_{\infty} \leq \Norm{h(\cdot) - h_j(\cdot)}_{\infty} + \Norm{ \lambda h_j(\cdot) -  \lambda_{jk} h_j(\cdot) }_{\infty}  \leq \eta/2 + \eta/2 = \eta.
$$
It only remains to count how many elements our cover has. Notice that for each $j$ it suffices to cover $[0,1]$ at $\eta/(4\tilde{L}(\rho))$, which we can do with order $(4\tilde{L}(\rho))/\eta$ elements. In total we get:
$$
\log N(\eta, \StarHull(\mathcal{S}^{\rho}_c), \Norm{\cdot}_{\infty}) \lesssim \log N(\eta/2, \mathcal{S}^{\rho}_c, \Norm{\cdot}_{\infty}) + \log(\eta/(4\tilde{L}(\rho))) \lesssim \abs{ \log(\eta \rho)}^2.
$$

For part 3, we argue as follows. First let $N$ and $h_1,\dotsc,h_N$ be a proper $(\eta, \Norm{\cdot}_{\infty})$-cover of the $\delta$-localized function class $\cb{h \in \StarHull(\mathcal{S}_c^{\rho})\,:\, \Norm{h}_2 \leq \delta}$. By proper cover we mean that $h_j \in \StarHull(\mathcal{S}_c^{\rho})$ and $\Norm{h_j}_2 \leq \delta$. We will pick $\eta$ later. By standard arguments, we just use the covering number of the whole class (up to constants), that is $\log N \lesssim  \abs{ \log(\eta \rho)}^2$.

Now take $h \in \cb{h \in \StarHull(\mathcal{S}_c^{\rho})\,:\, \Norm{h}_2 \leq \delta}$ and let $h_{\hat{j}}$ be the nearest element to it in the cover. Then:

$$
\begin{aligned}
\abs{\frac{1}{n}\sum_{i=1}^n \varepsilon_i h(Z_i)} &\leq \abs{\frac{1}{n}\sum_{i=1}^n \varepsilon_i \cb{h(Z_i)-h_{\hat{j}}(Z_i)}} + \abs{\frac{1}{n}\sum_{i=1}^n \varepsilon_i h_{\hat{j}}(Z_i)} \\ 
& \leq \eta \,+\, \frac{1}{n}\max_{j=1}^N  \abs{\sum_{i=1}^n \varepsilon_i h_{j}(Z_i)}
\end{aligned}
$$
Now argue as follows. First note that for each $j$, $\varepsilon_i h_j(Z_i)$ has the following properties: first, its absolute value is upper bounded by $2\tilde{L}(\rho)$ and second its variance is as follows:
$$\Var{\varepsilon_i h_{j}(Z_i)} = \EE{h_{j}^2(Z_i)} \leq \delta^2,$$
since we picked a proper cover. This means that $\varepsilon_i h_{j}(Z_i)$ is a sub-Gamma random variable, $\Gamma(\delta^2, 2\tilde{L}(\rho))$, where we use the definition of sub-Gamma in~\citet[Chapter 2.4]{boucheron2013concentration}. By tensorization, this means that $\sum_{i=1}^n \varepsilon_i h_j(X_i)$ is sub-Gamma, $\Gamma(n \delta^2 , 2\tilde{L}(\rho))$. Applying the maximal inequality in Corollary 2.6 of~\citet{boucheron2013concentration} we thus find that:
$$ 
\EE{\max_{j=1}^N \abs{\sum_{i=1}^n \varepsilon_i h_{j}(Z_i)}} \leq \delta \sqrt{2 n \log(2N)} \, + \, 2\tilde{L}(\rho) \log(2N).
$$
Putting our results so far together, we find that:
$$
\localizedpoprademacher{\delta}{\StarHull(\mathcal{S}^{\rho}_c)} \lesssim \eta + \frac{\delta}{\sqrt{n}}\abs{ \log(\eta \rho)} + \frac{\sqrt{\abs{\log(\rho)}} \abs{ \log(\eta \rho)}^2}{n}.
$$
The bound in the statement of the lemma follows by choosing $\eta = \delta/\sqrt{n}.$

For Part 4: 
Taking
$$\delta = C'\frac{\log^{1.5} n } {\sqrt{n}},$$
for some $C' > 1$,
 the right hand side of~\eqref{eq:rademacher_bound_center_score} is 
\begin{align*}
&\frac{\delta}{\sqrt{n}}\p{ \log(1/\delta) + \log(n)} \,+\, \frac{\sqrt{\log n}}{n}\p{ \log(1/\delta) + \log(n)}^2 \\
&\lesssim
C' \frac{\log^{2.5} n}{n} \,+\,  \frac{\log^{2.5}n}{n},
\end{align*}
which, recalling that $|\log \rho| \le 2C\log(n) + \log(c)$,
in turn is at most
$$(C')^2 \frac{ \log^{3} n}{n} \cdot \frac{1}{2\tilde{L}(\rho)} = \frac{\delta^2}{2\tilde{L}(\rho)},$$
for a constant $C'=C'(C,M)$ that is sufficiently large.
\end{proof}

Similarly to Lemma~\ref{lemm:complexity_regularized_score}, we now study the complexity of the following class.

\begin{equation}
\label{eq:noise_process}
\mathcal{M}^{\rho}_c = \cb{h_G(\cdot) = \regscore_{\star}(\cdot)\cb{\regscore_{\star}(\cdot)-\regscore_{G}(\cdot)}  + \frac{\partial}{\partial z}\cb{\regscore_{\star}(\cdot)-\regscore_G(\cdot)}\,:\, G \in \mathcal{P}(M)}.
\end{equation}
\begin{lemm}[Complexity of centered regularized noise process class]
\label{lemm:noise_process_complexity}
We have the following results for the centered noise process class and its star hull.
\begin{enumerate}
\item For $0< \rho < (2\pi e^3)^{-1/2}$, we have that $$\sup_{h \in \mathcal{M}^{\rho}_c} \Norm{h(\cdot)}_{\infty} = \sup_{h \in \StarHull(\mathcal{M}^{\rho}_c)} \Norm{h(\cdot)}_{\infty} \lesssim \tilde{L}^2(\rho)\lesssim \abs{\log(\rho)}.$$
\item Let $t^*$, $\rho^*$ be defined as in \eqref{eq:tstar} (both of which depend on $C,M$). Further define
$$\|f\|_{t^*,\infty} := \|f\|_{L_{\infty}[-t^*,t^*]}.$$
then
$$\log N(\eta, \mathcal{M}^{\rho^*}_c, \Norm{\cdot}_{t^*,\infty}) \leq \log N(\eta, \StarHull(\mathcal{M}^{\rho^*}_c), \Norm{\cdot}_{t^*,\infty}) \lesssim_{C,M} \abs{ \log(\eta \rho^*)}^2,$$
\item Continuing with $\rho^*$ as above, then it holds that:
$$
\localizedpoprademacher{\delta}{\StarHull(\mathcal{M}^{\rho^*}_c)} \lesssim \frac{\delta}{\sqrt{n}}\p{ \log(1/\delta) + \log(n)} \,+\, \frac{\log n}{n}\p{ \log(1/\delta) + \log(n)}^2.
$$
\item There exists a constant $C' = C'(C,M)>0$ such that $$\delta = C'\frac{\log^{3/2} n} {\sqrt{n}},$$
satisfies
$$
\localizedpoprademacher{\delta}{\StarHull(\mathcal{M}^{\rho^*}_c)}  \leq \frac{\delta^2}{\sup_{h \in \StarHull(\mathcal{M}^{\rho}_c)} \Norm{h(\cdot)}_{\infty}}.
$$

\item Take any $h \in \mathcal{M}_c^{\rho^*}$. Then:
$$ \abs{\EE[G_{\star}]{h(Z_i)}} \lesssim_{M,C}  n^{(1-C)/2} \log n.$$
\end{enumerate}
\end{lemm}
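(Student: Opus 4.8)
The plan is to reduce $\EE[G_{\star}]{h(Z)}$, via a single integration by parts, to an integral over the set where the true marginal $f_{G_{\star}}$ dips below the truncation level $\rho^*$; this set has exponentially small probability, and the remaining integral is estimated by elementary Gaussian tail bounds.

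Fix $G \in \mathcal{P}(M)$, write $h = h_G \in \mathcal{M}^{\rho^*}_c$, and set $g := \regscore_{G_{\star}} - \regscore_G$ (regularized scores with $\rho = \rho^*$), so that $h = \regscore_{G_{\star}}\cdot g + g'$ a.e. By Lemma~\ref{lemm:proposition_1_jiang_zhang}, $\abs{\regscore_{G_{\star}}}, \abs{\regscore_G} \le \tilde L(\rho^*)$ and their a.e.\ derivatives lie in $[-1, \tilde L^2(\rho^*)]$, so $g$ is bounded with $\abs{g}\le 2\tilde L(\rho^*)$, absolutely continuous, and $\abs{g'}\le 2\tilde L^2(\rho^*)$ a.e. Since $f_{G_{\star}}$ and $f_{G_{\star}}'$ have Gaussian decay, integration by parts together with dominated convergence (all integrands dominated by a multiple of $(\abs{z}+M)f_{G_{\star}}(z)$ or $f_{G_{\star}}(z)$) gives
$$
\int g'(z)\,f_{G_{\star}}(z)\,dz \;=\; -\int g(z)\,f_{G_{\star}}'(z)\,dz \;=\; -\int g(z)\,\score_{G_{\star}}(z)\,f_{G_{\star}}(z)\,dz,
$$
where $\score_{G_{\star}} = f_{G_{\star}}'/f_{G_{\star}}$ is the \emph{unregularized} true score. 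Hence
$$
\EE[G_{\star}]{h(Z)} \;=\; \int \cb{\regscore_{G_{\star}}(z) - \score_{G_{\star}}(z)}\, g(z)\, f_{G_{\star}}(z)\,dz .
$$
The key point is that $\regscore_{G_{\star}}(z) - \score_{G_{\star}}(z) = 0$ whenever $f_{G_{\star}}(z)\ge\rho^*$, so the integral runs only over $E := \cb{z : f_{G_{\star}}(z) < \rho^*}$; the $\regscore_G$-truncation has been absorbed by the integration by parts, which is why we integrate by parts rather than split directly into a good and bad event.

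On $E$ we bound the integrand: $\abs{g}\le 2\tilde L(\rho^*)$, and $\abs{\cb{\regscore_{G_{\star}}(z)-\score_{G_{\star}}(z)}f_{G_{\star}}(z)} = \abs{f_{G_{\star}}'(z)}\cdot\abs{1 - f_{G_{\star}}(z)/\rho^*} \le \abs{f_{G_{\star}}'(z)}$ since $0\le f_{G_{\star}}(z)/\rho^* < 1$ on $E$. Because $f_{G_{\star}}$ is a convolution of a measure on $[-M,M]$ with $\mathrm{N}(0,1)$, we have $f_{G_{\star}}(z)\ge\varphi(\abs{z}+M)$ and $\abs{f_{G_{\star}}'(z)} = \abs{\int(z-u)\varphi(z-u)G_{\star}(du)} \le (\abs{z}+M)\varphi(\abs{z}-M)$ for $\abs{z}>M$; the first of these forces $E \subseteq \cb{\abs{z} > \tilde L(\rho^*) - M}$. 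For $n$ large enough that $\tilde L(\rho^*)\ge 3M$, a one-dimensional Gaussian tail computation gives $\int_{\abs{z}>a}(\abs{z}+M)\varphi(\abs{z}-M)\,dz \lesssim_M e^{-(a-M)^2/2}$ for $a\ge 3M$, and taking $a = \tilde L(\rho^*)-M$ yields
$$
\abs{\EE[G_{\star}]{h(Z)}} \;\lesssim_M\; \tilde L(\rho^*)\,e^{-(\tilde L(\rho^*)-2M)^2/2}.
$$
Finally substitute $\rho^* = cn^{-2C}$: then $\tilde L(\rho^*)^2 = -\log(2\pi(\rho^*)^2) = 4C\log n - \log(2\pi c^2)$, so $\tilde L(\rho^*)\lesssim_{M,C}\sqrt{\log n}$ and $(\tilde L(\rho^*)-2M)^2 \ge \tilde L(\rho^*)^2 - 4M\tilde L(\rho^*) \ge 4C\log n - C_1(M,C)\sqrt{\log n}$. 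Thus $\abs{\EE[G_{\star}]{h(Z)}} \lesssim_{M,C} \sqrt{\log n}\cdot n^{-2C}\,e^{C_1(M,C)\sqrt{\log n}/2}$; since $e^{C_1\sqrt{\log n}/2}\le n^{1/2}$ for $n\ge n_0(M,C)$ and $-2C+\tfrac12 \le \tfrac{1-C}{2}$ for $C\ge 2$, this is $\lesssim_{M,C} n^{(1-C)/2}\log n$ for $n$ large, and the finitely many remaining $n$ are absorbed into the constant via the crude bound $\abs{\EE[G_{\star}]{h(Z)}} \le \Norm{h}_{\infty} \lesssim \tilde L^2(\rho^*)$ from part~1 of Lemma~\ref{lemm:noise_process_complexity}.

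\textbf{Main obstacle.} The delicate step is the integration-by-parts identity: verifying boundedness and absolute continuity of $g$, the vanishing of boundary terms, and the integrability needed for dominated convergence, and---most importantly---observing that after integrating by parts the integrand is supported only on $\cb{f_{G_{\star}} < \rho^*}$ (the $\regscore_G$-truncation drops out). Once this is in place, the remainder is a routine Gaussian tail estimate.
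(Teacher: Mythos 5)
Your proof is correct, and it rests on the same structural move as the paper's: apply Stein's lemma (your integration by parts) to $g = \regscore_{G_\star}-\regscore_G$ so that the derivative term cancels against $-s_\star g$, leaving $\EE[G_\star]{h} = \EE[G_\star]{(\regscore_{G_\star}-s_\star)g}$, and then exploit that the factor $\regscore_{G_\star}-s_\star$ vanishes identically on the set $\{f_{G_\star}\ge\rho^*\}$. Where you diverge is in the final estimate. The paper applies Cauchy--Schwarz, splitting $|\EE[G_\star]{(\regscore_{G_\star}-s_\star)g\,\ind(A^c)}|$ into $\sqrt{\PP{A^c}}$ times an $L^2$ factor bounded via $\|g\|_\infty\lesssim\tilde L(\rho^*)$ and $\EE{(\regscore_{G_\star}-s_\star)^2}\lesssim \tilde L^2(\rho^*)+M^2$, then invokes the exponential bound on $\PP{A^c}$ from Lemma~\ref{lemm:compact_z_whp}. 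You instead bound the integrand pointwise on $E=\{f_{G_\star}<\rho^*\}$---observing $|(\regscore_{G_\star}-s_\star)f_{G_\star}|\le|f_{G_\star}'|$ and $E\subseteq\{|z|>\tilde L(\rho^*)-M\}$---and finish with an explicit Gaussian tail integral. Both routes land at $\lesssim_{M,C} n^{(1-C)/2}\log n$; yours is a bit more elementary (no Cauchy--Schwarz, no second-moment bookkeeping) and in fact delivers a slightly sharper exponent, at the cost of tracking the geometry of $E$ and the tail integral explicitly. The only bookkeeping wrinkle is cosmetic: you need $\tilde L(\rho^*)-M\ge 3M$, i.e.\ $\tilde L(\rho^*)\ge 4M$ rather than $3M$, to invoke your stated tail estimate; for smaller $n$ you fall back on the trivial $\|h\|_\infty$ bound as you note, so nothing breaks.
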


\begin{proof}
For part 1, we again directly call upon Lemma~\ref{lemm:proposition_1_jiang_zhang}, applying it to both $\regscore_G(z)$ and its derivative.

For part 2, we start by noting that by Lemma~\ref{lemm:compact_z_whp}, for any $\abs{z} \leq t^*$, it holds that:
$$\frac{\partial}{\partial z}\score^{\rho^*}_G(z) = \frac{f_G''(z)}{f_G(z) \lor \rho^*} - \left(\frac{f'_G(z)}{f_G(z) \lor \rho^*}\right)^2.$$
It then suffices to separately cover
$$\cb{\frac{f_G''(z)}{f_G(z) \lor \rho^*}\,:\, G \in \mathcal{P}(M)},\quad \text{ and }\quad  \cb{\left(\frac{f'_G(z)}{f_G(z) \lor \rho^*}\right)^2\,:\, G \in \mathcal{P}(M)}.$$
We already constructed a cover of the first class in Lemma~\ref{lemm:second-covering} (it is the class $\mathscr{T}^{(2)}_{\rho^*}$ therein). Meanwhile, we can cover the second class in the display equation above by noting that its elements are the squares of elements in $\mathscr{T}^{(1)}_{\rho^*}$ of Lemma~\ref{lemm:second-covering}. Using the upper bound in Lemma~\ref{lemm:proposition_1_jiang_zhang}, it follows that:
$$\log N(\eta, \mathcal{M}^{\rho^*}_c, \Norm{\cdot}_{t^*,\infty})  \lesssim_{C,M} \abs{ \log(\eta \rho^*)}^2.$$
The argument for the star hull of  $\mathcal{M}^{\rho^*}_c$ is identical to the analogous argument in Lemma~\ref{lemm:complexity_regularized_score}, and so, omitted. Parts 3 and 4 are also analogous to the proof of part 3 in Lemma~\ref{lemm:complexity_regularized_score} and so omitted. 

Let us prove part 5. By Stein's lemma, we have that:
$$
\EE[G_\star]{\score_{\star}(Z_i)\cb{\regscorestar_{\star}(Z_i)-\regscore_{G}(Z_i)}  + \frac{\partial}{\partial z}\cb{\regscorestar_{\star}(Z_i)-\regscorestar_G(Z_i)}} = 0.$$
Thus, 
$$
\begin{aligned}
\abs{\EE[G_\star]{h(Z_i)}} &= \abs{\EE[G_\star]{\regscorestar_{\star}(Z_i)\cb{\regscorestar_{\star}(Z_i)-\regscorestar_{G}(Z_i)}  + \frac{\partial}{\partial z}\cb{\regscorestar_{\star}(Z_i)-\regscorestar_G(Z_i)}}} \\ 
&=\abs{\EE[G_{\star}]{\cb{\regscorestar_{\star}(Z_i)-s_*(Z_i)}\cb{\regscorestar_{\star}(Z_i)-\regscorestar_{G}(Z_i)}}} \\
&= \abs{\EE[G_{\star}]{\ind(A_n^c) \cb{\regscorestar_{\star}(Z_i)-s_\star(Z_i)}\cb{\regscorestar_{\star}(Z_i)-\regscorestar_{G}(Z_i)}}} \\ 
& \stackrel{(i)}{\leq} \sqrt{\PP[G_{\star}]{A_n^c}}\sqrt{ \EE{\cb{\regscorestar_{\star}(Z_i)-s_\star(Z_i)}^2\cb{\regscorestar_{\star}(Z_i)-\regscorestar_{G}(Z_i)}^2}} \\ 
& \stackrel{(ii)}{\lesssim} \sqrt{\PP[G_\star]{A_n^c}} \abs{\log \rho^*}^{1/2} \sqrt{ \EE{\cb{\regscorestar_{\star}(Z_i)-s_\star(Z_i)}^2}} \\
& \stackrel{(iii)}{\lesssim} \sqrt{\PP[G_\star]{A_n^c}} \abs{\log \rho^*}^{1/2}(   \abs{\log \rho^*}^{1/2} + M) \\ 
& \stackrel{(iv)}{\lesssim } n^{(1-C)/2}\abs{\log \rho^*}^{1/2}(   \abs{\log \rho^*}^{1/2} + M).
\end{aligned}
$$ 
In $(i)$ we use Cauchy-Schwarz, in $(ii)$ we use the uniform bound on the regularized scores, and in $(iii)$ we additionally use the inequality $(x + y)^2 \le 2x^2 + 2y^2$ and  that $\abs{s_\star(Z_i)} \leq \abs{Z_i} + M \leq \abs{\xi_i} + 2M$ with $\xi_i \sim \mathrm{N}(0,1)$, and so $\EE[G_\star]{s_\star(Z_i)^2} \leq  2+ 8 M^2$. Finally,  in step $(iv)$ we use Part 1 of Lemma~\ref{lemm:compact_z_whp} to control $\PP[G_\star]{A_n^c}$. 
\end{proof}

For the next lemma, as well as for the main proof of Theorem~\ref{theo:rate_homoscedastic}, we use the following standard notation in empirical process theory. Given a function $h: \RR \to \RR$, we write
$$\hEE{h} := \frac{1}{n}\sum_{i=1}^n h(Z_i).$$

\begin{lemm}
\label{lemm:l2_to_l2pn}
There is an event $A_n'$ with the following properties:
\begin{enumerate}
\item $\PP{A_n'} \geq  1- \exp(-c_1 \log(n)^2)$ for a positive constant $c_1$.
\item On the event $A_n \cap A_n'$ and for $\rho^*$ for $M,C$ defined in Lemma~\ref{lemm:compact_z_whp}, it holds that:
$$\Dfisher{f_{\star}}{f_G} \lesssim \hEE{ \p{s_G  - s_{\star}}^2} + 4M^2 n^{1-C} +  \frac{\log^3 n}{n}  \text{ for any } G \in \mathcal{P}(M).$$
\end{enumerate}
\end{lemm}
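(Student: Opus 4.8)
The plan is to compare $\Dfisher{f_{G_\star}}{f_G}$ with its empirical analogue in three steps: (i) truncate to a region where the regularized scores $s_G^{\rho^*}, s_\star^{\rho^*}$ coincide with the true scores, reducing the population Fisher divergence to the population second moment of the bounded, complexity-controlled centered class $\mathcal S^{\rho^*}_c$; (ii) apply a standard localized one-sided comparison of population and empirical $L^2$-norms on the star hull $\StarHull(\mathcal S^{\rho^*}_c)$; and (iii) use Lemma~\ref{lemm:compact_z_whp}(4) to replace the regularized scores by the true scores at the sample points on $A_n$. Here $\rho^* = cn^{-2C}$ and $t^* = M + \sqrt{2C\log n}$ are as in Lemma~\ref{lemm:compact_z_whp}.

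\emph{Step (i): tail truncation.} Set $R := \{z : f_{G_\star}(z) \ge \rho^*\text{ and }f_G(z) \ge \rho^*\}$. On $R$ both scores equal their regularized versions, so $(s_\star - s_G)^2 \ind_R = (s_\star^{\rho^*} - s_G^{\rho^*})^2 \ind_R$, whence $\EE[G_\star]{(s_\star - s_G)^2 \ind_R} \le \EE[G_\star]{(s_G^{\rho^*} - s_\star^{\rho^*})^2}$. For the complement, the elementary bound $f_H(z) \ge \tfrac1{\sqrt{2\pi}} e^{-(|z|+M)^2/2}$ valid for every $H \in \mathcal P(M)$, evaluated at $|z|=t^*$ and simplified via $(\sqrt{C\log n} - \sqrt 2 M)^2 \ge 0$, gives $f_H(z) \ge \rho^*$ for all $|z| \le t^*$ — this is precisely the pointwise estimate behind Lemma~\ref{lemm:compact_z_whp}(3) — so $R^c \subseteq \{|z| > t^*\}$. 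Since $s_\star(z) - s_G(z) = \EE[G_\star]{\mu \mid Z=z} - \EE[G]{\mu \mid Z=z}$ (the leading $z$'s cancel) and both posterior means lie in $[-M,M]$, we get $(s_\star - s_G)^2 \le 4M^2$ pointwise; combined with the Gaussian tail bound $\PP[G_\star]{|Z| > t^*} \le \PP{|\mathrm{N}(0,1)| > \sqrt{2C\log n}} \le 2n^{-C}$,
$$\EE[G_\star]{(s_\star - s_G)^2 \ind_{R^c}} \le 4M^2\, \PP[G_\star]{|Z| > t^*} \le 4M^2 n^{1-C}.$$
Hence $\Dfisher{f_{G_\star}}{f_G} \le \EE[G_\star]{(s_G^{\rho^*} - s_\star^{\rho^*})^2} + 4M^2 n^{1-C}$ uniformly over $G \in \mathcal P(M)$.

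\emph{Steps (ii)--(iii): uniform comparison and de-regularization.} The class $\StarHull(\mathcal S^{\rho^*}_c)$ is star-shaped about $0$, uniformly bounded by $b := 2\tilde L(\rho^*) \lesssim \sqrt{\log n}$ (Lemma~\ref{lemm:complexity_regularized_score}(1)), and by Lemma~\ref{lemm:complexity_regularized_score}(4) admits the critical radius $\delta := C'\log^{3/2} n/\sqrt n$ with $\localizedpoprademacher{\delta}{\StarHull(\mathcal S^{\rho^*}_c)} \le \delta^2/b$. A standard localized one-sided comparison of population and empirical $L^2$-norms for bounded, star-shaped classes (see, e.g., \citet[Chapter 14]{wainwright2019highdimensional}) then yields an event $A_n'$ with $\PP{A_n'} \ge 1 - c_1 \exp(-c_2 n\delta^2/b^2)$ on which $\EE[G_\star]{h^2} \lesssim \hEE{h^2} + \delta^2$ for all $h \in \StarHull(\mathcal S^{\rho^*}_c)$, in particular for $h = s_G^{\rho^*} - s_\star^{\rho^*}$. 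Since $n\delta^2/b^2 \gtrsim \log^2 n$, after renaming constants $\PP{A_n'} \ge 1 - \exp(-c_1\log^2 n)$ and $\delta^2 \lesssim \log^3 n/n$. Finally, by Lemma~\ref{lemm:compact_z_whp}(4), on $A_n$ we have $s_G(Z_i) = s_G^{\rho^*}(Z_i)$ and $s_\star(Z_i) = s_\star^{\rho^*}(Z_i)$ for every $i$, so $\hEE{(s_G^{\rho^*} - s_\star^{\rho^*})^2} = \hEE{(s_G - s_\star)^2}$. Chaining the three steps, on $A_n \cap A_n'$,
$$\Dfisher{f_{G_\star}}{f_G} \le \EE[G_\star]{(s_G^{\rho^*} - s_\star^{\rho^*})^2} + 4M^2 n^{1-C} \lesssim \hEE{(s_G - s_\star)^2} + \frac{\log^3 n}{n} + 4M^2 n^{1-C},$$
uniformly in $G \in \mathcal P(M)$, which is the asserted bound.

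\emph{Main obstacle.} The conceptual heart is Step (i): one must verify that calibrating $\rho^*$ to $t^*$ pushes the ``bad'' set $\{f_{G_\star} < \rho^*\text{ or }f_G < \rho^*\}$ outside $[-t^*,t^*]$ \emph{simultaneously over all} $G \in \mathcal P(M)$, so that the truncation level --- and hence the fixed, complexity-controlled class $\mathcal S^{\rho^*}_c$ handed to the uniform-law machinery --- does not depend on $G$. The cancellation $s_\star - s_G = \EE[G_\star]{\mu \mid Z} - \EE[G]{\mu \mid Z}$, which bounds the truncated contribution by $4M^2$ times a Gaussian tail, is what produces the clean $n^{1-C}$ remainder. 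Once Step (i) is in hand, Steps (ii)--(iii) are essentially bookkeeping over Lemmas~\ref{lemm:complexity_regularized_score} and~\ref{lemm:compact_z_whp}, whose proofs already contain the substantive work (entropy of scores and their derivatives, and the $\tilde L(\rho)$ estimates).
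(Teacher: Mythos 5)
Your proof is correct and follows essentially the same strategy as the paper's: truncate the Fisher divergence to the region where the regularized and true scores coincide (with the complement contributing $4M^2$ times a Gaussian tail via the posterior-mean cancellation), then apply the localized $L^2$-comparison from \citet[Chapter 14]{wainwright2019highdimensional} to $\StarHull(\mathcal{S}_c^{\rho^*})$ with the critical radius from Lemma~\ref{lemm:complexity_regularized_score}(4), and finally use Lemma~\ref{lemm:compact_z_whp}(4) to de-regularize the empirical sum on $A_n$. The only cosmetic difference is that you split on $R = \{f_{G_\star} \geq \rho^*,\ f_G \geq \rho^*\}$ and then observe $R^c \subseteq \{|z| > t^*\}$, whereas the paper splits directly on $\{|z| \leq t^*\}$ — these amount to the same calculation.
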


\begin{proof}
We first define the event $A_n'$
as the high probability event in~\citet[Theorem 14.1]{wainwright2019highdimensional}, which we apply for the class $\StarHull(\mathcal{S}_c^{\rho^*})$ studied in~Lemma~\ref{lemm:complexity_regularized_score}. In particular, by the former lemma, we may take $t=\delta$, which we derive probability bounds on $A_n'$.
 
Theorem 14.1 of~\citet{wainwright2019highdimensional} then yields that on the event $A_n'$ the following holds for any $h \in \StarHull(\mathcal{S}_c^{\rho^*})$:

$$\abs{ \EE{h(Z_i)^2} -  \hEE{h^2}} \leq \frac{1}{2} \EE{h(Z_i)^2} + \frac{1}{2} t^2.$$
Hence:
$$\EE{h(Z_i)^2} =  \EE{h(Z_i)^2}  - \hEE{h^2}  + \hEE{h^2} \leq    \frac{1}{2} \EE{h(Z_i)^2} + \frac{1}{2} t^2 + \hEE{h^2}.$$
And by rearranging:
$$
\EE[G_{\star}]{h(Z_i)^2} \leq t^2 + 2\hEE{h^2}.
$$
Notice that the above holds for any $h \in \StarHull(\mathcal{S}_c^{\rho})$, so in particular it also holds for $h_G \in \mathcal{S}_c^{\rho^*}$, where the subscript $G$ corresponds to the indexing $G \in \mathcal{P}(M)$ in the definition~\eqref{eq:score_class_center_rho_rho} and $\rho^*$ is as in 

Making the notation more explicit, $\EE[G_{\star}]{h_G(Z_i)^2} = \EE[G_{\star}]{ \cb{ \regscore_G(Z_i) - \regscore_{\star}(Z_i)}^2}$. 
We also have that,
$$
\begin{aligned}
\Dfisher{f_{\star}}{f_G} &= \EE[G_\star]{ \cb{ \score_G(Z_i) - \score_{\star}(Z_i)}^2} \\ 
&= \EE[G_\star]{ \cb{ \score_G(Z_i) - \score_{\star}(Z_i)}^2 \ind(A_n)} \, + \,  \EE[G_\star]{ \cb{ \score_G(Z_i) - \score_{\star}(Z_i)}^2 \ind(A_n^c)} \\ 
& = \EE[G_\star]{ \cb{ \regscorestar_G(Z_i) - \regscorestar_{\star}(Z_i)}^2 \ind(A_n)}  \,+\, \EE[G_\star]{ \cb{ \EE[G]{\mu_i \mid Z_i} - \EE[G_{\star}]{\mu_i \mid Z_i}}^2 \ind(A_n^c)} \\ 
& \leq \EE[G_\star]{ \cb{ \regscorestar_G(Z_i) - \regscorestar_{\star}(Z_i)}^2}\,+\, 4M^2 \PP[G_{\star}]{A_n^c}.
\end{aligned}
$$
\end{proof}

\subsection{Putting everything together: Proof of Theorem~\ref{theo:rate_homoscedastic}}
\begin{proof}
Throughout this proof we use $\rho^*$ defined in~\eqref{eq:tstar} (which is a function of $n$, $C>0$ to be specified at the end of the proof and $M$). 
Recall the following from Lemma~\ref{lemm:deterministic_inequality}. There is a (random) complexity function
$$\widehat W(r) := \sup_{G \in \mathcal{P}(M) : \hEE{ (\regscorestar_G - \regscorestar_{\star})^2} \le r^2} 2\, \hEE{\regscorestar_{\star}\cb{\regscorestar_G - \regscorestar_{\star}} + \frac{\partial}{\partial z}(\regscorestar_{\star} - \regscorestar_G)}.$$
and to upper bound the squared loss, it suffices to upper bound $(\hat r)^2$ where $\hat r$ is the greatest postfixed point
$$ 
\hat r = \sup \{ r \ge 0 \mid r^2 \le  \widehat W(r) \}. 
$$
To do this, we will apply a recursive localization argument at a carefully chosen deterministic sequence of radiuses
$r_0 \ge r_1 \ge \cdots \ge r_{I - 1}$ for $I \ge 1$ to be chosen later. This will let us iteratively upper bound $\widehat W(\hat r)$ (and thus $(\hat r)^2$) using that $\widehat W$ is a monotone function, where for each radius we apply concentration of measure and we combine the good events using the union bound. (Phrased differently, we will with high probability be able to use the fact that $\hat r \le r_0$ to argue that $\hat r \le r_1$, and so on which will ultimately let us argue $\hat r \le r_{I - 1}$.)

To start with, we consider the analysis for a single fixed $r$.
We require that $r^2 \gtrsim (\log n)^3/n$ (throughout the rest of the argument) and consider all $G \in \mathcal{P}(M)$ such that:
$$\hEE{\p{\regscorestar_G - \regscorestar_{\star}}^2} \leq r^2.$$
From Lemma~\ref{lemm:l2_to_l2pn}, on our high probability event $A_n \cap A_n'$, we get for any such $G$, 
$$\Dfisher{f_{\star}}{f_G} \lesssim r^2.$$
By Theorem~\ref{theo:derivative_score}, we find that:
\begin{equation}\label{eq:popguy}
\EE{ \p{\regscorestar_\star\cb{\regscorestar_G - \regscorestar_\star} + \frac{\partial}{\partial z} (\regscorestar_\star - \regscorestar_G)}^2} \lesssim (\log n)^{4} r^2.
\end{equation}
We have
\begin{align*} 
\widehat W(r) 
&= \sup_{G \in \mathcal{P}(M) : \hEE{ (\regscorestar_G - \regscorestar_\star)^2} \le r^2} 2\, \hEE{\regscorestar_\star\cb{\regscorestar_G - \regscorestar_\star} + \frac{\partial}{\partial z} (\regscorestar_\star - \regscorestar_G)}  \\
&\le 
\sup_{G \in \mathcal{P}(M) : \hEE{ (\regscorestar_G - \regscorestar_\star)^2} \le r^2} 2\, \EE{\regscorestar_\star\cb{\regscorestar_G - \regscorestar_\star} + \frac{\partial}{\partial z} (\regscorestar_\star - \regscorestar_G)}  \\
&\quad + 2\sup_{G \in \mathcal{P}(M) : \hEE{ (\regscorestar_G - \regscorestar_\star)^2} \le r^2} \Bigg\{\hEE{\regscorestar_\star\cb{\regscorestar_G - \regscorestar_\star} + \frac{\partial}{\partial z}(\regscorestar_\star - \regscorestar_G)}\\ 
& \qquad\qquad\qquad\qquad\qquad\qquad\qquad -\EE{\regscorestar_\star\cb{\regscorestar_G - \regscorestar_\star} + \frac{\partial}{\partial z} (\regscorestar_\star - \regscorestar_G)}  \Bigg\}.
\end{align*}
Thus we also have that
$$
\sup_{G \in \mathcal{P}(M) : \hEE{ (\regscorestar_G - \regscorestar_\star)^2} \le r^2} 2\, \EE{\regscorestar_\star\cb{\regscorestar_G - \regscorestar_\star} + \frac{\partial}{\partial z} (\regscorestar_\star - \regscorestar_G)} \lesssim  n^{\frac{1-C}{2}}\log^{1/2} n(   \log^{1/2}n + M)
$$
by Lemma~\ref{lemm:noise_process_complexity} part (5) and by \eqref{eq:popguy} we can upper bound,
$$
\sup_{G \in \mathcal{P}(M) : \hEE{ (\regscorestar_G - \regscorestar_\star)^2} \le r^2} \left[\hEE{\regscorestar_\star\cb{\regscorestar_G - \regscorestar_\star} + \frac{\partial}{\partial z} (\regscorestar_\star - \regscorestar_G)} -\EE{\regscorestar_\star\cb{\regscorestar_G - \regscorestar_\star} + \frac{\partial}{\partial z}(\regscorestar_\star - \regscorestar_G)} \right]$$
by

\begin{align*}
 Z_r := &\sup_{G \in \mathcal{P}(M) : \EE{ \p{\regscorestar_\star\cb{\regscorestar_G - \regscorestar_\star} + \frac{\partial}{\partial z} (\regscorestar_\star - \regscorestar_G)}^2} \lesssim (\log n)^{4} r^2} \Bigg\{\hEE{\regscorestar_\star\cb{\regscorestar_G - \regscorestar_\star} + \frac{\partial}{\partial z} (\regscorestar_\star - \regscorestar_G)} \\
&\hspace{8cm}-\EE{\regscorestar_\star\cb{\regscorestar_G - \regscorestar_\star} + \frac{\partial}{\partial z}(\regscorestar_\star - \regscorestar_G)}  \Bigg\}. 
\end{align*}
By Theorem~\ref{theo:talagrand} (and the boundedness statement from Lemma~\ref{lemm:proposition_1_jiang_zhang}), we have 
\[ Z_r \lesssim \EE{Z_r} + r \log^2n \sqrt{\frac{\log(2/\tau)}{n}} + \frac{\tilde{L}(\rho^*) \log(2/\tau)}{n} \]
with probability at least $1 - \tau$. By symmetrization, we can upper bound $ \EE{Z_r}$ by
$$2\EE{\sup_{G \in \mathcal{P}(M) : \EE{ \p{\regscorestar_\star\cb{\regscorestar_G - \regscorestar_\star} +  \frac{\partial}{\partial z}(\regscorestar_\star - \regscorestar_G)}^2} \lesssim (\log n)^{4} r^2} 
\left|\hEE{\varepsilon \p{ \regscorestar_\star\cb{\regscorestar_G - \regscorestar_\star} + \frac{\partial}{\partial z}(\regscorestar_\star - \regscorestar_G)}}\right|}
$$
where we can recognize the right hand side as the localized Rademacher complexity $\localizedpoprademacher{r \log^2 n }{\mathcal{M}^{\rho}_c}$ from 
Lemma~\ref{lemm:noise_process_complexity}, if we recall that in general 
\[ \localizedpoprademacher{\delta}{\mathcal{F}} = \EE{ \sup\cb{\abs{\frac{1}{n}\sum_{i=1}^n \varepsilon_i f(W_i)}\,:\, f \in \mathcal{F},\, \Norm{f}_2 \leq \delta}}. \]
Appealing to the lemma, we therefore find that
$$
\localizedpoprademacher{r \log^2 n }{\mathcal{M}^{\rho^*}_c} \lesssim \frac{r \log^3 n}{\sqrt{n}}
$$
and so in summary, for fixed $r$ we can show with probability at least $1 - \tau$ that
\[ Z_r \lesssim \frac{r \log^3 n}{\sqrt{n}} + r \log^2n \sqrt{\frac{\log(2/\tau)}{n}} + \frac{\tilde{L}(\rho^*)\log(2/\tau)}{n} \]
and
\begin{equation}\label{eqn:w-singler-bound} 
\hat W(r) \lesssim  \frac{r \log^3 n}{\sqrt{n}} + r \log^2n \sqrt{\frac{\log(2/\tau)}{n}} + \frac{\tilde{L}(\rho^*)\log(2/\tau)}{n}  +  n^{\frac{1-C}{2}}\log^{1/2} n(   \log^{1/2}n + M). 
\end{equation}
This concludes the description of the analysis at a fixed radius $r$.

Now we define the following deterministic sequence of radii for a constants $K > 1, \tau > 0$ to be chosen later. Let
\[ r_0 = \text{rad}(\mathcal M^{\rho^*}_c) \le \tilde{L}(\rho), \]
and for each $i \ge 0$ define
\[ r_{i + 1}^2 = K\left(r_i \log^3n/\sqrt{n} + r_i \log^2n \sqrt{\frac{\log(2/\tau)}{n}} + \frac{\tilde{L}(\rho^*)\log(2/\tau)}{n}+  n^{\frac{1-C}{2}}\log^{1/2} n(   \log^{1/2}n + M) \right). \]
By the AM-GM inequality we have $ab = (a/\sqrt{2})(b\sqrt{2}) \le a^2/8 + b^2$, so
$$
r_{i + 1}^2 \le r_i^2/4 + K\left(\frac{K \log^6 n + K\log^4 n\log(2/\tau) + \tilde{L}(\rho^*)\log(2/\tau)}{n}+  n^{\frac{1-C}{2}}\log^{1/2} n(   \log^{1/2}n + M) \right).
$$
So for each $i$, either
\begin{equation}
r_i^2 \le K\!\!\left(\frac{K \log^6 n +  K\log^4 n \log\p{\frac{2}{\tau}} + \tilde{L}(\rho^*) \log\p{\frac{2}{\tau}}}{n}+  n^{\frac{1-C}{2}}\log^{1/2}n(   \log^{1/2}n + M) \right) 
\label{eqn:ri-terminate}
\end{equation}
or $r_{i + 1}^2 \le 2r_i^2/4 = r_i^2/2$. The latter case can only happen at most $I \le [\log(n) + \log \tilde{L}(\rho^*)]/\log(2)$ times before $r_i \le \tilde{L}(\rho^*) 2^{-I} \le 1/n$ in which case \eqref{eqn:ri-terminate} is necessarily satisfied. 

Recall that $\hat{W}(r)$ is by definition a monotone function in $r$, and that $\hat r \le r_0$ by definition. 
Taking $\tau = \delta/2I$ so that $1/\tau = O([\log(n) + \log \tilde{L}(\rho)]/\delta)$, applying \eqref{eqn:w-singler-bound} argument for the above sequence $r_0, \ldots, r_{I - 1}$ where we select the constant $K$ based on \eqref{eqn:w-singler-bound} so that $\widehat W(r_i) \le r_{i + 1}^2$, we find\footnote{Where, as explained at the beginning of the proof, we combine the inequalities $(\hat r)^2 \le \widehat W(\hat r) \le \widehat W(r_i) \le r_{i + 1}^2$ to inductively prove that $\hat r \le r_{i}$ starting from the base case $\hat r \le r_0$.}  that with probability at least $1 - \delta$, we have
\begin{align}
(\hat r)^2 &\lesssim r_{I - 1}^2 \nonumber \\
&\lesssim \frac{\log^6 n +  \log^4(n)\log(2/\tau) + \tilde{L}(\rho^*)\log(2/\tau)}{n}+  n^{\frac{1-C}{2}}\log^{1/2}n(   \log^{1/2}n + M)  \nonumber \\
&\lesssim  \frac{\log^6 n +  [M + \log^4(n) + \tilde{L}(\rho^*)][\log(n\tilde{L}(\rho^*) + \log(2/\delta)]}{n} \label{eqn:final-rate-jz}
\end{align}
assuming that $C$ was chosen sufficiently large ($C=10$ suffices). 

So in conclusion, we get with probability at least $1 - \delta$ an upper bound on the squared loss of the form \eqref{eqn:final-rate-jz}, which if all other parameters are fixed is a rate of $\log^6(n)/n$ in the number of samples.
\end{proof}

\section{Proofs for Section~\ref{sec:regression}}
\label{sec:appendix_regression_proofs}
As a reminder, our generative model is
\[ Z_i = \mu_i + \xi_i \]
where independently $\xi_i$ is mean zero noise with variance $\sigma_i^2$.
Throughout this supplement, it will be convenient to use the following notational shorthands that enable us to view the objects defined in Section~\ref{sec:regression} as vectors in $\mathbb R^n$. In particular:
\begin{itemize}
\item We identify any $\lambda(\cdot) \in \mathcal{L}$ with $\lambda = (\lambda_1,\dotsc,\lambda_n) \in \RR^n$ where $\lambda_i = \lambda(X_i)$. Analogously we identify any $b(\cdot) \in \mathcal{B}$ with $b=(b_1,\dotsc,b_n) \in \RR^n$ where $b_i = b(X_i)$. Similarly, we identify $\hat{\lambda}(\cdot) \in \mathcal{L}$ with $\hat{\lambda} \in \RR^n$ and $\hat{b}(\cdot) \in \mathcal{B}$ with $\hat{b} \in \RR^n$.
Finally, we identify $\lambda_{\oracle}(\cdot) \in \mathcal{L}$ with $\lambda^{\oracle} \in \RR^n$ and $b_{\oracle}(\cdot) \in \mathcal{B}$ with $b^{\oracle} \in \RR^n$.
\item We often identify $\mathcal{L}$ with its projection onto $\RR^n$ (that is, we interpret $\mathcal{L}$ as a subset of $\RR^n$) and analogously for $\mathcal{B}$.
\item $\nabla \SURE(\lambda, b) \in \RR^{2n}$ refers to the gradient of SURE with respect to $(\lambda, b) \in \RR^{2n}$.
\item The norm $\Norm{\cdot}_2$ refers to the Euclidean norm\footnote{This convention is different than the one in Supplement~\ref{sec:proofs_sure_homoscedastic}. Therein, $\Norm{\cdot}_2$ refers to the $L^2$-norm with respect to the Lebesgue measure on $\RR$.} in $\RR^n$ (or sometimes, $\RR^{2n}$) and the inner product $\langle \cdot, \cdot \rangle$ refers to the corresponding scalar product.
\end{itemize}

\subsection{Deterministic argument}
\label{subsec:deterministic_argument_misspecified}

We now prove the key deterministic inequality from Proposition~\ref{prop:regression-basic}.

\begin{proof}[Proof of Proposition~\ref{prop:regression-basic}]
By the KKT condition and the star-shaped property of the class, we know that
\[ 0 \le \langle \nabla \SURE(\hat \lambda, \hat b), (\lambda^\oracle - \hat \lambda, b^\oracle - \hat b) \rangle. \]
Furthermore, observe that the Hessian $\mathcal H = \nabla^2 \SURE$ is constant, i.e., $\SURE$ is a quadratic objective. Therefore,
$$
\nabla \SURE(\hat \lambda, \hat b) = \nabla \SURE(\lambda^\oracle, b^\oracle) + \mathcal H\, (\hat \lambda - \lambda^\oracle, \hat b - b^\oracle),
$$
so
$$
0 \le \langle \nabla \SURE(\lambda^\oracle, b^\oracle), (\lambda^\oracle - \hat \lambda, b^\oracle - \hat b) \rangle - \langle (\hat \lambda - \lambda^\oracle,  b^\oracle - \hat b), \mathcal H\, (\hat \lambda - \lambda^\oracle,  b^\oracle - \hat b) \rangle.
$$
We can compute that
$$
\langle (\hat \lambda - \lambda^\oracle, \hat b - b^\oracle), \mathcal H\, (\hat \lambda - \lambda^\oracle, \hat b - b^\oracle)) \rangle = \frac{2}{n} \sum_{i = 1}^n ((\hat \lambda_i - \lambda^\oracle_i) Z_i - (\hat b_i - b^\oracle_i))^2
$$
and
\begin{align*}  
&\langle \nabla \SURE(\lambda^\oracle, b^\oracle), (\lambda^\oracle - \hat \lambda, b^\oracle - \hat b) \rangle  \\ 
&\qquad =\, \frac{2}{n} \sum_{i = 1}^n (\lambda^\oracle_i Z_i - b^\oracle_i)[(\lambda^\oracle_i - \hat \lambda_i)Z_i - (b^\oracle_i - \hat b_i)] - \frac{2}{n} \sum_{i = 1}^n \sigma_i^2 (\lambda^\oracle_i - \hat \lambda_i).
\end{align*}
By linearity of expectation and the KKT condition for the expected loss, we know that
$$
\langle \nabla \EE{\SURE(\lambda^\oracle, b^\oracle)}, (\lambda^\oracle - \hat \lambda, b^\oracle - \hat b) \rangle \le 0,
$$
and we can also compute that
\begin{align*}
\EE{\langle \nabla \SURE(\lambda^\oracle, b^\oracle), (\lambda^\oracle - \hat \lambda, b^\oracle - \hat b) \rangle } 
&= \frac{2}{n} \sum_{i = 1}^n \EE{(\lambda^\oracle_i (\mu_i + \xi_i) - b^\oracle_i)[(\lambda^\oracle_i - \hat \lambda_i)(\mu_i + \xi_i) - (b^\oracle_i - \hat b_i)]} \\
&\quad - \frac{2}{n} \sum_{i = 1}^n \sigma_i^2 (\lambda^\oracle_i - \hat \lambda_i).
\end{align*}
Therefore,
\begin{align*} 
&\frac{1}{n} \sum_{i = 1}^n ((\hat \lambda_i - \lambda^\oracle_i) Z_i - (\hat b_i - b^\oracle_i))^2 \\ 
&= \langle \hat \lambda - \lambda^\oracle, \mathcal H\, (\hat \lambda - \lambda^\oracle) \rangle/2 \\
&\le \langle \nabla \SURE(\lambda^\oracle, b^\oracle), (\lambda^\oracle - \hat \lambda, b^\oracle - \hat b) \rangle/2 \\
&\le \langle \nabla \SURE(\lambda^\oracle, b^\oracle), (\lambda^\oracle - \hat \lambda, b^\oracle - \hat b) \rangle/2 - \EE{\langle \nabla \SURE(\lambda^\oracle, b^\oracle), (\lambda^\oracle - \hat \lambda, b^\oracle - \hat b) \rangle/2} \\
&= \frac{1}{n} \sum_{i = 1}^n [\lambda^\oracle_i \xi_i [(\lambda^\oracle_i - \hat \lambda_i) \mu_i - (b^\oracle - \hat b)] + (\lambda^\oracle_i \mu_i - b^\oracle_i)(\lambda^\oracle_i - \hat \lambda_i)\xi_i + \lambda^\oracle_i(\lambda^\oracle_i - \hat \lambda_i)(\xi_i^2 - \sigma_i^2)]] \\
&= \frac{1}{n} \sum_{i = 1}^n [\lambda^\oracle_i \xi_i (\hat b_i - b^\oracle_i) + (2\lambda^\oracle_i \mu_i - b^\oracle_i)(\lambda^\oracle_i - \hat \lambda_i)\xi_i + \lambda^\oracle_i( \lambda^\oracle_i - \hat \lambda_i)(\xi_i^2 - \sigma_i^2)]]
\end{align*}
which proves the result.
\end{proof}

\subsection{Lower isometry bound}
\label{subsec:lower_isometry}

We start with the proof of Lemma~\ref{lemm:isometry_bounded_noise}, before extending it to the case with unbounded noise.

\begin{lemm}[Lemma~\ref{lemm:isometry_bounded_noise} restated]
Suppose that $\xi_1,\ldots,\xi_n$ are independent, \smash{$\EE{\xi_i} \le \sqrt{\EE{\xi_i^2}}/2$}, each satisfy $\EE{\xi_i^2} \ge 1$, and are valued in $[-\upperbound, \upperbound]$ for $\upperbound \ge 1$.
Consider a separable set $\mathcal H \subset \mathbb{R}^n \times [-1,1]^n$ with squared radius, resp. Rademacher complexity, 
$$r^2 := \sup_{(c,f) \in \mathcal H} \frac{1}{n}\sum_{i=1}^n f_i^2,\;\quad \rademacher(\mathcal{H}) := \EE{\sup_{(c,f) \in \mathcal H} \abs{ \frac{1}{n}\sum_{i =1}^n  \varepsilon_i c_i} + B\sup_{(c,f) \in \mathcal H} \abs{ \frac{1}{n}\sum_{i =1}^n  \varepsilon_i f_i}},$$
where the right-hand side expectation is taken over iid Rademacher random variables $\varepsilon_i$.

Then for any $x \geq 0$, with probability at least $1 - e^{-x}$, uniformly over all $(c_1,\ldots,c_n,f_1,\ldots,f_n) \in \mathcal H$, we have
$$\frac{1}{n}\sum_{i = 1}^n (c_i + f_i \xi_i)^2 \ge \frac{1}{8n}\sum_{i = 1}^n (c_i^2 + f_i^2) - 36 B\rademacher(\mathcal{H}) - 17 \upperbound r \sqrt{\frac{\max_{i=1}^n\EE{\xi_i^2} x}{n}} - 180\frac{\upperbound^2 x}{n}.$$
\end{lemm}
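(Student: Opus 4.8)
The plan is to establish the bound by combining a pointwise anti-concentration estimate with a uniform (empirical process) control of the fluctuations around that estimate. The starting observation is that for each fixed index $i$,
\[
\EE{(c_i + f_i \xi_i)^2} = c_i^2 + 2 c_i f_i \EE{\xi_i} + f_i^2 \EE{\xi_i^2} \ge c_i^2 + f_i^2 - 2|c_i||f_i|\sqrt{\EE{\xi_i^2}}/2,
\]
and since $\EE{\xi_i^2} \ge 1$, an application of Young's inequality $2|c_i||f_i|\sqrt{\EE{\xi_i^2}}/2 \le \tfrac14 c_i^2 + \EE{\xi_i^2} f_i^2$ shows $\EE{(c_i+f_i\xi_i)^2} \ge \tfrac{3}{4}c_i^2 + (\EE{\xi_i^2} - \EE{\xi_i^2}) f_i^2$; one needs to be a little more careful about the $f_i^2$ coefficient, but after optimizing the split of the cross term one obtains $\tfrac1n\sum_i \EE{(c_i+f_i\xi_i)^2} \ge \tfrac14 \cdot \tfrac1n\sum_i(c_i^2+f_i^2)$ uniformly over $\mathcal H$ (using $|f_i|\le 1$ and $\EE{\xi_i^2}\ge 1$). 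So the ``population'' version of the inequality holds with constant $1/4$, and the task reduces to showing that the empirical process
\[
\Delta(c,f) := \frac1n\sum_{i=1}^n\Big( \EE{(c_i+f_i\xi_i)^2} - (c_i+f_i\xi_i)^2 \Big)
\]
is, with high probability, uniformly bounded by $\tfrac18 \cdot \tfrac1n\sum_i(c_i^2+f_i^2) + (\text{error terms})$, which then gives the factor $1/8 = 1/4 - 1/8$ in the statement.

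To control $\sup_{\mathcal H}\Delta(c,f)$ I would expand the summand as $(c_i+f_i\xi_i)^2 = c_i^2 + 2c_i f_i \xi_i + f_i^2\xi_i^2$, so that the only random parts are $2c_if_i\xi_i$ and $f_i^2\xi_i^2$ (the $c_i^2$ part is deterministic and cancels). Thus $\Delta(c,f) = \tfrac1n\sum_i 2c_if_i(\EE{\xi_i}-\xi_i) + \tfrac1n\sum_i f_i^2(\EE{\xi_i^2}-\xi_i^2)$. For the first sum, note $|2c_if_i| \le 2|c_i|$ (since $|f_i|\le1$) and $|\xi_i - \EE{\xi_i}| \le 2B$, so this is a sum of bounded-increment terms; I would apply Talagrand's inequality (Theorem~\ref{theo:talagrand}) together with symmetrization (Lemma~\ref{lemm:symmetrization_argument}) and the contraction principle to bound its expected supremum by a multiple of $B\cdot\mathscr R(\mathcal H)$ — contraction is what lets us peel off the $f_i$ multiplier and the $\xi_i$ and reduce to the Rademacher averages of $(c_i)$ appearing in the definition of $\mathscr R(\mathcal H)$. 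The variance proxy here is controlled by $r^2$ times $\max_i\EE{\xi_i^2}$, yielding the $\upperbound r\sqrt{\max_i\EE{\xi_i^2}\,x/n}$ term, and the $\upperbound$-boundedness yields the $\upperbound^2 x/n$ term. For the second sum $\tfrac1n\sum_i f_i^2(\EE{\xi_i^2}-\xi_i^2)$, the increments are bounded by $B^2$ and I would again symmetrize and contract, this time noting that $t\mapsto t^2$ is Lipschitz with constant $2B$ on the relevant range, to reduce to the Rademacher averages of $(f_i)$; here it is essential that the radius $r$ enters, because the variance of this term is at most $r^2 \max_i \Var(\xi_i^2) \lesssim \upperbound^2 r^2 \max_i \EE{\xi_i^2}$. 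Collecting all pieces (with the numerical constants $36$, $17$, $180$ coming from the explicit constants in Theorem~\ref{theo:talagrand} and the factors of $2$ lost in symmetrization and contraction) gives exactly the claimed inequality, after absorbing a $\tfrac18\cdot\tfrac1n\sum_i(c_i^2+f_i^2)$ worth of slack — produced by an AM-GM split on the variance term $\upperbound r\sqrt{\cdots}$ and on the $B\mathscr R(\mathcal H)$ term where needed — into the main quadratic term on the right.

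The main obstacle, and the step deserving the most care, is the treatment of the cross term $\tfrac1n\sum_i 2c_if_i(\EE{\xi_i}-\xi_i)$: the $c_i$ are allowed to be arbitrarily large (the lemma is stated precisely so that there is \emph{no} explicit dependence on $\sup_i|c_i|$), so one cannot naively use a boundedness-based concentration bound on this sum. The resolution is that the coupling $c_if_i$ means the effective variance is controlled by $\sum_i c_i^2 f_i^2 \le \big(\sup_i c_i^2\big)\sum_i f_i^2$ — still bad — so instead one must use the structure of $\mathcal H$: the Rademacher complexity $\mathscr R(\mathcal H)$ built into the hypothesis bounds $\EE{\sup|\tfrac1n\sum\varepsilon_ic_i|}$ directly, and the contraction principle applied with the $1$-Lipschitz, $1$-bounded multipliers $f_i$ (together with the symmetrized noise, rescaled by $B$) transfers this to control the cross-term supremum \emph{without} ever invoking $\sup_i|c_i|$ outside of a Rademacher average. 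Getting the contraction step right — in particular contracting away a \emph{product} $c_i f_i \xi_i$ where two of the three factors are random — requires either a two-step contraction (first condition on $\xi$, contract in $f$; then handle $\xi$) or a direct appeal to a comparison inequality for multilinear Rademacher chaos, and laying this out carefully is the technical heart of the argument. The extension to sub-Gaussian $\xi_i$ (Lemma~\ref{lemm:subgaussian-isometry}) would then follow by the truncation device of Lemma~\ref{lemm:sub-talagrand}, replacing $\upperbound$ by $O(\sqrt{\log n}\cdot\text{poly}(x))$ on a high-probability event.
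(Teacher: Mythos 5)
Your overall architecture — population lower bound plus control of the empirical-process deviation $\Delta(c,f) := \tfrac1n\sum_i\EE{(c_i+f_i\xi_i)^2} - \tfrac1n\sum_i(c_i+f_i\xi_i)^2$ via symmetrization, contraction, and Talagrand — is the right skeleton, and you correctly identify that the unboundedness of $c_i$ is the central difficulty. But your proposed resolution of that difficulty does not actually close the gap, and this is where the paper's argument does something you are missing.

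The issue is not only that the \emph{expected supremum} of the cross-term process $\tfrac1n\sum_i 2c_if_i(\EE{\xi_i}-\xi_i)$ is hard to bound — which you correctly observe can be handled via the Rademacher complexity and contraction. The issue is that applying Talagrand's concentration inequality (Theorem~\ref{theo:talagrand}) to get the high-probability deviation terms $17Br\sqrt{\max_i\EE{\xi_i^2}\,x/n}$ and $180B^2x/n$ requires the function class to be $[-b,b]$-valued for some $b$. The summands $2c_if_i\xi_i$ in your decomposition are of order $B|c_i|$, which is unbounded over $\mathcal{H}$ since no bound on $\sup_i|c_i|$ is assumed. Contraction (or any ``multilinear chaos comparison'') cannot rescue this: it controls expectations of suprema, not the exponential concentration around them, and Talagrand's tail bound genuinely fails without an envelope. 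Your plan would therefore stall precisely at the step that produces the $x$-dependent terms in the bound.

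The paper's resolution is a deterministic split of $[n]$ into $A(c) = \{i : |c_i| \le 2B\}$ and $A'(c) = \{i : |c_i| > 2B\}$, chosen \emph{per} $(c,f) \in \mathcal{H}$. On $A'$, since $|f_i\xi_i| \le B \le |c_i|/2$, one has the deterministic inequality $(c_i + f_i\xi_i)^2 \ge c_i^2/4 \ge (c_i^2+f_i^2)/8$ — no probability needed at all. On $A$, one has $|c_i| \le 2B$ so that $|c_i + f_i\xi_i| \le 3B$, which makes the summands $(c_i+f_i\xi_i)^2$ bounded by $9B^2$, the squaring map $6B$-Lipschitz on $[-3B,3B]$, and the variance proxy controlled by $36B^2 n r^2\max_i\EE{\xi_i^2}$. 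Only then do symmetrization, contraction (with the $6B$-Lipschitz square), and Talagrand with $b = 9B^2$ apply cleanly, and the constants $36$, $17$, $180$ fall out. Without this split, the argument you sketch cannot be made rigorous; with it, the parts you describe (symmetrization, contraction, Rademacher complexity of $(c,f)$, variance via $r^2$) are essentially correct but should be applied to the full squared sum $(c_i+f_i\xi_i)^2$ restricted to $A$, not to a term-by-term expansion of the quadratic over all of $[n]$.

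A minor separate note: your intermediate population bound ``$\EE{(c_i+f_i\xi_i)^2} \ge \tfrac34 c_i^2 + 0\cdot f_i^2$'' drops the $f_i^2$ contribution entirely; the correct AM--GM split gives $\EE{(c_i+f_i\xi_i)^2} \ge \tfrac12(c_i^2+f_i^2)$ directly under the stated moment assumptions, which is both simpler and sharper than the $\tfrac14$ you claim to recover ``after optimizing the split.'' This is cosmetic compared to the main gap above.
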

\begin{proof}
For a given $(c,f)$ we split the index set $[n]$ into two sets; $A = A(c) = \{ i : |c_i| \le 2\upperbound \}$ and $A' = A'(c) = \{ i : |c_i| >  2\upperbound\}$. 
Observe that if $i \in A'$, then using the fact that $|f_i| \le 1$ and hence $|f_i \xi_i| \le B \le |c_i|/2$ we can deterministically show that
$$\frac{1}{n}\sum_{i \in A'} (c_i + f_i \xi_i)^2 \ge \frac{1}{4n}\sum_{i \in A'} c_i^2 \ge  \frac{1}{8n} \sum_{i \in A'} (c_i^2+ f_i^2).$$
To study the indices in $A$, let us define
$$Z := \sup_{(c,f) \in \mathcal H} \sum_{i \in A} \cb{\EE{(c_i + f_i \xi_i)^2} - (c_i + f_i \xi_i)^2}.
$$
We observe that
\begin{align*}
\frac{1}{n}\sum_{i \in A} (c_i + f_i \xi_i)^2 
&= \frac{1}{n}\sum_{i \in A} \EE{(c_i + f_i \xi_i)^2} + \frac{1}{n}\sum_{i \in A} \cb{(c_i + f_i \xi_i)^2 - \EE{(c_i + f_i \xi_i)^2}} \\
&\ge  \frac{1}{n}\sum_{i \in A} \EE{(c_i + f_i \xi_i)^2} - \frac{1}{n}Z \\
&= \frac{1}{n}\sum_{i \in A} \EE{c_i^2 +2c_i f_i \xi_i +  f_i^2 \xi_i^2} - \frac{1}{n}Z \\
&\ge  \frac{1}{2n}\sum_{i \in A} (c_i^2 + f_i^2) - \frac{1}{n}Z,
\end{align*}
where the last step is by the assumptions and the AM-GM inequality.
Observe that,
using the fact that $x \mapsto x^2$ is $6\upperbound$-Lipschitz on the interval $[-3\upperbound,3\upperbound]$, we have
$$\sum_{i \in A} \Var{(c_i + f_i \xi_i)^2} \le \sum_{i \in A} 36\upperbound^2 \Var{f_i \xi_i} \le 36 \upperbound^2  n r^2 \max_i \EE{\xi_i^2}.$$
Therefore by Theorem~\ref{theo:talagrand} with $\varepsilon=2$ and $b = 9 \upperbound^2$, we have that
$$\PP{ Z \ge 3\EE{Z} + 17B \sqrt{n} r \sqrt{ \max_i \EE{\xi_i^2} x} + 180 \upperbound^2 x} \le \exp(-x).$$
Next we bound the expectation of the supremum. By symmetrization, we can introduce independent Rademacher random variables $\varepsilon_1,\ldots,\varepsilon_n$ so that
\begin{align*}
\EE{Z} 
&\le 2 \EE{\sup_{(c,f) \in \mathcal H} \sum_{i \in A}  \varepsilon_i (c_i + f_i \xi_i)^2} \\
&\le 12 \upperbound  \EE{\sup_{(c,f) \in \mathcal H} \sum_{i \in A}  \varepsilon_i (c_i + f_i\xi_i)}\\
&\le 12 \upperbound  \EE{\sup_{(c,f) \in \mathcal H} \sum_{i = 1}^n  \varepsilon_i (c_i + f_i\xi_i)}\\
&\le 12 \upperbound  \EE{\sup_{(c,f) \in \mathcal H} \sum_{i = 1}^n  \varepsilon_i c_i + \sup_{(c,f) \in \mathcal H} \sum_{i = 1}^n \varepsilon_i f_i\xi_i}\\
&\le 12 \upperbound  \EE{\sup_{(c,f) \in \mathcal H} \sum_{i = 1}^n  \varepsilon_i c_i + B\sup_{(c,f) \in \mathcal H} \sum_{i = 1}^n \varepsilon_i f_i}\\
&\le 12 \upperbound n \rademacher(\mathcal{H}),
\end{align*}
where the second inequality holds by the contraction principle, using that the squared loss on the interval $[-3B,3B]$ is $6B$-Lipschitz on the interval $[-3B,3B]$, the next inequality follows from the fact that for any fixed $(c,f)$ that $\sum_{i \notin A} \varepsilon_i (c_i + f_i\xi_i)$ is mean-zero and by Jensen's inequality, and the next inequality again follows from contraction. 
\end{proof}
\begin{lemm}
\label{lemm:subgaussian-isometry}
Suppose that $\xi_1,\ldots,\xi_n$ are independent, \smash{$\EE{\xi_i} = 0$}, each satisfy $\EE{\xi_i^2} \ge 2$, and they are $\sigma^2$-sub-Gaussian for $\sigma^2 \ge 2$.
Consider a separable set $\mathcal H \subset \mathbb{R}^n \times [-1,1]^n$ with squared radius, resp. Rademacher complexity, 
$$r^2 := \sup_{(c,f) \in \mathcal H} \frac{1}{n}\sum_{i=1}^n f_i^2,\;\quad \rademacher(\mathcal{H}) := \EE{\sup_{(c,f) \in \mathcal H} \abs{ \frac{1}{n}\sum_{i =1}^n  \varepsilon_i c_i} + \sigma\sqrt{\log(4n/\delta)}\sup_{(c,f) \in \mathcal H} \abs{ \frac{1}{n}\sum_{i =1}^n  \varepsilon_i f_i}},$$
where the right-hand side expectation is taken over iid Rademacher random variables $\varepsilon_i$.

Then for any $x \geq 0$, with probability at least $1 - \delta$, uniformly over all $(c_1,\ldots,c_n,f_1,\ldots,f_n) \in \mathcal H$, we have
\begin{align*} 
\frac{1}{n}\sum_{i = 1}^n (c_i + f_i \xi_i)^2 
&\ge \frac{1}{8n}\sum_{i = 1}^n (c_i^2 + f_i^2) - 52 \sigma \sqrt{\log(4n/\delta)}\,\rademacher(\mathcal{H}) \\
&\quad - 26 \sigma^2 r \sqrt{\log(4n/\delta)} \sqrt{\frac{\log(2/\delta)}{n}} - 360\frac{\sigma^2 \log(4n/\delta) \log(2/\delta)}{n}.
\end{align*}
\end{lemm}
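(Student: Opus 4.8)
The plan is to reduce to the bounded‑noise estimate of Lemma~\ref{lemm:isometry_bounded_noise} by a truncation argument. Fix the level $T := \sigma\sqrt{2\log(4n/\delta)}$, let $\rho_T(x) := \min(\max(x,-T),T)$, and set $\tilde{\xi}_i := \rho_T(\xi_i)$. First I would show that truncation is invisible with high probability: since each $\xi_i$ is $\sigma^2$‑sub‑Gaussian, $\PP{\abs{\xi_i} > T} \le 2\exp(-T^2/2\sigma^2) = \delta/(2n)$, so by a union bound the event $E := \cb{\xi_i = \tilde{\xi}_i \text{ for all } i}$ has probability at least $1-\delta/2$. On $E$ the quantity $\tfrac1n\sum_i (c_i + f_i\xi_i)^2$ coincides with $\tfrac1n\sum_i (c_i + f_i\tilde{\xi}_i)^2$, so it suffices to lower bound the latter uniformly over $\mathcal H$.

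Next I would verify that $(\tilde{\xi}_i)$ satisfies the hypotheses of Lemma~\ref{lemm:isometry_bounded_noise} with bound $B := T$. Boundedness $\abs{\tilde{\xi}_i}\le T$ is immediate, and $T\ge 1$ since $\sigma\ge\sqrt2$. For the moment conditions, because $\xi_i$ is mean zero we have $\abs{\EE{\tilde{\xi}_i}} = \abs{\EE{\xi_i - \rho_T(\xi_i)}} \le \EE{\abs{\xi_i}\ind(\abs{\xi_i}>T)} + T\,\PP{\abs{\xi_i}>T}$, which is exponentially small in $T^2/\sigma^2$ by the sub‑Gaussian tail (using Lemmas~\ref{lem:truncation-shift} and~\ref{lemm:moment-bound}); similarly $\EE{\tilde{\xi}_i^2} \ge \EE{\xi_i^2} - \EE{\xi_i^2\ind(\abs{\xi_i}>T)} \ge 2 - o(1) \ge 1$. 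This is precisely where the strengthened hypotheses $\EE{\xi_i^2}\ge 2$ and $\sigma^2\ge 2$ are used: they buy the slack that survives truncation, so that $\EE{\tilde{\xi}_i}\le\tfrac12\le\sqrt{\EE{\tilde{\xi}_i^2}}/2$ as required. Finally, $\max_i\EE{\tilde{\xi}_i^2}\le\max_i\Var{\xi_i}\le\sigma^2$ from the sub‑Gaussian bound, which controls the variance proxy entering the bounded‑noise estimate.

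I would then apply Lemma~\ref{lemm:isometry_bounded_noise} to $(\tilde{\xi}_i)$ with $B=T$ and $x = \log(2/\delta)$: with probability at least $1-\delta/2$, uniformly over $(c,f)\in\mathcal H$,
\[
\frac1n\sum_{i=1}^n (c_i + f_i\tilde{\xi}_i)^2 \;\ge\; \frac1{8n}\sum_{i=1}^n (c_i^2 + f_i^2) \;-\; 36\,T\,\rademacher_T(\mathcal H) \;-\; 17\,T r\sqrt{\frac{\sigma^2 x}{n}} \;-\; 180\,\frac{T^2 x}{n},
\]
where $\rademacher_T(\mathcal H) = \EE{\sup_{(c,f)\in\mathcal H}\abs{\tfrac1n\sum_i\varepsilon_i c_i} + T\sup_{(c,f)\in\mathcal H}\abs{\tfrac1n\sum_i\varepsilon_i f_i}}$. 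Intersecting this event with $E$ yields total probability at least $1-\delta$, and on the intersection the left side equals $\tfrac1n\sum_i(c_i+f_i\xi_i)^2$.

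It then remains to substitute $T^2 = 2\sigma^2\log(4n/\delta)$ and $x=\log(2/\delta)$ and collect terms: $T^2 x/n = 2\sigma^2\log(4n/\delta)\log(2/\delta)/n$ gives the $360$ term, $T r\sqrt{\sigma^2 x/n}$ gives the $r$‑term, and $T\,\rademacher_T(\mathcal H)$ rewrites as a multiple of $\sigma\sqrt{\log(4n/\delta)}\,\rademacher(\mathcal H)$ with $\rademacher$ as in the statement, once one notes that the $f$‑component of $\rademacher_T$ carries an extra factor $T$ and is tracked via $T^2 = 2\sigma^2\log(4n/\delta)$. The only genuinely delicate points are this last constant‑bookkeeping step and the elementary tail estimates needed to confirm the moment hypotheses of Lemma~\ref{lemm:isometry_bounded_noise} for the truncated variables; neither is conceptually hard, and the sub‑exponential analogue (if needed) follows by the same route with the truncation level rescaled to $T\asymp\sigma^2\log(n/\delta)$.
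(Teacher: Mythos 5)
Your proof takes essentially the same route as the paper's, which conditions on the high-probability event $\max_i|\xi_i|\le\sigma\sqrt{2\log(4n/\delta)}$ (preserving independence) and then applies Lemma~\ref{lemm:isometry_bounded_noise} to the resulting bounded noise; your deterministic truncation $\tilde{\xi}_i=\rho_T(\xi_i)$ together with the coupling event $E$ is an equivalent implementation. You are in fact more explicit than the paper's one-sentence proof in checking that the truncated noise still satisfies the moment hypotheses of the bounded-noise lemma (that $\EE{\tilde\xi_i^2}\ge 1$ and the bound on $|\EE{\tilde\xi_i}|$), which is precisely where the strengthened assumptions $\EE{\xi_i^2}\ge 2$ and $\sigma^2\ge 2$ are used, and this verification is worth retaining since the moment loss $\EE{\xi_i^2\ind(|\xi_i|>T)}$ under truncation is not unconditionally below $1$ but is so in the parameter regime of the paper's application.
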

\begin{proof}
By sub-Gaussian concentration and the union bound,
\[ \max_i |\xi_i| \le \sigma \sqrt{2\log(4n/\delta)} \]
with probability at least $1 - \delta/2$. Conditioning on this event (which preserves independence) and applying the previous lemma gives the second conclusion.     
\end{proof}

\subsection{Proof of result}
\label{subsec:proof_regression_theorem}

The following (slightly tighter) result implies Theorem~\ref{theo:reg}.
\begin{theo}
    Suppose $\mathcal L$ is a compact set which is star-shaped about $\lambda^{\oracle}$,  $\mathcal B$ is a compact set which is star-shaped about $b^{\oracle}$, that the $\xi_i$ are independently $K\sigma_i$-sub-Gaussian for some $K > 0$ and that all  $\sigma_i \in [\sqrt{2},\sigma_{\text{max}}]$.\footnote{This is without loss of generality by rescaling the problem.}
Then with probability at least $1 - \delta$, it holds that,
$$\frac{1}{n}\sum_{i=1}^n \cb{( \hat \lambda(X_i) - \lambda_\oracle(X_i))Z_i - (\hat b(X_i) - b_\oracle(X_i))}^2 \le t_*^2,$$
and that,
$$ 
\sqrt{ \frac{1}{n} \sum_{i=1} \cb{ ( \mu_i - (\hat{b}(X_i) \,+\, (1-\hat{\lambda}(X_i))Z_i) }^2} \leq \sqrt{ \frac{1}{n} \sum_{i=1} \cb{ ( \mu_i - (b_{\oracle}(X_i) \,+\, (1-\lambda_{\oracle}(X_i))Z_i) }^2}  \;  +\; t_*,
$$
where:
\begin{align*} 
t_* := \inf \Bigg\{ t \ge 0 : t^2 
&\ge CK G_1(\alpha t\sqrt{n}) + CK G_2(t\sqrt{8n}) + 3T_3(r) \\
&\quad + 52 K\sigma_{\text{max}} \sqrt{\log(4n/\delta)}\,\rademacher(\alpha t\sqrt{n},t\sqrt{8n}) \\
&\quad + 26 K^2 \sigma_{\text{max}}^2 r_2 \sqrt{\log(4n/\delta)} \sqrt{\frac{\log(2/\delta)}{n}} + 360\frac{K^2 \sigma_{\text{max}}^2 \log(4n/\delta) \log(2/\delta)}{n} \\
&\quad + \frac{CK \alpha t \sigma_{\text{max}} \sqrt{\log(12/\delta)}}{\sqrt{n}} + \frac{4CK t \sigma_{\text{max}} \max_i |2\lambda_i^\oracle \mu_i - b^\oracle_i| \sqrt{\log(12/\delta)}}{\sqrt n} \\
&\quad + \frac{400 t K^2 \sigma_{\text{max}}^2 \log(12/\delta)}{\sqrt{n}} + \frac{105 K^2 \sigma_{\text{max}}^2 \log(n) \log(12/\delta)^2}{n} \Bigg\}
\end{align*}
with $C$ the constant inherited from \eqref{eq:majorizing}, 
$\alpha = 4\sqrt{\|\mu\|_{\infty} + 2}$ , and for the diagonal matrix $D$ with $D_{ii} = K\sigma_i^2$,
\begin{align*}
G_1(r) &:= \EE{\sup_{b : \|b - b^\oracle \|_2 \le r}  \frac{1}{n} \sum_{i = 1}^n \lambda^\oracle_i ( b_i^\oracle  - b_i)\zeta_i},\;\text{ where }\; \zeta \sim \mathrm{N}(0,D), \\
G_2(r) &:= \EE{ \sup_{\lambda : \|\lambda - \lambda^\oracle \|_2 \le r}\frac{1}{n} \sum_{i = 1}^n (2\lambda^\oracle _i \mu_i - b^\oracle _i)(\lambda_i^\oracle  - \lambda_i) \zeta_i},\;\text{ where }\;\zeta \sim \mathrm{N}(0,D), \\
T_3(r) &:= \EE{\sup_{\lambda : \|\lambda - \lambda^\oracle \|_2 \le r}\frac{1}{n} \sum_{i = 1}^n \lambda^\oracle _i(\lambda_i^\oracle - \lambda_i)(\xi_i^2 - \sigma_i^2)} 
\end{align*}
and
\begin{align*} 
\rademacher(r_1,r_2) &:= 
\mathbb E\sup_{(b,\lambda) \in \mathcal K_{r_1,r_2}} \abs{ \frac{1}{n}\sum_{i =1}^n  \varepsilon_i ((\lambda_i - \lambda^\oracle _i)\mu_i - (b_i - b^\oracle _i))} \\
&\quad +  \sigma_{\text{max}}\sqrt{\log(4n/\delta)}\, \mathbb E\sup_{(b,\lambda) \in \mathcal K_{r_1,r_2}} \abs{ \frac{1}{n}\sum_{i =1}^n  \varepsilon_i (\lambda^\oracle _i - \lambda_i)}
\end{align*}
with
\[ \mathcal K_{r_1,r_2} :=  \{ (b,\lambda) \in \mathcal B \times \mathcal L : \|b - b^\oracle \|_2 \le r_1,   \|\lambda - \lambda^\oracle \|_2 \le r_2 \}. \]
\end{theo}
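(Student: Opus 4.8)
The plan is to start from the deterministic inequality of Proposition~\ref{prop:regression-basic}, which already absorbs the KKT conditions and the star-shapedness, and which bounds the in-sample quantity $\hat D^2 := \frac1n\sum_{i=1}^n\{(\hat\lambda(X_i) - \lambda_{\oracle}(X_i))Z_i - (\hat b(X_i) - b_{\oracle}(X_i))\}^2$ by the sum of three stochastic terms: a term linear in $\xi_i$ with coefficients $\lambda_{\oracle}(X_i)(\hat b(X_i) - b_{\oracle}(X_i))$; a term linear in $\xi_i$ with coefficients $(2\lambda_{\oracle}(X_i)\mu_i - b_{\oracle}(X_i))(\lambda_{\oracle}(X_i) - \hat\lambda(X_i))$; and a term linear in $\xi_i^2 - \sigma_i^2$ with coefficients $\lambda_{\oracle}(X_i)(\lambda_{\oracle}(X_i) - \hat\lambda(X_i))$. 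Each of these depends on the \emph{separate} deviations $\|\hat\lambda - \lambda_{\oracle}\|_2$ and $\|\hat b - b_{\oracle}\|_2$ rather than on $\hat D$ itself, so the first step is to convert control of $\hat D$ into control of these two norms. This is exactly the role of the lower isometry / anti-concentration bound of Lemma~\ref{lemm:subgaussian-isometry}: I would take $f_i = \hat\lambda(X_i) - \lambda_{\oracle}(X_i)$ and $c_i = \mu_i(\hat\lambda(X_i) - \lambda_{\oracle}(X_i)) - (\hat b(X_i) - b_{\oracle}(X_i))$, observe the identity $c_i + f_i\xi_i = (\hat\lambda(X_i) - \lambda_{\oracle}(X_i))Z_i - (\hat b(X_i) - b_{\oracle}(X_i))$, and conclude $\frac1n\sum_i(c_i^2 + f_i^2) \lesssim \hat D^2 + (\text{a localized Rademacher term}) + (\text{sub-Gaussian tail terms})$. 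Combined with the triangle inequality and $|\mu_i|\le M$, this yields $\frac1n\|\hat\lambda - \lambda_{\oracle}\|_2^2$ and $\frac1n\|\hat b - b_{\oracle}\|_2^2$ each bounded by a constant multiple of $\hat D^2$ plus lower-order terms; this is where the factor $\alpha$ in the radius of $G_1$ and the factor $\sqrt 8$ in the radius of $G_2$ enter.

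With the individual deviations controlled at scale $\propto \hat D$, the next step is to bound the three noise terms of Proposition~\ref{prop:regression-basic} by localized complexities. For the two terms linear in $\xi_i$, the $\xi_i$ are sub-Gaussian so the summands are sub-Gaussian increments of a process indexed by the localized classes $\mathcal{B}_{\oracle}$ and $\mathcal{L}_{\oracle}$; I would apply the high-probability sub-Gaussian comparison theorem (the majorizing-measures consequence~\eqref{eq:majorizing}) to dominate each supremum by the expectation of the matching Gaussian process, which is exactly the functional $G_1$, respectively $G_2$, evaluated at a radius proportional to $\hat D$, plus a $\sqrt{\log(1/\delta)}$ tail contribution scaled by that radius. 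For the term in $\xi_i^2 - \sigma_i^2$, the summands are sub-exponential, so I would instead invoke the sub-exponential form of the Talagrand-type bound (Lemma~\ref{lemm:sub-talagrand}) together with symmetrization, producing the functional $T_3$ plus a remainder of order $\log(n)\log(1/\delta)^2/n$. Collecting everything, on the intersection of the relevant high-probability events one obtains a self-bounding inequality of the shape $\hat D^2 \lesssim CK\,G_1(\alpha\hat D\sqrt n) + CK\,G_2(\hat D\sqrt{8n}) + 3T_3 + (\text{Rademacher functional at radii}\propto\hat D) + (\text{lower-order tail terms})$ --- precisely the inequality defining $t_*$ with $t = \hat D$.

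To conclude $\hat D \le t_*$ I would run the standard localization/peeling argument: all of $G_1$, $G_2$, $T_3$, and the Rademacher functional are sub-homogeneous in their radius (their value at radius $r$, divided by $r$, is non-increasing), a direct consequence of star-shapedness of $\mathcal{L}$ about $\lambda_{\oracle}$ and of $\mathcal{B}$ about $b_{\oracle}$ --- this is what lets one rescale a hypothetical solution lying outside the $t_*$-ball down onto its boundary --- so any $t$ with $t^2$ at most the right-hand side must satisfy $t \le t_*$. This proves the first displayed bound. The second (oracle) inequality is then immediate: with $\hat\mu_i = \hat b(X_i) + (1 - \hat\lambda(X_i))Z_i$ and $\mu_i^{\oracle} = b_{\oracle}(X_i) + (1 - \lambda_{\oracle}(X_i))Z_i$ one has $\hat\mu_i - \mu_i^{\oracle} = -\{(\hat\lambda(X_i) - \lambda_{\oracle}(X_i))Z_i - (\hat b(X_i) - b_{\oracle}(X_i))\}$, so the triangle inequality in the empirical $\ell_2$-norm gives $\sqrt{\frac1n\sum_i(\mu_i - \hat\mu_i)^2}\le \sqrt{\frac1n\sum_i(\mu_i - \mu_i^{\oracle})^2} + \hat D \le \sqrt{\frac1n\sum_i(\mu_i - \mu_i^{\oracle})^2} + t_*$.

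The main obstacle is that Steps~1 and~2 must be carried out as a genuine fixed-point argument rather than a linear chain of implications: the lower-isometry lemma, the Gaussian comparison theorem, and the sub-exponential bound each need the localization radii ($\propto \hat D$) as \emph{input}, yet it is only after applying all of them that $\hat D$ is controlled. Handling this correctly requires peeling over dyadic scales of $\hat D$, union-bounding the high-probability events with only logarithmic loss, and checking at each scale that the anti-concentration constant $1/8$ coming from Lemma~\ref{lemm:subgaussian-isometry} genuinely survives subtraction of its own Rademacher remainder --- which is why a \emph{localized} Rademacher complexity, small for low-complexity classes, is essential. The extra tail terms in the definition of $t_*$ --- the $t\,|\log\delta|\sqrt{\log n/n}$ and $|\log\delta|^2\log n/n$ pieces --- are precisely the price of allowing merely sub-Gaussian (rather than Gaussian) noise and of the quadratic $\xi_i^2$ term, and tracking them through the peeling without losing the fast rate is the most delicate bookkeeping in the argument.
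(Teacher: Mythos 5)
Your plan identifies the correct key ingredients---Proposition~\ref{prop:regression-basic} as the starting point, Lemma~\ref{lemm:subgaussian-isometry} to convert in-sample prediction error $\hat D$ into control of the individual deviations $\|\hat\lambda-\lambda^\oracle\|_2$ and $\|\hat b-b^\oracle\|_2$, the Gaussian comparison~\eqref{eq:majorizing} for the two linear-in-$\xi_i$ terms, Lemma~\ref{lemm:sub-talagrand} for the sub-exponential $\xi_i^2-\sigma_i^2$ term---and correctly identifies the self-bounding inequality $\hat D^2\lesssim\text{(complexity at radius }\propto\hat D\text{)}$. The second (oracle) inequality via the triangle inequality is also handled correctly and agrees with the paper.

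The localization mechanism you propose, however, is genuinely different from the paper's and costlier. You propose running a dyadic peeling over scales of $\hat D$, with a union bound over the peeling grid. The paper instead avoids peeling entirely by a single-scale truncation-to-boundary argument: it fixes the \emph{deterministic} radii $r_1=\alpha t_*\sqrt n$, $r_2=t_*\sqrt{8n}$ once and for all, defines $(b',\lambda')$ as the intersection of the segment from $(b^\oracle,\lambda^\oracle)$ to $(\hat b,\hat\lambda)$ with the boundary of $\mathcal K_{r_1,r_2}$ (using star-shapedness to guarantee the segment stays inside $\mathcal B\times\mathcal L$), and---this is the step your outline does not contain---shows by convexity (the left side of the basic inequality is a strongly convex quadratic along the segment while the right side is linear, and the inequality is trivially true at $(b^\oracle,\lambda^\oracle)$) that the basic inequality transfers, strictly, from $(\hat b,\hat\lambda)$ to $(b',\lambda')$. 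All subsequent high-probability bounds (isometry, Gaussian comparison, sub-exponential concentration) are then invoked only at the single fixed radius pair $(r_1,r_2)$, and the contradiction "$(\hat b,\hat\lambda)\notin\mathcal K_{r_1,r_2}\Rightarrow t_*^2<H(r_1,r_2)\le t_*^2$" closes the argument. This buys three things your version gives up: (i) no union bound over a $\log n$-sized grid of scales, so the $\log(1/\delta)$ factors in $t_*$ are exactly as stated rather than inflated by $\log\log n$-type terms; (ii) the constants $1/8$, $52$, $26$, $360$ come out explicitly because everything happens at one radius; (iii) the circularity you flag in your last paragraph (needing the radius to apply the isometry lemma before the radius is controlled) never arises, because the isometry lemma is applied to the truncated point $(b',\lambda')$, which by construction always lies in $\mathcal K_{r_1,r_2}$. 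Your approach is morally sound and would yield the same rate, but to match the stated theorem you would have to carry out the peeling details carefully and would still likely end up with slightly worse logarithmic factors; and the convexity-transfer step is a genuine idea you are missing, not merely a bookkeeping convenience.
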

\begin{rema}
    It is possible to control all of the complexity terms (e.g. $G_2(r)$ and $T_3(r)$) in terms of the Rademacher complexity by using symmetrization and contraction, at the cost of a slightly less tight bound. If constants are important, it is straightforward to redo the last part of the argument for the actual noise distribution of interest, instead of comparing to a Gaussian width, which will avoid picking up the dependence on $C$. In general, we made no efforts to optimize the constants in this result---the argument is capable of giving better constants than this. 
\end{rema}
\begin{proof}
Define
\[ \mathcal G_{r_1,r_2} := \{ (b,\lambda) \in \mathcal B \times \mathcal L : 0 = \min\left(r_1 - \|b - b^\oracle \|_2,  r_2 - \|\lambda - \lambda^\oracle \|_2 \right)\}. \]

Let 
\[ r_1 := 4t_* \sqrt{n(\|\mu\|_{\infty} + 2)}, \qquad r_2 := t_* \sqrt{8n}. \]
Define $(b',\lambda') = (\hat b, \hat \lambda) \in \mathcal K_{r_1,r_2}$ if $(\hat b, \hat \lambda)$ is in the the set $\mathcal K_{r_1,r_2}$.
If $(\hat b, \hat \lambda)$ is outside of $\mathcal K_{r_1,r_2}$, then consider the line segment between $(\hat b, \hat \lambda)$ and $(b^\oracle ,\lambda^\oracle )$. By the star-shaped property, this line segment is contained within $\mathcal B \times \mathcal L$ and it must intersect $\mathcal G_{r_1,r_2}$ at some point --- so in this case, define $(b',\lambda') \in \mathcal G_{r_1,r_2}$ to be the point of intersection.

By Proposition~\ref{prop:regression-basic} we have
\begin{align*} 
&\frac{1}{n} \sum_{i = 1}^n (( \lambda_i^\oracle  - \hat \lambda_i) (\mu_i + \xi_i) - (b^\oracle _i - b'_i))^2 = \frac{1}{n} \sum_{i = 1}^n ((\lambda^\oracle _i - \hat \lambda_i) Z_i - ( b^\oracle _i - \hat b_i))^2  \\ 
&\le \frac{1}{n} \sum_{i = 1}^n \sqb{\cb{(2\lambda^\oracle _i \mu_i - b^\oracle _i)( \lambda_i^\oracle  - \hat \lambda_i)- \lambda^\oracle _i (b^\oracle _i - \hat b_i)}\xi_i + \lambda^\oracle _i(\lambda^\oracle _i - \hat \lambda_i)(\xi_i^2 - \sigma_i^2)}. \end{align*}
Observe that this implies the analogous inequality for $(b',\lambda')$:
\begin{align} 
&\frac{1}{n} \sum_{i = 1}^n (( \lambda_i^\oracle  - \lambda'_i) (\mu_i + \xi_i) - (b^\oracle _i - b'_i))^2 = \frac{1}{n} \sum_{i = 1}^n ((\lambda^\oracle _i - \lambda'_i) Z_i - ( b^\oracle _i - b'_i))^2 \nonumber \\ 
&\le \frac{1}{n} \sum_{i = 1}^n \sqb{\cb{(2\lambda^\oracle _i \mu_i - b^\oracle _i)( \lambda_i^\oracle  - \lambda'_i)- \lambda^\oracle _i (b^\oracle _i - b'_i)}\xi_i + \lambda^\oracle _i(\lambda^\oracle _i - \lambda'_i)(\xi_i^2 - \sigma_i^2)}. 
\end{align}
This is immediate if $(b',\lambda') = (\hat b,\hat \lambda)$. Otherwise, it follows by convexity (since the left hand side of the inequality is quadratic, and in particular strongly convex, as a one-dimensional function of the location on the line segment, the right hand side is linear, and the analogous inequality at the point $(b^\oracle , \lambda^\oracle )$ is trivially true)---furthermore in this case the inequality must be strict.

We therefore have that
\begin{align*} 
&\frac{1}{n} \sum_{i = 1}^n \sqb{\cb{(2\lambda^\oracle _i \mu_i - b^\oracle _i)( \lambda_i^\oracle  - \lambda'_i)- \lambda^\oracle _i (b^\oracle _i - b'_i)}\xi_i + \lambda^\oracle _i(\lambda^\oracle _i -  \lambda'_i)(\xi_i^2 - \sigma_i^2)} \\
&\quad \le Z_1(r_1) + Z_2(r_2) + Z_3(r_2)
\end{align*}
where we defined the localized noise-dependent processes
\begin{align*}
    Z_1(r) &:= \sup_{b : \|b - b^\oracle \|_2 \le r}  \frac{1}{n} \sum_{i = 1}^n \lambda^\oracle _i (b_i - b^\oracle _i)\xi_i \\
    Z_2(r) &:= \sup_{\lambda : \|\lambda - \lambda^\oracle \|_2 \le r}\frac{1}{n} \sum_{i = 1}^n (2\lambda^\oracle _i \mu_i - b^\oracle _i)( \lambda^\oracle _i - \lambda_i)\xi_i \\
    Z_3(r) &:= \sup_{\lambda : \|\lambda - \lambda^\oracle \|_2 \le r}\frac{1}{n} \sum_{i = 1}^n \lambda^\oracle _i(\lambda^\oracle _i - \lambda_i)(\xi_i^2 - \sigma_i^2)
\end{align*}
Recall that
\[ \sigma_i \in [\sqrt{2}, \sigma_{\text{max}}]\]
by assumption.
From Lemma~\ref{lemm:subgaussian-isometry} we have that with probability at least $1 - \delta/3$,
\begin{align*} 
&\frac{1}{n}\sum_{i = 1}^n (( \lambda^\oracle _i - \lambda'_i) (\mu_i + \xi_i) - (b^\oracle _i - b'_i))^2 \\
&\ge \frac{1}{8n}\sum_{i=1}^n (\lambda'_i - \lambda^\oracle _i)^2 + \frac{1}{8n} \sum_{i = 1}^n ((\lambda'_i- \lambda^\oracle _i)\mu_i - (b'_i - b^\oracle _i))^2 \\
&\quad - 52 K\sigma_{\text{max}} \sqrt{\log(4n/\delta)}\,\rademacher(r_1,r_2) \\
&\quad - 26 K^2 \sigma_{\text{max}}^2 r_2 \sqrt{\log(4n/\delta)} \sqrt{\frac{\log(2/\delta)}{n}} - 360\frac{K^2 \sigma_{\text{max}}^2 \log(4n/\delta) \log(2/\delta)}{n}.
\end{align*}
with the following definition of the localized Rademacher complexity
\begin{align*} 
&\rademacher(r_1,r_2) := \\
&\EE{\sup_{(b,\lambda) \in \mathcal K_{r_1,r_2}} \abs{ \frac{1}{n}\sum_{i =1}^n  \varepsilon_i ((\lambda_i - \lambda^\oracle _i)\mu_i - (b_i - b^\oracle _i)} +  \sigma_{\text{max}}\sqrt{\log(4n/\delta)}\sup_{(b,\lambda) \in \mathcal K_{r_1,r_2}} \abs{ \frac{1}{n}\sum_{i =1}^n  \varepsilon_i (\lambda^\oracle _i - \lambda_i)}}. 
\end{align*}

\noindent \textbf{Definition and lower bound on $H$.}
Define
\begin{align*} 
H(r_1,r_2) &:= Z_1(r_1) + Z_2(r_2) + Z_3(r_2) \\
&\quad 
+ 52 K\sigma_{\text{max}} \sqrt{\log(4n/\delta)}\,\rademacher(r_1,r_2) \\
&\quad + 26 K^2 \sigma_{\text{max}}^2 r_2 \sqrt{\log(4n/\delta)} \sqrt{\frac{\log(2/\delta)}{n}} + 360\frac{K^2 \sigma_{\text{max}}^2 \log(4n/\delta) \log(2/\delta)}{n}.
\end{align*}
Then we have shown that
$$
H(r_1,r_2) \ge \frac{1}{8n}\sum_{i=1}^n (\lambda'_i - \lambda^\oracle _i)^2 + \frac{1}{8n} \sum_{i = 1}^n ((\lambda'_i - \lambda^\oracle _i))\mu_i - (b'_i - b^\oracle _i))^2,
$$
and furthermore, this inequality is strict when $(\hat b, \hat \lambda)$ is outside of the set $\mathcal K_{r_1,r_2}$.
Also, by the $\ell_2$ triangle inequality
\[ \|b' - b^\oracle \|_2 \le \|(\lambda '- \lambda^\oracle ) \mu\|_2 + \|(b' - b^\oracle ) - (\lambda' - \lambda^\oracle ) \mu\|_2 \le \|\lambda' - \lambda^\oracle \|_2 \|\mu\|_{\infty} + \|(b' - b^\oracle ) - (\lambda' - \lambda^\oracle) \mu\|_2 \]
we have that
\begin{align}\label{eq:H-to-simplify}
H(r_1,r_2)
&\ge \frac{1}{8n}\|\lambda' - \lambda^\oracle \|_2^2 + \frac{1}{8n} (\|b - b^\oracle \|_2 - \|\mu\|_{\infty} \|\lambda' - \lambda^\oracle \|_2)^2.
\end{align}
This means that
\begin{align}\label{eqn:l-to-h}
\frac{1}{8n}\|\lambda' - \lambda_\oracle \|_2^2 
&\le H(r_1,r_2),
\end{align}
and note that \eqref{eq:H-to-simplify} implies that either:
\begin{enumerate}
\item $\|b' - b_\oracle \|_2 \ge 2 \|\mu\|_{\infty} \|\lambda' - \lambda^\oracle \|_2$, in which case we have
\begin{align*}\label{eq:opt-1}
H(r_1,r_2)
&\ge \frac{1}{8n}\|\lambda '- \lambda^\oracle \|_2^2 + \frac{1}{32n} \|b' - b^\oracle \|_2^2 
\end{align*}
\item or that $\|b' - b^\oracle \|_2 \le 2 \|\mu\|_{\infty} \|\lambda' - \lambda^\oracle \|_2$.
\end{enumerate} 
In either case, we therefore have that 
\begin{equation}\label{eqn:b-to-h}
\frac{1}{16n(\|\mu\|_{\infty} + 2)} \|b' - b^\oracle \|_2^2 \le  H(r_1,r_2). 
\end{equation}

\noindent \textbf{Upper bound on $H$.}
We have with total probability at least $1 - \delta$, by sub-Gaussian comparison \eqref{eq:majorizing}, 1-dimensional sub-exponential concentration, and Lemma~\ref{lemm:sub-talagrand}, that letting $\alpha = 4\sqrt{\|\mu\|_{\infty} + 2}$,
\begin{align*}
H(\alpha t_*\sqrt{n}, t_*\sqrt{8n}) 
&\le CK G_1(\alpha t_*\sqrt{n}) + CK G_2(t_*\sqrt{8n}) + 3\EE{Z_3(r)} \\
&\quad + 52 K\sigma_{\text{max}} \sqrt{\log(4n/\delta)}\,\rademacher(\alpha t_*\sqrt{n},t_*\sqrt{8n}) \\
&\quad + 26 K^2 \sigma_{\text{max}}^2 r_2 \sqrt{\log(4n/\delta)} \sqrt{\frac{\log(2/\delta)}{n}} + 360\frac{K^2 \sigma_{\text{max}}^2 \log(4n/\delta) \log(2/\delta)}{n} \\
&\quad + \frac{CK \alpha t_* \sigma_{\text{max}} \sqrt{\log(12/\delta)}}{\sqrt{n}} + \frac{4CK t_* \sigma_{\text{max}} \max_i |2\lambda_i^\oracle \mu_i - b^\oracle_i| \sqrt{\log(12/\delta)}}{\sqrt n} \\
&\quad + \frac{400 t_* K^2 \sigma_{\text{max}}^2 \log(12n/\delta)}{\sqrt{n}} + \frac{105 K^2 \sigma_{\text{max}}^2 \log(n) \log(12/\delta)^2}{n}, 
\end{align*}
where $C$ is the constant from the sub-Gaussian comparison theorem \eqref{eq:majorizing}.
Thus, we have that $ H(\alpha t_*\sqrt{n}, t_*\sqrt{8n}) \le t_*^2$.\\

\noindent\textbf{Case where $(\hat b, \hat \lambda)$ is outside of $\mathcal K_{r_1,r_2}$.}
By the definition of $\mathcal G_{r_1,r_2}$, the left hand side of at least one of  \eqref{eqn:l-to-h}
and \eqref{eqn:b-to-h} has to equal $t_*^2$, and also we can observe by the discussion above that both \eqref{eqn:l-to-h} and \eqref{eqn:b-to-h} must be strict if $(\hat b, \hat \lambda) \notin \mathcal K_{r_1,r_2}$. 

So this proves that in this case, we have the self-consistency condition
\begin{align*} 
t_*^2 < H(r_1,r_2) 
&= H(4t_*\sqrt{(\|\mu\|_{\infty} + 2)n}, t_*\sqrt{8n}).
\end{align*}
But under the high probability good event, we already showed that $H(r_1,r_2) \le t_*^2$, so this would yield a contradiction.\\

\noindent\textbf{Remaining case.} We have just shown (by contradiction) that under the high probability good event, $(\hat b, \hat \lambda) \in \mathcal K_{r_1,r_2}$, so $\hat b = b'$ and $\hat \lambda = \lambda'$. Using this and what we showed previously, we have that
\[ \frac{1}{n} \sum_{i = 1}^n ((\lambda^\oracle_i - \hat \lambda_i) Z_i - ( b^\oracle_i - \hat b_i))^2\le Z_1(r_1) + Z_2(r_2) + Z_3(r_2) \le H(r_1,r_2)  \le t_*^2 \]
which proves the first conclusion. The second conclusion (inequality with the square root) follows immediately from the first one and the triangle inequality.
\end{proof}

\end{document}